\documentclass[a4paper]{amsart}
\usepackage{url}
\usepackage[all]{xy}
\usepackage[mathscr]{eucal}
\usepackage{amsmath,amssymb,amsfonts}
\newtheorem{theorem}{Theorem}[section]
\newtheorem{lemma}[theorem]{Lemma}
\newtheorem{corollary}[theorem]{Corollary}
\newtheorem{example}[theorem]{Example}

\newtheorem{proposition}[theorem]{Proposition}
\newtheorem{remark}[theorem]{Remark}

\usepackage{stackengine} 
\usepackage{xcolor}
\usepackage[all]{xy}

\title{Higher dimensional generalizations of the Thompson groups}

\author{Mark V. Lawson}
\address{Mark V. Lawson, Department of Mathematics
and the
Maxwell Institute for Mathematical Sciences, 
Heriot-Watt University,
Riccarton,
Edinburgh EH14 4AS, 
UNITED KINGDOM}
\email{m.v.lawson@hw.ac.uk}

\author{Alina Vdovina}
\address{Alina Vdovina, 
School of Mathematics and Statistics,
Herschel Building,
Newcastle University,
Newcastle-upon-Tyne NE1 7RU,
UNITED KINGDOM}
\email{alina.vdovina@ncl.ac.uk}

\begin{document} 

\begin{abstract} 
Motivated by the goal of generalizing the Cuntz-Krieger algebras,
and making heavy use of the pioneering work of Robertson and Steger, 
Kumjian and Pask generalized the notion of a directed graph to what they termed a `higher rank graph'.
The $C^{\ast}$-algebras constructed from such higher rank graphs have proved to be highly interesting.
Higher rank graphs are, in fact, a class of cancellative categories
and so the one-vertex higher rank graphs are therefore a class of cancellative monoids
--- in this paper we call them $k$-monoids.
Finite direct products of free monoids belong to this family but there are many examples not of this form.
It is well-known that the classical Thompson groups arise from the free monoids,
and it is easy to show that Brin's higher dimensional Thompson groups arise from finite direct products of free monoids.
In this paper, we generalize these constructions and show how to construct a family of new groups with simple commutator subgroups from 
arbitrary aperiodic $k$-monoids thereby subsuming all the above examples.
Although we give simple direct constructions of our groups,
a key feature of our paper is that we also show that they arise as topological full groups of the usual \'etale groupoids associated with with $k$-monoids.
In this way, our groups are naturally associated with $C^{\ast}$-algebras.
The question of the mutual isomorphisms amongst this family of groups appears to be a delicate one and is not handled here.
\end{abstract}
\maketitle

\tableofcontents

\section{Introduction}

In this paper, we shall construct a class of new groups that includes not only the classical Thompson groups $G_{n,1}$ 
but also Brin's higher dimensional analogues $nV$ as special cases \cite{Brin}.
The building blocks of our groups are a class of 1-vertex higher rank graphs that we call `$k$-monoids'.
Examples of such monoids are the finite direct products of free monoids; such direct products are not themselves free
but have important properties in common with free monoids.

We state as a theorem what we achieve in this paper (and reassure the reader that all undefined terms will be defined).

\begin{theorem}\label{them:one} Let $S$ be an aperiodic $k$-monoid whose associated $k$ alphabets are all finite
and each have cardinality at least 2.
Then we construct a countable group $\mathscr{G}(S)$ as a subgroup of the group of self-homeomorphisms of the Cantor space
with simple commutator subgroup that arises as the topological full group of a Boolean  \'etale groupoid.
\end{theorem}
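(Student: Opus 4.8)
The plan is to realise $\mathscr{G}(S)$ through the non-commutative Stone duality between Boolean inverse monoids and Boolean \'etale groupoids, and then to deduce simplicity of its commutator subgroup from the general theory of topological full groups developed by Matui and Nekrashevych.

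The first step is purely combinatorial: I record the structure of a $k$-monoid $S$ --- the degree map $S \to \mathbb{N}^{k}$, the unique factorisation property, left and right cancellativity, and the fact that each $pS \cap qS$ is a finite union of principal right ideals (finite alignment), which is automatic once the $k$ alphabets are finite. From this data I build the space $\partial S$ of infinite paths in $S$; equivalently this is the tight spectrum of the semilattice of principal right ideals, or the space of the appropriate ultrafilters on the prefix order. Since each alphabet is finite and of cardinality at least $2$, the space $\partial S$ is nonempty, compact, Hausdorff, metrizable, totally disconnected and without isolated points, so it is a Cantor space, and the cylinder sets $Z(p)$ for $p \in S$ form a basis of clopen sets for it.

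Next I construct the inverse monoid of prefix replacements: the partial homeomorphisms of $\partial S$ of the form $x \mapsto q r^{-1} x$ that strip off an initial prefix $r$ and paste on a prefix $q$, together with all finite orthogonal joins of these. Closing under such joins yields a Boolean inverse monoid $\mathbf{S}$ whose Boolean algebra of idempotents is the algebra of clopen subsets of $\partial S$. Non-commutative Stone duality converts $\mathbf{S}$ into a Hausdorff \'etale groupoid $\mathcal{G}_{S}$ with unit space $\partial S$ --- this is exactly the Kumjian--Pask groupoid of $S$, equivalently the groupoid of germs of the natural action of $\mathbf{S}$ on $\partial S$ --- and $\mathcal{G}_{S}$ is ample with Boolean, indeed Cantor, unit space. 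Two dynamical facts must now be extracted: $\mathcal{G}_{S}$ is \emph{minimal}, because $S$ has a single vertex, so the underlying higher rank graph is cofinal and every $\mathcal{G}_{S}$-orbit is dense; and $\mathcal{G}_{S}$ is \emph{effective} (topologically principal), which is precisely the translation of the aperiodicity hypothesis on $S$ into groupoid language. I then set $\mathscr{G}(S) := [[\mathcal{G}_{S}]]$, the topological full group, which is the same thing as the group of units of $\mathbf{S}$; by construction it is a subgroup of the self-homeomorphism group of $\partial S$, and it is countable because $\mathcal{G}_{S}$ is second countable and ample and hence has only countably many compact open bisections. For concreteness, and to see that $\mathscr{G}(S)$ specialises to Thompson's $G_{n,1}$ and to Brin's $nV$, one also checks directly that the elements of $\mathscr{G}(S)$ are precisely the homeomorphisms of $\partial S$ induced by a bijection between two finite maximal prefix codes of $S$.

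Finally, simplicity of the commutator subgroup follows from the theorem of Matui and Nekrashevych that for a minimal, effective, ample Hausdorff groupoid with Cantor unit space which is moreover \emph{purely infinite}, the commutator subgroup $D([[\mathcal{G}]])$ is simple. Minimality and effectiveness were secured in the previous step, so the remaining --- and main --- obstacle is to prove that $\mathcal{G}_{S}$ is purely infinite in Matui's sense: that every nonempty clopen subset of $\partial S$ contains two disjoint clopen subsets each $\mathcal{G}_{S}$-equivalent to the whole. Here the hypothesis that each of the $k$ alphabets has cardinality at least $2$ is indispensable, since it guarantees that every cylinder $Z(p)$ strictly contains at least two disjoint sub-cylinders, and an argument combining this branching with aperiodicity transports any clopen set onto any other. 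Once pure infiniteness is in hand the cited simplicity theorem applies directly, and the proof is complete.
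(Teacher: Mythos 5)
Your proposal follows essentially the same route as the paper: the Cantor space of $k$-tilings with cylinder-set basis, the inverse monoid of prefix-replacement partial homeomorphisms closed under finite orthogonal joins, non-commutative Stone duality to the Kumjian--Pask groupoid, the equivalence of aperiodicity with effectiveness, identification of $\mathscr{G}(S)$ with the topological full group (equivalently the group of units of the Boolean inverse monoid), and the theorem of Matui \cite[Theorem~4.16]{Matui2015} for simplicity of the commutator subgroup. The only divergence is one of packaging: you propose to verify minimality and pure infiniteness directly on the groupoid, whereas the paper proves that the Boolean inverse monoid $\mathsf{C}(S)$ is $0$-simple and fundamental (by transporting any cylinder $\{x_{1},\dots,x_{m}\}S^{\infty}$ onto $\{y_{1}x_{1},\dots,y_{1}x_{m}\}S^{\infty}$ inside any other non-empty clopen set) and then invokes \cite[Theorem~4.16]{Lawson2016} to transfer these properties to the groupoid --- the same dynamical content, expressed algebraically.
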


We generalize an approach that goes back to the work of Scott \cite{Scott} 
as reinterpreted by Birget \cite{Birget}.\footnote{The connection between our work and the most recent paper by Birget \cite{Birget2019} is described in Section~13.}
There, free monoids and the theory of prefix codes \cite{BP} are used 
to construct the groups $G_{n,1}$ (where $G_{2,1}$ is usually denoted by $V$).
This same approach was exploited in \cite{Lawson2007, Lawson2007b} but combined with inverse semigroup theory to define the same groups.
The unspoken hope of the latter two papers was that by generalizing free monoids in a suitable way, generalizations of the Thompson groups would follow.
John Fountain, in some unpublished notes \cite{Fountain} and a number of lectures,
did take a first step in realizing this aspiration but in our paper, we develop this idea much further.
A crucial step was the realization that 1-vertex higher rank graphs \cite{RS, KP, FMY, JS2014, JS2018} --- what we call `$k$-monoids' ---
can be viewed as natural generalizations of free monoids.
This paper is also closely allied with the categorical approach adopted by Spielberg \cite{JS2014, JS2018}
though, of course, we work with monoids rather than categories.

The paper consists of a further thirteen sections.
Section 2 reviews the very basic semigroup theory needed to read this paper,
whereas Section 3 reviews the properties of `$1$-vertex higher rank graphs' --- what we call `$k$-monoids' --- needed to read this paper.
These two sections are entirely preliminary.
Section 4 describes how to construct a Boolean space $S^{\infty}$ from a $k$-monoid $S$ and describes the properties of the action $S \times S^{\infty} \rightarrow S^{\infty}$.
This is the basic construction on which the rest of the paper depends.
In Section 5, we describe the inverse monoid $\mathsf{R}(S)$ that can be constructed from $S$.
The group $\mathscr{G}(S)$ that can be constructed from the $k$-monoid $S$ is defined in Section~6
but the definition reveals nothing about its properties.
In Section 7, we show that the group $\mathscr{G}(S)$ can also be defined in terms of what might be called `maximal generalized prefix codes'.
But it is in Section 8, that we are able to show that our group is a subgroup of the group of self-homeomorphisms
of the Cantor space. 
In Section 9, we construct from $\mathsf{R}(S)$ by means of a congruence, a Boolean inverse monoid $\mathsf{C}(S)$ and prove
that the group $\mathscr{G}(S)$ is its group of units.
It is in Section 10, that we are finally able to prove that $\mathscr{G}(S)$ is, in fact, a topological full group.
This section contains what we believe is a new approach to the standard \'etale groupoid associated with a $k$-monoid
since we observe that it is a groupoid of fractions.
In Section 11, we study the properties of what we term `rigid' $k$-monoids, 
the most natural generalizations of direct products of free monoids.
In Section~12, we study the structure of `generalized maximal prefix codes'.
Section~13 concentrates on constructing concrete examples of the groups introduced 
in this paper and making connections with the papers \cite{Brin, KMN, MN}. 
Section~14 wraps the paper up with some general, contextual results.\\

\noindent
{\bf Acknowledgements }The authors would like to thank Aidan Sims for his comments and corrections on a first draft of this paper. They would also like to thank an anonymous referee for their detailed comments and suggestions.
The second author would like to thank the Mathematical Picture Language Project at Harvard University.

\section{Some terminology and notation}

This section should be used as a reference when needed.

If $(P,\leq)$ is a poset and $X \subseteq P$ we write $X^{\downarrow} = \{y \in P \colon (\exists x \in X) \, y \leq x\}$.
If $X = X^{\downarrow}$ we say that $X$ is an {\em order ideal}.
If $X = \{x\}$ we write $x^{\downarrow}$ instead of $\{x\}^{\downarrow}$.

This paper introduces a class of groups but our approach is semigroup-theoretic.
We recall some key notions now.

A {\em zero} $0$ in a semigroup $S$ is an element $0$ such that $s0 = 0 = 0s$ for all $s \in S$.
A {\em congruence} $\rho$ defined on a semigroup $S$ is an equivalence relation
with the property that $a \, \rho \, b$ and $c \, \rho \, d$ imply that $ac \, \rho \, bd$.
If we denote the $\rho$-equivalence class containing $s$ by $[s]$ and the set of $\rho$-equivalence classes by $S/\rho$
then $S/\rho$ is a semigroup when we define $[s][t] = [st]$.
The natural map $S \rightarrow S/\rho$, given by $s \mapsto [s]$, is then a semigroup homomorphism.
If $\theta \colon S \rightarrow T$ is a homomorphism of semigroups then its {\em kernel}
is the congruence $\rho$ defined by $a \, \rho \, b$ if and only if $\theta (a) = \theta (b)$.
A congruence $\rho$ on a semigroup with zero is said to be {\em $0$-restricted} if 
$a \, \rho \, 0$ implies that $a = 0$.
A congruence $\rho$ is said to be {\em idempotent-pure} if $a\, \rho \, e$, where $e$ is an idempotent, implies that $a$ is an idempotent.

Most of the time we work with inverse semigroups which are abstractions of semigroups of partial bijections just as
groups are abstractions of semigroups of bijections.
Our reference to inverse semigroup theory is \cite{Lawson1998} but we recall some basic definitions here.
An {\em inverse semigroup} is a semigroup $S$ in which for each element $a$ there is a unique element $a^{-1}$, called the {\em inverse} of $a$, such that
$a = aa^{-1}a$ and $a^{-1} = a^{-1}aa^{-1}$.
It is usual to write $\mathbf{d}(a) = a^{-1}a$ and $\mathbf{r}(a) = aa^{-1}$, both of which are idempotents.
The set of idempotents $\mathsf{E}(S)$ of an inverse semigroup is a commutative subsemigroup.
For this reason, it is usually referred to as the {\em semilattice of idempotents} of $S$.
Our inverse semigroups will always have a zero.
An {\em inverse subsemigroup} of an inverse semigroup is simply a subsemigroup closed under the taking of inverses.
A {\em wide inverse subsemigroup} of $S$ is an inverse subsemigroup that contains $\mathsf{E}(S)$.
Inverse semigroups usually arise as inverse semigroups of partial bijections of a set where the inverse
of a partial bijection is simply its inverse partial bijection.
In an inverse semigroup $S$, we can define a partial order $\leq$, called the {\em natural partial order},
by $a \leq b$ if and only if $a = be$ for some idempotent $e$.
Despite appearances, this definition is self-dual in the sense that $a \leq b$ if and only if $a = fb$ for some idempotent $f$.
For inverse semigroups of partial bijections, the natural partial order is the usual restriction order of partial bijections.
If $a \leq b$ then $a^{-1} \leq b^{-1}$ and if $c \leq d$ then $ac \leq bd$.
The set of idempotents is an order ideal.
Observe that if $a,b \leq c$ then $a^{-1}b$ and $ab^{-1}$ are both idempotents.
Define the {\em compatibility relation} $\sim$ by $a \sim b$ if and only if $a^{-1}b$ and  $ab^{-1}$ are both idempotents.
It follows that a necessary condition for two elements to have a join is that they be compatible.
If $a$ and $b$ have a join, we denote it by $a \vee b$.
Elements $a$ and $b$ are said to be {\em orthogonal}, denoted by $a \perp b$, if $ab^{-1} = 0$ and $a^{-1}b = 0$.
If $a$ and $b$ are orthogonal and have a join, we denote it by $a \oplus b$.
An inverse monoid is said to be {\em distributive} if each compatible pair of elements has a join
and multiplication distributes over such joins on the left and the right.
A {\em morphism} of distributive inverse semigroups is required to preserve compatible joins.
A distributive inverse monoid is {\em Boolean} if its semilattice of idempotents is a Boolean algebra.

An inverse semigroup is said to be {\em fundamental} if the only elements that commute with all idempotents are
themselves idempotents.
An inverse semigroup with zero $S$ is said to be {\em $0$-simple} if for any two non-zero idempotents
$e$ and $f$ there is an element $x$ such that $\mathbf{d}(x) = e$ and $\mathbf{r}(x) \leq f$;
it is said to be {\em $0$-bisimple} if for each ordered pair of non-zero idempotents $e$ and $f$
there exists an element $a$ such that $e = a^{-1}a$ and $f = aa^{-1}$;
it is said to be {\em $0$-disjunctive} if $e < f$, where $e$ and $f$ are non-zero idempotents, 
implies that there is a non-zero idempotent $e' \leq f$ such that $ee' = 0$;
it is said to be {\em congruence-free} if the only congruences are equality and the universal congruence.

An inverse semigroup is a {\em $\wedge$-semigroup} if each pair of elements has a meet under the natural partial order.
These semigroups were first introduced in \cite{Leech}.
It can be proved that an inverse semigroup has binary meets if it possesses a fixed point operator $\phi$
meaning a map $\phi \colon S \rightarrow \mathsf{E}(S)$ such that $\phi (s)$ is the largest idempotent less than or equal to $s$ \cite[Definition 1.7]{Leech}.
If $\phi$ is such an operator, then 
$$a \wedge b = \phi (ab^{-1})b = a \phi (a^{-1}b)$$
by \cite[Theorem 1.7]{Leech}.
Having binary meets is the algebraic version of being Hausdorff which explains the importance of this notion in our paper.

An inverse semigroup $S$ is said to be {\em $E$-unitary} if $e \leq a$, where $e$ is an idempotent, implies that $a$ is an idempotent. Observe that $e$ could be zero.
The case where we require $e$ to be non-zero is the following:
an inverse semigroup with zero is said to be {\em $E^{\ast}$-unitary} if $e \leq a$, where $e$ is a non-zero idempotent, implies that $a$ is an idempotent.
It is an unfortunate feature of semigroup theory that definitions need to be changed according
to whether the semigroup has a zero or not.
However, this will cause us no problems in this paper.
The notion of being $E$-unitary is an important one in the development of inverse semigroup theory
and abstracts certain features of complex analytic functions \cite{Lawson1998}.

There is a congruence $\sigma$ defined on any inverse semigroup $S$ such that $S/\sigma$ is a group
and if $\rho$ is any congruence on $S$ such that $S/\rho$ is a group then $\sigma \subseteq \rho$.
Thus $\sigma$ is the {\em minimum group congruence}.
In fact, $s \, \sigma \, t$ if and only if there exists $u \leq s,t$.
Observe that $u$ could be zero in which case the group reduces to the trivial group.
However, we shall only apply this congruence when the inverse semigroup does not have a zero.
We sometimes write $\sigma_{S}$ if $\sigma$ is defined on $S$.
See \cite[Section 2.4]{Lawson1998}.
The following was proved as \cite[Theorem 2.4.6]{Lawson1998};
it makes checking whether two elements are $\sigma$-related much easier.

\begin{proposition}\label{prop:E} Let $S$ be an inverse semigroup.
Then $S$ is $E$-unitary if and only if $\sigma \, = \, \sim$.
\end{proposition}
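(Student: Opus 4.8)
The plan is to dispatch the two implications in turn, after first recording that $\sim \, \subseteq \, \sigma$ holds in \emph{every} inverse semigroup, so that each direction reduces to a single containment. That containment is immediate: if $a \sim b$, then $a^{-1}b$ and $ab^{-1}$ are idempotents, so setting $u = aa^{-1}b$ we have $u = a(a^{-1}b) \leq a$ by right multiplication by the idempotent $a^{-1}b$, and $u = (aa^{-1})b \leq b$ by the self-dual description of the natural partial order (left multiplication by the idempotent $aa^{-1}$); since $u \leq a$ and $u \leq b$, we conclude $a \, \sigma \, b$.

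For the forward implication, I would assume $S$ is $E$-unitary and prove $\sigma \, \subseteq \, \sim$. Suppose $a \, \sigma \, b$, so that $u \leq a$ and $u \leq b$ for some $u$. Then $u^{-1} \leq a^{-1}$, hence $\mathbf{d}(u) = u^{-1}u \leq a^{-1}b$; as $\mathbf{d}(u)$ is an idempotent and $S$ is $E$-unitary, $a^{-1}b$ is an idempotent. Symmetrically, $u^{-1} \leq b^{-1}$ gives $\mathbf{r}(u) = uu^{-1} \leq ab^{-1}$, so $ab^{-1}$ is an idempotent. Thus $a \sim b$.

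For the converse, I would assume $\sigma \, = \, \sim$ and verify the $E$-unitary condition. Let $e$ be an idempotent with $e \leq a$. Since $e \leq a$ forces $\mathbf{d}(e) \leq \mathbf{d}(a)$, and $\mathbf{d}(e) = e$, the idempotent $e$ lies below both $a$ and $\mathbf{d}(a)$; hence $a \, \sigma \, \mathbf{d}(a)$, and by hypothesis $a \sim \mathbf{d}(a)$. In particular $a\,\mathbf{d}(a)^{-1}$ is an idempotent. But $\mathbf{d}(a)$ is itself an idempotent, so $a\,\mathbf{d}(a)^{-1} = a\,\mathbf{d}(a) = aa^{-1}a = a$, and therefore $a$ is an idempotent, as needed.

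The whole argument is short once the appropriate common lower bounds are written down; the one place where care is needed --- and what I would flag as the crux of the converse --- is that the hypothesis $\sigma = \sim$ must be applied to the pair $\{a, \mathbf{d}(a)\}$ rather than to $\{a, e\}$. Applying it to $\{a, e\}$ yields nothing, since $a \sim e$ holds automatically whenever $e \leq a$ with $e$ an idempotent (one has $ae = (aa^{-1})g$ for a suitable idempotent $g$, using $e \leq a^{-1}$, while $a^{-1}e \leq \mathbf{d}(a)$ already lies in the order ideal of idempotents). Together with the standing inclusion $\sim \subseteq \sigma$, these observations give the equivalence.
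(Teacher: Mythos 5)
Your proof is correct. The paper itself does not prove this proposition --- it simply cites \cite[Theorem 2.4.6]{Lawson1998} --- and your argument is essentially the standard one from that source: the containment $\sim\,\subseteq\,\sigma$ via the common lower bound $aa^{-1}b$, the forward direction by pushing the idempotents $\mathbf{d}(u)\leq a^{-1}b$ and $\mathbf{r}(u)\leq ab^{-1}$ through the $E$-unitary hypothesis, and the converse by applying $\sigma=\,\sim$ to the pair $\{a,\mathbf{d}(a)\}$, which is indeed the right pair to use.
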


Let $S$ be a monoid and $X$ a set.
Then a {\em monoid action} $S \times X \rightarrow X$ is a mapping $(a,x) \mapsto ax$ such that
$1x = x$ and $(ab)x = a(bx)$.
There is an associated homomomorphism $\lambda \colon S \rightarrow \mathcal{T}(X)$, where $\mathcal{T}(X)$ is the monoid of all functions from $X$ to itself,
given by $\lambda (s)(x) = sx$.
If this homomorphism is injective we say that the action is {\em effective}. 
This is equivalent to requiring that if $ax = bx$ for all $x \in X$ then $a = b$.

Free monoids are special examples of the $k$-monoids that form the basis of this paper
so we outline their basic properties here to help motivate what we do.
Let $A$ be a countable set.
The free monoid generated by $A$ is denoted $A^{\ast}$
and consists of all finite strings over the alphabet $A$ with the operation of concatenation as the semigroup
multiplication.
The set of all right-infinite strings over $A$ is denoted by $A^{\omega}$.
We denote by $A_{n}$ any alphabet with $n$ elements.
If $S$ is a free monoid and $a,b \in S$ then $aS \cap bS$ is non-empty precisely when
either $aS \subseteq bS$ or $bS \subseteq aS$.
If $a = bu$ for some $u \in S$ we say that $b$ is a {\em prefix} of $a$.
The elements $a$ and $b$ are said to be {\em prefix comparable} precisely when $a$ is a prefix of $b$ or $b$ is a prefix of $a$;
this is equivalent to $aS \cap bS \neq \varnothing$.
We shall generalize this definition to a more general setting later and change the terminology to avoid confusion.
If a pair of elements are not comparable we say that they are {\em incomparable}.
A {\em prefix code} in a free monoid is a (for us, finite) set of incomparable elements.
It is a {\em maximal prefix code} if it is a prefix code in which every element of the free monoid is prefix comparable with
an element of the prefix code.
The book by Lallement \cite{Lallement} is a good source for all these definitions.

\begin{center}
{\bf Notation}
\end{center}

\begin{itemize}

\item $S^{\infty}$ the topological space of all $k$-tilings of a $k$-monoid $S$ (Section~4).

\item $\mathsf{R}(S)$ the inverse monoid of all bijective morphisms between the finitely generated right ideals of the $k$-monoid $S$ (Section~5).

\item If $T$ is any inverse semigroup with zero then $T^{e}$ is the inverse subsemigroup of $T$ consisting of elements $a$ where $a^{-1}a$ and $aa^{-1}$ 
are both essential idempotents (Section~6).

\item $\mathscr{G}(S) = \mathsf{R}(S)^{e}/\sigma$, the group associated with a $k$-monoid $S$ (Section~6).

\item $\mathsf{P}(S)$ the inverse monoid of all bijective morphisms between finitely generated projective right ideals of the $k$-monoid $S$ (Section~7).

\item $\mathcal{I}(\mathscr{X})$, where $\mathscr{X}$ is a topological space, is the inverse monoid of all partial homeomorphisms between the open subsets of $\mathscr{X}$ (Section~4).

\item $\mathcal{I}^{\scriptstyle cl}(X)$, where $X$ is a Boolean space, is the Boolean inverse monoid of partial homeomorphisms between the clopen
subsets of $X$ (Section~8 onwards).

\item $\mathsf{C}(S) = \mathsf{R}(S)/\equiv$, the Boolean inverse monoid constructed from the $k$-monoid $S$.
Its group of units is isomorphic with $\mathscr{G}(S)$ (Section~9).

\item $\mathcal{G}(S)$ the Boolean groupoid associated with the $k$-monoid $S$ (Section~10).

\item $\mathsf{KB}(G)$ the Boolean inverse monoid of all compact-open local bisections of the Boolean groupoid $G$ (Section~10).

\item $\mathsf{B}(S)$ is the inverse monoid of all bijective morphisms between principal right ideals of $S$, together with the empty function, in the case where 
the $k$-monoid $S$ is singly aligned (Section~11).

\end{itemize}

\section{$k$-monoids} 

In this section, we define the class of monoids that will underlie all our constructions.

In what follows, $\mathbb{N}^{k}$ should be viewed as the positive cone of the lattice-ordered abelian group $\mathbb{Z}^{k}$.
If $\mathbf{m} \in \mathbb{N}^{k}$ then 
$$\mathbf{m} = (m_{1}, \ldots, m_{i}, \ldots, m_{k})$$ 
and we define $\mathbf{m}_{i} = m_{i}$.
Accordingly, we shall use superscripts $\mathbf{m}^{j}$ to label sequences of elements of $\mathbb{N}^{k}$. 
The order in $\mathbb{Z}^{k}$ is defined componentwise: $\mathbf{m} \leq \mathbf{n}$ if and only if $\mathbf{m}_{i} \leq \mathbf{n}_{i}$ for $1 \leq i \leq k$.
The join operation is $(\mathbf{m} \vee \mathbf{n})_{i} = \mbox{max}(m_{i},n_{i})$ and the meet operation is $(\mathbf{m} \wedge \mathbf{n})_{i} = \mbox{min}(m_{i},n_{i})$.
Put $\mathbf{0} = (0, \ldots, 0)$ and $\mathbf{1} = (1, \ldots, 1)$ both elements of $\mathbb{N}^{k}$.
Define $\mathbf{e}_{i}$, where $1 \leq i \leq k$, to be that element of $\mathbb{N}^{k}$ which is zero everywhere except at $i$ where it takes the value $1$.\\

\noindent
{\bf Definition. }A countable monoid $S$ is said to be a {\em $k$-monoid} if there is a homomorphism
$d \colon S \rightarrow \mathbb{N}^{k}$ satisfying the {\em unique factorization property (UFP)}:
if $d(x) = \mathbf{m} + \mathbf{n}$ then there exist unique elements $x_{1}$ and $x_{2}$ of $S$ such that $x = x_{1}x_{2}$ where $d(x_{1}) = \mathbf{m}$ and $d(x_{2}) = \mathbf{n}$.
We sometimes call $d(s)$ the {\em size} of $s$.\\

\noindent
{\bf Notation.} Let $S$ be a $k$-monoid and let $s \in S$ and $\mathbf{0} \leq \mathbf{m} \leq d(s)$.
Then we write $s = s(\mathbf{0},\mathbf{m})s(\mathbf{m},d(s))$ 
where 
$d(s(\mathbf{0},\mathbf{m})) = \mathbf{m}$ 
and
$d(s(\mathbf{m},d(s))) = d(s) - \mathbf{m}$
are the uniquely defined elements of their respective sizes.\\

The definition of a $k$-monoid is simply the monoid case of the more general notion of a higher rank graph defined in \cite{RS, KP, FMY, JS2014, JS2018}.\\

\noindent
{\bf Terminology. }Let $a,b \in S$, a $k$-monoid.
We shall say that $a$ is {\em bigger than} $b$ if $d(a) \geq d(b)$. 
This definition needs to be used with care since if $a$ and $b$ are arbitrary there need be no order
relation between $d(a)$ and $d(b)$.\\

For each $1 \leq i \leq k$, define $X_{i} = d^{-1}(\mathbf{e}_{i})$.
We call $(X_{1}, \ldots, X_{k})$ the {\em $k$ alphabets} associated with the $k$-monoid.

\begin{remark}{\em We can assume, without loss of generality, that the map $d$ used in the definition of a $k$-monoid
is actually surjective.
If $d$ were not surjective then we would have that $X_{i} = \varnothing$ for some $i$.
We could then replace $d$ by a map $d' \colon S \rightarrow \mathbb{N}^{k-1}$ by dropping the $i$-component
(which is always $0$).}
\end{remark}

\begin{lemma}\label{lem:bigger} Let $S$ be a $k$-monoid in which $d$ is surjective.
Then given any element $u \in S$ and $\mathbf{m} \in \mathbb{N}^{k}$ there is an element $v$ such that $d(uv) \geq \mathbf{m}$.
\end{lemma}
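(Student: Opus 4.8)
The plan is to exploit surjectivity of $d$ to produce, for each coordinate, an element that increases the size in that coordinate, and then multiply these together. Since $d$ is surjective, for each $i$ with $1 \leq i \leq k$ we may pick an element $a_{i} \in S$ with $d(a_{i}) = \mathbf{e}_{i}$; that is, $a_{i} \in X_{i}$, so the hypothesis that the $k$ alphabets are nonempty is exactly what makes this possible.

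Given $u \in S$ and $\mathbf{m} = (m_{1}, \ldots, m_{k}) \in \mathbb{N}^{k}$, I would set
$$v = a_{1}^{m_{1}} a_{2}^{m_{2}} \cdots a_{k}^{m_{k}}.$$
Since $d$ is a monoid homomorphism into $\mathbb{N}^{k}$, we compute $d(v) = m_{1}\mathbf{e}_{1} + \cdots + m_{k}\mathbf{e}_{k} = \mathbf{m}$, and hence $d(uv) = d(u) + d(v) = d(u) + \mathbf{m} \geq \mathbf{m}$, because $d(u) \in \mathbb{N}^{k}$ has all components nonnegative and the order on $\mathbb{Z}^{k}$ is componentwise. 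This gives the required $v$.

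There is really no main obstacle here: the only thing to be careful about is that the product $a_{1}^{m_{1}} \cdots a_{k}^{m_{k}}$ is a legitimate element of $S$ (it is, as $S$ is a monoid closed under multiplication, with the empty product interpreted as $1$ when $\mathbf{m} = \mathbf{0}$), and that $d$ genuinely lands in $\mathbb{N}^{k}$ as a homomorphism so that additivity of sizes applies. One should perhaps remark that the element $v$ is not unique and that $d(uv)$ will in general strictly exceed $\mathbf{m}$ in the components where $d(u)$ is nonzero, but the statement only asks for $\geq$, so this is harmless. If one instead wanted equality in the relevant sense one would need the unique factorization property to chop $uv$ down, but that refinement is not needed for this lemma.
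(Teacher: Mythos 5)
Your proof is correct and follows essentially the same route as the paper: both arguments reduce to finding some $v$ with $d(v) \geq \mathbf{m}$ (the paper simply invokes surjectivity of $d$ at $\mathbf{m}$, while you exhibit an explicit witness $v = a_{1}^{m_{1}}\cdots a_{k}^{m_{k}}$ built from letters) and then conclude via $d(uv) = d(u) + d(v) \geq \mathbf{m}$.
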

\begin{proof} If $d(u) \geq \mathbf{m}$ then put $v = 1$.
Otherwise, choose $v$ such that $d(v) \geq \mathbf{m}$.
This can be done since we are assuming that $d$ is surjective.
It follows that $d(uv) \geq \mathbf{m}$.
\end{proof}

A monoid is said to be {\em conical} if its group of units is trivial.
An {\em atom} in a monoid is an element $a$ such that if $a = bc$ then at least one of $b$ or $c$ is invertible.
The following are all well-known but can easily be proved from the definitions.

\begin{lemma}\label{lem:coffee} Let $S$ be a $k$-monoid.
\begin{enumerate}
\item The identity of $S$ is the only element $a \in S$ such that $d(a) = {\bf 0}$.
\item The monoid $S$ is conical.
\item The monoid $S$ is cancellative.
\item The atoms are the elements $a$ such that $d(a) = \mathbf{e}_{i}$ for some $1 \leq i \leq k$.
\item Each non-identity element is a product of atoms.
\end{enumerate}
\end{lemma}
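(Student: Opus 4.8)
The plan is to derive all five parts directly from the unique factorization property (UFP), using only elementary facts about the ordered monoid $\mathbb{N}^{k}$. The first thing I would record is that $d(1) = \mathbf{0}$, since a monoid homomorphism sends the identity to the identity. For part~(1), if $d(a) = \mathbf{0}$, I would view this as the decomposition $\mathbf{0} = \mathbf{0} + \mathbf{0}$ and observe that $a = a\cdot 1$ and $a = 1\cdot a$ are both factorizations of $a$ into a pair of elements of sizes $\mathbf{0}$, $\mathbf{0}$; the uniqueness clause of the UFP then forces $a = 1$. Part~(2) is immediate from this: if $ab = 1$ then $d(a) + d(b) = \mathbf{0}$ in $\mathbb{N}^{k}$, so $d(a) = \mathbf{0}$, and hence $a = 1$ by~(1), showing the only unit is the identity.

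For part~(3), the key point I would stress is that cancellativity is precisely the uniqueness half of the UFP. Given $ab = ac$, comparing sizes yields $d(a) + d(b) = d(a) + d(c)$, hence $d(b) = d(c)$ since $\mathbb{N}^{k}$ is cancellative. Then $x := ab = ac$ has size $d(a) + d(b)$, and $(a,b)$ and $(a,c)$ are both factorizations of $x$ into elements of sizes $d(a)$ and $d(b)$; uniqueness gives $b = c$. The proof of right cancellation is the mirror image, first deducing $d(b) = d(c)$ from $ba = ca$ and then applying uniqueness of the factorization of $ba$ into sizes $d(b)$, $d(a)$.

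For part~(4) I would use that in $\mathbb{N}^{k}$ the only ways to write $\mathbf{e}_{i}$ as a sum of two elements are $\mathbf{0} + \mathbf{e}_{i}$ and $\mathbf{e}_{i} + \mathbf{0}$. If $d(a) = \mathbf{e}_{i}$ and $a = bc$, then $d(b) + d(c) = \mathbf{e}_{i}$, so one of $d(b), d(c)$ is $\mathbf{0}$ and hence (by~(1)) one of $b, c$ equals $1$, which is invertible; so $a$ is an atom. Conversely, if $a$ is an atom then $a \neq 1$, so $d(a) \neq \mathbf{0}$ by~(1); choosing $i$ with $d(a)_{i} \geq 1$ and applying the UFP to the decomposition $d(a) = \mathbf{e}_{i} + (d(a) - \mathbf{e}_{i})$ gives $a = a_{1}a_{2}$ with $d(a_{1}) = \mathbf{e}_{i}$. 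Since $S$ is conical by~(2), the only invertible element is $1$, and $a_{1} \neq 1$ because $d(a_{1}) \neq \mathbf{0}$; as $a$ is an atom this forces $a_{2} = 1$, i.e. $d(a) = \mathbf{e}_{i}$.

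Part~(5) I would prove by induction on the natural number $\sum_{i=1}^{k} d(a)_{i}$, which is positive exactly when $a \neq 1$: if it equals $1$ then $d(a) = \mathbf{e}_{i}$ for some $i$ and $a$ is an atom by~(4); if it is at least $2$, I would split off an atom as in~(4), writing $a = a_{1}a_{2}$ with $d(a_{1}) = \mathbf{e}_{i}$ and $d(a_{2}) = d(a) - \mathbf{e}_{i} \neq \mathbf{0}$, note that $\sum_{i} d(a_{2})_{i}$ is strictly smaller, and apply the inductive hypothesis to $a_{2}$. There is no genuine obstacle here: every assertion is a one-line consequence of the UFP or of arithmetic in $\mathbb{N}^{k}$. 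The only point I would flag is that the stated definition of atom does not literally exclude the identity, so in~(4) ``atom'' must be read as ``non-invertible element with the indicated factorization property'', which is the intended reading and the one that makes the characterization correct.
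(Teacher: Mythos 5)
Your proof is correct, and the paper itself gives no argument here (it states only that these facts ``are all well-known but can easily be proved from the definitions''), so your derivation from the UFP and arithmetic in $\mathbb{N}^{k}$ is exactly the intended one. Your closing remark about the definition of atom not literally excluding the identity is a fair and minor point of hygiene, correctly resolved.
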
  

\begin{remark}\label{rem:free}
{\em Countable free monoids are precisely the $1$-monoids. This entitles us to view $k$-monoids as generalizations of free monoids which is
our perspective throughout this paper.}
\end{remark}

Let $S$ be a $k$-monoid and $T$ an $l$-monoid.
We denote their respective homomorphisms by 
$d_{S} \colon S \rightarrow \mathbb{N}^{k}$
and
$d_{T} \colon T \rightarrow \mathbb{N}^{l}$.
There is a natural isomorphism $\mathbb{N}^{k} \times \mathbb{N}^{l} \cong \mathbb{N}^{k+l}$
which takes $(\mathbf{m},\mathbf{n})$ to the element $\mathbf{m} \cdot \mathbf{n}$
where  $\mathbf{m} \cdot \mathbf{n}$ is the unique element of $\mathbb{N}^{k+l}$
whose components are $(m_{1}, \ldots, m_{k}, n_{1}, \ldots, n_{l})$.
Define $d \colon S \times T \rightarrow \mathbb{N}^{k+l}$ by
$d(s,t) = d_{S}(s) \cdot d_{T}(t)$.
In this way, it is easy to see that $S \times T$ is a $k+l$-monoid.
We have proved the following; see also \cite[Proposition 1.8]{KP}.

\begin{lemma}\label{lem:prod} 
If $S$ is a $k$-monoid and $T$ is an $l$-monoid then $S \times T$ is a $k+l$-monoid.
\end{lemma}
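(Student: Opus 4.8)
The plan is to verify directly that the map $d \colon S \times T \rightarrow \mathbb{N}^{k+l}$ defined by $d(s,t) = d_{S}(s) \cdot d_{T}(t)$ is a monoid homomorphism satisfying the unique factorization property, since $S \times T$ is obviously a countable monoid under componentwise multiplication. That $d$ is a homomorphism is immediate: $d((s,t)(s',t')) = d(ss',tt') = d_{S}(ss') \cdot d_{T}(tt') = (d_{S}(s) + d_{S}(s')) \cdot (d_{T}(t) + d_{T}(t'))$, and under the identification $\mathbb{N}^{k} \times \mathbb{N}^{l} \cong \mathbb{N}^{k+l}$ the concatenation operation $\cdot$ is additive in each slot, so this equals $d(s,t) + d(s',t')$. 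The identity $(1,1)$ maps to $\mathbf{0} \cdot \mathbf{0} = \mathbf{0}$, as required.

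The substantive step is the UFP. Suppose $d(s,t) = \mathbf{p}$ and $\mathbf{p} = \mathbf{a} + \mathbf{b}$ in $\mathbb{N}^{k+l}$. Writing $\mathbf{a} = \mathbf{a}' \cdot \mathbf{a}''$ and $\mathbf{b} = \mathbf{b}' \cdot \mathbf{b}''$ with $\mathbf{a}', \mathbf{b}' \in \mathbb{N}^{k}$ and $\mathbf{a}'', \mathbf{b}'' \in \mathbb{N}^{l}$, the decomposition $\mathbf{p} = \mathbf{a} + \mathbf{b}$ is equivalent, under the isomorphism, to the pair of decompositions $d_{S}(s) = \mathbf{a}' + \mathbf{b}'$ in $\mathbb{N}^{k}$ and $d_{T}(t) = \mathbf{a}'' + \mathbf{b}''$ in $\mathbb{N}^{l}$. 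Now apply the UFP in $S$ to get unique $s_{1}, s_{2}$ with $s = s_{1}s_{2}$, $d_{S}(s_{1}) = \mathbf{a}'$, $d_{S}(s_{2}) = \mathbf{b}'$, and apply the UFP in $T$ to get unique $t_{1}, t_{2}$ with $t = t_{1}t_{2}$, $d_{T}(t_{1}) = \mathbf{a}''$, $d_{T}(t_{2}) = \mathbf{b}''$. Then $(s_{1},t_{1})(s_{2},t_{2}) = (s,t)$ with $d(s_{1},t_{1}) = \mathbf{a}$ and $d(s_{2},t_{2}) = \mathbf{b}$, giving existence. For uniqueness, if $(s,t) = (u_{1},v_{1})(u_{2},v_{2})$ with $d(u_{1},v_{1}) = \mathbf{a}$ and $d(u_{2},v_{2}) = \mathbf{b}$, then $s = u_{1}u_{2}$ with $d_{S}(u_{1}) = \mathbf{a}'$, $d_{S}(u_{2}) = \mathbf{b}'$, so uniqueness in $S$ forces $u_{1} = s_{1}$, $u_{2} = s_{2}$; similarly $v_{1} = t_{1}$, $v_{2} = t_{2}$ by uniqueness in $T$. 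Hence the factorization in $S \times T$ is unique.

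The only point requiring a little care — and the place I would be most careful about bookkeeping rather than genuine difficulty — is the claim that every decomposition of an element of $\mathbb{N}^{k+l}$ into a sum corresponds, under the fixed isomorphism $\mathbb{N}^{k} \times \mathbb{N}^{l} \cong \mathbb{N}^{k+l}$, to a \emph{pair} of independent decompositions in $\mathbb{N}^{k}$ and $\mathbb{N}^{l}$ respectively; this is just the observation that addition in a product of monoids is componentwise, so there is no genuine obstacle here, only the need to state the correspondence cleanly. Once that is in place the argument is purely formal. I would present the proof in essentially the compressed form above, perhaps noting explicitly that the result also follows from \cite[Proposition 1.8]{KP} for higher rank graphs, of which this is the one-vertex case.
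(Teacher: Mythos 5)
Your proof is correct and follows exactly the same route as the paper: the paper defines the same map $d(s,t) = d_{S}(s) \cdot d_{T}(t)$ via the isomorphism $\mathbb{N}^{k} \times \mathbb{N}^{l} \cong \mathbb{N}^{k+l}$ and declares the verification "easy to see," while you simply carry out that verification in full. The details you supply (componentwise reduction of the UFP to the factors) are exactly what the paper leaves implicit, so there is nothing to correct.
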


\begin{example}\label{ex:bbc}
{\em The direct product of $k$ free monoids is therefore a $k$-monoid by Lemma~\ref{lem:prod}.
However, the product of free monoids is not free.
For example, the free monoid on one generator is $\mathbb{N}$.
But $\mathbb{N} \times \mathbb{N}$ is not isomorphic to the free monoid on one generator but is abelian.
Thus it cannot be free}.
\end{example}

If $X \subseteq S$, where $S$ is a monoid, we denote by $X^{\ast}$ the submonoid of $S$ generated by $X$.
The proof of the following lemma is immediate by the UFP.

\begin{lemma}\label{lem:relations} Let $S$ be a $k$-monoid.
\begin{enumerate}
\item $X_{i}X_{j} \subseteq X_{j}X_{i}$ for all $i,j$.
\item Each submonoid $X_{i}^{\ast}$ is free.
\item Each element of $S$ can be written uniquely as a product of the form $x_{1} \ldots x_{k}$
where $x_{i} \in X_{i}^{\ast}$.
\end{enumerate}
\end{lemma}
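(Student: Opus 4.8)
The plan is to derive all three parts directly from the unique factorization property (UFP), using the facts already collected in Lemma~\ref{lem:coffee}. For part (1), take $a \in X_{i}$ and $b \in X_{j}$; the case $i = j$ is trivial, so assume $i \neq j$. Then $d(ab) = \mathbf{e}_{i} + \mathbf{e}_{j} = \mathbf{e}_{j} + \mathbf{e}_{i}$, so applying the UFP with $\mathbf{m} = \mathbf{e}_{j}$ and $\mathbf{n} = \mathbf{e}_{i}$ produces a factorization $ab = x_{1}x_{2}$ with $d(x_{1}) = \mathbf{e}_{j}$ and $d(x_{2}) = \mathbf{e}_{i}$, i.e.\ $x_{1} \in X_{j}$ and $x_{2} \in X_{i}$; hence $ab \in X_{j}X_{i}$. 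Reading this symmetrically in $i$ and $j$ in fact gives the equality $X_{i}X_{j} = X_{j}X_{i}$ for all $i,j$, but only the one-sided inclusion of the statement is needed below.

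For part (2), first observe that any expression $w = c_{1}\cdots c_{m}$ of an element of $X_{i}^{\ast}$ with all $c_{p} \in X_{i}$ has $d(w) = m\mathbf{e}_{i}$, so the number of letters $m$ is determined by $w$. To see that the canonical surjection from the free monoid on the set $X_{i}$ onto $X_{i}^{\ast}$ is injective, suppose $a_{1}\cdots a_{m} = b_{1}\cdots b_{n}$ with all $a_{p}, b_{q} \in X_{i}$; applying $d$ forces $m = n$, and when $m \geq 1$ the UFP applied to the equation $d(a_{1}\cdots a_{m}) = \mathbf{e}_{i} + (m-1)\mathbf{e}_{i}$ shows the size-$\mathbf{e}_{i}$ first factor is unique, so $a_{1} = b_{1}$; cancellativity (Lemma~\ref{lem:coffee}(3)) then permits induction on $m$. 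Thus $X_{i}^{\ast}$ is free on $X_{i}$.

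For part (3), existence is obtained by a sorting argument. By Lemma~\ref{lem:coffee}(4),(5), any non-identity $s$ is a product of atoms $a_{1}\cdots a_{N}$, each $a_{t}$ lying in some $X_{i_{t}}$. Whenever the index sequence $(i_{1},\ldots,i_{N})$ has an adjacent inversion $i_{t} > i_{t+1}$, part (1) lets us rewrite $a_{t}a_{t+1} \in X_{i_{t}}X_{i_{t+1}} \subseteq X_{i_{t+1}}X_{i_{t}}$ as a product $a' a''$ with $a' \in X_{i_{t+1}}$, $a'' \in X_{i_{t}}$, which transposes the two entries of the index sequence and strictly decreases the number of inversions. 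Iterating, we reach an expression whose atoms are grouped by index in increasing order; collecting each block (and inserting $x_{i} = 1$ when index $i$ is absent) gives $s = x_{1}\cdots x_{k}$ with $x_{i} \in X_{i}^{\ast}$. For uniqueness, suppose $s = x_{1}\cdots x_{k} = y_{1}\cdots y_{k}$ with $x_{i}, y_{i} \in X_{i}^{\ast}$; writing $d(x_{i}) = m_{i}\mathbf{e}_{i}$ and $d(y_{i}) = n_{i}\mathbf{e}_{i}$, the identity $d(s) = (m_{1},\ldots,m_{k}) = (n_{1},\ldots,n_{k})$ yields $m_{i} = n_{i}$ for all $i$. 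Then $s = x_{1}\cdot(x_{2}\cdots x_{k}) = y_{1}\cdot(y_{2}\cdots y_{k})$ are two factorizations of $s$ into factors of the prescribed sizes $(m_{1},0,\ldots,0)$ and $(0,m_{2},\ldots,m_{k})$, so the UFP gives $x_{1} = y_{1}$; cancelling and inducting on $k$ completes the proof.

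There is no genuine obstacle here; the only points requiring a little care are that part (1), read symmetrically, supplies the commutation relations $X_{i}X_{j} = X_{j}X_{i}$ that make the bubble-sort in part (3) terminate, and that every "uniqueness'' assertion reduces, via the UFP, to peeling off a single factor of a prescribed size.
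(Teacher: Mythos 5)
Your proof is correct, and it is essentially the argument the paper intends: the paper gives no proof at all, stating only that the lemma is ``immediate by the UFP,'' and your write-up is a careful elaboration of exactly that---each part (the commutation $X_{i}X_{j}\subseteq X_{j}X_{i}$, freeness of $X_{i}^{\ast}$, and the sorted normal form with its uniqueness) is correctly reduced to peeling off factors of prescribed size via the UFP, together with the basic facts of Lemma~\ref{lem:coffee}. No gaps.
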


We now develop part (1) of Lemma~\ref{lem:relations}.
Let $S$ be a $k$-monoid with $k$ alphabets $(X_{1}, \ldots, X_{k})$.
Let $x_{i_{1}} \in X_{i}$ and $x_{j_{1}} \in X_{j}$ where $i \neq j$.
Then by the UFP, we may write $x_{i_{1}}x_{j_{1}} = x_{j_{1}'}x_{i_{1}'}$ 
where $x_{i_{1}'} \in X_{i}$ and $x_{j_{1}'} \in X_{j}$ are uniquely defined.
We have therefore defined a function $\theta_{ij} \colon X_{i} \times X_{j} \rightarrow X_{i} \times X_{j}$
given by $\theta (i_{1},j_{1}) = (i_{1}',j_{1}')$.
By the UFP, this function is a bijection.
For the case $k = 2$, any bijection $X_{1} \times X_{2} \rightarrow X_{1} \times X_{2}$
determines a unique $2$-monoid and conversely \cite{Power2007}.
For $k > 2$, we need to add an extra condition
which arises by associativity as follows \cite{FS2002, DY2009b}.
Let $x_{i_{1}} \in X_{i}$, $x_{j_{1}} \in X_{j}$ and $x_{l_{1}} \in X_{l}$
where $i,j,l$ are distinct.
We now make two calculations.
First,
$$(x_{i_{1}}x_{j_{1}})x_{l_{1}} = x_{j_{1}'}(x_{i_{1}'}x_{l_{1}}) = (x_{j_{1}'}x_{l_{1}'})x_{i_{1}''} = x_{l_{1}''}x_{j_{1}''}x_{i_{1}''}.$$
Second,
$$x_{i_{1}}(x_{j_{1}}x_{l_{1}}) = (x_{i_{1}}x_{l_{2}})x_{j_{2}} = x_{l_{3}}(x_{i_{2}}x_{j_{2}}) = x_{l_{3}}x_{j_{3}}x_{i_{3}}.$$
By associativity (since we are working in a monoid), we therefore have that
$x_{l_{1}''} = x_{l_{3}}$, $x_{j_{1}''} = x_{j_{3}}$ and $x_{x_{1}''} = x_{i_{3}}$.
This leads to an important connection between $k$-monoids (where $k \geq 3$) and set-theoretical solutions to the
Yang-Baxter equation \cite{KP, DY2016}.

The following describes the `degenerate' $k$-monoids.

\begin{proposition}\label{prop:abelian} Let $S$ be a $k$-monoid with associated $k$-alphabets
$(X_{1}, \ldots,X_{k})$.
Then $S \cong \mathbb{N}^{k}$ if and only if $|X_{1}| = |X_{2}| = \ldots = |X_{k}| = 1$.
\end{proposition}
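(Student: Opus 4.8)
The plan is to prove the equivalence by establishing each direction separately, with the substantive content lying in the ``only if'' direction being trivial and the ``if'' direction requiring a careful induction.

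\medskip

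\noindent\textbf{The easy direction.} First I would dispose of the forward implication: if $S \cong \mathbb{N}^{k}$, then under this isomorphism the homomorphism $d$ must be the identity (up to automorphism of $\mathbb{N}^{k}$, and since $\mathbb{N}^k$ has no nontrivial automorphisms as a monoid with the given grading, literally the identity), so $X_{i} = d^{-1}(\mathbf{e}_{i})$ corresponds to $\{\mathbf{e}_{i}\}$, a singleton. More carefully, one observes that in $\mathbb{N}^k$ the only element of size $\mathbf{e}_i$ is $\mathbf{e}_i$ itself, so $|X_i| = 1$ for each $i$.

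\medskip

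\noindent\textbf{The main direction.} For the converse, suppose each alphabet $X_{i} = \{x_{i}\}$ is a singleton. I would define a map $\varphi \colon \mathbb{N}^{k} \rightarrow S$ by $\varphi(\mathbf{m}) = x_{1}^{m_{1}} x_{2}^{m_{2}} \cdots x_{k}^{m_{k}}$ and show it is an isomorphism of monoids. That $\varphi$ is a bijection is essentially immediate from Lemma~\ref{lem:relations}(3): every element of $S$ has a unique expression as $u_{1} \cdots u_{k}$ with $u_{i} \in X_{i}^{\ast}$, and since $X_{i} = \{x_{i}\}$ each $X_{i}^{\ast} = \{x_{i}^{n} \colon n \geq 0\}$, so $u_{i} = x_{i}^{m_{i}}$ for a unique $m_{i}$; this gives a two-sided inverse to $\varphi$. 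The point requiring a small argument is that $\varphi$ is a \emph{homomorphism}, i.e. that $\varphi(\mathbf{m})\varphi(\mathbf{n}) = \varphi(\mathbf{m}+\mathbf{n})$, which amounts to showing
$$ (x_{1}^{m_{1}} \cdots x_{k}^{m_{k}})(x_{1}^{n_{1}} \cdots x_{k}^{n_{k}}) = x_{1}^{m_{1}+n_{1}} \cdots x_{k}^{m_{k}+n_{k}}. $$
The obstacle here is purely that the two products are interleaved in the wrong order; one must commute the block $x_{1}^{n_{1}} \cdots$ leftward past $x_{k}^{m_{k}} \cdots x_{2}^{m_{2}}$.

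\medskip

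\noindent\textbf{Handling the commutation.} This is where I expect the real (if modest) work to be, and I would isolate it as the key step. From the discussion following Lemma~\ref{lem:relations}, the bijection $\theta_{ij} \colon X_{i} \times X_{j} \rightarrow X_{i} \times X_{j}$ encoding $x_{i}x_{j} = x_{j}' x_{i}'$ must, when $X_{i}$ and $X_{j}$ are both singletons, be the identity on a one-point set, so $x_{i}x_{j} = x_{j}x_{i}$ for all $i \neq j$ (taking $i<j$ or $i>j$ as needed). Thus $S$ is generated by the pairwise-commuting elements $x_{1}, \ldots, x_{k}$, and a routine induction on $\sum m_i + \sum n_i$ (or simply repeated application of $x_j x_i = x_i x_j$) rearranges any word in the $x_i$ into the canonical sorted form $x_{1}^{a_{1}} \cdots x_{k}^{a_{k}}$. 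This establishes the homomorphism property and hence that $\varphi$ is an isomorphism, completing the proof. The only subtlety worth a sentence is confirming that the singleton hypothesis genuinely forces $\theta_{ij}$ to be trivial --- but a bijection of a one-element set (the domain $X_i \times X_j$ when both are singletons) is necessarily the identity, so this is automatic.
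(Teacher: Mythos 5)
Your proof is correct, but it takes a different route from the paper's for the substantive direction. The paper's proof is a one-liner: since $d\colon S\to\mathbb{N}^k$ is already a monoid homomorphism and is assumed surjective, one only needs injectivity, which follows from the unique normal form $x_1\cdots x_k$ with $x_i\in X_i^{\ast}$ (Lemma~\ref{lem:relations}(3)) once each $X_i^{\ast}\cong\mathbb{N}$ --- the element is determined by its exponent vector, i.e.\ by $d$. So the paper never has to verify a homomorphism property at all; $d$ itself is the isomorphism. You instead construct the inverse map $\varphi\colon\mathbb{N}^k\to S$ and therefore must check that $\varphi$ is a homomorphism, which forces you to prove that the generators commute; your observation that $\theta_{ij}$ is a bijection of a one-point set and hence the identity does this correctly, and in effect you are specializing the argument of Proposition~\ref{prop:brexit} (commuting alphabets imply $S\cong X_1^{\ast}\times\cdots\times X_k^{\ast}$) to the singleton case. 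Both arguments are sound; the paper's buys brevity by exploiting the homomorphism it already has, while yours makes the commutativity of $S$ explicit and exhibits the isomorphism concretely. Your forward direction is slightly loose in asserting that the transported grading on $\mathbb{N}^k$ is the standard one; a cleaner line is that atoms are preserved by isomorphism, $\mathbb{N}^k$ has exactly $k$ atoms, and the atoms of $S$ are the disjoint union of the nonempty sets $X_i$, forcing each $|X_i|=1$. This is a cosmetic point, not a gap.
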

\begin{proof} Only the `if-direction' needs proving.
Suppose that $|X_{1}| = |X_{2}| = \ldots = |X_{k}| = 1$.
Then $X_{i}^{\ast} \cong \mathbb{N}$.
The homomorphism $d \colon S \rightarrow \mathbb{N}^{k}$, which we always assume surjective,
is injective by the UFP.
\end{proof}

By Remark~\ref{rem:free}, countable free monoids are $1$-monoids and by Lemma~\ref{lem:prod}, the product of
$k$ free monoids is a $k$-monoid and so $A_{n_{1}}^{\ast} \times \ldots \times A_{n_{k}}^{\ast}$
is a $k$-monoid.

\begin{proposition}\label{prop:brexit} The $k$-monoid $S$ is isomorphic to a finite direct product of free monoids if and only if
$xy = yx$ for all $x \in X_{i}$, $y \in X_{j}$ and $i \neq j$ where $1 \leq i,j \leq k$.
\end{proposition}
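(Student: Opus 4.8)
The ``only if'' direction is immediate: in a finite direct product $A_{n_1}^{\ast} \times \cdots \times A_{n_k}^{\ast}$, an element of the $i$-th alphabet is a generator sitting in the $i$-th coordinate (and the identity elsewhere), so two such elements in distinct coordinates manifestly commute. The substance is the ``if'' direction, so suppose $xy = yx$ for all $x \in X_i$, $y \in X_j$ with $i \neq j$. The plan is to show that under this hypothesis the bijections $\theta_{ij} \colon X_i \times X_j \to X_i \times X_j$ constructed in the text (from $x_{i_1} x_{j_1} = x_{j_1'} x_{i_1'}$) are all the identity maps, and then to use part~(3) of Lemma~\ref{lem:relations} to build an explicit isomorphism $S \cong X_1^{\ast} \times \cdots \times X_k^{\ast}$, each $X_i^{\ast}$ being free by part~(2) of that lemma.

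First I would observe that the commuting hypothesis says exactly that for $x_{i_1} \in X_i$ and $x_{j_1} \in X_j$ with $i \neq j$ we have $x_{i_1} x_{j_1} = x_{j_1} x_{i_1}$; comparing this with the defining equation $x_{i_1} x_{j_1} = x_{j_1'} x_{i_1'}$ and invoking the UFP-uniqueness of the factorization of an element of size $\mathbf{e}_i + \mathbf{e}_j$ into a factor of size $\mathbf{e}_j$ times a factor of size $\mathbf{e}_i$, we get $x_{j_1'} = x_{j_1}$ and $x_{i_1'} = x_{i_1}$, i.e.\ $\theta_{ij} = \mathrm{id}$. Next I would bootstrap this from atoms to arbitrary elements of the free submonoids: by part~(1) of Lemma~\ref{lem:relations} combined with the atom-commuting just established, one shows by a straightforward induction on sizes that $uv = vu$ whenever $u \in X_i^{\ast}$ and $v \in X_j^{\ast}$ with $i \neq j$. (Each step rewrites a product of an atom from $X_i$ past an atom from $X_j$ using $\theta_{ij} = \mathrm{id}$, and associativity handles the assembly; the triangle/Yang–Baxter consistency condition quoted in the text is automatically satisfied since all the $\theta$'s are identities.)

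Then I would define $\Phi \colon X_1^{\ast} \times \cdots \times X_k^{\ast} \to S$ by $\Phi(x_1, \ldots, x_k) = x_1 \cdots x_k$. Using the commuting relations from the previous paragraph, $\Phi$ is a monoid homomorphism: a product of two tuples, when multiplied out in $S$, can have all the $X_2^{\ast}$-letters of the first tuple pushed past the $X_1^{\ast}$-letters of the second, and so on, reassembling into the image of the coordinatewise product. By part~(3) of Lemma~\ref{lem:relations}, $\Phi$ is a bijection — surjectivity and injectivity are precisely the existence and uniqueness of the normal form $x_1 \cdots x_k$. Hence $\Phi$ is an isomorphism, and since each $X_i^{\ast}$ is free (Lemma~\ref{lem:relations}(2)) on the finite alphabet $X_i$, this exhibits $S$ as a finite direct product of free monoids.

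The main obstacle I anticipate is the careful bookkeeping in the induction of the second paragraph: one must be scrupulous that pushing one alphabet past another really does terminate in the fully-sorted normal form and that the various $\theta_{ij} = \mathrm{id}$ identities can be applied in whatever order the rewriting demands without conflict. This is exactly where the associativity/Yang–Baxter consistency condition discussed before the proposition would normally bite, but because every $\theta_{ij}$ is the identity the consistency condition is trivially met, so the rewriting is confluent and the induction goes through cleanly. Everything else is an unwinding of definitions and of Lemma~\ref{lem:relations}.
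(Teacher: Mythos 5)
Your proposal is correct and follows essentially the same route as the paper: both use the unique normal form $s = x_{1}\cdots x_{k}$ from part~(3) of Lemma~\ref{lem:relations} to obtain the underlying bijection with $X_{1}^{\ast}\times\cdots\times X_{k}^{\ast}$, and then verify the homomorphism property by commuting the letters of one factor past the others. The only difference is that you spell out the routine induction upgrading the letter-level hypothesis $xy=yx$ to commutation of arbitrary words in $X_{i}^{\ast}$ and $X_{j}^{\ast}$, a step the paper uses implicitly when it ``moves $y_{1}$'' across $x_{2}\cdots x_{k}$.
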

\begin{proof} Only the `if-direction' needs proving.
We show that $S \cong X_{1}^{\ast} \times \ldots \times X_{k}^{\ast}$.
Let $s \in S$.
Then it can be writtten uniquely as a product $s = x_{1} \ldots x_{k}$ where $x_{i} \in X_{i}^{\ast}$.
This enables us to define a bijection between $S$ and $X_{1}^{\ast} \times \ldots \times X_{k}^{\ast}$.
It only remains to prove that it is a homomorphism.
Let $t = y_{1} \ldots y_{k}$ where $y_{i} \in X_{i}^{\ast}$.
Then $st = (x_{1} \ldots x_{k})(y_{1} \dots y_{k})$.
We now use commutativity to move $y_{1}$.
We get $st = ((x_{1}y_{1})x_{2} \ldots x_{k})(y_{2} \ldots y_{k})$.
We now repeat this process to get
$st = (x_{1}y_{1})\ldots (x_{k}y_{k})$.
This now proves that our bijection is in fact a homomorphism and so an isomorphism.
\end{proof}

The following is the analogue for $k$-monoids of what is termed `Levi's theorem' for free monoids \cite[Corollary 5.1.6]{Lallement}.
It will prove useful.

\begin{lemma}\label{lem:levi} Let $S$ be a $k$-monoid.
Let $xy = uv$, where $x,y,u,v \in S$ and $d(x) \geq d(u)$.
Then there exists $t \in S$ such that such that $x = ut$ and $v = ty$.
In particular, if $d(x) = d(u)$ then $x = u$.
\end{lemma}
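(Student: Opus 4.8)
The plan is to reduce the statement to repeated applications of the unique factorization property (UFP), using the homomorphism $d$ to keep track of sizes. Set $\mathbf{m} = d(u)$, so that $\mathbf{0} \leq \mathbf{m} \leq d(x)$ by hypothesis. First I would factor $x$ using the UFP as $x = x(\mathbf{0},\mathbf{m})\,x(\mathbf{m},d(x))$, where $d(x(\mathbf{0},\mathbf{m})) = \mathbf{m} = d(u)$. The candidate for $t$ is then $t = x(\mathbf{m},d(x))$. It remains to show $x(\mathbf{0},\mathbf{m}) = u$ and $v = ty$.

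The key step is to compare two factorizations of the single element $w := xy = uv$ at size $\mathbf{m}$. On the one hand, $w = uv$ exhibits $w$ as a product of an element of size $\mathbf{m}$ (namely $u$) and an element of size $d(w) - \mathbf{m}$ (namely $v$). On the other hand, $w = xy = x(\mathbf{0},\mathbf{m})\,\bigl(x(\mathbf{m},d(x))\,y\bigr)$ exhibits $w$ as a product of an element of size $\mathbf{m}$ (namely $x(\mathbf{0},\mathbf{m})$) and an element of size $d(w) - \mathbf{m}$ (namely $x(\mathbf{m},d(x))\,y$, whose size is $(d(x) - \mathbf{m}) + d(y) = d(w) - \mathbf{m}$). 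By the uniqueness clause of the UFP applied to $w$ with the splitting $d(w) = \mathbf{m} + (d(w)-\mathbf{m})$, we conclude $u = x(\mathbf{0},\mathbf{m})$ and $v = x(\mathbf{m},d(x))\,y = ty$. Together with $x = x(\mathbf{0},\mathbf{m})\,x(\mathbf{m},d(x)) = ut$, this gives exactly the required conclusion. The final sentence, the case $d(x) = d(u)$, is immediate: then $\mathbf{m} = d(x)$, so $t = x(\mathbf{m},d(x)) = x(d(x),d(x))$ has size $\mathbf{0}$ and hence $t = 1$ by Lemma~\ref{lem:coffee}(1), giving $x = u$.

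I do not anticipate a serious obstacle here; the only point requiring a little care is making sure the size bookkeeping is correct — in particular that $\mathbf{m} \leq d(w)$ so that the UFP can legitimately be invoked at level $\mathbf{m}$, which holds since $\mathbf{m} = d(u) \leq d(u) + d(v) = d(w)$ (all components of $d(v)$ being non-negative). Everything else is a direct invocation of existence-and-uniqueness in the UFP, exactly mirroring the classical Levi lemma for free monoids but with the total-length grading replaced by the $\mathbb{N}^k$-grading $d$.
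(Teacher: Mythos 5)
Your proof is correct and follows essentially the same route as the paper's: factor $x$ at size $d(u)$ via the UFP and then invoke the uniqueness clause on the two factorizations of $xy = uv$ at that level. The only (cosmetic) difference is that the paper first isolates the equal-size case and then reduces the general case to it, whereas you apply the uniqueness clause in one step.
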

\begin{proof} First, we deal with the case where $d(x) = d(u)$.
Put $z = xy = uv$.
Since $d(x) = d(u)$, we must have that $d(y) = d(v)$.
It now follows by the UFP, that $x = u$ and $y = v$.
Now we suppose that $d(x) > d(u)$.
Thus there exists $\mathbf{r} \in \mathbb{N}^{k}$ such that
$d(x) = d(u) + \mathbf{r}$.
By the UFP, there are unique elements $u'$ and $t$ such that $d(u') = d(u)$, $d(t) = \mathbf{r}$ and $x = u't$.
It follows that $u'(ty) = uv$.
But $d(u') = d(u)$.
By the first part of the proof, we deduce that $u' = u$ and so $v = ty$
from which we get that $x = ut$. 
\end{proof}

We now introduce concepts that will play a major role in our construction of a group from a $k$-monoid.
Let $S$ be any monoid and let $x,y \in S$.
If $xS \cap yS \neq \varnothing$, we say that $x$ and $y$ are {\em comparable}
whereas if  $xS \cap yS = \varnothing$, we say that $x$ and $y$ are {\em incomparable}.
A finite, non-empty subset $X$ of $S$ is said to be a {\em generalized prefix code} if each distinct pair of elements of $X$ is incomparable.
A generalized prefix code $X$ is said to be {\em maximal} if for each $y \in S$ there exists $x \in X$ such that
$yS \cap xS \neq \varnothing$.

\begin{example}{\em In a free monoid, generalized prefix codes sets are precisely the {\em prefix codes} and maximal generalized prefix codes are precisely the
{\em maximal prefix codes}.}
\end{example}

\begin{remark}{\em If $C \neq \{1\}$ is a maximal generalized prefix code then $1 \notin C$.
To see why, suppose that $1 \in C$ and $c \in C$ where $c \neq 1$.
Then $c1 = 1c$ and so $c$ and $1$ are comparable.}
\end{remark}

Let $S$ be a $k$-monoid, let $\mathbf{m} \in \mathbb{N}^{k}$ and
let $C_{\mathbf{m}}$ be the set of all elements $x$ of $S$ such that $d(x) = \mathbf{m}$.

\begin{lemma}\label{lem:homogeneous} Let $S$ be a $k$-monoid and let $\mathbf{m} \in \mathbb{N}^{k}$. 
Then $C_{\mathbf{m}}$ is a maximal generalized prefix code.
\end{lemma}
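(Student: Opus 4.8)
The plan is to verify the two defining properties of a maximal generalized prefix code in turn: that $C_{\mathbf{m}}$ is a generalized prefix code, and that it is maximal. Non-emptiness of $C_{\mathbf{m}}$ is immediate since we assume $d$ surjective, so some element has size $\mathbf{m}$; finiteness follows from Lemma~\ref{lem:relations}, since every element of size $\mathbf{m}$ is a unique product $x_{1} \ldots x_{k}$ with $x_{i} \in X_{i}^{\ast}$ of length $m_{i}$, and there are only finitely many such once each alphabet $X_{i}$ is finite. These points are routine; the substance lies in the incomparability and maximality claims.

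For pairwise incomparability I would argue as follows. Suppose $x, y \in C_{\mathbf{m}}$ with $xS \cap yS \neq \varnothing$, and choose $a, b \in S$ with $xa = yb$. Since $d(x) = \mathbf{m} = d(y)$, we have $d(x) = d(y)$, so the final clause of Levi's Lemma~\ref{lem:levi} (applied to the factorization $xa = yb$) forces $x = y$. Hence any two distinct elements of $C_{\mathbf{m}}$ are incomparable, so $C_{\mathbf{m}}$ is a generalized prefix code.

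For maximality I would take an arbitrary $y \in S$ and produce an element of $C_{\mathbf{m}}$ comparable with it. By Lemma~\ref{lem:bigger} there exists $v \in S$ with $d(yv) \geq \mathbf{m}$. Using the factorization notation, write $yv = (yv)(\mathbf{0},\mathbf{m}) \cdot (yv)(\mathbf{m},d(yv))$ and set $x = (yv)(\mathbf{0},\mathbf{m})$, so that $d(x) = \mathbf{m}$ and hence $x \in C_{\mathbf{m}}$. Then $yv \in xS$ and $yv \in yS$, so $yS \cap xS \neq \varnothing$. This shows $C_{\mathbf{m}}$ is maximal. I do not anticipate any real obstacle: the only mild subtlety is that the finiteness of $C_{\mathbf{m}}$ is exactly where finiteness of the alphabets is used, and the incomparability statement is a one-line consequence of Levi's Lemma; everything else is bookkeeping with the unique factorization property.
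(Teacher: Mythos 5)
Your proof is correct and follows essentially the same route as the paper: incomparability via Levi's Lemma~\ref{lem:levi} applied to $xa=yb$ with $d(x)=d(y)=\mathbf{m}$, and maximality by enlarging an arbitrary element to size at least $\mathbf{m}$ (Lemma~\ref{lem:bigger}) and extracting the size-$\mathbf{m}$ prefix via the UFP. Your added remarks on non-emptiness and finiteness of $C_{\mathbf{m}}$ are a reasonable supplement the paper leaves implicit, but the substance of the argument is identical.
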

\begin{proof} Let $x,y \in C_{\mathbf{m}}$ and suppose that $xu = yv$ for some $u,v \in S$.
Then $d(x) = d(y) = \mathbf{m}$, by assumption.
It follows by Lemma~\ref{lem:levi} that $x = y$.
Thus the elements of $C_{\mathbf{m}}$ are incomparable.
Let $u \in S$.
Then there is an element $v \in S$ such that $d(uv) \geq \mathbf{m}$ (since we always assume that $d$ is surjective).
By the UFP, there is an element $x \in C_{\mathbf{m}}$ and $y \in S$ such that $uv = xy$.
Thus $u$ is comparable with some element of $C_{\mathbf{m}}$.
\end{proof}

We can actually strengthen the above result.

\begin{lemma}\label{lem:redwine} Let $S$ be a $k$-monoid.
Let $C$ be a maximal generalized prefix code in $S$ in which every element has the same size $\mathbf{m}$.
Then, in fact, $C = C_{\mathbf{m}}$.
\end{lemma}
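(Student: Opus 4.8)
The plan is to show that the hypothesis forces every element of size $\mathbf{m}$ to lie in $C$, i.e.\ $C_{\mathbf{m}} \subseteq C$, which combined with the trivial inclusion $C \subseteq C_{\mathbf{m}}$ (true by assumption on the sizes) gives equality. So let $x \in S$ with $d(x) = \mathbf{m}$; I want to show $x \in C$. Since $C$ is a maximal generalized prefix code, there is some $c \in C$ with $xS \cap cS \neq \varnothing$, say $xu = cv$ for suitable $u,v \in S$. Now $d(c) = \mathbf{m} = d(x)$ by the hypothesis on $C$, so Lemma~\ref{lem:levi} (the case $d(x) = d(u)$ there, applied with the roles $x \mapsto x$, $u \mapsto c$) yields $x = c$. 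Hence $x \in C$, as required.

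The one point to be careful about is the degenerate possibility $C = \{1\}$, for which $\mathbf{m} = \mathbf{0}$ and indeed $C_{\mathbf{0}} = \{1\}$ by Lemma~\ref{lem:coffee}(1), so the statement still holds; in the non-degenerate case the argument above applies verbatim. I do not expect any real obstacle here: the whole content is a direct application of Lemma~\ref{lem:levi}, and the only thing to watch is invoking maximality of $C$ to produce the comparable element $c$ and then reading off $d(c) = \mathbf{m}$ from the hypothesis before applying Levi. If one wanted to avoid even mentioning the trivial inclusion, one could instead argue set-theoretically that the map $C \hookrightarrow C_{\mathbf{m}}$ is a bijection, but the two-inclusion approach is cleanest.
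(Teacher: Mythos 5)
Your proof is correct and follows essentially the same route as the paper: both establish the trivial inclusion $C \subseteq C_{\mathbf{m}}$ and then use maximality of $C$ to produce a comparable element of $C$ of the same size, concluding $x = c$ by Lemma~\ref{lem:levi}. Your aside on the degenerate case $C = \{1\}$ is harmless but not needed, since the argument applies there too.
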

\begin{proof} Clearly, $C \subseteq C_{\mathbf{m}}$.
Let $y$ be any element of $S$ such that $d(y) = \mathbf{m}$.
Then, by assumption, there exists $x \in C$ and $u,v \in S$ such that
$yu = xv$.
But, by assumption, $d(y) = d(x)$.
It follows by Lemma~\ref{lem:levi} that $x = y$.
Thus $y \in C_{\mathbf{m}}$, as required.
\end{proof}

We develop more of the theory of maximal generalized prefix codes in Section~12.

We say that a monoid $S$ is {\em finitely aligned} if for each $x,y \in S$ either $x$ and $y$ are incomparable
or the right ideal $xS \cap yS$ is finitely generated.
This definition first appeared in \cite{RSY}.
We say that $S$ is {\em strongly finitely aligned}\footnote{This term is introduced in this paper; it is not current.} if for each $x,y \in S$ either $x$ and $y$ are incomparable
or the right ideal $xS \cap yS$ is finitely generated by incomparable elements; that is, it is generated by a generalized prefix code.
The result below is proved in the literature.

\begin{lemma}\label{lem:nelson} Let $d \colon S \rightarrow \mathbb{N}^{k}$ be a $k$-monoid
and let $cS \subseteq aS,bS$.
Then there exists an element $e \in S$ such that $cS \subseteq eS \subseteq aS \cap bS$ where $d(e) = d(a) \vee d(b)$.
\end{lemma}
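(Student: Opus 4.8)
The plan is to produce the element $e$ explicitly from the given chain $cS \subseteq aS, bS$ and then verify the two inclusions $cS \subseteq eS$ and $eS \subseteq aS \cap bS$ together with the size condition $d(e) = d(a) \vee d(b)$. First I would write $c = ax = by$ for suitable $x,y \in S$, which is possible since $c \in aS$ and $c \in bS$. Put $\mathbf{m} = d(a) \vee d(b)$, so that $\mathbf{m} \geq d(a)$ and $\mathbf{m} \geq d(b)$. Since $d(c) = d(a) + d(x) \geq d(a)$ and likewise $d(c) \geq d(b)$, we have $d(c) \geq \mathbf{m}$. Using the UFP applied to the decomposition $d(c) = \mathbf{m} + (d(c) - \mathbf{m})$, factor $c = c(\mathbf{0},\mathbf{m})\, c(\mathbf{m}, d(c))$ and set $e = c(\mathbf{0},\mathbf{m})$, so $d(e) = \mathbf{m} = d(a)\vee d(b)$ by construction. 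Immediately $cS \subseteq eS$, since $c = e \cdot c(\mathbf{m},d(c))$.

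The heart of the argument is showing $eS \subseteq aS$ and, symmetrically, $eS \subseteq bS$; it suffices to show $e \in aS$. Here I would invoke Lemma~\ref{lem:levi} (the analogue of Levi's theorem). From $c = ax = e \cdot c(\mathbf{m}, d(c))$ and the inequality $d(e) = \mathbf{m} \geq d(a)$, Lemma~\ref{lem:levi} yields an element $t \in S$ with $e = at$ and $x = t\, c(\mathbf{m},d(c))$. In particular $e = at \in aS$, as required. The same reasoning with $c = by$ and $d(e) \geq d(b)$ gives $e \in bS$. Hence $eS \subseteq aS$ and $eS \subseteq bS$, so $eS \subseteq aS \cap bS$, completing the chain $cS \subseteq eS \subseteq aS \cap bS$.

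I expect the only real subtlety to be the correct bookkeeping with the order on $\mathbb{N}^{k}$ — specifically, confirming $d(c) \geq \mathbf{m}$ (which is where we use that $\mathbf{m}$ is the \emph{join} rather than just an upper bound for one of $d(a), d(b)$) and making sure the hypotheses of Lemma~\ref{lem:levi} are literally met, namely that the element playing the role of ``$x$'' there has size at least that of ``$u$''. Since $d(e) = d(a)\vee d(b) \geq d(a)$, this hypothesis holds in the needed direction for both $a$ and $b$, and no case distinction on which of $d(a), d(b)$ is larger is necessary. Everything else is a direct application of the UFP and of Lemma~\ref{lem:levi}, so the proof is short.
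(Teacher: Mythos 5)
Your proof is correct. Note that the paper does not actually supply an argument for this lemma --- it is stated with the remark that it ``is proved in the literature'' --- so there is no in-paper proof to compare against; your argument is the standard one (factor $c$ at level $d(a)\vee d(b)$ via the UFP, then use Lemma~\ref{lem:levi} twice to see that the resulting prefix $e$ lies in both $aS$ and $bS$), and all the hypotheses you need ($d(c)\geq d(a)\vee d(b)$ and $d(e)\geq d(a),d(b)$) are verified correctly.
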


Let $a,b \in S$.
We define the subset $a \vee b$ of $S$ as follows:
if $aS \cap bS = \varnothing$,  define $a \vee b = \varnothing$,
and if $aS \cap bS \neq \varnothing$ define $a \vee b$ to be all elements $e$ such that $e \in aS \cap bS$ and
$d(e) = d(a) \vee d(b)$.
By Lemma~\ref{lem:nelson}, it is immediate that $aS \cap bS = \bigcup_{e \in a \vee b} eS$.
Define also
$$\mathcal{C}(a,b) = \{(x,y) \in S \times S \colon ax = by \in a \vee b\}.$$
The fact that $a \vee b$ is always finite means precisely that $S$ is {\em finitely aligned}.

\begin{lemma}\label{lem:indep} 
If $a \vee b$ is non-empty then it is a set of incomparable elements. 
\end{lemma}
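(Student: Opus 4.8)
The plan is to reduce the statement directly to Levi's lemma for $k$-monoids (Lemma~\ref{lem:levi}), exploiting the fact that every element of $a \vee b$ is homogeneous of one fixed size, namely $\mathbf{m} := d(a) \vee d(b)$.

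First I would take two elements $e, f \in a \vee b$ and suppose that they are comparable, say $eu = fv$ for some $u, v \in S$. By the very definition of $a \vee b$ we have $d(e) = d(f) = \mathbf{m}$, so in particular $d(e) \geq d(f)$. Applying Lemma~\ref{lem:levi} to the equation $eu = fv$ with $x = e$, $u' = f$ (in the notation of that lemma, $xy = uv$ with $d(x) \geq d(u)$), the final clause gives $e = f$ since $d(e) = d(f)$. Hence any two distinct elements of $a \vee b$ satisfy $eS \cap fS = \varnothing$, i.e.\ they are incomparable, which is exactly the claim.

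There is essentially no obstacle here: the content of the lemma is entirely absorbed into the already-established cancellative/factorization machinery, and the argument is the same one used in the proofs of Lemma~\ref{lem:homogeneous} and Lemma~\ref{lem:redwine}. If anything, the only point worth stating explicitly is that $a \vee b$ being non-empty guarantees $\mathbf{m}$ is a legitimate size occurring in $aS \cap bS$, so that the homogeneity hypothesis of Levi's lemma applies to all of its members simultaneously. One could also remark, as a corollary, that $a \vee b$ is therefore a generalized prefix code whenever it is non-empty, linking this back to the discussion of $\mathcal{C}(a,b)$ and strong finite alignment.
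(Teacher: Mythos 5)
Your argument is correct and is essentially identical to the paper's own proof: both take two elements of $a \vee b$, observe they have equal size $d(a) \vee d(b)$, and invoke the final clause of Lemma~\ref{lem:levi} to conclude they must coincide if comparable. Nothing further is needed.
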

\begin{proof} Let $u,v \in a \vee b$.
Suppose that $uS \cap vS \neq \varnothing$. 
Then $us = vt$ for some $s,t \in S$.
But $d(u) = d(v)$.
It follows by Lemma~\ref{lem:levi} that $u = v$.
\end{proof}

By Lemma~\ref{lem:nelson} and Lemma~\ref{lem:indep}, it follows that a finitely aligned $k$-monoid is strongly finitely aligned.

\begin{remark}{\em In this paper, we are only interested in finitely aligned $k$-monoids. We shall also assume that each of the alphabets $X_{i}$ is finite and has cardinality at least 2.
Later, we shall require that the $k$-monoid is aperiodic. }\end{remark}

A $k$-monoid $S$ is said to be {\em singly aligned} if $aS \cap bS \neq \varnothing$ implies that $aS \cap bS = cS$ for some $c \in S$.
All free monoids are singly aligned as, so too, are the rigid $k$-monoids defined in Section~11.
The proof of the following is straightforward.

\begin{lemma}\label{lem:scott} Let $S$ and $T$ be $k$-monoid and $l$-monoid, respectively.
If both are singly aligned then $S \times T$ is singly aligned.
\end{lemma}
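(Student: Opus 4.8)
The plan is to verify directly that the property of being singly aligned is preserved under the $k+l$-monoid structure on $S \times T$ described in Lemma~\ref{lem:prod}. Fix two elements $(a_1,a_2)$ and $(b_1,b_2)$ of $S \times T$, and suppose that $(a_1,a_2)(S\times T) \cap (b_1,b_2)(S\times T) \neq \varnothing$. The first step is the elementary observation that, since multiplication in $S\times T$ is componentwise, a right ideal of the form $(a_1,a_2)(S\times T)$ is exactly $a_1 S \times a_2 T$, and hence
$$
(a_1,a_2)(S\times T) \cap (b_1,b_2)(S\times T) \;=\; (a_1 S \cap b_1 S) \times (a_2 T \cap b_2 T).
$$
In particular the left-hand side is non-empty precisely when both $a_1 S \cap b_1 S \neq \varnothing$ in $S$ and $a_2 T \cap b_2 T \neq \varnothing$ in $T$.

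The second step applies the hypotheses: since $S$ is singly aligned and $a_1 S \cap b_1 S \neq \varnothing$, there is $c_1 \in S$ with $a_1 S \cap b_1 S = c_1 S$; likewise there is $c_2 \in T$ with $a_2 T \cap b_2 T = c_2 T$. The third step simply reassembles these: by the displayed identity,
$$
(a_1,a_2)(S\times T) \cap (b_1,b_2)(S\times T) \;=\; c_1 S \times c_2 T \;=\; (c_1,c_2)(S\times T),
$$
so $(c_1,c_2)$ witnesses that $S\times T$ is singly aligned. This also confirms that $S\times T$ is genuinely a $k+l$-monoid in the relevant sense, so the statement is not vacuous.

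I do not expect any real obstacle here; as the paper itself remarks, the proof is straightforward. The only point requiring the smallest amount of care is the very first step — confirming that right ideals and their intersections in a direct product decompose componentwise — but this is immediate from the componentwise definition of the product and the fact that $(s_1,s_2)(t_1,t_2) = (s_1 t_1, s_2 t_2)$. Everything else is bookkeeping, and the argument clearly extends by induction to any finite direct product of singly aligned monoids, which is presumably how it will be invoked for the rigid $k$-monoids of Section~11.
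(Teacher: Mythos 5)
Your proof is correct and is evidently the intended argument: the paper omits the proof as ``straightforward,'' and the componentwise decomposition $(a_1,a_2)(S\times T)\cap(b_1,b_2)(S\times T)=(a_1S\cap b_1S)\times(a_2T\cap b_2T)$ followed by an application of single alignment in each factor is exactly the straightforward verification being alluded to.
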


\section{The associated Boolean space}

Let $S$ be a $k$-monoid.
In this section, we shall recall the construction of a topological space, denoted by $S^{\infty}$, on which $S$ acts on the left.
We are particularly interested in when this left action is effective.
A topological space is said to be {\em Boolean} if it is compact Hausdorff with a basis of clopen subsets;
the space $S^{\infty}$ will be Boolean.

For each integer $k \geq 1$, define $\Omega_{k}$ to be the category with objects the elements of $\mathbb{N}^{k}$
and morphisms those ordered pairs of elements of $\mathbb{N}^{k}$, $(\mathbf{m},\mathbf{n})$, where $\mathbf{m} \leq \mathbf{n}$;
in other words, the category associated with the partial order $\leq$ on $\mathbb{N}^{k}$ where the arrow goes from the larger to the smaller element.
Define $d(\mathbf{m},\mathbf{n}) = \mathbf{n} - \mathbf{m}$.
By a {\em $k$-tiling} in $S$, we mean a function $w \colon \Omega_{k} \rightarrow S$ satisfying the following three properties:
\begin{enumerate}

\item $w(\mathbf{m}, \mathbf{m}) = 1$.

\item $w (\mathbf{m}, \mathbf{n}) w(\mathbf{n},\mathbf{p}) = w (\mathbf{m},\mathbf{p})$.

\item $d(w (\mathbf{m}, \mathbf{n})) = \mathbf{n} - \mathbf{m}$.

\end{enumerate}
An element of $S$ of the form $w(\mathbf{0},\mathbf{m})$ is called a {\em corner} of the $k$-tiling $w$.\\

\noindent
{\bf Definition.} Define $S^{\infty}$ to be the set of all $k$-tilings in $S$.\\

For each $\mathbf{p} \in \mathbb{N}^{k}$, define the {\em shift (erase) operator} $\sigma^{\mathbf{p}} \colon S^{\infty} \rightarrow S^{\infty}$
by 
$$\sigma^{\mathbf{p}}(w)(\mathbf{m}, \mathbf{n}) = w(\mathbf{m} + \mathbf{p}, \mathbf{n} + \mathbf{p})$$ 
where $w \in S^{\infty}$.
It is easy to check that this operation is well-defined.

A {\em comparable set of corners} is a subset of $S$: 
$$\mathscr{C} = \{w_{\mathbf{m}} \colon \mathbf{m} \in \mathbb{N}^{k},
d(w_{\mathbf{m}}) = \mathbf{m}, \text{ any two elements are comparable}\}.$$
The following is a special case of the first part of \cite[Remarks~2.2]{KP}.

\begin{proposition}\label{prop:moolah} Let $S$ be a $k$-monoid.
Then $k$-tilings and comparable sets of corners are equivalent notions.
\end{proposition}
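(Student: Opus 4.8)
The plan is to construct mutually inverse maps between the set $S^{\infty}$ of $k$-tilings in $S$ and the set of comparable sets of corners, and to check that each map is well-defined. Given a $k$-tiling $w \colon \Omega_{k} \rightarrow S$, I would form the family $\mathscr{C}_{w} = \{w(\mathbf{0},\mathbf{m}) \colon \mathbf{m} \in \mathbb{N}^{k}\}$. Property (3) of a $k$-tiling gives $d(w(\mathbf{0},\mathbf{m})) = \mathbf{m}$, so this is an indexed family of the required sizes; the content to verify is comparability. For this, given $\mathbf{m}, \mathbf{n} \in \mathbb{N}^{k}$, set $\mathbf{p} = \mathbf{m} \vee \mathbf{n}$; then property (2) yields $w(\mathbf{0},\mathbf{m})\,w(\mathbf{m},\mathbf{p}) = w(\mathbf{0},\mathbf{p}) = w(\mathbf{0},\mathbf{n})\,w(\mathbf{n},\mathbf{p})$, and since $w(\mathbf{0},\mathbf{p}) \in w(\mathbf{0},\mathbf{m})S \cap w(\mathbf{0},\mathbf{n})S$, the corners $w(\mathbf{0},\mathbf{m})$ and $w(\mathbf{0},\mathbf{n})$ are comparable. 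Hence $\mathscr{C}_{w}$ is a comparable set of corners.

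Conversely, given a comparable set of corners $\mathscr{C} = \{w_{\mathbf{m}} \colon \mathbf{m} \in \mathbb{N}^{k}\}$, I would define $w \colon \Omega_{k} \rightarrow S$ by requiring, for $\mathbf{m} \leq \mathbf{n}$, that $w(\mathbf{m},\mathbf{n})$ be the unique element with $w_{\mathbf{m}}\,w(\mathbf{m},\mathbf{n}) = w_{\mathbf{n}}$; such an element exists and is unique by the UFP applied to $w_{\mathbf{n}}$ together with $d(w_{\mathbf{m}}) = \mathbf{m} \leq \mathbf{n} = d(w_{\mathbf{n}})$, once one knows $w_{\mathbf{m}}$ is a prefix of $w_{\mathbf{n}}$. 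That last point is exactly where comparability (via Lemma~\ref{lem:levi}) is used: from $w_{\mathbf{m}}S \cap w_{\mathbf{n}}S \neq \varnothing$ and $d(w_{\mathbf{m}}) \leq d(w_{\mathbf{n}})$, Lemma~\ref{lem:levi} produces $t$ with $w_{\mathbf{n}} = w_{\mathbf{m}}t$, and cancellativity (Lemma~\ref{lem:coffee}(3)) makes $t$ unique, so we may set $w(\mathbf{m},\mathbf{n}) = t$. Then $d(w(\mathbf{m},\mathbf{n})) = \mathbf{n} - \mathbf{m}$ is property (3), $w(\mathbf{m},\mathbf{m}) = 1$ follows from cancellativity and $w_{\mathbf{m}} \cdot 1 = w_{\mathbf{m}}$ (this is property (1)), and the cocycle identity $w(\mathbf{m},\mathbf{n})w(\mathbf{n},\mathbf{p}) = w(\mathbf{m},\mathbf{p})$ (property (2)) follows by cancelling $w_{\mathbf{m}}$ from $w_{\mathbf{m}}\,w(\mathbf{m},\mathbf{n})\,w(\mathbf{n},\mathbf{p}) = w_{\mathbf{n}}\,w(\mathbf{n},\mathbf{p}) = w_{\mathbf{p}} = w_{\mathbf{m}}\,w(\mathbf{m},\mathbf{p})$.

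Finally I would check the two constructions are mutually inverse. Starting from $w$, the corner of $\mathscr{C}_{w}$ at $\mathbf{m}$ is $w(\mathbf{0},\mathbf{m})$, and reconstructing the tiling from $\mathscr{C}_{w}$ returns the unique $t$ with $w(\mathbf{0},\mathbf{m})\,t = w(\mathbf{0},\mathbf{n})$, which by property (2) is $w(\mathbf{m},\mathbf{n})$; so we recover $w$. Starting from $\mathscr{C} = \{w_{\mathbf{m}}\}$, the associated tiling $w$ has $w(\mathbf{0},\mathbf{m})$ equal to the unique element with $w_{\mathbf{0}}\,w(\mathbf{0},\mathbf{m}) = w_{\mathbf{m}}$, and since $w_{\mathbf{0}} = 1$ (it has size $\mathbf{0}$, hence equals the identity by Lemma~\ref{lem:coffee}(1)), this is just $w_{\mathbf{m}}$; so we recover $\mathscr{C}$. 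The only genuinely delicate point is the prefix extraction in the converse direction — ensuring a comparable pair of corners of different sizes is actually related by the prefix relation — and that is precisely what Lemma~\ref{lem:levi} delivers, with the remaining steps being routine bookkeeping with the UFP and cancellativity.
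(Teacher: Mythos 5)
Your proposal is correct and follows essentially the same route as the paper, which does not give a formal proof but cites \cite[Remarks~2.2]{KP} and then describes exactly the two mutually inverse constructions you use: $w \mapsto \mathscr{C}_{w} = \{w(\mathbf{0},\mathbf{m})\}$ in one direction, and in the other the extraction of $w(\mathbf{m},\mathbf{n})$ from $w_{\mathbf{n}} = w_{\mathbf{m}}\,w(\mathbf{m},\mathbf{n})$ via Lemma~\ref{lem:levi}. Your verification of properties (1)--(3) and of the mutual inverseness simply supplies the routine details the paper leaves implicit.
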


We explain what we mean by the term `equivalent' used above.
Let  $w \colon \Omega_{k} \rightarrow S$ be a $k$-tiling.
Then $\mathscr{C}_{w} = \{w(\mathbf{0}, \mathbf{m}) \colon \mathbf{m} \in \mathbb{N}^{k}\}$
is a comparable set of corners.
Now, let $\mathscr{C}$ be a comparable set of corners.
Then an associated $k$-tiling $w_{\mathscr{C}}$ can be constructed as follows:
let $\mathbf{m}, \mathbf{n} \in \mathbb{N}^{k}$ be such that $\mathbf{m} \leq \mathbf{n}$.
Then $w_{\mathbf{m}}$ and $w_{\mathbf{n}}$ are comparable
and
$w_{\mathbf{n}} = w_{\mathbf{m}}w(\mathbf{m}, \mathbf{n})$ for a unique $w (\mathbf{m}, \mathbf{n}) \in S$
by Lemma~\ref{lem:levi}.
This defines the $k$-tiling.
The operations: $w \mapsto \mathscr{C}_{w}$ and
$\mathscr{C} \mapsto w_{\mathscr{C}}$ are mutually inverse.

\begin{remark}\label{rem:ribena}
{\em The above result is important because it tells us, in particular, that each $k$-tiling has corners of every possible size
and that any two corners are comparable.}
\end{remark}

The following lemma will be needed in Section~9.

\begin{lemma}\label{lem:size-matters} Let $S$ be a $k$-monoid, let $w$ be a $k$-tiling, let $a_{1}, \ldots, a_{n} \in S$ be arbitrary
and let $\mathbf{m}^{1}, \ldots, \mathbf{m}^{n} \in \mathbb{N}^{k}$.
Then there is a corner $x$ of $w$ such that $d(a_{j}x) \geq \mathbf{m}^{j}$ 
for each $1 \leq j \leq n$.
\end{lemma}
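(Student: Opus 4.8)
The plan is to reduce the statement to the single fact, recorded in Remark~\ref{rem:ribena} as a consequence of Proposition~\ref{prop:moolah}, that a $k$-tiling $w$ has a corner of every prescribed size. Concretely, a corner of $w$ is an element $w(\mathbf{0},\mathbf{p})$, and $d(w(\mathbf{0},\mathbf{p})) = \mathbf{p}$ by property~(3) in the definition of a $k$-tiling; so for each $\mathbf{p} \in \mathbb{N}^{k}$ there is exactly one corner $x$ with $d(x) = \mathbf{p}$.

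First I would set $\mathbf{p} = \mathbf{m}^{1} \vee \cdots \vee \mathbf{m}^{n}$, the componentwise maximum in $\mathbb{N}^{k}$, and let $x$ be the corner of $w$ with $d(x) = \mathbf{p}$, which exists by the previous paragraph. Then, since $d$ is a monoid homomorphism, $d(a_{j}x) = d(a_{j}) + d(x) = d(a_{j}) + \mathbf{p}$. Because $d(a_{j}) \in \mathbb{N}^{k}$ we have $d(a_{j}) \geq \mathbf{0}$, hence $d(a_{j}x) \geq \mathbf{p} \geq \mathbf{m}^{j}$ for every $1 \leq j \leq n$, as required.

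There is no real obstacle here: the only subtlety is making sure one invokes the existence of corners of arbitrary size (Remark~\ref{rem:ribena}), rather than merely the surjectivity of $d$ on $S$, and then choosing $\mathbf{p}$ to dominate all the $\mathbf{m}^{j}$ simultaneously. The argument is entirely order-theoretic in $\mathbb{N}^{k}$ once that observation is in place.
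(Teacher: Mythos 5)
Your proof is correct and follows essentially the same route as the paper: both choose the corner $x$ of size $\bigvee_{j=1}^{n}\mathbf{m}^{j}$, using the fact that a $k$-tiling has corners of every size, and then conclude from $d(a_{j}x)=d(a_{j})+d(x)$. No gap to report.
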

\begin{proof} The $k$-tiling $w$ has corners of every size.
Let $x$ be any corner of $w$ such that $d(x) = \bigvee_{j=1}^{n} \mathbf{m}^{j}$.
Then $d(a_{j}x) = d(a_{j}) + d(x) \geq \mathbf{m}^{j}$ for each $1 \leq j \leq n$.
\end{proof}

A comparable set of corners requires there to be an element of size $\mathbf{m}$ for each $\mathbf{m} \in \mathbb{N}^{k}$.
In fact, this condition can be weakened.
The following is proved as the second part of \cite[Remark 2.2]{KP}.

\begin{proposition}\label{prop:rain} Let $S$ be a $k$-monoid.
Let $\mathscr{D}$ be a comparable subset of $S$ 
such that for each $\mathbf{n} \in \mathbb{N}^{k}$ there is at least one element $b \in \mathscr{D}$ such that
$d(b) \geq \mathbf{n}$. 
Then $\mathscr{D}$ gives rise to a unique comparable set of corners $\mathscr{D}'$.
The set $\mathscr{D}'$ contains the element $x$, where $d(x) = \mathbf{n}$,
precisely when $a \in \mathscr{D}$ is any element such that $d(a) \geq \mathbf{n}$ and $a = xy$ for some $y$.
\end{proposition}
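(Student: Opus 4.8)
The plan is to build $\mathscr{D}'$ by completing $\mathscr{D}$ to a full comparable set of corners, extracting one corner of each size from the elements of $\mathscr{D}$. For each $\mathbf{n} \in \mathbb{N}^{k}$, the cofinality hypothesis furnishes some $a \in \mathscr{D}$ with $d(a) \geq \mathbf{n}$, and by the UFP we may factor $a = a(\mathbf{0},\mathbf{n})\,a(\mathbf{n},d(a))$; set $x_{\mathbf{n}} := a(\mathbf{0},\mathbf{n})$, so $d(x_{\mathbf{n}}) = \mathbf{n}$, and put $\mathscr{D}' := \{x_{\mathbf{n}} \colon \mathbf{n} \in \mathbb{N}^{k}\}$. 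The first thing to check is that $x_{\mathbf{n}}$ does not depend on the choice of $a$: if $a,a' \in \mathscr{D}$ both have size $\geq \mathbf{n}$, then since $\mathscr{D}$ is comparable there are $s,t$ with $as = a't$; writing $a = a(\mathbf{0},\mathbf{n})p$ and $a' = a'(\mathbf{0},\mathbf{n})p'$ gives $a(\mathbf{0},\mathbf{n})(ps) = a'(\mathbf{0},\mathbf{n})(p't)$ with the two size-$\mathbf{n}$ prefixes equal in size, so the last clause of Lemma~\ref{lem:levi} forces $a(\mathbf{0},\mathbf{n}) = a'(\mathbf{0},\mathbf{n})$. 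This well-definedness statement is exactly the characterization of the size-$\mathbf{n}$ element of $\mathscr{D}'$ asserted in the proposition, and it also yields $\mathscr{D} \subseteq \mathscr{D}'$: taking $\mathbf{n} = d(a)$ and $a$ itself as representative, $a(d(a),d(a)) = 1$ by Lemma~\ref{lem:coffee}(1), so $x_{d(a)} = a$.

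Next I would verify that $\mathscr{D}'$ is a comparable set of corners. It has exactly one element of each size by construction and well-definedness, so it remains to see that any two of its elements are comparable. The key subsidiary fact is that $\mathbf{m} \leq \mathbf{n}$ implies $x_{\mathbf{m}}$ is a prefix of $x_{\mathbf{n}}$: choosing a single $a \in \mathscr{D}$ with $d(a) \geq \mathbf{n}$, well-definedness gives $x_{\mathbf{n}} = a(\mathbf{0},\mathbf{n})$ and $x_{\mathbf{m}} = a(\mathbf{0},\mathbf{m})$, and factoring the size-$\mathbf{m}$ prefix off $a(\mathbf{0},\mathbf{n})$ and comparing with the factorization of $a$ via the UFP shows that prefix is precisely $a(\mathbf{0},\mathbf{m})$, whence $x_{\mathbf{m}}$ is a prefix of $x_{\mathbf{n}}$. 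For arbitrary $\mathbf{m},\mathbf{n}$, both $x_{\mathbf{m}}$ and $x_{\mathbf{n}}$ are then prefixes of $x_{\mathbf{m}\vee\mathbf{n}}$, so $x_{\mathbf{m}}S \cap x_{\mathbf{n}}S \supseteq x_{\mathbf{m}\vee\mathbf{n}}S \neq \varnothing$, i.e.\ they are comparable.

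Finally, for uniqueness I would show $\mathscr{D}'$ is the only comparable set of corners containing $\mathscr{D}$. If $\mathscr{C}$ is such a set and $w_{\mathbf{n}}$ denotes its element of size $\mathbf{n}$, pick $a \in \mathscr{D} \subseteq \mathscr{C}$ with $d(a) \geq \mathbf{n}$; since $w_{\mathbf{n}}$ and $a$ both lie in $\mathscr{C}$ they are comparable, and as $d(w_{\mathbf{n}}) \leq d(a)$, Lemma~\ref{lem:levi} makes $w_{\mathbf{n}}$ a prefix of $a$, while $x_{\mathbf{n}} = a(\mathbf{0},\mathbf{n})$ is also the size-$\mathbf{n}$ prefix of $a$, so $w_{\mathbf{n}} = x_{\mathbf{n}}$ by the UFP and hence $\mathscr{C} = \mathscr{D}'$. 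None of these steps is deep — each is an application of the UFP and Lemma~\ref{lem:levi} — so the only real obstacle is bookkeeping: making the choices in the construction genuinely canonical (the well-definedness step) and settling the intended meaning of ``gives rise to a unique'' set, which the argument above reads as ``the unique comparable set of corners extending $\mathscr{D}$''.
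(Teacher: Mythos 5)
Your argument is correct and complete: the well-definedness of $x_{\mathbf{n}} := a(\mathbf{0},\mathbf{n})$ via the UFP together with the last clause of Lemma~\ref{lem:levi}, the verification that the resulting family is a comparable set of corners, and the uniqueness argument are all sound, and your reading of ``gives rise to a unique comparable set of corners'' (the unique one extending $\mathscr{D}$, pinned down by the stated characterization) matches the intent. The paper itself supplies no proof here --- it simply attributes the statement to the second part of Remark~2.2 of Kumjian--Pask --- so there is nothing to compare against; your self-contained derivation from the UFP and Levi's lemma is exactly the expected argument. The only cosmetic point is that you say ``prefix'' where the paper has reserved that word for free monoids and would say ``$x_{\mathbf{m}}$ is an initial factor of $x_{\mathbf{n}}$'' or ``$x_{\mathbf{n}} \in x_{\mathbf{m}}S$''; this does not affect the mathematics.
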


A subset $\mathscr{D} \subseteq S$ is called {\em expanding} if it satisfies the conditions of Proposition~\ref{prop:rain}.
If $\mathscr{D}_{1}$ and $\mathscr{D}_{2}$ are two expanding sets, we say that they are {\em equivalent}, denoted by $\mathscr{D}_{1} \equiv \mathscr{D}_{2}$,
if they determine the same $k$-tiling.
This means that $\mathscr{D}_{1}' = \mathscr{D}_{2}'$, using the notation of Proposition~\ref{prop:rain}.
The proof of the following is routine.
 
\begin{lemma}\label{lem:brexit} Let $\mathscr{D}_{1}$ and $\mathscr{D}_{2}$ be two expanding sets.
Suppose that for any $x \in \mathscr{D}_{1}$ and $y \in \mathscr{D}_{2}$, we have that $x$ and $y$ are comparable.
Then $\mathscr{D}_{1}$ and $\mathscr{D}_{2}$ are equivalent.
\end{lemma}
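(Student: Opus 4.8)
The plan is to show that $\mathscr{D}_1' = \mathscr{D}_2'$, using the explicit description of the set of corners given in Proposition~\ref{prop:rain}. Fix $\mathbf{n} \in \mathbb{N}^k$, and let $x$ be the unique element of $\mathscr{D}_1'$ with $d(x) = \mathbf{n}$; I want to show $x \in \mathscr{D}_2'$ as well, and then the reverse inclusion follows by symmetry. By Proposition~\ref{prop:rain}, $x$ is characterized by the property that whenever $a \in \mathscr{D}_1$ satisfies $d(a) \geq \mathbf{n}$, we have $a = xy$ for some $y \in S$. What I must verify is the analogous statement for $\mathscr{D}_2$: if $b \in \mathscr{D}_2$ with $d(b) \geq \mathbf{n}$, then $b = xz$ for some $z \in S$.

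The key step is to exploit comparability between the two expanding sets together with Lemma~\ref{lem:levi} (the Levi property). Since $\mathscr{D}_1$ is expanding, choose $a \in \mathscr{D}_1$ with $d(a) \geq \mathbf{n} \vee d(b)$; in particular $d(a) \geq \mathbf{n}$, so $a = xy$ for some $y$ by the characterization of $x \in \mathscr{D}_1'$. By hypothesis $a$ and $b$ are comparable, so $as = bt$ for some $s,t \in S$, i.e. $xys = bt$. Now $d(b) \leq d(a) = d(xy)$, and since $d(x) = \mathbf{n} \leq d(b)$, we may split $b$ via the UFP as $b = b_1 b_2$ with $d(b_1) = \mathbf{n} = d(x)$. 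Applying Lemma~\ref{lem:levi} to the equation $x(ys) = b_1 (b_2 t)$ with $d(x) = d(b_1)$ gives $x = b_1$, hence $b = xb_2$, which is exactly what was needed. Therefore $x$ belongs to $\mathscr{D}_2'$ by Proposition~\ref{prop:rain}, and since $\mathscr{D}_2'$ has exactly one element of each size, it is the element of size $\mathbf{n}$ in $\mathscr{D}_2'$. As $\mathbf{n}$ was arbitrary, $\mathscr{D}_1' \subseteq \mathscr{D}_2'$, and by symmetry $\mathscr{D}_1' = \mathscr{D}_2'$, so $\mathscr{D}_1 \equiv \mathscr{D}_2$.

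The main obstacle I anticipate is purely bookkeeping: making sure that comparability of $a$ and $b$ is applied before any size hypotheses are used, and that when invoking Lemma~\ref{lem:levi} the sizes genuinely match (hence the need to pre-split $b$ using $d(x) \leq d(b)$ rather than trying to compare $x$ with $b$ directly, which could fail if $d(b) \not\geq \mathbf{n}$ — but that case is excluded since we only ever test elements $b$ with $d(b) \geq \mathbf{n}$). One should also double-check that such an $a$ with $d(a) \geq \mathbf{n} \vee d(b)$ exists, but this is immediate from $\mathscr{D}_1$ being expanding applied to the single vector $\mathbf{n} \vee d(b)$. Everything else is a direct unwinding of the definition of $\equiv$ and of Proposition~\ref{prop:rain}.
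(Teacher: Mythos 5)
Your proof is correct. The paper gives no proof of this lemma (it is dismissed as routine), and your argument --- reducing equivalence to the corner characterization of Proposition~\ref{prop:rain}, splitting off the size-$\mathbf{n}$ prefix of $b$ by the UFP, and then applying Lemma~\ref{lem:levi} to identify it with $x$ --- is exactly the intended routine verification.
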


\begin{example}\label{ex:tomb}
{\em In the light of our above result, it is now easy to show that $(A_{n}^{\ast})^{\infty} = A_{n}^{\omega}$.}
\end{example}


The following is a special case of \cite[Proposition 2.3]{KP}.

\begin{lemma}\label{lem:pepsi} Let $a \in S$ and $w \in S^{\infty}$.
Then there is a unique element denoted $aw \in S^{\infty}$ such that $(aw)(\mathbf{0}, d(a)) = a$ and 
$\sigma^{d(a)}(aw) = w$. 
\end{lemma}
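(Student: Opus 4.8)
The plan is to build the $k$-tiling $aw$ directly out of the comparable set of corners of $w$, using the equivalence between $k$-tilings and comparable sets of corners established in Proposition~\ref{prop:moolah}, and then to verify uniqueness. First I would describe the data: write $\mathscr{C}_w = \{w(\mathbf{0},\mathbf{m}) \colon \mathbf{m} \in \mathbb{N}^k\}$ for the comparable set of corners of $w$. Set $\mathbf{p} = d(a)$. The natural candidate for the corner set of $aw$ is $\mathscr{D} = \{a \cdot w(\mathbf{0},\mathbf{m}) \colon \mathbf{m} \in \mathbb{N}^k\}$, where each element $a\cdot w(\mathbf{0},\mathbf{m})$ has size $\mathbf{p} + \mathbf{m}$. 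This is not yet a comparable set of corners because it lacks elements of size $\mathbf{n}$ for $\mathbf{n} \not\geq \mathbf{p}$, but it is an expanding set: it is comparable (if $w(\mathbf{0},\mathbf{m})$ and $w(\mathbf{0},\mathbf{m}')$ are comparable, say sharing a common right multiple, then left-multiplying by $a$ shows $a\cdot w(\mathbf{0},\mathbf{m})$ and $a\cdot w(\mathbf{0},\mathbf{m}')$ share one too), and for any $\mathbf{n}$ the element $a\cdot w(\mathbf{0},\mathbf{n})$ has size $\mathbf{p}+\mathbf{n} \geq \mathbf{n}$. So Proposition~\ref{prop:rain} applies and yields a unique comparable set of corners $\mathscr{D}'$, hence by Proposition~\ref{prop:moolah} a unique $k$-tiling; call it $aw$.

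Next I would check the two defining properties. For $(aw)(\mathbf{0},\mathbf{p}) = a$: by Proposition~\ref{prop:rain}, $\mathscr{D}'$ contains the element $x$ of size $\mathbf{p}$ characterised by the requirement that some $a' \in \mathscr{D}$ with $d(a') \geq \mathbf{p}$ factors as $a' = xy$. Taking $a' = a\cdot w(\mathbf{0},\mathbf{m})$ for any $\mathbf{m}$, Lemma~\ref{lem:levi} (applied to $a\cdot w(\mathbf{0},\mathbf{m}) = x y$ with $d(x) = \mathbf{p} = d(a)$) gives $x = a$; so the corner of $aw$ of size $\mathbf{p}$ is indeed $a$. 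For $\sigma^{\mathbf{p}}(aw) = w$: it suffices to check the two $k$-tilings agree on corners, i.e. that $\sigma^{\mathbf{p}}(aw)(\mathbf{0},\mathbf{m}) = w(\mathbf{0},\mathbf{m})$ for all $\mathbf{m}$. By definition of the shift, $\sigma^{\mathbf{p}}(aw)(\mathbf{0},\mathbf{m}) = (aw)(\mathbf{p},\mathbf{p}+\mathbf{m})$. Now $(aw)(\mathbf{0},\mathbf{p}+\mathbf{m}) = (aw)(\mathbf{0},\mathbf{p})(aw)(\mathbf{p},\mathbf{p}+\mathbf{m}) = a\cdot(aw)(\mathbf{p},\mathbf{p}+\mathbf{m})$; but the left-hand side is the corner of $aw$ of size $\mathbf{p}+\mathbf{m}$, which by construction of $\mathscr{D}$ equals $a\cdot w(\mathbf{0},\mathbf{m})$, so cancellativity (Lemma~\ref{lem:coffee}(3)) gives $(aw)(\mathbf{p},\mathbf{p}+\mathbf{m}) = w(\mathbf{0},\mathbf{m})$, as wanted.

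Finally, uniqueness: suppose $v \in S^{\infty}$ also satisfies $v(\mathbf{0},\mathbf{p}) = a$ and $\sigma^{\mathbf{p}}(v) = w$. Then for each $\mathbf{m}$ the corner of $v$ of size $\mathbf{p}+\mathbf{m}$ is $v(\mathbf{0},\mathbf{p}+\mathbf{m}) = v(\mathbf{0},\mathbf{p})\,v(\mathbf{p},\mathbf{p}+\mathbf{m}) = a\,\sigma^{\mathbf{p}}(v)(\mathbf{0},\mathbf{m}) = a\,w(\mathbf{0},\mathbf{m})$, so $v$ and $aw$ have the same corners of every size of the form $\mathbf{p}+\mathbf{m}$. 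Since these sizes are cofinal in $\mathbb{N}^k$, the expanding set $\mathscr{D}$ determines all the corners of both $v$ and $aw$ via Proposition~\ref{prop:rain}, forcing $v = aw$. I expect the main obstacle to be purely bookkeeping: making sure the passage between the three equivalent descriptions (tiling, comparable set of corners, expanding set) is invoked cleanly, and applying Lemma~\ref{lem:levi} in the right direction to pin down that the size-$\mathbf{p}$ corner is exactly $a$ rather than merely a prefix of all the $a\cdot w(\mathbf{0},\mathbf{m})$; there is no genuine difficulty, only care with the combinatorics of $\mathbb{N}^k$.
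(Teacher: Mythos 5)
Your proof is correct, but it is worth noting that the paper does not actually prove this lemma at all: it simply records it as ``a special case of Proposition 2.3 of Kumjian--Pask'' and moves on. What you have done is supply a self-contained argument using only the machinery already set up in Section 4, namely the equivalence of $k$-tilings with comparable sets of corners (Proposition~\ref{prop:moolah}), the passage from expanding sets to corner sets (Proposition~\ref{prop:rain}), Levi's lemma, and cancellativity. The construction of $aw$ via the expanding set $\{a\cdot w(\mathbf{0},\mathbf{m})\}$, the identification of its size-$d(a)$ corner as $a$ by the unique factorization property, the verification of $\sigma^{d(a)}(aw)=w$ on corners followed by cancellation, and the uniqueness argument via cofinality of the sizes $d(a)+\mathbf{m}$ are all sound; the only cosmetic remark is that pinning down the size-$\mathbf{p}$ corner as $a$ follows directly from the UFP without even needing Lemma~\ref{lem:levi}, though invoking it does no harm. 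What your approach buys is a proof internal to the paper's own formalism of $k$-monoids, whereas the citation outsources the work to the general higher-rank-graph setting of Kumjian and Pask; what the citation buys is brevity and consistency with the paper's policy of treating Section 4 as a recollection of known facts.
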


It follows by the above lemma, 
that there is therefore a natural action  $S \times S^{\infty} \rightarrow S^{\infty}$
meaning that $1w = w$ and $(ab)w = a(bw)$ for all $a,b \in S$.
We write
$$aS^{\infty} = \{aw \colon w \in S^{\infty}\},$$
the set of all $k$-tilings with $a$ as a corner.
Put $\tau = \{aS^{\infty} \colon a \in S\}$.
Then $\tau$ forms the basis for a topology on $S^{\infty}$.
Part of the following was first proved in \cite{KP}.
The proof we give is due to Aidan Sims.

\begin{proposition}\label{prop:top}
Let $S$ be a $k$-monoid with finite $k$-alphabets $(X_{1}, \ldots, X_{n})$.
Then $S^{\infty}$, equipped with the topology above, is a compact Boolean space.
If at least one of the alphabets $X_{i}$ has cardinality at least 2 then $S^{\infty}$ is homeomorphic
to the Cantor space.
\end{proposition}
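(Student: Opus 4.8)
The plan is to realise $S^{\infty}$ explicitly as a closed subspace of a countable product of finite discrete spaces, deducing that it is a compact Boolean space, and then to recognise the Cantor space by Brouwer's characterisation, with the cardinality hypothesis used only to establish perfectness.

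\textbf{The clopen basis.} First I would record that for each $\mathbf{m}\in\mathbb{N}^{k}$ the set $C_{\mathbf{m}}$ of elements of size $\mathbf{m}$ is finite: by Lemma~\ref{lem:relations} every element of $S$ is uniquely of the form $x_{1}\cdots x_{k}$ with $x_{i}\in X_{i}^{\ast}$, and an element of size $\mathbf{m}$ has $x_{i}$ of length $\mathbf{m}_{i}$, so $|C_{\mathbf{m}}| = \prod_{i}|X_{i}|^{\mathbf{m}_{i}}<\infty$. Next, if $a,b\in S$ are incomparable then $aS^{\infty}\cap bS^{\infty}=\varnothing$, since a common element would have $a$ and $b$ as corners and any two corners of a $k$-tiling are comparable (Remark~\ref{rem:ribena}); while if $a,b$ are comparable then, taking $\mathbf{p}=d(a)\vee d(b)$ and applying Lemma~\ref{lem:levi} to the corner of that size, one obtains $aS^{\infty}\cap bS^{\infty}=\bigcup_{c\in a\vee b}cS^{\infty}$, so $\tau$ genuinely is a basis. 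For each $\mathbf{m}$ one has the finite partition $S^{\infty}=\bigsqcup_{c\in C_{\mathbf{m}}}cS^{\infty}$, because every $k$-tiling has a unique corner of size $\mathbf{m}$ (Lemma~\ref{lem:pepsi}); hence each $aS^{\infty}$ is clopen. Finally $S^{\infty}$ is Hausdorff: distinct $k$-tilings $w\neq w'$ satisfy $w(\mathbf{0},\mathbf{m})\neq w'(\mathbf{0},\mathbf{m})$ for some $\mathbf{m}$, and then $w(\mathbf{0},\mathbf{m})S^{\infty}$ and $w'(\mathbf{0},\mathbf{m})S^{\infty}$ are disjoint neighbourhoods.

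\textbf{Compactness.} Consider $\iota\colon S^{\infty}\to\prod_{\mathbf{m}\in\mathbb{N}^{k}}C_{\mathbf{m}}$, $w\mapsto(w(\mathbf{0},\mathbf{m}))_{\mathbf{m}}$, each $C_{\mathbf{m}}$ discrete. By Proposition~\ref{prop:moolah} a $k$-tiling is determined by its corners, so $\iota$ is injective, and its image is exactly the set of families $(c_{\mathbf{m}})$ such that $c_{\mathbf{m}}$ is a prefix of $c_{\mathbf{n}}$ whenever $\mathbf{m}\leq\mathbf{n}$ --- this is the comparable-set-of-corners condition, using Lemma~\ref{lem:levi} in one direction and passing to $\mathbf{m}\vee\mathbf{n}$ in the other. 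For fixed $\mathbf{m}\leq\mathbf{n}$ the condition ``$c_{\mathbf{m}}$ is a prefix of $c_{\mathbf{n}}$'' involves only the two discrete coordinates $\mathbf{m},\mathbf{n}$, hence cuts out a clopen subset of the product; so $\iota(S^{\infty})$ is closed. Moreover $\iota(aS^{\infty})=\iota(S^{\infty})\cap\{f\colon f(d(a))=a\}$ shows $\iota$ is continuous and open onto its image, i.e.\ an embedding. Since the $C_{\mathbf{m}}$ are finite and $\mathbb{N}^{k}$ is countable, the product is compact, metrisable and totally disconnected, hence so is the closed subspace $S^{\infty}$; combined with the previous paragraph, $S^{\infty}$ is a compact Boolean space. (A self-contained alternative to the embedding is a K\"onig's-lemma argument: from a cover with no finite subcover, use the partitions $S^{\infty}=\bigsqcup_{c\in C_{j\mathbf{1}}}cS^{\infty}$ to build a nested chain $c_{1}\prec c_{2}\prec\cdots$ of corners with no $c_{j}S^{\infty}$ finitely covered; by Proposition~\ref{prop:rain} the $c_{j}$ assemble into a $k$-tiling $w$ lying in some basic open $aS^{\infty}$ contained in a single member of the cover, so $c_{j}S^{\infty}\subseteq aS^{\infty}$ for large $j$, a contradiction.)

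\textbf{The Cantor space.} By Brouwer's characterisation it remains to check that $S^{\infty}$ is non-empty, compact, metrisable, totally disconnected and perfect; the middle three are done. Non-emptiness: fixing $x_{i}\in X_{i}$ (possible as $d$ is surjective), the chain $\{(x_{1}\cdots x_{k})^{n}\colon n\geq1\}$ is a comparable, expanding set of corners, so by Proposition~\ref{prop:rain} it determines a point of $S^{\infty}$; in particular $aS^{\infty}\neq\varnothing$ for every $a\in S$. Perfectness is where the hypothesis enters: choosing $i$ with $|X_{i}|\geq2$ and distinct $y,y'\in X_{i}$, for any $a\in S$ the sets $ayS^{\infty}$ and $ay'S^{\infty}$ are non-empty, disjoint, and contained in $aS^{\infty}$, so no basic open set --- hence no point --- is isolated. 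Therefore $S^{\infty}$ is homeomorphic to the Cantor space. The one genuinely delicate point is the bookkeeping in the compactness step identifying $\iota(S^{\infty})$ as a closed subset of the product; everything else is a routine application of the $k$-monoid machinery established above.
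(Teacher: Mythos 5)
Your proof is correct and follows essentially the same route as the paper: the paper realises $S^{\infty}$ as the projective limit of the finite discrete sets $C_{(n,\ldots,n)}$ along the cofinal diagonal chain and then invokes standard topology, which is the same construction as your closed embedding into $\prod_{\mathbf{m}}C_{\mathbf{m}}$, just indexed by a cofinal subfamily. You supply more of the details the paper leaves to "standard results" (the clopen partition, the Hausdorff separation, Brouwer's characterisation, and the two disjoint non-empty subsets $ayS^{\infty}$, $ay'S^{\infty}$ behind the no-isolated-points claim), but the underlying argument is the same.
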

\begin{proof} 
Recall that $C_{\mathbf{m}}$ is the set of all elements of $S$ of size $\mathbf{m}$.
If $x \in C_{(n, \ldots, n)}$ then there are unique elements $x_{1}$ and $x_{2}$ such that
$x = x_{1}x_{2}$, $d(x_{1}) = (n-1, \ldots, n-1)$ and $d(x_{2}) = \mathbf{1}$.
Define the map $p_{\mathbf{n}} \colon C_{(n,\ldots,n)} \rightarrow C_{(n-1, \ldots, n-1)}$ by $p_{n}(x) = x_{1}$.
Then $S^{\infty}$ is isomorphic to the projective limit of the system $(C_{(n,\ldots,n)}, p_{n})$.
Standard results from topology now tell us that $S^{\infty}$ is a Boolean space.
To show that $S^{\infty}$ is the Cantor space, we have to show that there are no isolated points.
But for any $a \in S$ the set $aS^{\infty}$ will contain at least two elements if at 
at least one of the alphabets $X_{i}$ has cardinality at least 2. 
\end{proof}

We say that the action $S \times S^{\infty} \rightarrow S^{\infty}$ is {\em effective} if $aw = bw$ for all $w \in S^{\infty}$ implies that $a = b$.
The following was motivated by \cite{LS2010}.

\begin{lemma}\label{lem:next} Let $S$ be a $k$-monoid.
\begin{enumerate}

\item Let $a,b \in S$ such that $aw = bw$ for some $w \in S^{\infty}$.
Then $a$ and $b$ are comparable.

\item Let $a,b \in S$ be such that $aw = bw$ for all $w \in S^{\infty}$.
Then $au$ and $bu$ are comparable for all $u \in S$ 

\end{enumerate}
\end{lemma}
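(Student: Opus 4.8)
The plan is to unpack the definition of the action and combine it with Lemma~\ref{lem:pepsi} and Lemma~\ref{lem:levi}, so both parts reduce to a size comparison.

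For part (1), suppose $aw = bw$ for some fixed $k$-tiling $w$. Without loss of generality assume $d(a) \geq d(b)$ (the two indices being coordinates of $\mathbb{N}^k$, we cannot assume this in general — but if $d(a)$ and $d(b)$ are incomparable there is nothing yet forcing a contradiction, so I need to be careful here). The cleaner route: by Lemma~\ref{lem:pepsi}, $a$ is the corner $(aw)(\mathbf{0},d(a))$ and $b$ is the corner $(bw)(\mathbf{0},d(b))$ of the \emph{same} $k$-tiling $aw = bw =: v$. By Remark~\ref{rem:ribena} (equivalently Proposition~\ref{prop:moolah}), any two corners of a $k$-tiling are comparable; hence $aS \cap bS \neq \varnothing$, i.e.\ $a$ and $b$ are comparable. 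This is really the whole argument for (1): corners of a single tiling form a comparable set, and $a$, $b$ are exhibited as corners of the common tiling $aw$.

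For part (2), assume $aw = bw$ for \emph{all} $w \in S^{\infty}$, and let $u \in S$ be arbitrary. I want $au$ and $bu$ comparable. By part (1) it suffices to find a single $w' \in S^{\infty}$ with $(au)w' = (bu)w'$. Take any $w' \in S^{\infty}$ and set $w = uw'$; then $(au)w' = a(uw') = aw$ and likewise $(bu)w' = b(uw') = bw$, and these are equal by hypothesis applied to $w = uw'$. So $(au)w' = (bu)w'$, and part (1) gives that $au$ and $bu$ are comparable. Here I am using associativity of the action, $(au)w' = a(uw')$, which holds by the remark following Lemma~\ref{lem:pepsi}, together with the nonemptiness of $S^{\infty}$ (guaranteed since the alphabets are nonempty, so comparable sets of corners exist).

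The main obstacle, such as it is, is bookkeeping rather than depth: making sure the passage from "$a$ is a corner of $aw$" to "comparable" is justified. The key fact is that for a $k$-tiling $v$, both $v(\mathbf 0, d(a))$ and $v(\mathbf 0, d(b))$ lie in the comparable set of corners $\mathscr C_v$, and comparability of corners is built into the definition of a comparable set of corners (Proposition~\ref{prop:moolah}); concretely, choosing $\mathbf p = d(a) \vee d(b)$, the corner $v(\mathbf 0,\mathbf p)$ has both $a$ and $b$ as prefixes, so $v(\mathbf 0,\mathbf p) \in aS \cap bS$. Once this is in place, part (2) is immediate from part (1) by the substitution $w \mapsto uw'$, so I would present part (1) in full and part (2) in two lines.
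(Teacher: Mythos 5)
Your proof is correct and follows essentially the same route as the paper: part (1) exhibits $a$ and $b$ as corners of the common tiling $aw=bw$ and uses that corners of size at most $d(a)\vee d(b)$ are prefixes of the corner of that size (the paper phrases this directly via the UFP applied to $(aw)(\mathbf{0},\mathbf{m})$ for $\mathbf{m}\geq d(a),d(b)$), and part (2) is the identical substitution $w=uw'$ using associativity of the action. No gaps.
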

\begin{proof} 
(1) Let $\mathbf{m} \geq d(a), d(b)$.
Then $(aw)(\mathbf{0}, \mathbf{m}) = (bw)(\mathbf{0}, \mathbf{m})$.
Thus by the UFP, we have that $au = bv$ for some $u,v \in S$.
It follows that $a$ and $b$ are comparable.

(2) Let $u \in S$ be arbitrary.
If $w \in S^{\infty}$ then $uw \in S^{\infty}$.
By assumption, $a(uw) = b(uw)$ for all $w \in S^{\infty}$.
Thus $(au)w = (bu)w$ for all $w \in S^{\infty}$.
By part (1), it follows that $au$ and $bu$ are comparable.
\end{proof}

We now prove the converse to part (2) above.

\begin{lemma}\label{lem:feeble} Let $a,b \in S$ be such that $au$ and $bu$ are comparable for all $u \in S$.
Then $aw = bw$ for all $w \in S^{\infty}$. 
\end{lemma}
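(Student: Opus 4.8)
The plan is to build, for each $w \in S^{\infty}$, a single $k$-tiling that both $aw$ and $bw$ equal, using the hypothesis that $au$ and $bu$ are comparable for every $u \in S$ together with the characterization of $k$-tilings via comparable sets of corners (Proposition~\ref{prop:moolah}) and the "expanding set'' reformulation (Proposition~\ref{prop:rain} and Lemma~\ref{lem:brexit}). Fix $w \in S^{\infty}$. The natural candidate family of corners is
$$\mathscr{D} = \{ a\,w(\mathbf{0},\mathbf{m}) \colon \mathbf{m} \in \mathbb{N}^{k} \} \cup \{ b\,w(\mathbf{0},\mathbf{m}) \colon \mathbf{m} \in \mathbb{N}^{k} \}.$$
First I would check that $\mathscr{D}$ is expanding in the sense of Proposition~\ref{prop:rain}: given $\mathbf{n} \in \mathbb{N}^{k}$, since $w$ has corners of every size, pick $\mathbf{m}$ with $\mathbf{m} \geq \mathbf{n}$, and then $d(a\,w(\mathbf{0},\mathbf{m})) = d(a) + \mathbf{m} \geq \mathbf{n}$, so the size condition holds.

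Second, I would verify that $\mathscr{D}$ is a comparable subset, i.e.\ any two of its elements are comparable. There are three cases. Two corners of the form $a\,w(\mathbf{0},\mathbf{m})$ and $a\,w(\mathbf{0},\mathbf{m}')$ are comparable because $w(\mathbf{0},\mathbf{m})$ and $w(\mathbf{0},\mathbf{m}')$ are comparable (they are corners of the single $k$-tiling $w$, hence prefix-comparable, so one is $w(\mathbf{0},\mathbf{m})y$ for the other), and left-multiplying a comparability relation $xS \cap x'S \neq \varnothing$ by $a$ preserves it; the all-$b$ case is identical. The mixed case, $a\,w(\mathbf{0},\mathbf{m})$ versus $b\,w(\mathbf{0},\mathbf{m}')$, is where the hypothesis enters: by Remark~\ref{rem:ribena} the elements $w(\mathbf{0},\mathbf{m})$ and $w(\mathbf{0},\mathbf{m}')$ are comparable, so there is $\mathbf{p} \geq \mathbf{m},\mathbf{m}'$ with $w(\mathbf{0},\mathbf{p})$ a common "refinement'' — more precisely $w(\mathbf{0},\mathbf{p}) = w(\mathbf{0},\mathbf{m})w(\mathbf{m},\mathbf{p}) = w(\mathbf{0},\mathbf{m}')w(\mathbf{m}',\mathbf{p})$. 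Setting $u = w(\mathbf{0},\mathbf{p})$, the hypothesis gives that $au$ and $bu$ are comparable; and $au \in a\,w(\mathbf{0},\mathbf{m})S$ while $bu \in b\,w(\mathbf{0},\mathbf{m}')S$, so $a\,w(\mathbf{0},\mathbf{m})S \cap b\,w(\mathbf{0},\mathbf{m}')S \supseteq auS \cap buS \neq \varnothing$, giving the required comparability.

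Third, having shown $\mathscr{D}$ is expanding and comparable, Proposition~\ref{prop:rain} produces a unique comparable set of corners $\mathscr{D}'$, hence by Proposition~\ref{prop:moolah} a unique $k$-tiling $w'$. Now the sub-family $\{a\,w(\mathbf{0},\mathbf{m})\}$ is itself expanding, is comparable, and every one of its elements is comparable with every element of $\mathscr{D}$; by Lemma~\ref{lem:brexit} it is equivalent to $\mathscr{D}$, i.e.\ it determines the same $k$-tiling $w'$. But the $k$-tiling determined by the set $\{a\,w(\mathbf{0},\mathbf{m}) \colon \mathbf{m} \in \mathbb{N}^{k}\}$ is exactly $aw$: indeed $aw$ has $(aw)(\mathbf{0},d(a)) = a$ and $\sigma^{d(a)}(aw) = w$ by Lemma~\ref{lem:pepsi}, so its corners of size $d(a)+\mathbf{m}$ are precisely $a\,w(\mathbf{0},\mathbf{m})$. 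Likewise the sub-family $\{b\,w(\mathbf{0},\mathbf{m})\}$ determines $bw$ and is also equivalent to $\mathscr{D}$. Hence $aw = w' = bw$. Since $w$ was arbitrary, $aw = bw$ for all $w \in S^{\infty}$.

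The main obstacle I anticipate is the mixed-case comparability argument in the second step: one must be careful that "comparable'' ($xS \cap yS \neq \varnothing$) behaves well under left multiplication (straightforward: $z \in xS \cap yS$ implies $az \in axS \cap ayS$) and, more subtly, that feeding the correct common element $u = w(\mathbf{0},\mathbf{p})$ into the hypothesis actually yields a nonempty intersection of the larger principal ideals $a\,w(\mathbf{0},\mathbf{m})S$ and $b\,w(\mathbf{0},\mathbf{m}')S$ rather than just of $auS$ and $buS$ — this is fine because $auS \subseteq a\,w(\mathbf{0},\mathbf{m})S$ and $buS \subseteq b\,w(\mathbf{0},\mathbf{m}')S$. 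Everything else is bookkeeping with the UFP and the dictionary between $k$-tilings, comparable sets of corners, and expanding sets already established in Section~4.
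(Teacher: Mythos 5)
Your proof is correct, but it takes a genuinely different route from the paper's. The paper argues by contradiction: assuming $aw \neq bw$, it picks $\mathbf{m} \geq d(a), d(b)$ with $(aw)(\mathbf{0},\mathbf{m}) \neq (bw)(\mathbf{0},\mathbf{m})$, writes these corners as $au'$ and $bv'$ with $u', v'$ comparable corners of $w$, feeds a common multiple $u = u'x = v'y$ into the hypothesis to get $au'xe = bv'yf$, and then invokes Levi's lemma (Lemma~\ref{lem:levi}) on the equal-size elements $au'$ and $bv'$ to force $au' = bv'$, a contradiction. You instead build the set $\mathscr{D} = \{a\,w(\mathbf{0},\mathbf{m})\} \cup \{b\,w(\mathbf{0},\mathbf{m})\}$, verify it is expanding and pairwise comparable (the mixed case being exactly where the hypothesis enters, via $u = w(\mathbf{0},\mathbf{p})$ and the inclusions $auS \subseteq a\,w(\mathbf{0},\mathbf{m})S$, $buS \subseteq b\,w(\mathbf{0},\mathbf{m}')S$), and then use Proposition~\ref{prop:rain} and Lemma~\ref{lem:brexit} to identify the tiling it determines with both $aw$ and $bw$. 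Your version is constructive and showcases the expanding-set machinery of Section~4, at the cost of more bookkeeping (checking that $\{a\,w(\mathbf{0},\mathbf{m})\}$ really is the corner set of $aw$, which you do correctly via Lemma~\ref{lem:pepsi}); the paper's version is shorter and more local, resting on a single application of Levi's lemma. Both are sound.
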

\begin{proof} We may assume that $a \neq b$.
We prove the result by contradiction.
Suppose that there exists  $w \in S^{\infty}$ such that $aw \neq bw$.
Then for some $\mathbf{m}$ we have that $(aw)(\mathbf{0},\mathbf{m}) \neq (bw)(\mathbf{0},\mathbf{m})$.
Without loss of generality, we can choose  $\mathbf{m} \geq d(a), d(b)$.
We can therefore write
$au' = (aw)(\mathbf{0},\mathbf{m})$ 
and
$bv' = (bw)(\mathbf{0},\mathbf{m})$
where $u'$ and $v'$ are both corners of $w$ and
$d(au') = d(bv') = \mathbf{m}$.
Since $u'$ and $v'$ are both corners of $w$, it follows by Proposition~\ref{prop:moolah} that $u'$ and $v'$ are comparable and so there exist $x,y \in S$ such that $u = u'x = v'y$.
By assumption, $au$ and $bu$ are comparable and so
$aue = buf$ for some $e,f \in S$.
It follows that $au'xe = bv'yf$.
But $au'$ and $bv'$ have the same size, and so by Lemma~\ref{lem:levi}, we have that $au' = bv'$
which is a contradiction.
\end{proof}

Our next definition is nothing but a reformulation of the definition of aperiodicity given in \cite{LS2010}
with a new name.
We say that a $k$-monoid $S$ is {\em effective} (so, no reference to the action) if for each pair of distinct elements $a$ and $b$ of $S$ there exists an element $u \in S$
such that $au$ and $bu$ are not comparable.
By Lemma~\ref{lem:next} and Lemma~\ref{lem:feeble}, we have therefore proved the following result.

\begin{proposition}\label{prop:need}
The $k$-monoid $S$ is effective if and only if the action of $S$ on $S^{\infty}$ is effective.
\end{proposition}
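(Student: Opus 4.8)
The plan is to prove Proposition~\ref{prop:need} by combining the two lemmas that immediately precede it. Recall that the action $S \times S^{\infty} \rightarrow S^{\infty}$ is effective if whenever $aw = bw$ for all $w \in S^{\infty}$ then $a = b$; and the $k$-monoid $S$ is effective if for each pair of distinct elements $a, b \in S$ there exists $u \in S$ with $au$ and $bu$ incomparable. These are contrapositive-style formulations of the same dichotomy, so the argument is essentially a matter of unwinding definitions.

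First I would establish the forward direction: assume $S$ is effective as a $k$-monoid and show the action is effective. Suppose $aw = bw$ for all $w \in S^{\infty}$ but $a \neq b$. By Lemma~\ref{lem:next}(2), $au$ and $bu$ are comparable for all $u \in S$. But $S$ effective gives some $u \in S$ with $au$ and $bu$ incomparable --- a contradiction. Hence $a = b$, so the action is effective.

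Next I would establish the converse: assume the action is effective and show $S$ is effective as a $k$-monoid. Suppose not; then there exist distinct $a, b \in S$ such that $au$ and $bu$ are comparable for every $u \in S$. By Lemma~\ref{lem:feeble}, this forces $aw = bw$ for all $w \in S^{\infty}$. Since the action is effective, $a = b$, contradicting $a \neq b$. Therefore $S$ is effective.

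I do not anticipate a genuine obstacle here: the substance of the equivalence has already been absorbed into Lemmas~\ref{lem:next} and~\ref{lem:feeble}, and Proposition~\ref{prop:need} is a clean bookkeeping step that packages them. The only thing requiring a moment's care is matching up the quantifier structure --- "$aw = bw$ for all $w$" on the action side versus "$au$, $bu$ comparable for all $u$" on the monoid side --- but Lemma~\ref{lem:next}(2) handles one implication and Lemma~\ref{lem:feeble} the exact converse, so the two conditions are literally equivalent for any fixed pair $a, b$, and the proposition follows by contraposition on both sides.
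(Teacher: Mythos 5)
Your proof is correct and follows exactly the paper's route: the paper states the proposition as an immediate consequence of Lemma~\ref{lem:next} and Lemma~\ref{lem:feeble}, which is precisely the combination you carry out. Your write-up merely makes the contrapositive bookkeeping explicit, which the paper leaves to the reader.
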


We shall now connect effectiveness with another property.
Let $w \in S^{\infty}$.
We say that $w$ has {\em period} $\mathbf{p}$, where $\mathbf{p} \in \mathbb{Z}^{k}$, if for all
$(\mathbf{m},\mathbf{n}) \in \Omega_{k}$ with $\mathbf{m} + \mathbf{p} \geq \mathbf{0}$ we have that
$w(\mathbf{m} + \mathbf{p}, \mathbf{n} + \mathbf{p}) = w(\mathbf{m}, \mathbf{n})$.
We say that $w$ is {\em periodic} if it has some non-zero period.
We say that $w$ is {\em eventually periodic} if for some $\mathbf{q}$ we have that $\sigma^{\mathbf{q}}(w)$ is periodic.
An element of $S^{\infty}$ that is not eventually periodic is said to be {\em aperiodic}.

The next example will motivate the result that follows it.

\begin{example}\label{ex:blow}
{\em We work in the 1-dimensional case.
Consider the finite strings $ab$ and $abcd$.
Can we find a right-infinite string $w$ such that $abw = abcdw$?
A little thought will show that there is exactly one solution: $w = cdcdcd \ldots$.
In particular, $w$ is periodic.}
\end{example}

The following can also be found as \cite[Remark 4.4]{RSY2003}.

\begin{lemma}\label{lem:hard} Let $S$ be a $k$-monoid and let $a,b \in S$ be distinct elements.
If $w \in S^{\infty}$ is such that $aw = bw$ then $w$ is eventually periodic.
\end{lemma}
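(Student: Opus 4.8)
The plan is to extract, from the equation $aw = bw$, a shift of $w$ that is fixed by a non-trivial shift operator. By Lemma~\ref{lem:next}(1), since $aw = bw$ (and in particular $aw = bw$ holds for this one $w$), $a$ and $b$ are comparable, so $aS \cap bS \neq \varnothing$. By Lemma~\ref{lem:nelson} there is an element $e$ with $d(e) = d(a) \vee d(b)$ and $aS \cap bS = \bigcup_{f \in a \vee b} fS$. Since $aw$ and $bw$ agree on every corner of size $\geq d(a) \vee d(b)$ — which follows because $aw = bw$ as $k$-tilings — the common corner $(aw)(\mathbf{0}, d(a)\vee d(b)) = (bw)(\mathbf{0}, d(a)\vee d(b))$ lies in $a \vee b$; call it $c = ap = bq$ where $d(p) = (d(a)\vee d(b)) - d(a)$ and $d(q) = (d(a)\vee d(b)) - d(b)$. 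Since $a \neq b$ and the sizes work out, we have $d(a) \neq d(b)$ is not forced, but in any case $p \neq q$ or $d(p) \neq d(q)$: indeed if $d(p) = d(q)$ then $d(a) = d(b)$ and then by Lemma~\ref{lem:levi} $a = b$, a contradiction; so $\mathbf{p}_{0} := d(p) \neq d(q) =: \mathbf{q}_{0}$ as elements of $\mathbb{N}^{k}$, and set $\mathbf{v} = d(p) - d(q) \in \mathbb{Z}^{k}\setminus\{\mathbf{0}\}$.

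Next I would write $w' = \sigma^{d(a)}(w)$... more carefully: from Lemma~\ref{lem:pepsi}, $aw$ is the unique tiling with $(aw)(\mathbf{0},d(a)) = a$ and $\sigma^{d(a)}(aw) = w$. The key observation is that $c = ap$ is a corner of $aw = bw$, so $\sigma^{d(c)}(aw) = \sigma^{d(p)}(\sigma^{d(a)}(aw)) = \sigma^{d(p)}(w)$, and equally $\sigma^{d(c)}(bw) = \sigma^{d(q)}(w)$. Since $aw = bw$, these are equal: $\sigma^{d(p)}(w) = \sigma^{d(q)}(w)$. Write $\mathbf{r} = d(p) \wedge d(q)$ and put $u = \sigma^{\mathbf{r}}(w)$; then $\sigma^{\,\mathbf{p}^{+}}(u) = \sigma^{\,\mathbf{q}^{+}}(u)$ where $\mathbf{p}^{+} = d(p) - \mathbf{r}$ and $\mathbf{q}^{+} = d(q) - \mathbf{r}$ are elements of $\mathbb{N}^{k}$ with disjoint support and $\mathbf{p}^{+} \neq \mathbf{q}^{+}$ (at least one is nonzero). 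I claim this forces $u$ to be periodic with period $\mathbf{v} = \mathbf{p}^{+} - \mathbf{q}^{+} \in \mathbb{Z}^{k}\setminus\{\mathbf{0}\}$: for any $(\mathbf{m},\mathbf{n}) \in \Omega_{k}$ with $\mathbf{m} + \mathbf{q}^{+} \geq \mathbf{0}$ (automatic) we have $u(\mathbf{m}+\mathbf{q}^{+}, \mathbf{n}+\mathbf{q}^{+}) = \sigma^{\mathbf{q}^{+}}(u)(\mathbf{m},\mathbf{n}) = \sigma^{\mathbf{p}^{+}}(u)(\mathbf{m},\mathbf{n}) = u(\mathbf{m}+\mathbf{p}^{+}, \mathbf{n}+\mathbf{p}^{+})$, and reindexing by $\mathbf{m} \mapsto \mathbf{m} - \mathbf{q}^{+}$ gives $u(\mathbf{m},\mathbf{n}) = u(\mathbf{m}+\mathbf{v}, \mathbf{n}+\mathbf{v})$ whenever $\mathbf{m} - \mathbf{q}^{+} + \mathbf{v} = \mathbf{m} + \mathbf{p}^{+} - 2\mathbf{q}^{+} \geq \mathbf{0}$; one then bootstraps by iterating the relation $\sigma^{\mathbf{p}^{+}} = \sigma^{\mathbf{q}^{+}}$ on $u$ to cover all $\mathbf{m}$ with $\mathbf{m} + \mathbf{v} \geq \mathbf{0}$, exactly the condition in the definition of period $\mathbf{v}$. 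Hence $u$ is periodic, and $u = \sigma^{\mathbf{r}}(w) = \sigma^{d(a)+\mathbf{r}}(aw) = \sigma^{d(a)+\mathbf{r}}(aw)$... tracing back, $u$ is $\sigma^{\mathbf{q}}(w)$ for a suitable $\mathbf{q} \in \mathbb{N}^{k}$, so $w$ is eventually periodic.

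The main obstacle I anticipate is the bookkeeping in going from the relation $\sigma^{d(p)}(w) = \sigma^{d(q)}(w)$ (with $d(p), d(q)$ incomparable in general) to an honest statement that some shift of $w$ has a non-zero period in the precise sense defined in the excerpt — the definition of "period $\mathbf{p}$" quantifies over $(\mathbf{m},\mathbf{n})$ with $\mathbf{m}+\mathbf{p} \geq \mathbf{0}$, and one must check the relation $u(\mathbf{m}, \mathbf{n}) = u(\mathbf{m}+\mathbf{v},\mathbf{n}+\mathbf{v})$ propagates to the full allowed range by repeatedly applying $\sigma^{\mathbf{p}^{+}}(u) = \sigma^{\mathbf{q}^{+}}(u)$ (i.e. for all $n\geq 1$, $\sigma^{n\mathbf{p}^{+}}(u) = \sigma^{n\mathbf{q}^{+}}(u)$, which combined with componentwise bounds eventually clears the obstruction coordinate-by-coordinate). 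The other point requiring a line of care is verifying that $\mathbf{v} \neq \mathbf{0}$, which as noted reduces via Lemma~\ref{lem:levi} to the hypothesis $a \neq b$: if $d(p) = d(q)$ then $d(a) = d(b)$ and the equation $ap = bq = c$ with equal sizes forces $a = b$. Everything else is a routine application of Lemma~\ref{lem:pepsi}, Lemma~\ref{lem:nelson}, and the definition of the shift operators.
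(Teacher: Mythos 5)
The paper does not prove this lemma at all --- it simply cites \cite[Remark 4.4]{RSY2003} --- so there is no internal argument to compare against; what you have written is a genuine proof, and it is correct in substance. The essential chain is: $\sigma^{d(a)\vee d(b)}(aw)=\sigma^{d(p)}(w)$ and $\sigma^{d(a)\vee d(b)}(bw)=\sigma^{d(q)}(w)$ with $d(p)=(d(a)\vee d(b))-d(a)$ and $d(q)=(d(a)\vee d(b))-d(b)$, so $aw=bw$ yields $\sigma^{d(p)}(w)=\sigma^{d(q)}(w)$; and $d(p)=d(q)$ would force $d(a)=d(b)$ and hence $a=b$ by Lemma~\ref{lem:levi}, so $\mathbf{v}=d(p)-d(q)=d(b)-d(a)\neq\mathbf{0}$. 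Two tidying remarks. First, the meet $\mathbf{r}=d(p)\wedge d(q)$ is automatically $\mathbf{0}$ (componentwise, $(\max(a_i,b_i)-a_i)\wedge(\max(a_i,b_i)-b_i)=0$), so $u=w$ and your argument in fact shows $w$ itself is periodic, a stronger conclusion than required. Second, the ``main obstacle'' you anticipate is not there: because $d(p)$ and $d(q)$ have disjoint supports, the condition $\mathbf{m}+\mathbf{v}\geq\mathbf{0}$ appearing in the definition of period is \emph{equivalent} to $\mathbf{m}\geq d(q)$ (on the support of $d(q)$ it reads $\mathbf{m}_i\geq d(q)_i$, and elsewhere it is vacuous), and $\mathbf{m}\geq d(q)$ is exactly the range covered by a single application of $\sigma^{d(p)}(w)=\sigma^{d(q)}(w)$ after the substitution $\mathbf{m}\mapsto\mathbf{m}-d(q)$. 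So no iteration of the relation is needed, and the displayed condition ``$\mathbf{m}+\mathbf{p}^{+}-2\mathbf{q}^{+}\geq\mathbf{0}$'' in your reindexing is a slip --- the correct reindexed condition is simply $\mathbf{m}\geq\mathbf{q}^{+}$. With those two points cleaned up the proof is complete.
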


The proof of the following is now immediate on the basis of the above lemma.

\begin{corollary}\label{cor:good} Let $S$ be a $k$-monoid with at least one aperiodic $k$-tiling.
Then the action of $S$ on $S^{\infty}$ is effective.
\end{corollary}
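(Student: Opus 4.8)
The plan is to argue by contraposition, using Lemma~\ref{lem:hard} as the only real input. Suppose, for contradiction, that the action of $S$ on $S^{\infty}$ is \emph{not} effective. By the definition of effectiveness of the action, this means there exist distinct elements $a, b \in S$ such that $aw = bw$ for \emph{every} $w \in S^{\infty}$.

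Now let $w_{0} \in S^{\infty}$ be the aperiodic $k$-tiling whose existence is hypothesised. Applying the previous sentence to $w_{0}$, we get $aw_{0} = bw_{0}$ with $a \neq b$. Lemma~\ref{lem:hard} then forces $w_{0}$ to be eventually periodic, which directly contradicts the choice of $w_{0}$ as aperiodic. Hence no such pair $a \neq b$ can exist, and so $aw = bw$ for all $w \in S^{\infty}$ implies $a = b$; that is, the action is effective.

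There is essentially no obstacle here: the statement is a one-line consequence of Lemma~\ref{lem:hard} once one unwinds the definition of an effective action, which is why the excerpt describes it as ``immediate''. If one prefers to route the argument through the $k$-monoid-theoretic notion, one can instead invoke Proposition~\ref{prop:need}: it suffices to show $S$ is effective, i.e.\ that for distinct $a, b$ there is $u$ with $au$, $bu$ incomparable; if no such $u$ existed, Lemma~\ref{lem:feeble} would give $aw = bw$ for all $w$, and in particular $aw_{0} = bw_{0}$, again contradicting aperiodicity of $w_{0}$ via Lemma~\ref{lem:hard}. Either phrasing works; I would present the first, shorter one.
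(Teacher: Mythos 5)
Your proposal is correct and matches the paper's intended argument: the paper gives no written proof beyond declaring the corollary ``immediate'' from Lemma~\ref{lem:hard}, and the one-line contrapositive you spell out (a failure of effectiveness yields distinct $a,b$ with $aw_{0}=bw_{0}$ for the hypothesised aperiodic tiling $w_{0}$, forcing $w_{0}$ to be eventually periodic) is exactly that intended deduction. The alternative route you sketch via Proposition~\ref{prop:need} and Lemma~\ref{lem:feeble} is also sound but unnecessary; your first, shorter version is the right one to present.
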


\noindent
{\bf Definition. }A $k$-monoid with at least one aperiodic $k$-tiling will be called {\em aperiodic}.\\

The converse to Corollary~\ref{cor:good} is proved below
---
we thank Aidan Sims for the direct argument
---
whereas
we shall prove in Section~10 that in the context of \'etale groupoids 
being aperiodic and being effective are equivalent.

\begin{lemma}\label{lem:as} 
In a $k$-monoid, being effective implies the existence of an aperiodic $k$-tiling.
It follows that being effective and having an aperiodic $k$-tiling are equivalent notions.
\end{lemma}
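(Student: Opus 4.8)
The statement asserts that if $S$ is an effective $k$-monoid then $S^{\infty}$ contains an aperiodic $k$-tiling, and hence (combining with Corollary~\ref{cor:good}) effectiveness and the existence of an aperiodic $k$-tiling coincide. My plan is to prove the contrapositive in spirit: assuming \emph{every} $k$-tiling in $S^{\infty}$ is eventually periodic, I will exhibit a pair of distinct elements $a,b \in S$ with $au,bu$ comparable for all $u \in S$, contradicting effectiveness via Lemma~\ref{lem:feeble}. The natural mechanism is a counting/compactness argument on the compact Boolean space $S^{\infty}$: the set of eventually periodic tilings, if it is all of $S^{\infty}$, forces strong structural constraints because there are only countably many "period data" $(\mathbf{q},\mathbf{p})$ with $\mathbf{p} \neq \mathbf{0}$, so $S^{\infty}$ is a countable union of the closed(?) sets of tilings with a fixed eventual period; by a Baire category argument one of these has nonempty interior, i.e.\ contains a basic open set $aS^{\infty}$.

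First I would make precise that for fixed $\mathbf{q} \in \mathbb{N}^{k}$ and fixed nonzero $\mathbf{p}$, the set $E_{\mathbf{q},\mathbf{p}} = \{w \in S^{\infty} : \sigma^{\mathbf{q}}(w) \text{ has period } \mathbf{p}\}$ is closed in $S^{\infty}$ --- this is immediate since the defining condition $w(\mathbf{m}+\mathbf{q}+\mathbf{p},\mathbf{n}+\mathbf{q}+\mathbf{p}) = w(\mathbf{m}+\mathbf{q},\mathbf{n}+\mathbf{q})$ for all admissible $(\mathbf{m},\mathbf{n})$ is a conjunction of clopen conditions. If every $w \in S^{\infty}$ is eventually periodic, then $S^{\infty} = \bigcup_{\mathbf{q},\mathbf{p}} E_{\mathbf{q},\mathbf{p}}$, a countable union of closed sets. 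Since $S^{\infty}$ is a compact Hausdorff space (hence Baire) by Proposition~\ref{prop:top}, some $E_{\mathbf{q},\mathbf{p}}$ has nonempty interior, so there is $c \in S$ with $cS^{\infty} \subseteq E_{\mathbf{q},\mathbf{p}}$: \emph{every} tiling with corner $c$ becomes $\mathbf{p}$-periodic after shifting by $\mathbf{q}$. Enlarging $c$ (replacing it by a bigger corner, using Lemma~\ref{lem:bigger} and Remark~\ref{rem:ribena}), I may assume $d(c) \geq \mathbf{q}$ and, by absorbing the shift, that in fact every $w \in cS^{\infty}$ has the form where the "tail beyond $c$" is genuinely $\mathbf{p}$-periodic; concretely, writing $c' = c(\mathbf{0},\mathbf{q})$-truncations appropriately, I can arrange two corners of size differing by $\mathbf{p}$ that must always coincide after shift.

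The final step extracts the witnesses $a \neq b$. Having $cS^{\infty} \subseteq E_{\mathbf{q},\mathbf{p}}$ with $d(c) \geq \mathbf{q} + \mathbf{p}$, set $a = c(\mathbf{0}, d(c)-\mathbf{p})$ and $b = c$ shifted appropriately, or more cleanly: the periodicity condition says that for every $w \in cS^{\infty}$ and every corner $x$ of $w$ with $d(x)$ large, $w(\mathbf{q}, d(x)) = w(\mathbf{q}+\mathbf{p}, d(x)+\mathbf{p})$. Picking $a = c(\mathbf{q}, d(c))$ and $b = c(\mathbf{q}+\mathbf{p}, \cdot)$ built from $c$ truncated and re-extended gives distinct elements (distinctness because $|X_i| \geq 2$, or because $c$ can be chosen to witness a genuine ambiguity) with $d(a) - d(b) = \mathbf{p} \neq \mathbf{0}$, such that $au$ and $bu$ are comparable for all $u$: indeed any common extension corresponds to a tiling in $cS^{\infty}$ and the periodicity equation forces the overlap. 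Then Lemma~\ref{lem:feeble} gives $aw = bw$ for all $w$, and since $d(a) \neq d(b)$ this contradicts effectiveness (by Proposition~\ref{prop:need}, or directly: effective means such distinct $a,b$ cannot exist). The second sentence of the lemma is then just Corollary~\ref{cor:good} plus what we have proved.

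\textbf{Main obstacle.} The delicate point is the passage from "$cS^{\infty} \subseteq E_{\mathbf{q},\mathbf{p}}$" to an \emph{honest} pair $a \neq b$ of \emph{elements of $S$} with $au \sim bu$ for all $u$: one must check that the period $\mathbf{p}$ can be realized by comparing two corners of a \emph{fixed} element $c$ rather than merely an asymptotic statement, and that the two resulting elements are genuinely distinct (this is where the hypothesis $|X_i| \geq 2$ does real work --- if all alphabets were singletons, $S^{\infty}$ is a single point, trivially periodic, and no such pair exists, consistent with $\mathbb{N}^k$ being effective only in degenerate ways). Handling the bookkeeping of truncations $s(\mathbf{m},\mathbf{n})$ and the shift operator $\sigma^{\mathbf{q}}$ cleanly, and verifying closedness of $E_{\mathbf{q},\mathbf{p}}$ in the projective-limit topology, are the routine-but-careful parts; the Baire category input is the conceptual heart.
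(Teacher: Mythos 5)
Your argument is correct, but it takes a genuinely different route from the paper's. The paper argues directly: it enumerates all pairs $(\mathbf{m}^{i},\mathbf{n}^{i})$ of distinct elements of $\mathbb{N}^{k}$ and, using effectiveness once per pair, builds an expanding set $\{l_{1},\, l_{1}l_{2},\, l_{1}l_{2}l_{3},\ldots\}$ whose associated $k$-tiling defeats every candidate period --- an explicit diagonal construction that needs no topology beyond Proposition~\ref{prop:rain}. You instead prove the contrapositive by Baire category. Your outline goes through: each $E_{\mathbf{q},\mathbf{p}}$ is an intersection of clopen conditions (each single condition depends only on the corner $w(\mathbf{0},\mathbf{N})$ for a suitable $\mathbf{N}$, and $\{w \colon w(\mathbf{0},\mathbf{N})=x\}=xS^{\infty}$ is clopen), so if every tiling were eventually periodic, compactness of $S^{\infty}$ (Proposition~\ref{prop:top} --- this is where finiteness of the alphabets enters your argument but not the paper's) forces some $cS^{\infty}\subseteq E_{\mathbf{q},\mathbf{p}}$. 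Your extraction step, which you rightly flag as the delicate point, can be made clean as follows: since $\mathbf{p}\in\mathbb{Z}^{k}$, write $\mathbf{p}=\mathbf{p}^{+}-\mathbf{p}^{-}$, put $\mathbf{r}=\mathbf{q}+\mathbf{p}^{+}$ and $\mathbf{s}=\mathbf{q}+\mathbf{p}^{-}$ (distinct since $\mathbf{p}\neq\mathbf{0}$), enlarge $c$ so that $d(c)\geq\mathbf{r}\vee\mathbf{s}$ (this preserves $cS^{\infty}\subseteq E_{\mathbf{q},\mathbf{p}}$), and set $a=c(\mathbf{r},d(c))$ and $b=c(\mathbf{s},d(c))$; then $\sigma^{\mathbf{r}}(cw')=aw'$ and $\sigma^{\mathbf{s}}(cw')=bw'$ yield $aw'=bw'$ for every $w'\in S^{\infty}$, contradicting Proposition~\ref{prop:need}. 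Note that $a\neq b$ simply because $d(a)\neq d(b)$, so the hypothesis $|X_{i}|\geq 2$ does no work here, contrary to your worry (consistently, $\mathbb{N}^{k}$ is never effective since any two of its elements are comparable). What each approach buys: the paper's is constructive and independent of alphabet size; yours is shorter once the topology is set up and is essentially the same Baire argument that underlies Renault's equivalence of effectiveness and topological principality, which the paper only invokes later (Proposition~\ref{prop:nice-result}), so it makes the link between the monoid-level and groupoid-level notions of ``effective'' transparent at this earlier stage.
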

\begin{proof} Suppose that $S$ is effective.
We prove that it must have an aperiodic $k$-tiling.
List the elements of  
$(\mathbb{N}^k \times \mathbb{N}^k) \setminus \{(\mathbf{n},\mathbf{n}) \colon \mathbf{n} \in \mathbb{N}^k \}$ as $(\mathbf{m}^{i}, \mathbf{n}^{i})^\infty_{i=1}$.
For each $i$, choose an element $s_i$ of length $\mathbf{m}^i + \mathbf{n}^i$. 
Since $S$ is effective, there exists an element $t_i$ such that 
$s_{i}(\mathbf{m}^i, \mathbf{m}^i + \mathbf{n}^i)t_i$ 
and  
$s_{i}(\mathbf{n}^i, \mathbf{m}^i + \mathbf{n}^i)t_i$  
are incomparable.
Observe that it is here we use the fact that $\mathbf{m}^{i} \neq \mathbf{n}^{i}$.
Put $l_i = s_it_i$.
The set of elements of $S$ given by $\{l_{1}, l_{1}l_{2}, l_{1}l_{2}l_{3}, \ldots \}$
is clearly an expanding set and so determines a unique $k$-tiling $x$ by Proposition~\ref{prop:moolah} and Proposition~\ref{prop:rain}.
We prove that $x$ is aperiodic.
Suppose that 
$\sigma^{\mathbf{m}^i}(x) = \sigma^{\mathbf{n}^i}(x)$.
Then $\sigma^{\sum^{i-1}_{j=0} d(l_i) + \mathbf{m}^i}(x) = \sigma^{\sum^{i-1}_{j=0} d(l_i) + \mathbf{n}^i}(x)$.
Thus $\sigma^{\mathbf{m}^i}(l_il_{i+1} \dots) = \sigma^{\mathbf{n}^i}(l_il_{i+1} \dots)$. 
But 
$$\sigma^{\mathbf{m}^i}(l_il_{i+1} \dots) = s_i(\mathbf{m}^i, \mathbf{m}^i+\mathbf{n}^i)t_il_{i+1} \ldots,$$ 
and 
$$\sigma^{\mathbf{n}^i}(l_il_{i+1} \dots) = s_i(\mathbf{n}^i, \mathbf{m}^i+\mathbf{n}^i)t_il_{i+1} \ldots,$$ 
and this contradicts the fact that
$s_i(\mathbf{m}^i, \mathbf{m}^i+\mathbf{n}^i)t_i$ and $s_i(\mathbf{n}^i, \mathbf{m}^i+\mathbf{n}^i)t_i$
are incomparable.
\end{proof}

We now describe the properties of the action $S \times S^{\infty} \rightarrow S^{\infty}$ that will be needed later.

\begin{proposition}\label{prop:bojo} Let $S$ be an aperiodic $k$-monoid.
\begin{enumerate}

\item If $aw = aw'$ where $a \in S$ and $w,w' \in S^{\infty}$ then $w = w'$.

\item Suppose that $w = aw_{1} = bw_{2}$ where $a,b \in S$ and $w_{1},w_{2} \in S^{\infty}$.
Then $au_{1} = bu_{2}$ where $u_{1},u_{2} \in S$
and
$w_{1} = u_{1} w'$ and $w_{2} = u_{2} w'$ for some $w' \in S^{\infty}$.

\item If $aw = bw$ for all $w \in S^{\infty}$ then $a = b$.

\end{enumerate}
\end{proposition}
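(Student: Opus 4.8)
The plan is to treat the three parts in the order $(1)$, $(3)$, $(2)$, since part $(2)$ invokes part $(1)$ and part $(3)$ is essentially a restatement of earlier results. For part $(1)$, I would just unwind Lemma~\ref{lem:pepsi}: by the defining property of the action, $\sigma^{d(a)}(aw) = w$ and $\sigma^{d(a)}(aw') = w'$, so from $aw = aw'$ we get $w = \sigma^{d(a)}(aw) = \sigma^{d(a)}(aw') = w'$. (This argument uses nothing about aperiodicity; it holds in any $k$-monoid.) For part $(3)$, an aperiodic $k$-monoid has, by definition, an aperiodic $k$-tiling, so its action on $S^{\infty}$ is effective by Corollary~\ref{cor:good}; effectiveness of the action says precisely that $aw = bw$ for all $w$ forces $a = b$.

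The substantive part is $(2)$. Starting from $w = aw_{1}$, Lemma~\ref{lem:pepsi} gives $w(\mathbf{0}, d(a)) = a$, so $a$ is a corner of $w$; likewise $b = w(\mathbf{0}, d(b))$ is a corner. Put $\mathbf{m} = d(a) \vee d(b)$ and $c = w(\mathbf{0}, \mathbf{m})$. By the tiling axioms, $c = w(\mathbf{0}, d(a)) \, w(d(a), \mathbf{m}) = a u_{1}$ with $u_{1} = w(d(a), \mathbf{m})$, and symmetrically $c = b u_{2}$ with $u_{2} = w(d(b), \mathbf{m})$; this yields the asserted relation $a u_{1} = b u_{2}$ (and these are the unique such factorizations by the UFP). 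Now set $w' = \sigma^{\mathbf{m}}(w)$: since $w(\mathbf{0}, \mathbf{m}) = c$ and $\sigma^{\mathbf{m}}(w) = w'$, the uniqueness clause of Lemma~\ref{lem:pepsi} forces $w = c w'$. Then $w = c w' = (a u_{1}) w' = a(u_{1} w')$ by the action axiom, while also $w = a w_{1}$, so part $(1)$ gives $w_{1} = u_{1} w'$; running the same argument with $b$, $u_{2}$ gives $w_{2} = u_{2} w'$, with the same $w'$.

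I expect the only point needing care — and where I would check most closely — is the coherence of the two factorizations of the single corner $c$: one must be sure that the size-$d(a)$ prefix of $c = w(\mathbf{0}, \mathbf{m})$ is genuinely $a$ (not merely comparable to it), which is exactly what the tiling identity $w(\mathbf{0}, \mathbf{m}) = w(\mathbf{0}, d(a)) w(d(a), \mathbf{m})$ together with $w(\mathbf{0}, d(a)) = a$ delivers, and likewise for $b$. Once that is pinned down, the fact that the tail $w'$ is common to both decompositions is automatic, since both arise as $\sigma^{\mathbf{m}}(w)$ for the one choice $\mathbf{m} = d(a) \vee d(b)$. Everything else is routine manipulation of the shift operator and the monoid action.
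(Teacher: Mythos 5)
Your proof is correct and follows essentially the same route as the paper's: part (1) via the shift operator, part (3) via Corollary~\ref{cor:good}, and part (2) via the corner $w(\mathbf{0}, d(a)\vee d(b)) = au_{1} = bu_{2}$ followed by an appeal to part (1). The only cosmetic difference is that you identify $w'$ directly as $\sigma^{d(a)\vee d(b)}(w)$, whereas the paper introduces two tails $w_{1}'$, $w_{2}'$ and then uses part (1) to show they coincide.
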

\begin{proof}
(1) We have that $w = \sigma^{d(a)}(aw) = \sigma^{d(a)}(aw') = w'$.

(2) Put $\mathbf{m} = d(a) \vee d(b)$.
Then 
$z = w(\mathbf{0}, \mathbf{m})$ is a corner of $w$.
We can write 
$z 
= w(\mathbf{0}, d(a))w(d(a), \mathbf{m})
= w(\mathbf{0}, d(b))w(d(b), \mathbf{m})$.
Put $u_{1} = w(d(a), \mathbf{m})$ and $u_{2} = w(d(b), \mathbf{m})$.
We therefore have that $z = au_{1} = bu_{2}$.
Now $w_{1} = \sigma^{d(a)}(w)$ and $w_{2} = \sigma^{d(b)}(w)$.
It follows that 
$w_{1} = u_{1} w_{1}'$ and $w_{2} = u_{2} w_{2}'$
for suitable elements $w_{1}', w_{2}' \in S^{\infty}$.
But then $au_{1}w_{1}' = bu_{2}w_{2}'$.
By Part (1), we deduce that $w_{1}' = w_{2}'$.
Therefore we put $w' = w_{1}' = w_{2}'$.

(3) This is Corollary~\ref{cor:good}.
\end{proof}

We take the properties listed in Proposition~\ref{prop:bojo} as the basis for the following axiomatization.
Let $S$ be a $k$-monoid and let $\mathcal{X}$ be a set with a left $S$-action.
Thus, $1x = x$ for all $x \in \mathcal{X}$ and $(ab)x = a(bx)$ for all $a,b \in S$.
In addition, we assume the following three properties:
\begin{description}     

\item[{\rm (A1)}] If $ax = ax'$ then $x = x'$ where $a \in S$ and $x,x' \in \mathcal{X}$.

\item[{\rm (A2)}] If $ax_{1} = bx_{2}$ then there exist $u_{1}, u_{2} \in S$ such that $au_{1} = bu_{2}$ and
$x_{1} = u_{1}x'$ and $x_{2} = u_{2}x'$ for some $x' \in \mathcal{X}$.

\item[{\rm (A3)}] If $ax = bx$ for all $x \in \mathcal{X}$ then $a = b$.

\end{description}

We shall call the pair $(S,\mathcal{X})$ satisfying the above axioms a {\em system}.
We shall now investigate those consequences of the above axioms that will be useful to us later.
The following lemma actually proves that $a\mathcal{X} \cap b\mathcal{X} = (a \vee b)\mathcal{X}$ in all cases but we have divided
it into two results for the sake of clarity.

\begin{lemma}\label{lem:pepper} Let $(S,\mathcal{X})$ be a system.
\begin{enumerate}
\item For all $a,b \in S$, we have that $a\mathcal{X} \cap b\mathcal{X} = \varnothing$ if and only if $aS \cap bS = \varnothing$.
\item If $aS \cap bS \neq \varnothing$ then $a\mathcal{X} \cap b\mathcal{X} = (a \vee b)\mathcal{X}$.
\end{enumerate}
\end{lemma}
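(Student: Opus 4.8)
The plan is to prove the two statements in turn, using the system axioms (A1), (A2) together with the $k$-monoid lemmas (especially Lemma~\ref{lem:nelson} and Lemma~\ref{lem:levi}) and the structure of $a \vee b$. For part (1), the implication ``$aS \cap bS = \varnothing \Rightarrow a\mathcal{X} \cap b\mathcal{X} = \varnothing$'' is the substantive direction. I would argue contrapositively: suppose $z \in a\mathcal{X} \cap b\mathcal{X}$, so $z = a x_1 = b x_2$ for some $x_1, x_2 \in \mathcal{X}$. Then (A2) supplies $u_1, u_2 \in S$ with $a u_1 = b u_2$, which is an element of $aS \cap bS$, so that intersection is nonempty. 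Conversely, if $c \in aS \cap bS$, write $c = a s = b t$; pick any $x \in \mathcal{X}$ (the set is nonempty since it carries an action and, e.g., for $\mathcal{X} = S^\infty$ it is the Cantor space), and then $a(sx) = (as)x = cx = (bt)x = b(tx)$ lies in $a\mathcal{X} \cap b\mathcal{X}$. This gives (1).

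For part (2), assume $aS \cap bS \neq \varnothing$. The inclusion $(a \vee b)\mathcal{X} \subseteq a\mathcal{X} \cap b\mathcal{X}$ is easy: each $e \in a \vee b$ satisfies $e \in aS \cap bS$, say $e = a x = b y$ with $x, y \in S$, so $e\mathcal{X} = (ax)\mathcal{X} \subseteq a\mathcal{X}$ and similarly $e\mathcal{X} \subseteq b\mathcal{X}$; taking the union over $e \in a \vee b$ gives $(a \vee b)\mathcal{X} \subseteq a\mathcal{X} \cap b\mathcal{X}$. For the reverse inclusion, take $z \in a\mathcal{X} \cap b\mathcal{X}$, so $z = a x_1 = b x_2$. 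Apply (A2) to get $u_1, u_2 \in S$ with $a u_1 = b u_2 =: c \in aS \cap bS$ and $x_1 = u_1 x'$, $x_2 = u_2 x'$ for some $x' \in \mathcal{X}$; hence $z = a x_1 = (a u_1) x' = c x'$. Now $c \in aS \cap bS$, so by Lemma~\ref{lem:nelson} there is $e \in S$ with $cS \subseteq eS \subseteq aS \cap bS$ and $d(e) = d(a) \vee d(b)$; writing $e = a p = b q$ shows $e \in a \vee b$, and $c = e r$ for some $r \in S$. Therefore $z = c x' = (e r) x' = e(r x') \in e\mathcal{X} \subseteq (a \vee b)\mathcal{X}$, as required.

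The main obstacle I anticipate is making sure the element $e$ produced by Lemma~\ref{lem:nelson} genuinely lies in $a \vee b$ as that set is defined here --- i.e., that $e \in aS \cap bS$ with $d(e) = d(a) \vee d(b)$ is exactly the membership condition --- and checking it with $c$ playing the role of the ``$c$'' in Lemma~\ref{lem:nelson} (which requires $cS \subseteq aS$ and $cS \subseteq bS$, both immediate from $c = a u_1 = b u_2$). Everything else is routine manipulation of the action axioms; no use of (A3) is needed for this lemma. I would also remark that this proof simultaneously establishes the identity $a\mathcal{X} \cap b\mathcal{X} = (a \vee b)\mathcal{X}$ uniformly (covering the empty case via part (1)), which is the statement flagged in the sentence preceding the lemma.
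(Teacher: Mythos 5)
Your proposal is correct and follows essentially the same route as the paper: both directions of (1) via (A2) and the action of a common element of $aS\cap bS$, and (2) via (A2) to produce $c=au_{1}=bu_{2}\in aS\cap bS$ and then locating $c$ inside $eS$ for some $e\in a\vee b$ (the paper simply cites the decomposition $aS\cap bS=\bigcup_{e\in a\vee b}eS$ already recorded after Lemma~\ref{lem:nelson}, where you re-derive it from that lemma directly). The detail you flag about $e$ genuinely lying in $a\vee b$ is handled correctly, since membership in $a\vee b$ is by definition exactly $e\in aS\cap bS$ with $d(e)=d(a)\vee d(b)$.
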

\begin{proof} (1) Suppose that  $aS \cap bS = \varnothing$.
If $a\mathcal{X} \cap b\mathcal{X} \neq \varnothing$ then by (A2) there exist $u_{1}, u_{2} \in S$ such that $au_{1} = bu_{2}$.
But this contradicts the assumption that  $aS \cap bS = \varnothing$.
It follows that $a\mathcal{X} \cap b\mathcal{X} = \varnothing$.
Now suppose that  $a\mathcal{X} \cap b\mathcal{X} = \varnothing$.
If  $aS \cap bS \neq \varnothing$ then we can find $u,v \in S$ such that $au = bv$.
Let $x \in \mathcal{X}$ be arbitrary.
Then $a(ux) = b(vx)$ from which it follows that $a\mathcal{X} \cap b\mathcal{X} \neq \varnothing$.
But this contradicts the assumption that $a\mathcal{X} \cap b\mathcal{X} = \varnothing$.

(2) Let $aS \cap bS = \{c_{1}, \ldots, c_{m}\}S$ where $a \vee b = \{c_{1}, \ldots, c_{m}\}$.
We prove that $a\mathcal{X} \cap b\mathcal{X} = \{c_{1}, \ldots, c_{m}\}\mathcal{X}$.
Let $x \in a\mathcal{X} \cap b\mathcal{X}$.
Then $x = ax_{1} = bx_{2}$.
Thus by (A2), there exist $u_{1}, u_{2} \in S$ such that $au_{1} = bu_{2}$ and
$x_{1} = u_{1}x'$ and $x_{2} = u_{2}x'$ for some $x' \in \mathcal{X}$.
Thus $au_{1} = bu_{2} = c_{i}p$ for some $i$ and $p \in S$.
It follows that $x = ax_{1} = au_{1}x' = c_{i}px'$.
Thus $x \in \{c_{1}, \ldots, c_{m}\}\mathcal{X}$.
We now prove the reverse inclusion.
Let $c_{i}x \in \{c_{1}, \ldots, c_{m}\}\mathcal{X}$.
We can write $c_{i} = au = bv$ for some $u,v \in S$.
It follows that $c_{i}x = aux = bvx$.
Thus $c_{i}x \in a\mathcal{X} \cap b\mathcal{X}$.
\end{proof}

The following is immediate by part (1) of Lemma~\ref{lem:pepper}.

\begin{corollary}\label{cor:insect} Let $(S,\mathcal{X})$ be a system.
Then $a\mathcal{X} \cap b\mathcal{X} = \varnothing$ if and only if $a$ and $b$ are incomparable.
\end{corollary}

Let $(S,\mathcal{X})$ be a system.
Then the set $\beta = \{a\mathcal{X} \colon a \in S\}$ forms the basis for a topology on $\mathcal{X}$,
since if $x \in a\mathcal{X} \cap b\mathcal{X}$, then it follows from the axioms for a system that there is $c \in S$ such that
$x \in c\mathcal{X} \subseteq a\mathcal{X} \cap b\mathcal{X}$.
We endow $\mathcal{X}$ with the topology with $\beta$ as basis.
At this stage, we require nothing else than it is a topology.

\begin{lemma}\label{lem:peking} Let $(S,\mathcal{X})$ be a system.
For each $a \in S$, define $\lambda_{a} \colon \mathcal{X} \rightarrow a\mathcal{X}$ by $x \mapsto ax$.
This is a homeomorphism.
\end{lemma}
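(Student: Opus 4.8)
The plan is to check the three properties that together constitute a homeomorphism: that $\lambda_{a}$ is a bijection onto $a\mathcal{X}$, that it is continuous, and that it is an open map. Bijectivity is essentially built into the setup: surjectivity onto $a\mathcal{X}$ is immediate from the definition $a\mathcal{X} = \{ax \colon x \in \mathcal{X}\}$, and injectivity is precisely axiom (A1), since $ax = ax'$ forces $x = x'$.

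For continuity, recall that $a\mathcal{X}$ carries the subspace topology from $\mathcal{X}$, so a basis for its topology is given by the sets $a\mathcal{X} \cap b\mathcal{X}$ with $b \in S$. If $aS \cap bS = \varnothing$, then $a\mathcal{X} \cap b\mathcal{X} = \varnothing$ by part (1) of Lemma~\ref{lem:pepper} and there is nothing to check. Otherwise, part (2) of Lemma~\ref{lem:pepper} gives $a\mathcal{X} \cap b\mathcal{X} = (a \vee b)\mathcal{X} = \bigcup_{c \in a \vee b} c\mathcal{X}$. Each $c \in a \vee b$ satisfies $c \in aS$, so $c = au$ for some $u \in S$; then $x \in \lambda_{a}^{-1}(c\mathcal{X})$ iff $ax \in au\mathcal{X}$, i.e.\ $ax = auy$ for some $y$, which by (A1) is equivalent to $x = uy$ for some $y$, i.e.\ $x \in u\mathcal{X}$. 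Hence $\lambda_{a}^{-1}(c\mathcal{X}) = u\mathcal{X}$ is basic open, and therefore $\lambda_{a}^{-1}(a\mathcal{X} \cap b\mathcal{X})$ is a union of basic open sets. So $\lambda_{a}$ is continuous.

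Finally, to see that $\lambda_{a}$ is open it suffices to compute images of basic open sets: $\lambda_{a}(b\mathcal{X}) = \{a(bx) \colon x \in \mathcal{X}\} = \{(ab)x \colon x \in \mathcal{X}\} = ab\mathcal{X}$, which is open in $\mathcal{X}$ and contained in $a\mathcal{X}$, hence open in $a\mathcal{X}$. Thus $\lambda_{a}$ is a continuous open bijection onto $a\mathcal{X}$, that is, a homeomorphism. The only step that requires any genuine argument is the identification of $\lambda_{a}^{-1}(a\mathcal{X} \cap b\mathcal{X})$, for which Lemma~\ref{lem:pepper} together with (A1) does all the work; the remaining verifications are routine unwindings of the definitions.
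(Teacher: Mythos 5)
Your proof is correct and follows essentially the same route as the paper's: bijectivity from (A1), openness by noting $\lambda_{a}(b\mathcal{X}) = ab\mathcal{X}$, and continuity by showing preimages of basic open sets are unions of sets $u\mathcal{X}$. The only cosmetic differences are that you invoke Lemma~\ref{lem:pepper} (itself a consequence of (A2)) where the paper applies (A2) directly, and that you work with the subspace basis $a\mathcal{X} \cap b\mathcal{X}$ for arbitrary $b$ rather than with sets $b\mathcal{X} \subseteq a\mathcal{X}$ --- a slightly more careful bookkeeping of the subspace topology, but the same argument.
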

\begin{proof} By (A1), $\lambda_{a}$ is a bijection.
It clearly maps basis elements of the topology to basis elements.
We now look at inverse images.
Suppose that $b\mathcal{X} \subseteq a\mathcal{X}$.
Then, certainly, $a\mathcal{X} \cap b\mathcal{X} \neq \varnothing$.
Let $bx_{1} \in b\mathcal{X}$ be arbitrary.
Then $bx_{1} = ax_{2}$.
Thus by (A2), there exist $u_{1}, u_{2} \in S$ such that
$bu_{1} = au_{2}$ and $x_{1} = u_{1}x'$ and $x_{2} = u_{2}x'$
for some $x' \in \mathcal{X}$.
Observe that $\lambda_{a}^{-1}(bx_{1}) = \lambda_{a}^{-1}(au_{2}x') = u_{2}x'$.
On the other hand, $\lambda_{a}(u_{2}\mathcal{X}) = au_{2}\mathcal{X} = bu_{1}\mathcal{X} \subseteq b\mathcal{X}$.
Thus, the full inverse image of $b\mathcal{X}$ under $\lambda_{a}^{-1}$ is a union of sets of the form $u_{2}\mathcal{X}$.
It follows that $\lambda_{a}$ is a homeomorphism.
\end{proof}  

Let $\mathscr{X}$ be a topological space.
We denote by $\mathcal{I}(\mathscr{X})$ the inverse monoid of all
partial homeomorphisms between the open subsets of $\mathscr{X}$.

\begin{lemma}\label{lem:fruit} Let $(S,\mathcal{X})$ be a system.
Then $\lambda \colon S \rightarrow \mathcal{I}(\mathcal{X})$ is an injective monoid homomorphism.
\end{lemma}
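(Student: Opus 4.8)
The plan is to show that $\lambda \colon S \to \mathcal{I}(\mathcal{X})$, $a \mapsto \lambda_a$, is a well-defined monoid homomorphism and then that it is injective. That each $\lambda_a$ lands in $\mathcal{I}(\mathcal{X})$ is already Lemma~\ref{lem:peking}: $\lambda_a$ is a homeomorphism from $\mathcal{X}$ onto the open set $a\mathcal{X}$, hence a partial homeomorphism between open subsets of $\mathcal{X}$, i.e.\ an element of $\mathcal{I}(\mathcal{X})$. So the first thing I would record is that $\lambda$ is at least a map $S \to \mathcal{I}(\mathcal{X})$.

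Next I would check the homomorphism property. The identity $1 \in S$ acts by $\lambda_1(x) = 1x = x$, which is the identity partial homeomorphism on all of $\mathcal{X}$, the identity of $\mathcal{I}(\mathcal{X})$. For composition, given $a,b \in S$ I would compute $\lambda_a \circ \lambda_b$ as a partial map: its domain is $\{x : bx \in \mathrm{dom}(\lambda_a)\} = \mathcal{X}$ (since $\lambda_a$ is total on $\mathcal{X}$), and for $x \in \mathcal{X}$ we have $(\lambda_a \circ \lambda_b)(x) = \lambda_a(bx) = a(bx) = (ab)x = \lambda_{ab}(x)$, using the $S$-action axiom $(ab)x = a(bx)$. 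Hence $\lambda_a \lambda_b = \lambda_{ab}$ in $\mathcal{I}(\mathcal{X})$, and $\lambda$ is a monoid homomorphism. (One should note the composition convention in $\mathcal{I}(\mathcal{X})$ matches the order chosen here; if the paper composes the other way the computation is the symmetric one and still gives $\lambda_{ab}$ up to that convention.)

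Finally, injectivity. Suppose $\lambda_a = \lambda_b$ in $\mathcal{I}(\mathcal{X})$. Then in particular they have the same domain, which is all of $\mathcal{X}$ in both cases, and they agree pointwise: $ax = \lambda_a(x) = \lambda_b(x) = bx$ for all $x \in \mathcal{X}$. By axiom (A3) this forces $a = b$. Hence $\lambda$ is injective, completing the proof.

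The proof is essentially routine once Lemma~\ref{lem:peking} is in hand; there is no real obstacle, only bookkeeping. The one point that deserves a moment's care is the composition convention in $\mathcal{I}(\mathcal{X})$ and the matching order in the $S$-action, so that $\lambda_a\lambda_b$ genuinely equals $\lambda_{ab}$ rather than $\lambda_{ba}$; but since $S$ is a monoid acting on the left and $\mathcal{I}(\mathcal{X})$ is read with the same convention throughout the paper, this is automatic.
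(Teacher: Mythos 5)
Your proposal is correct and follows exactly the same route as the paper: Lemma~\ref{lem:peking} for membership in $\mathcal{I}(\mathcal{X})$, the computation $\lambda_{ab}=\lambda_a\lambda_b$ from the action axioms together with $\lambda_1=\mathrm{id}_{\mathcal{X}}$ for the monoid homomorphism property, and axiom (A3) for injectivity. Your write-up simply spells out the bookkeeping that the paper leaves implicit.
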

\begin{proof} By Lemma~\ref{lem:peking} and the observation that $\lambda_{ab} = \lambda_{a} \lambda_{b}$,
it is clear that $\lambda$ defines a homomorphism and since $\lambda_{1}$ is the identity on $\mathcal{X}$ it is a monoid
homomorphism. It is injective by (A3).
\end{proof}

We can construct examples where $\lambda$ is not injective.

\begin{example}{\em This example is essentially due to Aidan Sims.
Let $S$ be a $k$-monoid in which the alphabet $X_{1} = \{e\}$ is a singleton
and suppose that $e$ commutes with every element of $X_{i}$ where $1 \leq i \leq k$.
Then for any element $a \in S$, we have that $ea = ae$ by the UFP.
Let $\mathscr{D}$ be any expanding subset of $S$.
We claim that $\mathscr{D}$ and $e\mathscr{D}$ are equivalent.
Let $x \in \mathscr{D}$ and $ey \in e\mathscr{D}$, where $y \in \mathscr{D}$.
By assumption, $xu = yv$ for some $u,v \in S$.
Thus $exu = eyv$ and so $x(eu) = (ey)v$. 
Let $w$ be the $k$-tiling determined by $\mathscr{D}$.
Then $1w = ew$ by Lemma~\ref{lem:brexit}.
But $w$ was arbitrary.
Thus $\lambda_{1} = \lambda_{e}$ in this case.}
\end{example}

There is an isomorphism of categories $\Omega_{k+l} \cong \Omega_{k} \times \Omega_{l}$.
It follows that the space of $(k+l)$-tilings in $S \times T$ is homeomorphism
to the space of functors $F$ from $\Omega_{k} \times \Omega_{l}$ to $S \times T$
such that $d(F((\mathbf{m}_{1}, \mathbf{n}_{1}),(\mathbf{m}_{2},\mathbf{n}_{2}))) = (\mathbf{m}_{2} - \mathbf{m}_{1}) \cdot (\mathbf{n}_{2} - \mathbf{n}_{1})$ where we use the notation introduced after Lemma~\ref{rem:free}.
There is a bijective correspondence between such functors and ordered pairs $(w_{1},w_{2})$
where $w_{1} \colon \Omega_{k} \rightarrow S$ is a $k$-tiling and $w_{2} \colon \Omega_{l} \rightarrow T$ is an $l$-tiling.
The proof of the following is now immediate by the above argument.

\begin{proposition}\label{prop:topology} Let $S$ be a $k$-monoid and let $T$ be an $l$-monoid.
Then $(S \times T)^{\infty}$ is homeomorphic with $S^{\infty} \times T^{\infty}$.
\end{proposition}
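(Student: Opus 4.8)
The plan is to make precise the category-theoretic sketch that precedes the statement: unwind the definition of a $(k+l)$-tiling in $S \times T$ through the isomorphism $\Omega_{k+l} \cong \Omega_{k} \times \Omega_{l}$, show that each such tiling splits canonically into a $k$-tiling in $S$ together with an $l$-tiling in $T$, and then check that this bijection is a homeomorphism for the topologies described above.

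First I would set up the category isomorphism carefully. Under the identification $\mathbb{N}^{k+l} \cong \mathbb{N}^{k} \times \mathbb{N}^{l}$ that carries $\mathbf{p} \cdot \mathbf{q}$ to $(\mathbf{p},\mathbf{q})$, the order on $\mathbb{N}^{k+l}$ becomes the product order, so $\Omega_{k+l} \cong \Omega_{k} \times \Omega_{l}$ as categories, and the degree map transports to $d(\alpha,\beta) = d(\alpha) \cdot d(\beta)$. Hence a $(k+l)$-tiling $w$ in $S \times T$ is the same thing as a functor $F \colon \Omega_{k} \times \Omega_{l} \rightarrow S \times T$ with $d(F(\alpha,\beta)) = d(\alpha) \cdot d(\beta)$. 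Since $S \times T$, viewed as a one-object category, is the product of the one-object categories $S$ and $T$, such an $F$ is the same as a pair $(F_{S},F_{T})$ with $F_{S} \colon \Omega_{k}\times\Omega_{l}\rightarrow S$ and $F_{T} \colon \Omega_{k}\times\Omega_{l}\rightarrow T$.

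Next I would show that $F$ itself decomposes. By uniqueness of the $\cdot$-decomposition of elements of $\mathbb{N}^{k+l}$, the degree condition forces $d(F_{S}(\alpha,\beta)) = d(\alpha)$ and $d(F_{T}(\alpha,\beta)) = d(\beta)$. Factoring a morphism $(\alpha,\beta)$ of $\Omega_{k}\times\Omega_{l}$ through suitable identity morphisms and using Lemma~\ref{lem:coffee}(1) (the only element of degree $\mathbf{0}$ is $1$), one gets that $F_{S}(\alpha,\beta)$ does not depend on $\beta$; hence $F_{S} = w_{1}\circ\pi_{k}$ for a unique functor $w_{1}\colon\Omega_{k}\rightarrow S$, and the degree condition for $F$ now reads $d(w_{1}(\alpha)) = d(\alpha)$, so $w_{1}$ is a $k$-tiling. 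Symmetrically $F_{T} = w_{2}\circ\pi_{l}$ with $w_{2}\in T^{\infty}$. Conversely, any pair $(w_{1},w_{2}) \in S^{\infty}\times T^{\infty}$ reassembles into such an $F$, and the two assignments are mutually inverse, giving a bijection $\Phi\colon (S\times T)^{\infty}\rightarrow S^{\infty}\times T^{\infty}$, $w\mapsto(w_{1},w_{2})$.

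Finally I would check that $\Phi$ is a homeomorphism by comparing bases. By Lemma~\ref{lem:pepsi}, the sets $(s,t)(S\times T)^{\infty}$ with $(s,t)\in S\times T$ form a basis for $(S\times T)^{\infty}$, and $w\in(s,t)(S\times T)^{\infty}$ precisely when the corner of $w$ of size $d(s)\cdot d(t)$ equals $(s,t)$; under the decomposition this says exactly that the corner of $w_{1}$ of size $d(s)$ is $s$ and the corner of $w_{2}$ of size $d(t)$ is $t$, i.e. $w_{1}\in sS^{\infty}$ and $w_{2}\in tT^{\infty}$. Thus $\Phi$ maps $(s,t)(S\times T)^{\infty}$ onto $sS^{\infty}\times tT^{\infty}$, and since such products form a basis of $S^{\infty}\times T^{\infty}$, both $\Phi$ and $\Phi^{-1}$ are continuous. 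The only step requiring genuine care is the verification that $F_{S}$ really is constant in the $\Omega_{l}$-coordinate — the bookkeeping with identity morphisms and the degree function inside the product category — and that is where I would concentrate the written argument; everything else is routine.
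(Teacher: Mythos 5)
Your proof is correct and follows exactly the route the paper takes: the paper's own argument is precisely the sketch preceding the statement (the isomorphism $\Omega_{k+l}\cong\Omega_{k}\times\Omega_{l}$ together with the correspondence between degree-respecting functors on the product category and ordered pairs of tilings), after which the proposition is declared immediate. You have simply supplied the details the paper omits --- the constancy of $F_{S}$ in the $\Omega_{l}$-variable via part (1) of Lemma~\ref{lem:coffee} and the identification of basic open sets --- and both are carried out correctly.
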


Denote the set of right-infinite strings over the alphabet $A_{n}$ is denoted by $A_{n}^{\omega}$.
The next result follows by Example~\ref{ex:tomb}, Proposition~\ref{prop:topology} and induction.

\begin{proposition}\label{prop:doodah} The space
$(A_{n_{1}}^{\ast} \times \ldots \times A_{n_{k}}^{\ast})^{\infty}$
is homeomorphic to the space
$A_{n_{1}}^{\omega} \times \ldots \times A_{n_{k}}^{\omega}$.
\end{proposition}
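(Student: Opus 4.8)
The plan is to argue by induction on $k$, using Example~\ref{ex:tomb} as the base case and Proposition~\ref{prop:topology} to peel off one free factor at a time. This is the obvious route, and essentially all the content has already been established; the write-up amounts to recording the inductive bookkeeping.

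For the base case $k=1$, Example~\ref{ex:tomb} already records $(A_{n_1}^{\ast})^{\infty} = A_{n_1}^{\omega}$, so there is nothing to prove. For the inductive step, suppose the statement holds for products of $k-1$ free monoids. Set $S = A_{n_1}^{\ast} \times \ldots \times A_{n_{k-1}}^{\ast}$, which is a $(k-1)$-monoid by repeated application of Lemma~\ref{lem:prod}, and set $T = A_{n_k}^{\ast}$, which is a $1$-monoid by Remark~\ref{rem:free}. Associativity of the direct product of monoids identifies $A_{n_1}^{\ast} \times \ldots \times A_{n_k}^{\ast}$ with $S \times T$, and one checks that this identification is compatible with the degree maps under the isomorphism $\mathbb{N}^{k-1} \times \mathbb{N}^{1} \cong \mathbb{N}^{k}$ used before Lemma~\ref{lem:prod}, so it is an isomorphism of $k$-monoids. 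Proposition~\ref{prop:topology} then gives a homeomorphism $(S \times T)^{\infty} \cong S^{\infty} \times T^{\infty}$. By the inductive hypothesis $S^{\infty}$ is homeomorphic to $A_{n_1}^{\omega} \times \ldots \times A_{n_{k-1}}^{\omega}$, and by Example~\ref{ex:tomb} $T^{\infty} = A_{n_k}^{\omega}$; composing these homeomorphisms and using that a product of homeomorphisms is a homeomorphism for the product topology yields the asserted homeomorphism.

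There is no real obstacle here: the substance is entirely inside Proposition~\ref{prop:topology}. The only points deserving a word of care are (i) verifying that the associativity isomorphism of $k$-monoids respects the $\mathbb{N}^{k}$-grading, so that Proposition~\ref{prop:topology} may legitimately be invoked with the chosen $S$ and $T$, and (ii) noting that the chain of maps built from homeomorphisms is again a homeomorphism, which is immediate. In the final version I would compress this to a single short paragraph carrying out the inductive step, since the base case and the two cited results do all the work.
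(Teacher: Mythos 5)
Your argument is exactly the paper's: the result is stated there as following from Example~\ref{ex:tomb}, Proposition~\ref{prop:topology} and induction, which is precisely the induction you carry out (you merely supply the bookkeeping the paper leaves implicit). The proof is correct and no further comment is needed.
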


The following result was reported to us by Aidan Sims.

\begin{lemma}\label{lem:aperiodic} Let $S$ be a $k$-monoid and let $T$ be an $l$-monoid.
If $S$ and $T$ are both effective then $S \times T$ is effective.
\end{lemma}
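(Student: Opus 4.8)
The plan is to argue directly from the combinatorial definition of effectiveness rather than routing through the action on the infinite-tiling space. Recall that a $k$-monoid $S$ is effective precisely when, for every pair of distinct elements $a,b \in S$, there is an element $u \in S$ such that $au$ and $bu$ are incomparable, i.e.\ $auS \cap buS = \varnothing$.

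So let $(a,c)$ and $(b,e)$ be distinct elements of $S \times T$; then $a \neq b$ or $c \neq e$. If $a \neq b$, I would use effectiveness of $S$ to choose $u \in S$ with $auS \cap buS = \varnothing$; otherwise set $u = 1$. Symmetrically, if $c \neq e$, use effectiveness of $T$ to choose $v \in T$ with $cvT \cap evT = \varnothing$; otherwise set $v = 1$.

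The key observation is that in a direct product of monoids the principal right ideals factor as $(x,y)(S \times T) = xS \times yT$, and hence
$$\big((a,c)(u,v)\big)(S \times T) \cap \big((b,e)(u,v)\big)(S \times T) = (auS \cap buS) \times (cvT \cap evT).$$
By construction at least one of the two factors on the right is empty (the first when $a \neq b$, the second when $c \neq e$, and at least one of these cases occurs), so the intersection is empty. Thus $(a,c)(u,v)$ and $(b,e)(u,v)$ are incomparable, which witnesses that $S \times T$ is effective.

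There is no real obstacle here: the only points needing a moment's care are the trivial case split according to whether $a = b$ or $c = e$, and the elementary identification of the principal right ideals of $S \times T$ with products of principal right ideals. (Alternatively one could combine Proposition~\ref{prop:need} with Proposition~\ref{prop:topology}, after checking that the homeomorphism $(S \times T)^{\infty} \cong S^{\infty} \times T^{\infty}$ intertwines the action of $S \times T$ with the product of the actions of $S$ and $T$; the direct argument above sidesteps that verification.)
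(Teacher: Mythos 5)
Your proposal is correct and follows essentially the same route as the paper: split according to which coordinates differ, use effectiveness of each factor to choose a witness in that factor (padding with the identity in the other), and conclude via the fact that comparability in $S \times T$ is detected componentwise. The paper writes this as three explicit cases rather than your single product-of-ideals formula, but the argument is the same.
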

\begin{proof} Let $(s_{1},t_{1}), (s_{2},t_{2}) \in S \times T$ be distinct elements.
Suppose first that $s_{1}$ and $s_{2}$ are distinct but $t_{1} = t_{2} = t$.
Now, $S$ is effective.
Thus there exists $s \in S$ such that $s_{1}s$ and $s_{2}s$ are not comparable.
It follows that $(s_{1},t)(s,1)$ and $(s_{2},t)(s,1)$ are not comparable.
A similar argument applies if $s_{1} = s_{2}$ but $t_{1}$ and $t_{2}$ are distinct.
Suppose now that $s_{1} \neq s_{2}$ and $t_{1} \neq t_{2}$.
Choose $s \in S$ such that $s_{1}s$ and $s_{2}s$ are not comparable and $t \in T$ such that $t_{1}t$ and $t_{2}t$ are not comparable.
Then $(s_{1},t_{1})(s,t)$ and $(s_{2},t_{2})(s,t)$ are not comparable.
\end{proof}

\section{The inverse monoid $\mathsf{R}(S)$}

In this section, we define the inverse monoid $\mathsf{R}(S)$ associated with an aperiodic $k$-monoid that will be used to build our group.

Let $S$ be any monoid.
A subset $R \subseteq S$ is said to be a {\em right ideal} if $RS \subseteq R$.
If $X$ is any subset of $S$ then $XS$ is a right ideal.
If $X$ consists of a single element $x$ then we write $xS$; this is called a {\em principal right ideal}.
Let $R_{1}$ and $R_{2}$ be right ideals of the monoid $S$.
A function $\alpha \colon R_{1} \rightarrow R_{2}$ is called a {\em morphism}
if $\alpha (as) = \alpha (a)s$ for all $a \in R_{1}$ and $s \in S$.\footnote{Morphisms are the analogues of (right) module homomorphisms.}
Given a $k$-monoid $S$, define
$\mathsf{R}(S)$ to consist of all the bijective morphisms between 
the finitely generated right ideals of $S$.

\begin{lemma}\label{lem:nato} Let $S$ be a $k$-monoid and let $\alpha \colon XS \rightarrow YS$ be a bijective morphism between two right ideals of $S$.
Then there is a subset $Z \subseteq YS$ such that $ZS = YS$ and
$\alpha$ induces a bijection between $X$ and $Z$.
\end{lemma}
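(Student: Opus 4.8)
The plan is to understand the structure of bijective morphisms between right ideals by showing that such a morphism is "generated" by a bijection on a suitable generating set. First I would recall that a morphism $\alpha \colon R_1 \to R_2$ is determined by its values on generators, since $\alpha(xs) = \alpha(x)s$. The natural candidate for $Z$ is simply $\alpha(X)$, but the subtlety is whether $\alpha(X)$ still generates $YS$ as a right ideal, and whether $\alpha$ restricts to a \emph{bijection} $X \to \alpha(X)$ (injectivity is clear from $\alpha$ being injective; the issue is that $\alpha(X)$ might fail to generate, or different elements of $X$ might generate overlapping ideals in a way that collapses).

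The key steps, in order, would be: (1) Set $Z = \alpha(X)$; injectivity of $\alpha$ immediately gives a bijection $X \to Z$. (2) Show $ZS = YS$: the inclusion $ZS \subseteq YS$ is clear since $Z \subseteq YS$ and $YS$ is a right ideal. For the reverse, take $y \in YS$; since $\alpha$ is surjective, $y = \alpha(a)$ for some $a \in R_1 = XS$, so $a = xs$ for some $x \in X$, $s \in S$, whence $y = \alpha(xs) = \alpha(x)s \in ZS$. (3) Conclude that $\alpha$ induces a bijection between $X$ and $Z = \alpha(X)$, which is what the statement asserts. One should double-check that the statement only claims $\alpha|_X \colon X \to Z$ is a bijection — not that it is a minimal generating set — so the argument above suffices as stated.

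The main obstacle I anticipate is a hidden subtlety about \emph{generating sets of right ideals} in a $k$-monoid: one wants to know that a finitely generated right ideal $XS$ has $X$ (or some canonical reduct of it) behaving well, e.g. that one can take $X$ to be an antichain / generalized prefix code. If the statement intends $Z$ to be such a canonical generating set (a generalized prefix code), then the work is to pass from the raw image $\alpha(X)$ to an irredundant subset: remove any $z \in Z$ whose ideal $zS$ is contained in $z'S$ for another $z' \in Z$, using cancellativity and Lemma~\ref{lem:levi} to control comparabilities. In that refined reading, the argument would interleave step (2) with a pruning procedure and one must verify pruning preserves the bijection with a correspondingly pruned subset of $X$ (which works because $\alpha$ is an isomorphism of right ideals, so $\alpha(x)S \subseteq \alpha(x')S$ iff $xS \subseteq x'S$). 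I expect the proof as written in the paper to take the simpler reading — $Z = \alpha(X)$ — so the heart of the matter is really just the surjectivity-unwinding in step (2), and the "obstacle" is mostly one of bookkeeping rather than genuine difficulty.
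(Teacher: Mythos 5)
Your proposal is correct and matches the paper's proof exactly: the paper sets $Z = \alpha(X)$, unwinds surjectivity of $\alpha$ to show $ZS = YS$, and notes the induced bijection is immediate from injectivity. The ``refined reading'' you speculate about (pruning to a generalized prefix code) is not what this lemma asks for, and you were right to set it aside.
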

\begin{proof} Put $Z = \alpha (X)$.
Clearly, $ZS \subseteq YS$.
Let $ys \in YS$. 
Then because $\alpha$ is a bijection, there exists $xt \in XS$ such that $\alpha (xt) = ys$.
Thus $\alpha (x)t = ys$.
But $\alpha(x) \in Z$.
We have therefore proved that $ys \in ZS$.
Thus $ZS = YS$.
The fact that $\alpha$ induces a bijection between $X$ and $Z$ is immediate.
\end{proof}

\begin{lemma}\label{lem:usa} Let $S$ be a $k$-monoid.
Let $\alpha \colon XS \rightarrow YS$ be a bijective morphism between two right ideals of $S$.
Let $ZS \subseteq XS$ be a finitely generated right ideal.
Then $\alpha (ZS)$ is a finitely generated right ideal.
\end{lemma}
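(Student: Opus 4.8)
The plan is to use the literal meaning of ``finitely generated right ideal'' together with the one defining property of a morphism, namely that $\alpha(as) = \alpha(a)s$. First I would pick a finite generating set and write $Z = \{z_{1}, \ldots, z_{n}\}$, so that $ZS = \bigcup_{i=1}^{n} z_{i}S$. Since $ZS \subseteq XS$, each generator $z_{i}$ lies in $XS$, which is the domain of $\alpha$, so $\alpha(z_{i})$ is defined and lies in $YS \subseteq S$.

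Next I would identify the image of each principal piece $z_{i}S$. For any $s \in S$ we have $z_{i}s \in z_{i}S \subseteq XS$, and the morphism property yields $\alpha(z_{i}s) = \alpha(z_{i})s$. Hence $\alpha(z_{i}S) = \{\alpha(z_{i})s \colon s \in S\} = \alpha(z_{i})S$, a principal right ideal of $S$.

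Finally, since the image of a union is the union of the images, $\alpha(ZS) = \alpha\!\left(\bigcup_{i=1}^{n} z_{i}S\right) = \bigcup_{i=1}^{n} \alpha(z_{i}S) = \bigcup_{i=1}^{n} \alpha(z_{i})S = \{\alpha(z_{1}), \ldots, \alpha(z_{n})\}S$. This is a right ideal, being a finite union of right ideals, and it is generated by the finite set $\{\alpha(z_{1}), \ldots, \alpha(z_{n})\}$, so it is finitely generated, as required.

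There is essentially no obstacle here; the argument is a direct computation. The only two points that deserve a moment's attention are that $\alpha$ is genuinely defined on all of $z_{i}S$ (which holds because $z_{i}S \subseteq ZS \subseteq XS$) and that ``finitely generated'' is being used in its literal sense as a finite union of principal right ideals — both immediate from the conventions set up earlier in this section.
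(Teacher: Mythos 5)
Your proof is correct and follows essentially the same route as the paper: the paper simply delegates the computation $\alpha(ZS)=\alpha(Z)S$ to its Lemma~\ref{lem:nato} (applied to the restriction of $\alpha$ to $ZS$), whereas you carry out that computation inline using the morphism identity $\alpha(z_i s)=\alpha(z_i)s$. Both arguments are the same direct calculation, and your two points of care (that $\alpha$ is defined on each $z_iS$, and the literal meaning of finitely generated) are exactly the right ones.
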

\begin{proof} Since $\alpha$ is a morphism, it maps right ideals to right ideals.
Thus $\alpha (ZS)$ is a right ideal.
By Lemma~\ref{lem:nato}, we have that $\alpha (ZS) = \alpha (Z)S$.
It follows that $\alpha (ZS)$ is also finitely generated.
\end{proof}

Our next result simply establishes what we would expect.

\begin{proposition}\label{prop:truss} Let $S$ be a $k$-monoid.
Then  $\mathsf{R}(S)$ is an inverse monoid.
\end{proposition}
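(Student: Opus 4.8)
The plan is to verify that $\mathsf{R}(S)$ satisfies the defining properties of an inverse monoid: that it is a monoid under composition of partial functions, and that each element has a unique von Neumann inverse. The standard strategy is to realize $\mathsf{R}(S)$ as an inverse submonoid of the symmetric inverse monoid $\mathcal{I}(S)$ of all partial bijections of the set $S$; it then suffices to check closure under composition and inversion, since it is classical that a subset of $\mathcal{I}(S)$ closed under these two operations (and containing the identity) is an inverse monoid in its own right.

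First I would observe that the identity map $S \to S$ is a bijective morphism between the finitely generated right ideal $S = 1S$ and itself, so it lies in $\mathsf{R}(S)$ and serves as the identity element. Next, closure under inversion: if $\alpha \colon XS \to YS$ is a bijective morphism, then $\alpha^{-1} \colon YS \to XS$ is a bijection, and it is a morphism because from $\alpha(as) = \alpha(a)s$ one deduces $\alpha^{-1}(bs) = \alpha^{-1}(b)s$ for $b \in YS$ — writing $b = \alpha(a)$ gives $\alpha^{-1}(\alpha(a)s) = \alpha^{-1}(\alpha(as)) = as = \alpha^{-1}(b)s$. Both $XS$ and $YS$ are finitely generated by hypothesis, so $\alpha^{-1} \in \mathsf{R}(S)$.

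The substantive step is closure under composition. Given $\alpha \colon X_1 S \to Y_1 S$ and $\beta \colon X_2 S \to Y_2 S$ in $\mathsf{R}(S)$, the composite $\beta\alpha$ (as a partial function) has domain $\alpha^{-1}(Y_1 S \cap X_2 S)$ and image $\beta(Y_1 S \cap X_2 S)$. The key points to establish are: (i) $Y_1 S \cap X_2 S$ is a finitely generated right ideal — this is where finite alignment enters, since $Y_1 S \cap X_2 S$ is a finite union $\bigcup_{e} eS$ over a generalized prefix code by Lemma~\ref{lem:nelson} and the surrounding discussion; (ii) the preimage $\alpha^{-1}(Y_1 S \cap X_2 S)$ and the image $\beta(Y_1 S \cap X_2 S)$ are finitely generated right ideals — this follows from Lemma~\ref{lem:usa} applied to $\alpha^{-1}$ and to $\beta$ respectively; and (iii) the composite of two morphisms, suitably restricted, is again a morphism, which is immediate from the definition since $\beta\alpha(as) = \beta(\alpha(a)s) = \beta\alpha(a)s$. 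I expect step (i), invoking finite alignment to guarantee that the intersection of two finitely generated right ideals is finitely generated, to be the only place requiring genuine structural input; everything else is formal manipulation of morphisms of right ideals. Finally, uniqueness of inverses is automatic in any inverse submonoid of $\mathcal{I}(S)$, so once closure is checked the proposition follows.
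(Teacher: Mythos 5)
Your proposal is correct and follows essentially the same route as the paper: the paper's (much terser) proof likewise reduces everything to the fact that finite alignment makes the intersection of two finitely generated right ideals finitely generated, combined with Lemma~\ref{lem:usa} to handle images and preimages under bijective morphisms. The extra detail you supply (identity, closure under inversion, and the explicit domain/image computation for composites) is exactly what the paper leaves implicit.
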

\begin{proof} In order that the intersection of two finitely generated right ideals be finitely generated, it is necessary
that the intersection of any two principal right ideals be either empty or finitely generated but this condition also implies that
the intersection of any two finitely generated ideals is finitely generated.
From this result and Lemma~\ref{lem:usa}, it follows that $\mathsf{R}(S)$ is an inverse monoid.
\end{proof}

We shall now show that the representation $\lambda \colon S \rightarrow \mathcal{I}(\mathcal{X})$
lifts to a representation of the inverse monoid $\mathsf{R}(S)$.
To do this, we shall need some properties of this monoid.
Let $S$ be a $k$-monoid and let $x,y \in S$.
Denote by $xy^{-1}$ the bijective morphism from $yS$ to $xS$ defined by $ya \mapsto xa$.
We shall call bijective morphisms of the form $xy^{-1}$ {\em basic}.

\begin{lemma}\label{lem:tom} Let $S$  be a $k$-monoid.
\begin{enumerate}
\item $xy^{-1} \leq uv^{-1}$ if and only if $(x,y) = (u,v)s$ for some $s \in S$.
It follows that if $xy^{-1}$ is an idempotent so too is $uv^{-1}$.
\item  $xx^{-1} \perp yy^{-1}$ if and only if $x$ and $y$ are incomparable in $S$.
\item $xx^{-1} yy^{-1} = \bigvee_{u \in x \vee y} uu^{-1}$.
\item Suppose that $bS \cap cS = \{x_{1}, \ldots, x_{m}\}S$.
Then
$$(ab^{-1})(cd^{-1}) = \bigvee_{i=1}^{n} ap_{i}(dq_{i})^{-1}$$
where $x_{i} = cq_{i} = bp_{i}$.
\end{enumerate}
\end{lemma}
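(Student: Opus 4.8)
The plan is to verify the four parts by direct computation with basic morphisms, using the structural results already established (cancellativity, the UFP, Lemma~\ref{lem:levi}, Lemma~\ref{lem:tom}(1)--(3) being proven in sequence, and the description of $aS \cap bS$ via $a \vee b$). I would prove them in the stated order, since later parts rely on earlier ones. All of these are computations in the inverse monoid $\mathsf{R}(S)$, where the natural partial order, compatibility, orthogonality and joins have the concrete meanings spelled out in Section~2.

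\emph{Parts (1)--(3).} For (1), recall $xy^{-1} \leq uv^{-1}$ means $xy^{-1} = (uv^{-1})e$ for an idempotent $e$; an idempotent in $\mathsf{R}(S)$ is a restriction of the identity, so $e$ has the form $\mathrm{id}_{ZS}$ for a finitely generated right ideal $ZS$, and $(uv^{-1})\,\mathrm{id}_{ZS}$ restricts the domain $vS$ to $vS \cap ZS$. Unwinding, $xy^{-1} \leq uv^{-1}$ forces $yS \subseteq vS$, hence $y = vs$ for some $s$ (here we use that $S$ is $k$-monoid so principal right ideals are comparable only via prefixes --- more precisely $yS \subseteq vS \Leftrightarrow y \in vS$), and then agreement of the two morphisms on $yS$ gives $x = us$; conversely $(x,y) = (u,v)s$ clearly yields $xy^{-1} = (uv^{-1})\,\mathrm{id}_{vsS}$. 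The consequence about idempotents: $uv^{-1}$ is idempotent iff $u = v$, and $(x,y) = (u,v)s$ with $x = y$ forces $us = vs$, hence $u = v$ by cancellativity. For (2), $xx^{-1} \perp yy^{-1}$ means $(xx^{-1})(yy^{-1}) = 0$, i.e. the meet of the idempotents $\mathrm{id}_{xS}$ and $\mathrm{id}_{yS}$ is the empty morphism, i.e. $xS \cap yS = \varnothing$, which is exactly incomparability of $x$ and $y$ (the other condition $x^{-1}y = 0$ is automatic for idempotents, or follows symmetrically). For (3), when $x$ and $y$ are incomparable both sides are $0$ by (2); when comparable, $xS \cap yS = \bigcup_{u \in x \vee y} uS$ by Lemma~\ref{lem:nelson}, so $\mathrm{id}_{xS}\,\mathrm{id}_{yS} = \mathrm{id}_{xS \cap yS}$, and since the $uS$ for $u \in x \vee y$ are pairwise incomparable (Lemma~\ref{lem:indep}) the idempotents $uu^{-1} = \mathrm{id}_{uS}$ are pairwise orthogonal, so their supremum is $\mathrm{id}_{\bigcup uS}$; that this supremum exists in $\mathsf{R}(S)$ and equals the displayed join is exactly the assertion.

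\emph{Part (4).} This is the substantive computation. Write $\gamma = (ab^{-1})(cd^{-1})$: its domain is $dS$ pulled back through $cd^{-1}$ into the largest subset of $cS$ lying in the domain $bS$ of $ab^{-1}$, namely $bS \cap cS = \{x_1,\dots,x_m\}S = \bigoplus_i x_i S$ with the $x_i S$ pairwise orthogonal by Lemma~\ref{lem:indep}. For each $i$, $x_i \in bS \cap cS$ gives $x_i = bp_i = cq_i$; I would note that $p_i, q_i$ are uniquely determined by cancellativity and that $x_i S = b p_i S = c q_i S$. Restricting $\gamma$ to the piece $x_i S$: an element $cq_i t = x_i t$ is sent by $cd^{-1}$ to $dq_i t$ --- wait, direction: $cd^{-1}$ goes $dS \to cS$, so I should instead track $d q_i t \in dS \mapsto c q_i t = x_i t \in cS \mapsto b p_i t \cdot$? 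No: $ab^{-1}$ sends $bp_i t \mapsto a p_i t$. So on the corner indexed by $i$, $\gamma$ maps $dq_i t \mapsto a p_i t$, which is precisely the basic morphism $(a p_i)(d q_i)^{-1}$ with domain $dq_i S$. Since the domains $dq_i S = x_i S$ (pushed through the bijection $cd^{-1}$ — they are the preimages of the orthogonal pieces $x_i S \subseteq cS$) are pairwise orthogonal, and $\gamma$ is the union of these restrictions, $\gamma = \bigoplus_{i=1}^m (a p_i)(d q_i)^{-1} = \bigvee_{i=1}^m a p_i (d q_i)^{-1}$. The main obstacle, and the place to be careful, is bookkeeping the directions of the basic morphisms and confirming the domains really are orthogonal after transport through $cd^{-1}$ (using that $cd^{-1}$ is a homeomorphism/bijection preserving the lattice of subideals, cf. Lemmas~\ref{lem:nato} and~\ref{lem:usa}); once that is set up, the identity is immediate. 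I would also remark that the index should read $i = 1$ to $m$ throughout (the $n$ in the displayed formula being a typo for $m$).
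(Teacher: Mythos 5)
Your proposal is correct and follows essentially the same route as the paper: all four parts are verified by direct computation with the partial bijections $xy^{-1}$, using the concrete meaning of $\leq$, $\perp$ and joins in $\mathsf{R}(S)$ together with cancellativity and the decomposition $bS \cap cS = \{x_{1},\dots,x_{m}\}S$. You supply rather more detail than the paper (whose proof of part (4) is just ``compute the composite''), and your observation that the upper index $n$ in the displayed formula should be $m$ is accurate.
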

\begin{proof} (1) By the definition of the order on partial functions, we have that $yS \subseteq vS$ and $xS \subseteq uS$.
In addition, $xy^{-1}$ and $uv^{-1}$ agree on elements of $yS$.
We have that $y = va$ and $x = ub$.
Now, $(xy^{-1})(y) = x$.
But $(uv^{-1})(y) = ua$.
It follows that $x = ua$ and so $ub = ua$ and so $a = b$.
The result now follows with $s = a = b$.
In order that $xy^{-1}$ be an idempotent, we must have that $x = y$.
It is therefore immediate that if $xy^{-1}$ is an idempotent then so too is $uv^{-1}$.

(2) The idempotents $xx^{-1}$ and  $yy^{-1}$ are orthogonal if and only if $xS \cap yS = \varnothing$.
But this is equivalent to saying that $x$ and $y$ are incomparable.

(3) The product of $xx^{-1}$ and $yy^{-1}$ is the identity function on $xS \cap yS$ which is the identity function on $(x \vee y)S$.

(4) We simply compute the composite of the partial functions $ab^{-1} \colon bS \rightarrow aS$ and $cd^{-1} \colon dS \rightarrow cS$.
\end{proof}

Basic morphisms are the building blocks of all elements of $\mathsf{R}(S)$.

\begin{lemma}\label{lem:trump} Let $S$ be a $k$-monoid.
Then each element of $\mathsf{R}(S)$ is a join of basic bijective morphisms.
\end{lemma}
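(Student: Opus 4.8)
The plan is to take an arbitrary element $\alpha \colon XS \to YS$ of $\mathsf{R}(S)$, where $XS$ is a finitely generated right ideal with generating set $X = \{x_{1}, \ldots, x_{n}\}$, and exhibit $\alpha$ as the join $\bigvee_{i=1}^{n} \alpha(x_{i})x_{i}^{-1}$ of basic bijective morphisms. First I would use Lemma~\ref{lem:nato} to replace $Y$ by $Z = \alpha(X) = \{\alpha(x_{1}), \ldots, \alpha(x_{n})\}$, so that $\alpha$ restricts to a bijection between the generating sets $X$ and $Z$ and $ZS = YS$. For each $i$, set $\beta_{i} = \alpha(x_{i})x_{i}^{-1}$, the basic bijective morphism $x_{i}s \mapsto \alpha(x_{i})s$ from $x_{i}S$ to $\alpha(x_{i})S$; each $\beta_{i}$ is a restriction of $\alpha$ since $\alpha$ is a morphism, so $\beta_{i} \leq \alpha$ in the natural partial order on $\mathsf{R}(S)$.

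The two things to check are that the family $\{\beta_{i}\}$ is pairwise compatible (so that the join exists in the distributive inverse monoid $\mathsf{R}(S)$) and that $\alpha$ is actually the \emph{least} upper bound, not merely an upper bound. Compatibility is immediate from the fact that all $\beta_{i}$ are restrictions of the single element $\alpha$: if $a, b \leq c$ in any inverse semigroup then $a^{-1}b$ and $ab^{-1}$ are idempotents (this is recorded in Section~2), so $\beta_{i} \sim \beta_{j}$ for all $i,j$. For the join, I would argue that any upper bound $\gamma$ of all the $\beta_{i}$ must have domain containing $\bigcup_{i} x_{i}S = XS$ and must agree with each $\beta_{i}$ there, hence agree with $\alpha$ on all of $XS$; since $\mathbf{d}(\alpha) = XS$ this forces $\alpha \leq \gamma$. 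Dually $\alpha$ itself dominates every $\beta_{i}$, so $\alpha = \bigvee_{i=1}^{n} \beta_{i} = \bigvee_{i=1}^{n} \alpha(x_{i})x_{i}^{-1}$.

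The only genuine subtlety — and the step I expect to be the main obstacle — is making sure the join computed in $\mathsf{R}(S)$ really is the pointwise union of the partial maps $\beta_{i}$, i.e.\ that joins of compatible families in $\mathsf{R}(S)$ are computed "as functions" on the union of the domains. This is where Proposition~\ref{prop:truss} and the observation in its proof are used: a finite union of finitely generated right ideals is again finitely generated (the $x_{i}S$ themselves generate $XS$), and a pairwise-compatible family of morphisms glues to a well-defined morphism on the union of the domains — compatibility $\beta_{i}^{-1}\beta_{j}, \beta_{i}\beta_{j}^{-1} \in \mathsf{E}(\mathsf{R}(S))$ is exactly the condition guaranteeing that $\beta_{i}$ and $\beta_{j}$ agree on $x_{i}S \cap x_{j}S$ and that their images overlap consistently. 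Once that gluing lemma is in hand (or cited as the standard fact that $\mathcal{I}(\mathscr{X})$-type inverse monoids, and their inverse submonoids closed under compatible joins, compute joins pointwise), the identification $\alpha = \bigvee_{i} \beta_{i}$ is forced, completing the proof.
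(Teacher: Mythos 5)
Your proposal is correct and follows essentially the same route as the paper: invoke Lemma~\ref{lem:nato} to assume $\alpha$ induces a bijection between the generating sets, observe that the restriction of $\alpha$ to each $x_{i}S$ is the basic morphism $\alpha(x_{i})x_{i}^{-1}$, and conclude that $\alpha$ is the union (hence the join) of these restrictions. The paper simply writes $\theta = \bigcup_{x \in X} y_{x}x^{-1}$ and treats the identification of this union of partial functions with the join as immediate, so the extra care you take over compatibility and pointwise computation of joins is just a more explicit version of the same argument.
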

\begin{proof} Let  $\theta \colon XS \rightarrow YS$ be a bijective morphism where $X$ and $Y$ are finite sets.
By Lemma~\ref{lem:nato}, we may assume that $\theta$ induces a bijection between $X$ and $Y$.
Let $x \in X$. Put $y_{x} = \theta (x)$.
Then $\theta (xs) = \theta (x)s = y_{x}s$.
It follows that the restriction of $\theta$ to $xS$ is a map of the form $y_{x}x^{-1}$.
Thus $\theta = \bigcup_{x \in X} y_{x}x^{-1}$.
\end{proof}

The following property of $\mathsf{R}(S)$ is simple, but important.

\begin{lemma}\label{lem:line} Let $S$ be a $k$-monoid.
Let $xy^{-1} \leq \bigvee_{j=1}^{n} u_{j}v_{j}^{-1}$ in $\mathsf{R}(S)$.
Then $xy^{-1} \leq u_{j}v_{j}^{-1}$ for some $j$.
\end{lemma}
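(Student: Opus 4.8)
The plan is to exploit the fact that $xy^{-1}$ is a \emph{basic} morphism with domain the principal right ideal $yS$, so its behaviour is pinned down by its value on the single generator $y$. Write $\theta = \bigvee_{j=1}^{n} u_{j}v_{j}^{-1}$, an element of $\mathsf{R}(S)$ with domain $(\bigcup_j v_j S)\cap(\text{appropriate ideal})$; since $xy^{-1}\leq\theta$ we have $yS\subseteq\operatorname{dom}(\theta)$, and in particular $y\in\operatorname{dom}(\theta)=\bigcup_{j=1}^{n}v_{j}S$ (using that the domain of a join of the $v_j v_j^{-1}$ is the union of the $v_j S$, by Lemma~\ref{lem:tom}(2)–(3) applied to the idempotent parts, or simply because $\theta$ restricted to idempotents is $\bigvee_j v_j v_j^{-1}$). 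Hence $y = v_{j}t$ for some index $j$ and some $t\in S$.

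Next I would compute $\theta(y)$ two ways. On the one hand $\theta(y)=(xy^{-1})(y)=x$ since $xy^{-1}\leq\theta$ and $y\in yS=\operatorname{dom}(xy^{-1})$. On the other hand, since $y = v_j t\in v_j S = \operatorname{dom}(u_j v_j^{-1})$, and $u_j v_j^{-1}\leq\theta$, we get $\theta(y)=(u_j v_j^{-1})(y)=(u_j v_j^{-1})(v_j t)=u_j t$. Therefore $x = u_j t$. Combining $y = v_j t$ and $x = u_j t$ gives $(x,y)=(u_j,v_j)t$, which by Lemma~\ref{lem:tom}(1) is exactly the statement $xy^{-1}\leq u_j v_j^{-1}$, as required.

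The only genuine point requiring care is the claim that $y\in\bigcup_{j}v_{j}S$, i.e. that the domain idempotent of $\theta$ is $\bigvee_{j} v_j v_j^{-1}$ and that this idempotent is the identity on $\bigcup_j v_j S$; this follows because $\mathbf{d}\!\left(\bigvee_j u_j v_j^{-1}\right)=\bigvee_j \mathbf{d}(u_j v_j^{-1})=\bigvee_j v_j v_j^{-1}$ in a distributive inverse monoid, and by Lemma~\ref{lem:tom}(3) a join of idempotents $v_j v_j^{-1}$ is the identity map on $\bigcup_j v_j S$. One should also note that the join $\bigvee_{j=1}^{n}u_{j}v_{j}^{-1}$ is assumed to exist (the elements are pairwise compatible), so $\theta$ is a well-defined element of $\mathsf{R}(S)$ and the above manipulations with $\mathbf{d}$ are legitimate. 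No part of the argument is hard; the content is simply that basicness reduces an inequality in $\mathsf{R}(S)$ to tracking the image of one generator, together with the order characterization in Lemma~\ref{lem:tom}(1).
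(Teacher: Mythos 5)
Your proof is correct and is essentially the paper's own argument: locate $y$ in some $v_jS$, evaluate the join at $y$ in two ways to get $x=u_jt$ alongside $y=v_jt$, and invoke the order characterization of Lemma~\ref{lem:tom}(1). The extra care you take over identifying the domain of the join is a reasonable elaboration of what the paper dismisses with ``clearly.''
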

\begin{proof} Clearly, $y \in v_{j}S$ for some $j$.
It follows that $yS \subseteq v_{j}S$.
Let $y = v_{j}m$ for some $m \in S$.
Then $(xy^{-1})(y) = x$ and $(u_{j}v_{j}^{-1})(v_{j}m) = u_{j}m$.
Thus $x = u_{j}m$.
It follows that $xS \subseteq u_{j}S$.
It is now clear that $xy^{-1} \leq u_{j}v_{j}^{-1}$. 
\end{proof}

\begin{proposition}\label{prop:cheese} Let $(S,\mathcal{X})$ be a system.
Then there is a monoid homomorphism $\chi \colon \mathsf{R}(S) \rightarrow \mathcal{I}(\mathcal{X})$
such that $\chi (ab^{-1}) = \lambda_{a}\lambda_{b}^{-1}$.
\end{proposition}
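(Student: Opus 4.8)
The plan is to use the description of elements of $\mathsf{R}(S)$ as joins of basic bijective morphisms (Lemma~\ref{lem:trump}) to reduce everything to the basic case, where the map $\chi$ is prescribed by the formula $\chi(ab^{-1}) = \lambda_{a}\lambda_{b}^{-1}$. First I would check that $\lambda_{a}\lambda_{b}^{-1}$ is a well-defined element of $\mathcal{I}(\mathcal{X})$: by Lemma~\ref{lem:peking}, $\lambda_{b}$ is a homeomorphism from $\mathcal{X}$ onto the open set $b\mathcal{X}$, so $\lambda_{b}^{-1}$ is a partial homeomorphism with domain $b\mathcal{X}$, and composing with $\lambda_{a}$ gives a partial homeomorphism from $b\mathcal{X}$ onto $a\mathcal{X}$. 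Then I would define $\chi$ on a general element $\theta = \bigvee_{x\in X} y_{x}x^{-1} \in \mathsf{R}(S)$ (using the normal form from Lemma~\ref{lem:trump}, where the $x^{-1}$'s have pairwise orthogonal ranges because the generators $x\in X$ are incomparable, hence the $x\mathcal{X}$ are disjoint by Corollary~\ref{cor:insect}) by $\chi(\theta) = \bigvee_{x\in X} \lambda_{y_{x}}\lambda_{x}^{-1}$, i.e.\ the partial homeomorphism $\bigcup_{x\in X}\lambda_{y_{x}}\lambda_{x}^{-1}$ with domain $\bigcup_{x\in X} x\mathcal{X}$. The disjointness guarantees this union is a genuine partial homeomorphism.

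The next step is to verify this is well defined, independently of the chosen normal form; here I would argue that if $y x^{-1} \leq \theta$ in $\mathsf{R}(S)$ then $\lambda_{y}\lambda_{x}^{-1} \leq \chi(\theta)$, using Lemma~\ref{lem:line} to reduce to a comparison with a single basic piece, and then noting that $xy^{-1} \leq uv^{-1}$ implies (by Lemma~\ref{lem:tom}(1)) that $(x,y) = (u,v)s$, whence $\lambda_{x}\lambda_{y}^{-1} = \lambda_{u}\lambda_{s}\lambda_{s}^{-1}\lambda_{v}^{-1} \leq \lambda_{u}\lambda_{v}^{-1}$ because $\lambda_{s}\lambda_{s}^{-1}$ is the identity on the open set $s\mathcal{X}$ (it is an idempotent of $\mathcal{I}(\mathcal{X})$). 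Since any two normal forms for $\theta$ have each basic piece of one below the other, this shows $\chi(\theta)$ does not depend on the representation. The crucial point that makes $\chi$ multiplicative is that $\lambda$ is already a homomorphism $S \to \mathcal{I}(\mathcal{X})$ (Lemma~\ref{lem:fruit}), so $\lambda_{a}\lambda_{b} = \lambda_{ab}$, and the product formula for basic morphisms in $\mathsf{R}(S)$ (Lemma~\ref{lem:tom}(4)) must be matched by the analogous computation of $(\lambda_{a}\lambda_{b}^{-1})(\lambda_{c}\lambda_{d}^{-1})$ in $\mathcal{I}(\mathcal{X})$. This is exactly where the system axioms enter: one computes $\lambda_{b}^{-1}\lambda_{c}$ as a partial homeomorphism whose domain is $c\mathcal{X} \cap b\mathcal{X}$, and by Lemma~\ref{lem:pepper}(2) this equals $(b \vee c)\mathcal{X} = \bigcup_{i} x_{i}\mathcal{X}$ where $bS\cap cS = \{x_1,\dots,x_m\}S$, and on the piece $x_{i}\mathcal{X}$ with $x_{i} = bp_{i} = cq_{i}$ the composite acts as $\lambda_{q_{i}}\lambda_{p_{i}}^{-1}$; assembling gives $(\lambda_{a}\lambda_{b}^{-1})(\lambda_{c}\lambda_{d}^{-1}) = \bigvee_{i}\lambda_{ap_{i}}\lambda_{dq_{i}}^{-1}$, matching $\chi$ applied to Lemma~\ref{lem:tom}(4).

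The main obstacle, I expect, is the careful bookkeeping in the multiplicativity check: one has to handle the general (non-basic) case, where $\theta_{1}\theta_{2}$ is computed as a join of products of basic pieces, and verify that taking joins commutes with $\chi$ — i.e.\ that $\chi$ respects the orthogonal/compatible join structure. This needs the observation that $\chi$ is order-preserving (established above) together with the fact that in $\mathcal{I}(\mathcal{X})$ a join of partial homeomorphisms with pairwise disjoint domains and pairwise disjoint ranges is simply their union, so that $\chi\big(\bigvee_{j} u_{j}v_{j}^{-1}\big) = \bigcup_{j}\lambda_{u_{j}}\lambda_{v_{j}}^{-1}$ whenever the $v_{j}$'s (resp.\ $u_{j}$'s) are pairwise incomparable; domains and ranges of the pieces in a normal form of an element of $\mathsf{R}(S)$ are indeed of this form. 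Finally, $\chi(1) = \mathrm{id}_{\mathcal{X}}$ since the identity of $\mathsf{R}(S)$ is $11^{-1}$ and $\lambda_{1} = \mathrm{id}_{\mathcal{X}}$, so $\chi$ is a monoid homomorphism. (One could also remark, though it is not asserted in the statement, that $\chi$ is injective precisely when the action is effective, by Lemma~\ref{lem:fruit}.)
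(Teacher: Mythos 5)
Your overall strategy matches the paper's: define $\chi$ on basic morphisms by $ab^{-1}\mapsto\lambda_a\lambda_b^{-1}$, extend to joins via the normal form of Lemma~\ref{lem:trump}, use Lemma~\ref{lem:line} for well-definedness, and verify multiplicativity on basic pieces via Lemma~\ref{lem:tom}(4) and Lemma~\ref{lem:pepper}. However, there is one genuine gap. You assert that in a normal form $\theta=\bigvee_{x\in X}y_x x^{-1}$ the generators $x\in X$ are pairwise incomparable, so that the sets $x\mathcal{X}$ are pairwise disjoint and the union $\bigcup_x\lambda_{y_x}\lambda_x^{-1}$ is automatically a partial bijection. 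This is false for a general element of $\mathsf{R}(S)$: the ideal $XS$ is merely finitely generated, not projective, so $X$ need not be a generalized prefix code (that is exactly the distinction between $\mathsf{R}(S)$ and $\mathsf{P}(S)$ drawn in Section~7). The basic pieces $y_x x^{-1}$ are only \emph{compatible}, not orthogonal, and their domains $x\mathcal{X}$ may overlap.

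Consequently the step you cannot skip is showing that $\chi$ sends compatible pairs of basic morphisms to compatible partial homeomorphisms, i.e.\ that $\lambda_a\lambda_b^{-1}$ and $\lambda_c\lambda_d^{-1}$ agree on $b\mathcal{X}\cap d\mathcal{X}$ whenever $ab^{-1}\sim cd^{-1}$. This is where axiom (A2) enters essentially: given $x=bx_1=dx_2$, (A2) produces $u_1,u_2,x'$ with $bu_1=du_2$, $x_1=u_1x'$, $x_2=u_2x'$; compatibility of $ab^{-1}$ and $cd^{-1}$ forces $au_1=cu_2$, whence $ax_1=au_1x'=cu_2x'=cx_2$, so the two images coincide. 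Only after this (together with your correct order-preservation argument) is $\bigcup_x\lambda_{y_x}\lambda_x^{-1}$ a well-defined element of $\mathcal{I}(\mathcal{X})$. With this verification inserted, the rest of your argument -- well-definedness via Lemma~\ref{lem:line}, the domain comparison via Lemma~\ref{lem:pepper}(2), and multiplicativity via Lemma~\ref{lem:tom}(4) -- goes through as in the paper.
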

\begin{proof} Define $\chi (ab^{-1}) = \lambda_{a}\lambda_{b}^{-1}$.
By Lemma~\ref{lem:peking}, this is a partial homeomorphism of $\mathcal{X}$.
It has domain $b\mathcal{X}$, codomain $a\mathcal{X}$ and $bx \mapsto ax$.
We prove first that if $ab^{-1} \leq cd^{-1}$ then $\chi (ab^{-1}) \leq \chi (cd^{-1})$.
By part (1) of Lemma~\ref{lem:tom}, there is an element $s \in S$ such that
$a = cs$ and $b = ds$.
It is now routine to verify that
$\chi (ab^{-1}) \leq \chi (cd^{-1})$
using the fact that $\lambda_{s}\lambda_{s}^{-1}$ is an idempotent.
Now, let $ab^{-1}$ and $cd^{-1}$ be compatible elements.
We prove that $\chi (ab^{-1})$ and $\chi (cd^{-1})$ are compatible.
If $b\mathcal{X} \cap d\mathcal{X} = \varnothing$ then $ab^{-1}$ and $cd^{-1}$ are orthogonal and so
$b\mathcal{X} \cap d\mathcal{X} = \varnothing$ and $a\mathcal{X} \cap c\mathcal{X} = \varnothing$;
here we use the result that a pair of compatible elements with orthogonal domains have orthogonal ranges.
It follows that $\chi (ab^{-1})$ and $\chi (cd^{-1})$ are both orthogonal and so are compatible.
In the sequel, we may therefore assume that $b\mathcal{X} \cap d\mathcal{X} \neq \varnothing$.
Let $x \in b\mathcal{X} \cap d\mathcal{X}$.
Then $x = bx_{1} = dx_{2}$.
Thus by axiom (A2), there are elements $u_{1},u_{2} \in S$  and $x' \in \mathcal{X}$ such that
$bu_{1} = du_{2}$ and $x_{1} = u_{1}x'$ and $x_{2} = u_{2}x'$.
By definition,
$\chi (ab^{-1})(x) = ax_{1} = au_{1}x'$ and $\chi (cd^{-1})(x) = cx_{2} = cu_{2}x'$.
But $ab^{-1}$ and $cd^{-1}$ are compatible and so $au_{1} = cu_{2}$.
It follows that  
$\chi (ab^{-1})(x) = \chi (cd^{-1})(x)$.
By symmetry, we deduce that $\chi (ab^{-1})$ and $\chi (cd^{-1})$ are compatible.
We can now extend $\chi$ to the whole of $\mathsf{R}(S)$.
By Lemma~\ref{lem:trump}, a typical element of $\mathsf{R}(S)$
has the form $\bigvee_{i=1}^{n} a_{i}b_{i}^{-1}$.
Define
$$\chi \left( \bigvee_{i=1}^{n} a_{i}b_{i}^{-1} \right)     
=
\bigcup_{i=1}^{n} \chi (a_{i}b_{i}^{-1}).$$
The right-hand side is certainly a well-defined element.
It remains to show that the map $\chi$ as defined is actually well-defined.
Suppose that  
$$\bigvee_{i=1}^{n} a_{i}b_{i}^{-1} 
=
\bigvee_{j=1}^{m} c_{j}d_{j}^{-1}.$$ 
By Lemma~\ref{lem:line}, for each $i$ there exists a $j$ such that
$a_{i}b_{i}^{-1} \leq c_{j}d_{j}^{-1}$
and conversely.
From this it quickly follows that our definition of $\chi$ is indeed well-defined.
Finally, we prove that $\chi$ is a monoid homomorphism.
To do this, we need only prove that
$\chi (ab^{-1})\chi (cd^{-1}) = \chi (ab^{-1} \cdot cd^{-1})$.
We use part (4) of Lemma~\ref{lem:tom}.
We therefore need to prove that
$$\chi (ab^{-1}) \chi (cd^{-1}) = \chi \left( \bigvee_{i=1}^{n} ap_{i}(dq_{i})^{-1}\right)$$
where $bS \cap cS = \{x_{1}, \ldots, x_{m}\}S$
and $x_{i} = cq_{i} = bp_{i}$.
We check first that the map on the left-hand side
has the same domain as the map on the right-hand side.
By Lemma~\ref{lem:pepper}, we have that
$b\mathcal{X} \cap c\mathcal{X} = \{x_{1},\ldots, x_{m}\}\mathcal{X}$.
Thus the domain of the left-hand side is 
$\{dq_{1}, \ldots, dq_{m}\}\mathcal{X}$ which is the same as the domain of the right-hand side.
The result now follows by a routine calculation.
\end{proof}

\section{Definition of the group $\mathscr{G}(S)$.}

In this section, we define the group associated with a $k$-monoid.
The properties of this group will be investigated in more detail in subsequent sections.

We use the inverse monoid $\mathsf{R}(S)$ defined in the previous section.
We shall construct our group, not from $\mathsf{R}(S)$ itself, but from a certain inverse subsemigroup.
Let $T$ be an inverse semigroup.
A non-zero idempotent $e$ of $T$ is said to be {\em essential} if $ef \neq 0$ for all non-zero
idempotents $f$ of $S$.
Denote by $T^{e}$ the set of all $s \in T$ such that $s^{-1}s$ and $ss^{-1}$ are essential.
It follows by \cite[Lemma~4.2]{Lawson2007}, that $T^{e}$ is an inverse semigroup (without zero).

\begin{lemma}\label{lem:flower} Let $S$ be $k$-monoid.
Then the idempotent in $\mathsf{R}(S)$ associated with the finitely generated right ideal $XS$, that is the identity function defined on $XS$, 
is essential if and only
if for every $s \in S$ there exists an $x \in X$ such that $sa = xb$ for some $a,b \in S$.
\end{lemma}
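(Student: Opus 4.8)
The plan is to unwind the definition of an essential idempotent in $\mathsf{R}(S)$ and translate it into a statement purely about the combinatorics of right ideals in $S$. Recall that the idempotents of $\mathsf{R}(S)$ are exactly the identity maps on finitely generated right ideals, and that two such idempotents, say the identities on $XS$ and $YS$, multiply to the identity on $XS \cap YS$. So the idempotent $1_{XS}$ is essential precisely when $XS \cap YS \neq \varnothing$ for every nonzero idempotent $1_{YS}$, i.e.\ for every nonempty finitely generated right ideal $YS$. The first step is to observe that it suffices to test this condition on the \emph{principal} right ideals $sS$, $s \in S$: if $XS \cap sS \neq \varnothing$ for all $s$, then in particular for any finite set $Y = \{y_1, \dots, y_n\}$ we have $XS \cap y_1 S \neq \varnothing$, hence $XS \cap YS \supseteq XS \cap y_1 S \neq \varnothing$; conversely every $sS$ is itself a (principal, hence finitely generated) nonzero right ideal.

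The second step is simply to spell out what $XS \cap sS \neq \varnothing$ means element-wise. An element of $XS \cap sS$ has the form $xb$ for some $x \in X$, $b \in S$, and simultaneously the form $sa$ for some $a \in S$; so $XS \cap sS \neq \varnothing$ is equivalent to the existence of $x \in X$ and $a, b \in S$ with $sa = xb$. Quantifying over all $s \in S$ gives exactly the stated condition: for every $s \in S$ there is $x \in X$ with $sa = xb$ for some $a, b \in S$.

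Combining these two observations yields the equivalence. In more detail: if $1_{XS}$ is essential then for each $s$ the idempotent $1_{sS}$ is nonzero, so $1_{XS} 1_{sS} = 1_{XS \cap sS} \neq 0$, giving the required $x, a, b$; conversely, if the combinatorial condition holds, then by the reduction in the first step $XS$ meets every nonzero finitely generated right ideal, i.e.\ $1_{XS} 1_{YS} \neq 0$ for all nonzero idempotents $1_{YS}$, which is precisely essentiality. I do not expect any genuine obstacle here — the content is entirely the dictionary between idempotents of $\mathsf{R}(S)$ and finitely generated right ideals together with the elementary remark that meeting all principal right ideals is the same as meeting all finitely generated ones; the only point requiring a line of care is that last reduction, which uses that a finitely generated right ideal $YS$ with $Y$ finite nonempty contains $yS$ for any $y \in Y$.
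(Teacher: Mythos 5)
Your proof is correct and follows exactly the same route as the paper, which simply observes that $XS$ is essential if and only if it meets every principal right ideal $sS$ and then declares the result immediate; you have merely filled in the routine reduction from finitely generated to principal right ideals and the element-wise translation. No issues.
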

\begin{proof} A right ideal $XS$ is essential if and only if it has a non-empty intersection with every principal
right ideal $sS$.
The result is now immediate.
\end{proof}

We now come to the key definition of this paper.
\vspace{5mm}
\begin{center}
\fbox{\begin{minipage}{15em}
{\bf Definition.} Let $S$ be a $k$-monoid. 
Then 
$$\mathscr{G}(S) = \mathsf{R}(S)^{e}/\sigma$$
is the  {\em group associated with $S$}.
\end{minipage}}
\end{center}
\vspace{5mm}

\begin{remark}
{\em The above process for constructing a group from an inverse monoid
of partial bijections is in fact identical to the one used in \cite{YC}.
However, we stress that the group defined in our paper is quite different from the one defined in \cite{YC}.
Observe that we can avoid the explicit use of the congruence $\sigma$ by simply defining
two elements of $\mathsf{R}(S)^{e}$ to be equal if they agree on an essential finitely generated right ideal.
}
\end{remark}

\section{The group $\mathscr{G}(S)$ in terms of maximal generalized prefix codes.}

In Section~6, we defined the group $\mathscr{G}(S)$ associated with any $k$-monoid.
In this section, we shall show that this group can be defined in a different way
that will make it much easier to represent it in geometric terms later.
As a first step, we shall construct an inverse submonoid of $\mathsf{R}(S)$.
Let $S$ be a $k$-monoid.
We say that the finitely generated right ideal $XS$ is {\em projective} if $X$ is a generalized prefix code.

\begin{lemma}\label{lem:beer}  Let $S$ be a $k$-monoid.
Suppose that $XS = YS$ where $X$ and $Y$ are generalized prefix codes.
Then $X = Y$.
\end{lemma}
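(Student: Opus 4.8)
The plan is to prove $X \subseteq Y$ by an ideal-chasing argument and then invoke the symmetry between $X$ and $Y$. First I would take an arbitrary $x \in X$. Since $x \in XS = YS$, there exist $y \in Y$ and $u \in S$ with $x = yu$, so $xS \subseteq yS$; in particular $xS \cap yS \neq \varnothing$. Applying the same reasoning to $y$, which lies in $YS = XS$, there exist $x' \in X$ and $v \in S$ with $y = x'v$, so $yS \subseteq x'S$. Chaining the inclusions gives $xS \subseteq yS \subseteq x'S$, hence $xS \cap x'S \supseteq xS \neq \varnothing$, so $x$ and $x'$ are comparable elements of the generalized prefix code $X$. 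Since distinct elements of a generalized prefix code are incomparable by definition, this forces $x = x'$.

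From $x = x'$ we get $xS \subseteq yS \subseteq xS$, so $xS = yS$. To upgrade this to $x = y$, I would use that $x = yu$ and $y = xv$ imply $x = x(vu)$; cancellativity of $S$ (Lemma~\ref{lem:coffee}) yields $vu = 1$, and conicality of $S$ (Lemma~\ref{lem:coffee}, the group of units being trivial) then gives $u = v = 1$, so $x = y \in Y$. As $x \in X$ was arbitrary, $X \subseteq Y$; interchanging the roles of $X$ and $Y$ gives $Y \subseteq X$, and hence $X = Y$.

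There is no serious obstacle here; the one point that genuinely requires the hypothesis on $S$ rather than pure set-theoretic manipulation is the final passage from $xS = yS$ to $x = y$, which relies on $S$ being conical (and cancellative) — it would fail for a general monoid with nontrivial units. Everything else is a direct consequence of the definition of a generalized prefix code as a finite set of pairwise incomparable elements.
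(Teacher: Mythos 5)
Your proof is correct and follows essentially the same route as the paper's: take $x \in X$, write $x = yu$ with $y \in Y$ and $y = x'v$ with $x' \in X$, use incomparability of distinct elements of $X$ to force $x = x'$, and then use cancellativity plus conicality to conclude the multipliers are trivial, finishing by symmetry. The only difference is cosmetic (your phrasing in terms of chains of principal right ideals versus the paper's direct substitution $x = x'm'm$).
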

\begin{proof} Let $x \in X$.
Then $x = ym$ for some $y \in Y$ and $m \in S$.
Also, $y = x'm'$ for some $x' \in X$ and $m' \in S$.
Therefore $x = x'm'm$.
But $x$ and $x'$ are incomparable if distinct.
Thus $x = x'$ and so $m'm = 1$ by cancellation.
However, $S$ is conical and so $m = m' = 1$.
It follows that $x = y$.
We have proved that $X \subseteq Y$.
By symmetry, $Y \subseteq X$ and so $X = Y$.
\end{proof}

Lemma~\ref{lem:beer} shows that there is a bijection between projective right ideals and generalized prefix codes.

\begin{lemma}\label{lem:Jones} Let $S$ be a $k$-monoid.
Then the intersection of any two finitely generated projective right ideals of $S$
is a finitely generated projective right ideal of $S$.
\end{lemma}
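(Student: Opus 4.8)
The plan is to realize the intersection as the right ideal generated by the union of all the sets $x \vee y$, and then to observe that this union is \emph{automatically} a generalized prefix code. So suppose $XS$ and $YS$ are finitely generated projective right ideals, where $X$ and $Y$ are the (finite, non-empty) generalized prefix codes determined by them via Lemma~\ref{lem:beer}. First I would write
$$XS \cap YS = \bigcup_{x \in X,\, y \in Y} (xS \cap yS),$$
and recall from the discussion following Lemma~\ref{lem:nelson} that $xS \cap yS = \bigcup_{e \in x \vee y} eS$ for each pair $(x,y)$, where $x \vee y = \varnothing$ (so this union is empty) exactly when $x$ and $y$ are incomparable. Putting $W = \bigcup_{x \in X,\, y \in Y} (x \vee y)$, this gives $XS \cap YS = WS$; moreover $W$ is finite because $S$ is finitely aligned and $X$, $Y$ are finite. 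Hence $XS \cap YS$ is a finitely generated right ideal.

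It then remains to check that $W$ is a generalized prefix code, i.e. that distinct elements of $W$ are incomparable. Suppose $e \in x \vee y$ and $e' \in x' \vee y'$ are comparable and pick $f \in eS \cap e'S$. Since $e \in x \vee y \subseteq xS$ we have $eS \subseteq xS$, so $f \in xS$; likewise $f \in x'S$, so $xS \cap x'S \neq \varnothing$, and because $X$ is a generalized prefix code this forces $x = x'$. The same argument applied to $y$ and $y'$ gives $y = y'$. Thus $e$ and $e'$ both belong to $x \vee y$, which is a set of pairwise incomparable elements by Lemma~\ref{lem:indep}; since $e$ and $e'$ are comparable, $e = e'$. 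Hence $W$ is a generalized prefix code and $XS \cap YS = WS$ is a finitely generated projective right ideal.

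I do not anticipate a real obstacle. The only step needing a little care is the cross-pair comparison in the second paragraph — the point being that comparability between an element of $x \vee y$ and an element of $x' \vee y'$ lifts to comparability between $x$ and $x'$ (and between $y$ and $y'$), which the prefix-code hypotheses on $X$ and $Y$ rule out unless the pairs coincide, after which Lemma~\ref{lem:indep} finishes it. The one genuine edge case is when the intersection is empty (all $x \vee y$ empty), which should be either excluded or regarded as trivial in line with the paper's convention that generalized prefix codes are non-empty.
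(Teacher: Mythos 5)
Your proof is correct and follows essentially the same route as the paper's: decompose $XS \cap YS$ over pairs $(x,y) \in X \times Y$, use (strong) finite alignment to write each nonempty $xS \cap yS$ as generated by a finite set of incomparable elements, and show comparability across the union forces the generating pairs to coincide, after which Lemma~\ref{lem:indep} finishes the argument. If anything, your write-up is slightly more explicit than the paper's at the final step and in handling the empty intersection.
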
 
\begin{proof} Let $XS$ and $YS$ be two finitely generated projective right ideals
where $X$ and $Y$ are generalized prefix codes.
We shall assume that $XS \cap YS \neq \varnothing$, otherwise there is nothing to prove.
By assumption, for each $x \in X$ and $y \in Y$ either $xS \cap yS = \varnothing$ or 
$xS \cap yS = Z_{(x,y)}S$ where $Z_{(x,y)}$ is a finite set of incomparable elements.
It follows that $XS \cap YS = \left( \bigcup_{(x,y) \in X \times Y} Z_{(x,y)} \right) S$.\footnote{Be aware that some of the sets $Z_{(x,y)}$
could be empty.}
We prove that $\bigcup_{(x,y) \in X \times Y} Z_{(x,y)}$ is a generalized prefix code.
Let $a \in Z_{(x_{i}, y_{k})}$ and $b \in Z_{(x_{j}, y_{l})}$
where $x_{i}, x_{j} \in X$ and $y_{k}, y_{l} \in Y$.
Suppose that $aS \cap bS \neq \varnothing$.
Then $am_{1} = bm_{2}$ for some $m_{1},m_{2} \in S$.
But $a = up$, where $u \in x_{i}S \cap y_{k}S$, and $b = vq$, where $v \in x_{j}S \cap y_{l}S$.
It follows that $x_{i} = x_{j}$ and $y_{k} = y_{l}$ by virtue of the fact that the elements are incomparable.
\end{proof}

\begin{lemma}\label{lem:comet} Let $S$ be a $k$-monoid.
Each bijective morphism from $XS$ to $YS$, where $X$ and $Y$ are generalized prefix codes,
is determined by a bijection from $X$ to $Y$, and vice versa.
\end{lemma}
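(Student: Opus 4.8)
The plan is to show the two directions of the claimed bijection. In one direction, suppose $\alpha \colon XS \to YS$ is a bijective morphism between projective right ideals. By Lemma~\ref{lem:nato}, there is a subset $Z \subseteq YS$ with $ZS = YS$ such that $\alpha$ induces a bijection between $X$ and $Z$; concretely, $Z = \alpha(X)$. The key point is that $Z$ is automatically a generalized prefix code: since $ZS = YS$ and $Y$ is itself a generalized prefix code, Lemma~\ref{lem:beer} forces $Z = Y$. Hence $\alpha$ restricts to a bijection $X \to Y$, and moreover $\alpha$ is completely determined by this restriction, because any element of $XS$ has the form $xs$ for a unique $x \in X$ and $s \in S$ (uniqueness of $x$ follows since $X$ is a generalized prefix code, so the principal right ideals $xS$ are pairwise disjoint, and $s$ is then recovered by cancellativity from Lemma~\ref{lem:coffee}(3)), and morphism-ness gives $\alpha(xs) = \alpha(x)s$.

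In the other direction, suppose we are given a bijection $f \colon X \to Y$ between generalized prefix codes. We must produce a well-defined bijective morphism $\alpha \colon XS \to YS$ with $\alpha|_X = f$. The only candidate is $\alpha(xs) = f(x)s$, so the real work is checking this is well-defined: if $xs = x's'$ with $x,x' \in X$ and $s,s' \in S$, then $xS \cap x'S \neq \varnothing$, so $x$ and $x'$ are comparable, whence $x = x'$ (as $X$ is a generalized prefix code), and then $s = s'$ by cancellativity. So $\alpha$ is a well-defined function. It is a morphism essentially by construction: $\alpha((xs)t) = f(x)(st) = (f(x)s)t = \alpha(xs)t$. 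It is surjective onto $YS$ since every element of $YS$ is $ys$ for some $y \in Y$, and $y = f(x)$ for a unique $x$. Injectivity: if $f(x)s = f(x')s'$ then, since $Y$ is a generalized prefix code, the same disjointness argument gives $f(x) = f(x')$, hence $x = x'$ since $f$ is a bijection, and then $s = s'$ by cancellativity. Thus $\alpha \in \mathsf{R}(S)$ restricted to a map between projective right ideals.

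Finally one observes the two constructions are mutually inverse: starting from $\alpha$, extracting $f = \alpha|_X$, and re-building gives back $\alpha$ by the uniqueness observation above; starting from $f$, building $\alpha$, and restricting to $X$ gives back $f$ by construction. This establishes the claimed correspondence.

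I do not expect a genuine obstacle here: every step reduces to cancellativity (Lemma~\ref{lem:coffee}) and the rigidity of generalized prefix codes (Lemma~\ref{lem:beer}), both already available. The one place to be slightly careful is the well-definedness of $xs \mapsto f(x)s$, i.e.\ that the "prefix" $x \in X$ of an element of $XS$ is unique — this is exactly where incomparability of the elements of the generalized prefix code $X$ is used, together with cancellativity to pin down the tail $s$.
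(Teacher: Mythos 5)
Your bijection-to-morphism direction is essentially identical to the paper's: define $\alpha(xs)=f(x)s$, check well-definedness via incomparability of $X$ plus cancellativity, then verify morphism-ness, injectivity and surjectivity the same way. In the morphism-to-bijection direction you take a different route from the paper, and that route has a gap as written. The paper argues directly: for $x\in X$ write $\alpha(x)=ym$, apply $\alpha^{-1}$ to get $x=\alpha^{-1}(y)m=x'nm$, use incomparability of $X$ to force $x=x'$ and $nm=1$, and then conicality to force $m=n=1$, so $\alpha(x)\in Y$. You instead invoke Lemma~\ref{lem:nato} to get $Z=\alpha(X)$ with $ZS=YS$ and then want $Z=Y$ from Lemma~\ref{lem:beer}. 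But Lemma~\ref{lem:beer} requires \emph{both} $Z$ and $Y$ to be generalized prefix codes, and your stated justification that ``$Z$ is automatically a generalized prefix code'' is circular: you offer the conclusion of Lemma~\ref{lem:beer} as the reason its hypothesis holds. The hypothesis genuinely needs checking, since $ZS=YS$ with $Y$ a generalized prefix code does not by itself give $Z=Y$ (take $Y=\{a\}$ and $Z=\{a,ab\}$ in a free monoid: $ZS=YS$ but $Z$ is not even a code).

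Fortunately the missing step is one line: if $\alpha(x)s=\alpha(x')s'$ with $x,x'\in X$, then $\alpha(xs)=\alpha(x's')$, so $xs=x's'$ by injectivity, so $x=x'$ by incomparability of $X$; hence distinct elements of $\alpha(X)$ are incomparable and $\alpha(X)$ is a generalized prefix code. (This is exactly the content of Lemma~\ref{lem:wine}, which the paper states immediately \emph{after} the present lemma, so you cannot simply cite it here without reordering.) With that inserted, Lemma~\ref{lem:beer} applies, $Z=Y$, and your argument — including the mutual-inverseness of the two constructions — goes through. Your version arguably packages the conicality argument once and for all inside Lemma~\ref{lem:beer} rather than repeating it, which is a mild gain in modularity over the paper's direct computation.
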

\begin{proof} Suppose first that $\alpha \colon X \rightarrow Y$ is a bijection.
Define $\alpha' \colon XS \rightarrow YS$ by $\alpha' (xm) = \alpha (x)m$.
We need first to show that $\alpha'$ is a well-defined function.
Suppose that $xm = x'm'$ where $x,x' \in X$.
Then, since $X$ is a generalized prefix code, we must have that $x = x'$ and so, by cancellation, $m = m'$.
It follows that $\alpha'$ is well-defined.
It is a morphism by construction.
We show that it is a bijection.
Suppose that $\alpha'(xm) = \alpha' (x'm')$.
Then $\alpha (x)m = \alpha (x')m'$.
By assumption, $\alpha (x), \alpha (x') \in Y$ which is a generalized prefix code.
It follows that $\alpha (x) = \alpha (x')$ and so, since $\alpha$ is a bijection, we have that $x = x'$.
By cancellation, we then get that $m = m'$ and so $xm = x'm'$.
It follows that $\alpha'$ is injective.
The fact that it is a surjection is immediate.

Suppose now that $\alpha \colon XS \rightarrow YS$ is a bijective morphism.
We claim that $\alpha$ induces a bijection between $X$ and $Y$.
Let $x \in X$.
Then $\alpha (x) = ym$.
Take $\alpha^{-1}$ of both sides and we get that
$x = \alpha^{-1}(y)m$.
Let $\alpha^{-1}(y) = x'n$.
Then $x = x'nm$.
But $x$ and $x'$ are incomparable unless $x = x'$ in which case $1 = nm$.
We now apply the fact that $S$ is conical to get that $m = n = 1$.
\end{proof}

\begin{lemma}\label{lem:wine}
Let $S$ be a $k$-monoid.
Let $\alpha \colon XS \rightarrow YS$ be a bijective morphism between two finitely generated right ideals
and let $Z \subseteq XS$ be a generalized prefix code.
Then $\alpha (Z)$ is a generalized prefix code.
\end{lemma}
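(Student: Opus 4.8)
The plan is to exploit two features of $\alpha$: that it is a morphism (so $\alpha(as) = \alpha(a)s$) and that it is a bijection (hence injective). First I would record the trivial bookkeeping: since $Z$ is by definition a finite non-empty subset of $S$ and $\alpha$ is in particular a function, $\alpha(Z)$ is a finite non-empty subset of $S$; indeed $|\alpha(Z)| = |Z|$ because $\alpha$ is injective, so distinct elements of $Z$ give distinct elements of $\alpha(Z)$. What remains is to show that distinct elements of $\alpha(Z)$ are incomparable in $S$.

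The heart of the argument is the following. Let $a, b \in \alpha(Z)$ be distinct, say $a = \alpha(z)$ and $b = \alpha(z')$ with $z, z' \in Z$; by injectivity of $\alpha$, $z \neq z'$, so $zS \cap z'S = \varnothing$ because $Z$ is a generalized prefix code. Suppose, for a contradiction, that $aS \cap bS \neq \varnothing$, so that $as = bt$ for some $s, t \in S$. Since $Z \subseteq XS$ and $XS$ is a right ideal, both $zs$ and $z't$ lie in $XS$, the domain of $\alpha$. Using the morphism property, $\alpha(zs) = \alpha(z)s = as = bt = \alpha(z')t = \alpha(z't)$, and then injectivity of $\alpha$ yields $zs = z't$. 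But this element lies in $zS \cap z'S$, contradicting $zS \cap z'S = \varnothing$. Hence $aS \cap bS = \varnothing$, i.e.\ $a$ and $b$ are incomparable, and $\alpha(Z)$ is a generalized prefix code.

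I do not anticipate a genuine obstacle here: the only point requiring a moment's care is noting that the products $zs$ and $z't$ that witness comparability of $a$ and $b$ actually lie in the domain $XS$ of $\alpha$ (so that applying the morphism identity and then $\alpha$'s injectivity is legitimate), which is immediate since $XS$ is a right ideal containing $Z$. Everything else is a direct transfer of the incomparability of $z, z'$ backwards along the injective morphism $\alpha$.
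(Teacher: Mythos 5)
Your proof is correct and follows essentially the same route as the paper's: both arguments use the morphism property to rewrite $\alpha(z)m = \alpha(z')m'$ as $\alpha(zm) = \alpha(z'm')$ and then invoke injectivity of $\alpha$ to transfer comparability back to $z$ and $z'$. The only difference is presentational (you argue contrapositively from $z \neq z'$, the paper derives $z = z'$ and hence $\alpha(z) = \alpha(z')$), so there is nothing to add.
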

\begin{proof}
Suppose that $z,z' \in Z$ are such that $\alpha (z)S \cap \alpha (z')S \neq \varnothing$.
Then $\alpha (z)m = \alpha (z')m'$ for some $m,m' \in S$.
But $\alpha$ is a morphism and so $\alpha (zm) = \alpha (z'm')$.
By injectivity, we have that $zm = z'm'$ and, by assumption, $z$ and $z'$ are supposed to be incomparable.
Thus $z = z'$.
It follows that $\alpha (z) = \alpha (z')$.
This proves that $\alpha (Z)$ is also a generalized prefix code.
\end{proof}

Denote by $\mathsf{P}(S)$ the set of all bijective morphisms between finitely generated projective right ideals of $S$.
By Lemma~\ref{lem:Jones} and Lemma~\ref{lem:wine}, it follows that
$\mathsf{P}(S) \subseteq \mathsf{R}(S)$
and that, in fact, we have proved the following.

\begin{proposition}\label{prop:ophelia}
Let $S$ be a $k$-monoid.
The set of all bijective morphisms between finitely generated projective right ideals of $S$
is an inverse submonoid $\mathsf{P}(S)$ of $\mathsf{R}(S)$.
\end{proposition}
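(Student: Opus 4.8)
The plan is to verify the three conditions that make a subset of the inverse monoid $\mathsf{R}(S)$ --- which is an inverse monoid by Proposition~\ref{prop:truss} --- into an inverse submonoid: that it contains the identity of $\mathsf{R}(S)$, that it is closed under multiplication, and that it is closed under the inverse operation. The inclusion $\mathsf{P}(S) \subseteq \mathsf{R}(S)$ is immediate, since a finitely generated projective right ideal is in particular a finitely generated right ideal.

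Two of the three conditions are essentially free. For the identity: the identity of $\mathsf{R}(S)$ is the identity function on $S = 1S$, and the singleton $\{1\}$ is (vacuously) a generalized prefix code, so this map lies in $\mathsf{P}(S)$. For closure under inverses: if $\alpha \colon XS \rightarrow YS$ is a bijective morphism with $X$ and $Y$ generalized prefix codes, then $\alpha^{-1} \colon YS \rightarrow XS$ is again a bijective morphism between finitely generated projective right ideals, hence lies in $\mathsf{P}(S)$.

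The substance is closure under multiplication. Given $\alpha \colon XS \rightarrow YS$ and $\beta \colon US \rightarrow VS$ in $\mathsf{P}(S)$, their product in $\mathsf{R}(S)$ --- composing, for definiteness, as $\beta \circ \alpha$ --- has domain $\alpha^{-1}(W)$ and range $\beta(W)$, where $W := YS \cap US$, and on its domain it restricts to a bijective morphism onto its range, since the restriction of a bijective morphism to a subideal of its domain is again a bijective morphism onto its image. (If $W = \varnothing$ the product is the empty map, which we regard, as in the proof of Lemma~\ref{lem:Jones}, as a degenerate element of $\mathsf{P}(S)$.) So it suffices to show that $\alpha^{-1}(W)$ and $\beta(W)$ are finitely generated projective right ideals. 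First I would apply Lemma~\ref{lem:Jones} to get that $W$ is a finitely generated projective right ideal, say $W = ZS$ with $Z$ a generalized prefix code. Then $\alpha^{-1}(W)$ is a right ideal because $\alpha^{-1}$ is a morphism; it is finitely generated by Lemma~\ref{lem:usa}; and by Lemma~\ref{lem:nato} it equals $\alpha^{-1}(Z)S$, where $\alpha^{-1}(Z)$ is a generalized prefix code by Lemma~\ref{lem:wine} applied to the bijective morphism $\alpha^{-1}$ and the code $Z$. Hence $\alpha^{-1}(W)$ is finitely generated projective; the argument for $\beta(W)$ is symmetric, applying Lemmas~\ref{lem:usa}, \ref{lem:nato} and~\ref{lem:wine} directly to $\beta$. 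This gives closure under multiplication and completes the verification.

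I do not anticipate a real obstacle: the proof is bookkeeping that assembles Lemmas~\ref{lem:Jones}, \ref{lem:nato}, \ref{lem:usa} and~\ref{lem:wine}, and indeed the text preceding the statement already flags it as following `in fact' from Lemmas~\ref{lem:Jones} and~\ref{lem:wine}. The one step meriting a moment's care is passing from $W = ZS$ to $\alpha^{-1}(W) = \alpha^{-1}(Z)S$ at the level of generating sets, which is exactly the role of Lemma~\ref{lem:nato} (and is already the fact underlying the proofs of Lemmas~\ref{lem:usa} and~\ref{lem:wine}).
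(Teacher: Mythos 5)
Your proof is correct and follows exactly the route the paper intends: the paper gives no separate argument for Proposition~\ref{prop:ophelia}, simply asserting that it follows from Lemma~\ref{lem:Jones} and Lemma~\ref{lem:wine}, and your write-up is precisely the bookkeeping (identity, inverses, and products via Lemmas~\ref{lem:Jones}, \ref{lem:nato}, \ref{lem:usa} and~\ref{lem:wine}) that the authors leave implicit. Your parenthetical handling of the empty map is also consistent with the paper's convention that these inverse monoids contain a zero.
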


The following lemma is simple, but important.

\begin{lemma}\label{lem:birget} Let $S$ be a $k$-monoid.
Then $\mathsf{P}(S)^{e} \subseteq \mathsf{R}(S)^{e}$.
\end{lemma}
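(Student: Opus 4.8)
The plan is to unwind both sides of the claimed inclusion directly from the definitions and use the key fact, already established in Lemma~\ref{lem:Jones}, that a finitely generated projective right ideal has the special shape $XS$ with $X$ a generalized prefix code. Recall that for any inverse semigroup $T$ with zero, $T^{e}$ is the set of elements $s$ such that both $s^{-1}s$ and $ss^{-1}$ are essential idempotents. Since $\mathsf{P}(S)$ is an inverse \emph{submonoid} of $\mathsf{R}(S)$ by Proposition~\ref{prop:ophelia}, an element $\alpha \in \mathsf{P}(S)$ has the same domain idempotent $\alpha^{-1}\alpha$ and range idempotent $\alpha\alpha^{-1}$ whether we regard it as an element of $\mathsf{P}(S)$ or of $\mathsf{R}(S)$; these idempotents are the identity maps on $\mathbf{d}(\alpha) = XS$ and $\mathbf{r}(\alpha) = YS$ respectively, for generalized prefix codes $X, Y$. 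So the only thing that needs checking is the following: if the identity map on a finitely generated projective right ideal $XS$ is essential \emph{as an idempotent of $\mathsf{P}(S)$}, then the identity map on $XS$ is also essential \emph{as an idempotent of $\mathsf{R}(S)$}.

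First I would make the notion of essential concrete. By Lemma~\ref{lem:flower}, the idempotent associated with a finitely generated right ideal $XS$ is essential in $\mathsf{R}(S)$ precisely when $XS$ meets every principal right ideal $sS$ nontrivially, i.e.\ for every $s \in S$ there is $x \in X$ with $xS \cap sS \neq \varnothing$. The same characterization, restricted to the relevant semilattice, describes essentiality in $\mathsf{P}(S)$: the idempotent of $XS$ is essential in $\mathsf{P}(S)$ iff $XS$ meets every \emph{projective} principal right ideal nontrivially. But every principal right ideal $sS$ is automatically projective, since the singleton $\{s\}$ is trivially a generalized prefix code (a one-element set is vacuously a set of pairwise incomparable elements, and $1 \notin \{s\}$ unless $s=1$, in which case $sS = S$ is the whole monoid). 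Hence the two conditions coincide verbatim.

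The main step, then, is simply to observe that the condition "$XS \cap sS \neq \varnothing$ for all $s \in S$" does not depend on which ambient inverse monoid we test it in: it is a statement purely about $S$ and the set $X$. Concretely, given $\alpha \in \mathsf{P}(S)^{e}$, both $\mathbf{d}(\alpha) = XS$ and $\mathbf{r}(\alpha) = YS$ are finitely generated projective right ideals whose associated idempotents are essential in $\mathsf{P}(S)$; by the identification above, these idempotents meet every principal right ideal, hence are essential in $\mathsf{R}(S)$ by Lemma~\ref{lem:flower}. Therefore $\alpha \in \mathsf{R}(S)^{e}$, which is exactly the desired inclusion $\mathsf{P}(S)^{e} \subseteq \mathsf{R}(S)^{e}$. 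I expect no serious obstacle here; the only point requiring a moment's care is confirming that "essential idempotent of $\mathsf{P}(S)$" is tested only against idempotents coming from \emph{projective} right ideals, and then noting that principal right ideals are already projective, so that the smaller semilattice of test idempotents in $\mathsf{P}(S)$ is in fact cofinal enough to detect essentiality in $\mathsf{R}(S)$.
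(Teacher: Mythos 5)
Your proof is correct and rests on exactly the same observation as the paper's (one-line) proof: principal right ideals are projective, so the test idempotents that detect essentiality in $\mathsf{R}(S)$ (via Lemma~\ref{lem:flower}) are already present in $\mathsf{P}(S)$, and the two notions of essentiality coincide on projective right ideals. Your version simply spells out the details that the paper leaves implicit.
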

\begin{proof} The result follows from the simple observation that principal right ideals $aS$
are projective.
\end{proof}

\begin{lemma}\label{lem:otan} Let $S$ be a $k$-monoid.
Let $\alpha \colon XS \rightarrow YS$ be a bijective morphism between two essential, finitely generated right ideals of $S$
such that $\alpha$ induces a bijection between $X$ and $Y$.
Suppose that $ZS \subseteq XS$ is such that $Z$ is a maximal generalized prefix code.
Then $\alpha (Z)$ is a maximal generalized prefix code.
\end{lemma}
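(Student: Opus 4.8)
The plan is to deduce maximality of $\alpha(Z)$ from the essentialness of $YS$ and the maximality of $Z$ as a generalized prefix code in $S$, transporting comparabilities forward and backward through $\alpha$. First I would record that $\alpha(Z)$ is already a generalized prefix code: it is finite and non-empty because $Z$ is (being a maximal generalized prefix code, $Z$ is finite and non-empty), it is contained in $\mathbf{r}(\alpha) = YS \subseteq S$, and its elements are pairwise incomparable by Lemma~\ref{lem:wine} applied to the generalized prefix code $Z \subseteq XS$. So only the maximality condition for $\alpha(Z)$ remains, i.e.\ that every $w \in S$ is comparable with some element of $\alpha(Z)$.

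Fix $w \in S$. Since $YS$ is an essential right ideal, $wS \cap YS \neq \varnothing$, and because $Y$ generates $YS$ there is some $y \in Y$ with $wS \cap yS \neq \varnothing$; write $wa = yb$ with $a,b \in S$. Put $x = \alpha^{-1}(y)$, which makes sense as $y \in Y \subseteq YS = \mathbf{r}(\alpha)$; by the hypothesis that $\alpha$ restricts to a bijection $X \to Y$ we have $x \in X$, so $xb \in XS$. Now I apply the maximality of $Z$ in $S$ to the element $xb$: there is $z \in Z$ with $(xb)S \cap zS \neq \varnothing$, say $xbc = zd$ for some $c,d \in S$.

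Finally I would transport this equality forward by $\alpha$. Both $xbc$ and $zd$ lie in the domain $XS$ of $\alpha$ (the first since $x \in X$, the second since $z \in Z \subseteq XS$), so, using that $\alpha$ is a morphism and $\alpha(x) = y$, one gets $ybc = \alpha(x)bc = \alpha(xbc) = \alpha(zd) = \alpha(z)d$. Combining with $wa = yb$ yields $wac = ybc = \alpha(z)d$, so $wac \in wS \cap \alpha(z)S$, exhibiting $\alpha(z) \in \alpha(Z)$ comparable with $w$. As $w$ was arbitrary, $\alpha(Z)$ is a maximal generalized prefix code.

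The argument is really a short element chase, so there is no serious obstacle; the only care needed is bookkeeping about which elements lie in $X$, in $XS$, and in $S$, since $\alpha$ and its morphism identity $\alpha(sm) = \alpha(s)m$ are available only on $XS$. It is worth noting that the essentialness of $XS$ is not used above — indeed it is automatic, since $ZS \subseteq XS$ and $ZS$ is essential by maximality of $Z$ — and that only the essentialness of $YS$ enters the proof.
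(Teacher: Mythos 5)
Your proof is correct and follows essentially the same route as the paper's: both establish that $\alpha(Z)$ is a generalized prefix code (you cite Lemma~\ref{lem:wine}, the paper redoes the short computation), and both prove maximality by using essentialness of $YS$ to write $wa=yb$ with $y\in Y$, pulling back to $x=\alpha^{-1}(y)\in X$, invoking maximality of $Z$ (equivalently, essentialness of $ZS$) against $xb$, and pushing the resulting equality forward through the morphism $\alpha$. Your closing observation that only the essentialness of $YS$ is needed is a nice, correct refinement.
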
 
\begin{proof} Let $Z = \{z_{1}, \ldots, z_{m}\}$.
Suppose that $\alpha (z_{i})$ and $\alpha (z_{j})$ are comparable.
Then $\alpha (z_{i})a = \alpha (z_{j})b$.
But $\alpha$ is a morphism, so $\alpha (z_{i}a) = \alpha (z_{j}b)$.
Thus $z_{i}a = z_{j}b$.
But $Z$ is a generalized prefix code and so $i = j$ and $z_{i} = z_{j}$.
We have therefore proved that $\alpha (Z)$ is a generalized prefix code.
We now prove that $\alpha (Z)$ is maximal.
Let $a \in S$ be any element.
Then $aS \cap YS \neq \varnothing$.
Thus $ab = ys$ for some $y \in Y$ and $b,s \in S$.
It follows that $\alpha^{-1}(a)b = \alpha^{-1}(y)s$.
Since $ZS$ is an essential ideal we have that $\alpha^{-1}(y)sd = zf$.
Thus $ysd = \alpha (z)f$
and so $abd = \alpha (z)f$. 
It follows that $aS \cap \alpha (z)S \neq \varnothing$, as claimed.
\end{proof}

\begin{lemma}\label{lem:bliss} Let $XS$ be an essential finitely generated right ideal in a $k$-monoid $S$.
Then there is a maximal generalized prefix code $Z \subseteq XS$.
\end{lemma}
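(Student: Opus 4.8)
The plan is to produce such a $Z$ at a single fixed ``level''. Write $X = \{x_{1}, \ldots, x_{n}\}$, set $\mathbf{m} = \bigvee_{i=1}^{n} d(x_{i}) \in \mathbb{N}^{k}$, and define
\[
Z = C_{\mathbf{m}} \cap XS = \{z \in S \colon d(z) = \mathbf{m} \text{ and } z \in XS\}.
\]
Then $Z \subseteq XS$ by construction; $Z$ is finite because $C_{\mathbf{m}}$ is finite (the alphabets are finite); and any two distinct elements of $C_{\mathbf{m}}$ are incomparable by Lemma~\ref{lem:levi} (this is part of what Lemma~\ref{lem:homogeneous} records), so $Z$ consists of pairwise incomparable elements. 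It therefore remains to verify (i) that $Z \neq \varnothing$ and (ii) that $Z$ is maximal.

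For (i): $X \neq \varnothing$ since $XS$, being essential, is non-empty. Pick any $x \in X$. As $\mathbf{m} - d(x) \in \mathbb{N}^{k}$ and $d$ is surjective (our standing assumption), there is $v \in S$ with $d(v) = \mathbf{m} - d(x)$; then $xv \in xS \subseteq XS$ has size $\mathbf{m}$, so $xv \in Z$.

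Step (ii) is where the essentiality of $XS$ enters and is the only step requiring a genuine argument. Let $y \in S$ be arbitrary; I must find $z \in Z$ with $yS \cap zS \neq \varnothing$. Since $XS$ is essential, $yS \cap XS \neq \varnothing$, so $ya = x_{i}b$ for some $i$ and some $a, b \in S$. Put $w = ya = x_{i}b$ and, by Lemma~\ref{lem:bigger}, choose $c \in S$ with $d(wc) \geq \mathbf{m}$. By the UFP write $wc = z z'$ with $d(z) = \mathbf{m}$, so $z \in C_{\mathbf{m}}$. Now $wc = x_{i}(bc)$ and $d(z) = \mathbf{m} \geq d(x_{i})$, so Lemma~\ref{lem:levi} yields $t \in S$ with $z = x_{i}t$; hence $z \in x_{i}S \subseteq XS$ and thus $z \in Z$. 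Finally, $wc = y(ac) = z z'$ shows $wc \in yS \cap zS$, so $yS \cap zS \neq \varnothing$, as required.

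In short, the whole content is the observation in (ii): climb past level $\mathbf{m}$ inside an element witnessing $yS \cap XS \neq \varnothing$ and then read off its size-$\mathbf{m}$ corner; since $\mathbf{m}$ dominates every $d(x_{i})$, this corner automatically lies in $XS$ (by Lemma~\ref{lem:levi}) and it is clearly comparable with $y$. Everything else is bookkeeping with the UFP, and no appeal to aperiodicity or effectiveness of $S$ is needed.
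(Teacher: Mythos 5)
Your proof is correct and follows essentially the same route as the paper: both take the level $\mathbf{m} = \bigvee_{x \in X} d(x)$ and combine essentiality of $XS$ with Lemma~\ref{lem:levi} and the fact that $\mathbf{m}$ dominates every $d(x_{i})$. The paper's version is marginally shorter because it applies that same essentiality-plus-Levi step to an arbitrary $z \in C_{\mathbf{m}}$ to conclude $C_{\mathbf{m}} \subseteq XS$ outright, so your $Z = C_{\mathbf{m}} \cap XS$ is all of $C_{\mathbf{m}}$ and maximality comes for free from Lemma~\ref{lem:homogeneous}.
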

\begin{proof} Put $\mathbf{n} = \bigvee_{x \in X}d(x)$.
Recall that $C_{\mathbf{n}}$ is the set of all elements $s \in S$ such that $d(s) = \mathbf{n}$.
This is a maximal generalized prefix code by Lemma~\ref{lem:homogeneous}.
Put $Z = C_{\mathbf{n}}$.
Let $z \in Z$.
Then $zS \cap XS \neq \varnothing$.
Thus $za = xb$ for some $x \in X$ and $a,b \in S$.
By definition, $d(z) \geq d(x)$.
It follows that $z = xt$ for some $t \in S$ by Lemma~\ref{lem:levi}.
Thus $Z \subseteq XS$.
\end{proof}

\begin{lemma}\label{lem:tony} 
Each element of $\mathsf{R}(S)^{e}$ extends an element of  $\mathsf{P}(S)^{e}$.
\end{lemma}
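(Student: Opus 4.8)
The plan is to take an arbitrary element $\theta \in \mathsf{R}(S)^{e}$, say $\theta \colon XS \rightarrow YS$ where $X$ and $Y$ are finite sets and both $XS$ and $YS$ are essential right ideals, and to produce a restriction of $\theta$ that lies in $\mathsf{P}(S)^{e}$. By Lemma~\ref{lem:nato} we may assume $\theta$ induces a bijection $X \to Y$; write $\theta(xs) = \theta(x)s$. First I would apply Lemma~\ref{lem:bliss} to the essential ideal $XS$ to obtain a maximal generalized prefix code $Z \subseteq XS$. (Inspecting the proof of Lemma~\ref{lem:bliss}, one can in fact take $Z = C_{\mathbf n}$ for a suitable $\mathbf n$, and each $z \in Z$ factors as $z = x_z t_z$ with $x_z \in X$; this explicit description may be convenient.)

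Next I would consider $\theta(Z)$. Since $Z \subseteq XS$ is a generalized prefix code, Lemma~\ref{lem:wine} tells us $\theta(Z)$ is a generalized prefix code, so $\theta(Z)S$ is a finitely generated projective right ideal; hence $\theta(Z)S$ is projective, and likewise $ZS$ is projective. Moreover, by Lemma~\ref{lem:otan} (applied using that $XS$ and $YS$ are essential and $\theta$ induces a bijection $X \to Y$, and that $Z$ is a \emph{maximal} generalized prefix code inside $XS$), the set $\theta(Z)$ is a maximal generalized prefix code, and therefore $\theta(Z)S$ is an essential right ideal. The restriction $\theta|_{ZS} \colon ZS \to \theta(Z)S$ is again a bijective morphism — its domain and codomain are projective right ideals — so $\theta|_{ZS} \in \mathsf{P}(S)$.

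It then remains to check that $\theta|_{ZS}$ actually lies in $\mathsf{P}(S)^{e}$, i.e.\ that both $\mathbf{d}(\theta|_{ZS})$ and $\mathbf{r}(\theta|_{ZS})$ are essential idempotents of $\mathsf{P}(S)$. But $\mathbf{d}(\theta|_{ZS})$ is the identity on $ZS$ and $\mathbf{r}(\theta|_{ZS})$ is the identity on $\theta(Z)S$, and we have just noted that $ZS$ is essential and $\theta(Z)S$ is essential; essentiality of an idempotent in $\mathsf{P}(S)$ is exactly essentiality of the corresponding right ideal, just as in Lemma~\ref{lem:flower}. Finally $\theta|_{ZS} \leq \theta$ in $\mathsf{R}(S)$ since $ZS \subseteq XS$ and the two maps agree on $ZS$, so $\theta$ extends the element $\theta|_{ZS}$ of $\mathsf{P}(S)^{e}$, as required.

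The only genuine content here is already packaged in the preceding lemmas, so the proof is short; the one point that needs care — and which I expect to be the main thing to get right — is making sure the chosen $Z$ is a maximal generalized prefix code sitting \emph{inside} $XS$ (not merely inside $S$), so that Lemma~\ref{lem:otan} is applicable and yields that $\theta(Z)$, hence $\theta(Z)S$, is essential; this is precisely why we invoke Lemma~\ref{lem:bliss} rather than just Lemma~\ref{lem:homogeneous}. One should also double check the harmless reduction at the start (via Lemma~\ref{lem:nato}) that $\theta$ may be taken to restrict to a bijection of the generating sets, since Lemma~\ref{lem:otan} is stated under that hypothesis.
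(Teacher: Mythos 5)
Your proof is correct and follows essentially the same route as the paper: restrict to a maximal generalized prefix code $Z \subseteq XS$ supplied by Lemma~\ref{lem:bliss} and apply Lemma~\ref{lem:otan} to see that the restriction lands in $\mathsf{P}(S)^{e}$. The extra care you take (reducing via Lemma~\ref{lem:nato} so that Lemma~\ref{lem:otan} applies, and checking that maximality of $Z$ and of $\theta(Z)$ gives essentiality of the domain and range) is left implicit in the paper's proof but is exactly the right bookkeeping.
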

\begin{proof} Let $\alpha \colon XS \rightarrow YS$ be a bijective morphism between two essential, finitely generated right ideals of $S$.
By Lemma~\ref{lem:bliss}, there is a maximal generalized prefix code $Z \subseteq XS$.
By lemma~\ref{lem:otan}, the bijective morphism $\alpha | ZS$, the restriction of $\alpha$ to the set $ZS$, 
maps a finitely generated projective right ideal to a finitely generated projective right ideal.
It is therefore an element of  $\mathsf{P}(S)^{e}$.
\end{proof}

\begin{lemma}\label{lem:benn} Let $U$ be an inverse subsemigroup of the inverse semigroup $V$.
Suppose that each element of $V$ lies above an element of $U$ in the natural partial order.
Then $V/\sigma_{V} \cong U/\sigma_{U}$.
\end{lemma}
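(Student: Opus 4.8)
The plan is to produce a single surjective semigroup homomorphism $\phi$ from $U$ onto the group $V/\sigma_V$, to identify its kernel congruence with $\sigma_U$, and then to quote the homomorphism theorem for semigroups. Throughout I would use the description of the minimum group congruence recalled in Section~2 (just before Proposition~\ref{prop:E}): for any inverse semigroup, $a \, \sigma \, b$ if and only if there exists $c$ with $c \le a$ and $c \le b$.

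First I would record two elementary facts about the inclusion $U \hookrightarrow V$ of an inverse subsemigroup: every idempotent of $U$ is an idempotent of $V$, and, using the identity $w = u(w^{-1}w)$ that characterises $w \le u$, the natural partial order of $U$ is exactly the restriction to $U$ of the natural partial order of $V$. Consequently, for $w, u, u' \in U$ one has $w \le u$ in $U$ if and only if $w \le u$ in $V$, and this is what lets the relations $\sigma_U$ and $\sigma_V$ interact.

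Now let $\iota \colon U \to V$ be the inclusion, let $\pi \colon V \to V/\sigma_V$ be the quotient map, and set $\phi = \pi \circ \iota$, a semigroup homomorphism of $U$ into the group $V/\sigma_V$. I would check: (i) $\phi$ is onto --- given $v \in V$, the hypothesis provides $u \in U$ with $u \le v$, so $u \, \sigma_V \, v$ (both $u$ and $v$ lie above $u$) and hence $\pi(v) = \phi(u)$; (ii) the kernel congruence of $\phi$ is $\sigma_U$. The inclusion $\sigma_U \subseteq \ker \phi$ is immediate: if $w \le u, u'$ in $U$ then $w \le u, u'$ in $V$, so $u \, \sigma_V \, u'$ and $\phi(u) = \phi(u')$. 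The reverse inclusion is the only step with content: if $\phi(u) = \phi(u')$, i.e.\ $u \, \sigma_V \, u'$, choose $v \in V$ with $v \le u$ and $v \le u'$, and then apply the hypothesis a \emph{second} time to obtain $u'' \in U$ with $u'' \le v$; then $u'' \le u$ and $u'' \le u'$ with $u'' \in U$, so $u \, \sigma_U \, u'$. Finally the homomorphism theorem gives $U/\sigma_U = U/\ker\phi \cong \phi(U) = V/\sigma_V$.

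The point to be careful about --- and the only place the density hypothesis does real work --- is the reverse kernel inclusion: one must apply the hypothesis not to $u$ or $u'$ but to the witness $v$ of $u \, \sigma_V \, u'$, which then yields an element of $U$ lying below both $u$ and $u'$. Everything else is bookkeeping with the natural partial order and the stated description of $\sigma$.
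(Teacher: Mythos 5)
Your proposal is correct and follows essentially the same route as the paper: both arguments use the map $U \to V/\sigma_V$ induced by inclusion, establish surjectivity from the density hypothesis, and prove the kernel is $\sigma_U$ by applying the hypothesis to the witness $t \le s, s'$ of $s \, \sigma_V \, s'$ to obtain an element of $U$ below both. Your added remarks (that the natural partial order of $U$ is the restriction of that of $V$, via $w = u(w^{-1}w)$, and the explicit appeal to the homomorphism theorem) merely make explicit what the paper leaves implicit.
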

\begin{proof} We denote the $\sigma_{V}$-congruence class containing the element $s$ by $[s]_{V}$
and the $\sigma_{U}$-congruence class containing the element $s$ by $[s]_{U}$.
There is a homomorphism $U \rightarrow V/\sigma_{V}$ given by $s \mapsto [s]_{V}$.
This map is surjective since each element of $V$ is above an element in $U$.
Suppose that $s$ and $s'$ map to the same element under this map.
Then, by definition, there is an element $t \in V$ such that $t \leq s,s'$.
By assumption, there exists $s'' \leq t$ where $s'' \in U$.
It follows that $[s]_{U} = [s']_{U}$.
The result now follows.
\end{proof}

By Lemma~\ref{lem:tony} and Lemma~\ref{lem:benn}, we have therefore proved the following.
\vspace{5mm}
\begin{center}
\fbox{\begin{minipage}{15em}
\begin{theorem}\label{them:tea} Let $S$ be a $k$-monoid.
Then  
$$\mathsf{P}(S)^{e}/\sigma \cong \mathsf{R}(S)^{e}/\sigma.$$
\end{theorem}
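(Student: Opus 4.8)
The statement is an immediate corollary of Lemma~\ref{lem:tony} and Lemma~\ref{lem:benn}, so the plan is simply to verify that the hypotheses of Lemma~\ref{lem:benn} are met with $U = \mathsf{P}(S)^{e}$ and $V = \mathsf{R}(S)^{e}$ and then quote that lemma. First I would record that $\mathsf{P}(S)^{e}$ is indeed an inverse subsemigroup of $\mathsf{R}(S)^{e}$: by Proposition~\ref{prop:ophelia}, $\mathsf{P}(S)$ is an inverse submonoid of $\mathsf{R}(S)$, and by Lemma~\ref{lem:birget} the principal right ideals $aS$ are projective, so the essential-idempotent condition is inherited, giving $\mathsf{P}(S)^{e} \subseteq \mathsf{R}(S)^{e}$ as inverse subsemigroups (both without zero, by construction of $T^{e}$).

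Next I would check the key hypothesis of Lemma~\ref{lem:benn}: that every element of $V = \mathsf{R}(S)^{e}$ lies above an element of $U = \mathsf{P}(S)^{e}$ in the natural partial order. This is exactly the content of Lemma~\ref{lem:tony}: given $\alpha \colon XS \rightarrow YS$ in $\mathsf{R}(S)^{e}$, one uses Lemma~\ref{lem:nato} to assume $\alpha$ induces a bijection $X \to Y$, then Lemma~\ref{lem:bliss} to find a maximal generalized prefix code $Z \subseteq XS$, and then Lemma~\ref{lem:otan} to see that $\alpha|ZS$ carries the finitely generated projective right ideal $ZS$ onto the finitely generated projective right ideal $\alpha(Z)S$, with $\alpha(Z)$ again a maximal (hence in particular essential) generalized prefix code; so $\alpha|ZS \in \mathsf{P}(S)^{e}$ and $\alpha|ZS \leq \alpha$.

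With both hypotheses in hand, Lemma~\ref{lem:benn} gives directly
$$\mathsf{R}(S)^{e}/\sigma \;\cong\; \mathsf{P}(S)^{e}/\sigma,$$
which is the assertion of Theorem~\ref{them:tea} (writing $\sigma$ for $\sigma_{\mathsf{R}(S)^{e}}$ on the left and $\sigma_{\mathsf{P}(S)^{e}}$ on the right, as the notation in the excerpt tacitly does). There is essentially no obstacle here beyond bookkeeping, since all the real work has been front-loaded into Lemmas~\ref{lem:tony} and~\ref{lem:benn}; the only point deserving a moment's care is the observation — needed to invoke $T^{e}$ meaningfully and to apply Lemma~\ref{lem:benn} — that $\mathsf{P}(S)^{e}$ and $\mathsf{R}(S)^{e}$ have no zero, so that the minimum group congruence $\sigma$ genuinely produces a group in each case rather than collapsing, and that the restriction $\alpha|ZS$ really is below $\alpha$ in the natural partial order of $\mathsf{R}(S)$, which is just the restriction order on partial bijections.
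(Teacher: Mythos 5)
Your proposal is correct and follows exactly the paper's own route: the theorem is stated there as an immediate consequence of Lemma~\ref{lem:tony} (every element of $\mathsf{R}(S)^{e}$ lies above an element of $\mathsf{P}(S)^{e}$) combined with Lemma~\ref{lem:benn}. Your additional bookkeeping --- checking via Proposition~\ref{prop:ophelia} and Lemma~\ref{lem:birget} that $\mathsf{P}(S)^{e}$ really is an inverse subsemigroup of $\mathsf{R}(S)^{e}$ without zero --- is exactly the right care to take and matches the paper's intent.
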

\end{minipage}}
\end{center}
\vspace{5mm}
The above theorem tells us that the two ways in which we might have defined our group $\mathscr{G}(S)$
give the same answer.

\begin{remark}
{\em The above theorem implies that the structure of the group $\mathscr{G}(S)$
is intimately connected with the structure of the maximal generalized prefix codes in $S$.}
\end{remark}

\section{A geometric representation of the group $\mathscr{G}(S)$.}

Let $S$ be an aperiodic $k$-monoid.
By proposition~\ref{prop:cheese} and Proposition~\ref{prop:bojo},
there is a monoid homomorphism $\chi \colon \mathsf{R}(S) \rightarrow \mathcal{I}^{\scriptsize cl}(S^{\infty})$,
the inverse monoid of all partial homeomorphisms between the clopen subsets of $S^{\infty}$.
We are first of all interested in which elements of $\mathsf{R}(S)$ map to bijections in $\mathcal{I}^{\scriptsize cl}(S^{\infty})$.

\begin{lemma}\label{lem:boris} Let $S$ be an aperiodic $k$-monoid.
Let $\iota$ be the identity function on the finitely generated right ideal $AS$.
Then $\chi (\iota)$ is the identity on $S^{\infty}$ if and only if $AS$ is an essential ideal of $S$.
\end{lemma}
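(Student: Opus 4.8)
The plan is to make the partial homeomorphism $\chi(\iota)$ completely explicit and then read off the equivalence. Writing $AS = \bigcup_{a \in A}aS$ with $A$ finite, restricting $\iota$ to each $aS$ gives the idempotent $aa^{-1}$, so $\iota = \bigvee_{a\in A}aa^{-1}$ by Lemma~\ref{lem:trump}, and hence $\chi(\iota) = \bigcup_{a\in A}\lambda_{a}\lambda_{a}^{-1}$ by Proposition~\ref{prop:cheese}. Since $\lambda_{a}$ is a homeomorphism $S^{\infty} \to aS^{\infty}$ (Lemma~\ref{lem:peking}), $\lambda_{a}\lambda_{a}^{-1}$ is the identity on the clopen set $aS^{\infty}$, so $\chi(\iota)$ is the identity on $\bigcup_{a\in A}aS^{\infty}$. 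Thus $\chi(\iota)$ is the identity on all of $S^{\infty}$ precisely when $\bigcup_{a\in A}aS^{\infty} = S^{\infty}$, that is (recalling that $aS^{\infty}$ is exactly the set of $k$-tilings having $a$ as a corner) precisely when every $k$-tiling has some $a \in A$ among its corners. The whole proof then reduces to showing that this last condition is equivalent to essentiality of $AS$ in the sense of Lemma~\ref{lem:flower}.

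For the implication "$AS$ essential $\Rightarrow$ every $k$-tiling has a corner in $A$", I would fix $w \in S^{\infty}$, put $\mathbf{n} = \bigvee_{a\in A}d(a)$, and consider the corner $x = w(\mathbf{0},\mathbf{n})$ of $w$. Lemma~\ref{lem:flower} applied to $s = x$ produces some $a \in A$ with $xS \cap aS \neq \varnothing$; as $d(x) = \mathbf{n} \geq d(a)$, Lemma~\ref{lem:levi} gives $x = at$, and comparing this with the factorization $x = w(\mathbf{0},d(a))\,w(d(a),\mathbf{n})$ via the UFP forces $a = w(\mathbf{0},d(a))$, i.e.\ $a$ is a corner of $w$, so $w \in aS^{\infty}$. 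For the converse, given $s \in S$ I would pick any $w' \in S^{\infty}$ (the space is nonempty: build an expanding chain using Lemma~\ref{lem:bigger}) and form $sw' \in S^{\infty}$, which has $s$ as a corner by Lemma~\ref{lem:pepsi}; by hypothesis $sw' \in aS^{\infty}$ for some $a \in A$, so $a$ is likewise a corner of $sw'$, and since any two corners of the same $k$-tiling are comparable (Proposition~\ref{prop:moolah} and Remark~\ref{rem:ribena}) we get $sS \cap aS \neq \varnothing$, which is exactly what Lemma~\ref{lem:flower} asks for.

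I expect the only real subtlety to be the bookkeeping in the first implication: passing from "$a$ is comparable with a sufficiently large corner of $w$" to "$a$ is itself a corner of $w$", which is precisely where Lemma~\ref{lem:levi} and the uniqueness in the UFP are needed, and which is the reason for choosing the corner of size $\bigvee_{a\in A}d(a)$ rather than an arbitrary one. Everything else is a routine unwinding of the definitions of $\chi$, of $aS^{\infty}$, and of an essential right ideal, so I would keep it brief.
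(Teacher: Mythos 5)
Your proposal is correct and follows essentially the same route as the paper: both reduce the statement to the equivalence $AS^{\infty} = S^{\infty} \Leftrightarrow AS$ essential, prove the forward direction by taking a corner of $w$ of size at least $\bigvee_{a \in A} d(a)$ and applying Lemma~\ref{lem:levi} together with the UFP, and prove the converse by observing that any $s \in S$ is a corner of some $k$-tiling, which by hypothesis also has a corner in $A$, and that any two corners of a $k$-tiling are comparable. The only difference is that you make the decomposition $\chi(\iota) = \bigcup_{a \in A}\lambda_{a}\lambda_{a}^{-1}$ explicit and argue the converse directly rather than by contradiction, neither of which changes the substance.
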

\begin{proof} We prove that $AS^{\infty} = S^{\infty}$ if and only if $AS$ is an essential right ideal of $S$.
Suppose first that $AS$ is an essential right ideal.
Let $x \in S^{\infty}$.
Choose a corner $b$ of $x$ which is larger than any element of $A$.
Then $bS \cap AS \neq \emptyset$ and so $bu = av$ for some $a \in A$.
But, by assumption, $d(b) \geq d(a)$.
It follows by Lemma~\ref{lem:levi}, that $b = at$ for some $t \in S$.
It follows that $a$ is a corner of $x$ and so $x \in aS^{\infty}$.
Thus $AS^{\infty} = S^{\infty}$.
To prove the converse, suppose that  $AS^{\infty} = S^{\infty}$.  
We prove that $AS$ is an essential ideal.
Suppose not.
Then there is an element $b \in S$ such that $AS \cap bS = \varnothing$.
However, $b$ must be a corner of some element $w \in aS^{\infty}$ for some $a \in A$.
But this implies that $b$ is comparable with $a$
and so $bS \cap aS \neq \varnothing$.
This is a contradiction.
\end{proof}

By the above lemma, it follows that the elements of $\mathsf{R}(S)$ that are mapped to bijections of $S^{\infty}$ under $\chi$
are precisely the elements of $\mathsf{R}(S)^{e}$.

\begin{proposition}\label{prop:tizer} Let $S$ be a $k$-monoid.
Then  $\mathsf{R}(S)^{e}$ is $E$-unitary.
\end{proposition}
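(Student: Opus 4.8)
The plan is to unwind the three definitions involved — $E$-unitary, the natural partial order, and the subsemigroup $\mathsf{R}(S)^{e}$ — and then to exploit that a morphism between right ideals is completely determined by its values on a generating set, together with the fact that $S$ is cancellative (Lemma~\ref{lem:coffee}). Concretely, I would fix an idempotent $e$ of $\mathsf{R}(S)^{e}$ and an element $a \in \mathsf{R}(S)^{e}$ with $e \leq a$, and aim to show outright that $a$ is the identity map on its domain.

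First I would observe that, being an idempotent of a monoid of partial bijections of $S$, $e$ is the identity function on some finitely generated right ideal $D$, and since $e \in \mathsf{R}(S)^{e}$ this ideal $D$ is essential (Lemma~\ref{lem:flower}). Because the natural partial order on $\mathsf{R}(S)$ is just the restriction order on partial maps, $e \leq a$ says precisely that $D \subseteq \operatorname{dom}(a)$ and that $a$ agrees with the identity on $D$. Writing $\operatorname{dom}(a) = XS$ with $X$ finite, I would then take an arbitrary generator $x \in X$ and use essentiality of $D$: the principal right ideal $xS$ meets $D$, so there exist $u, v \in S$ (and a generator of $D$) with $xu = d$ for some $d \in D$. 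Applying $a$ and using that $a$ is a morphism and restricts to the identity on $D$ gives $a(x)u = a(xu) = a(d) = d = xu$, and right cancellativity of $S$ forces $a(x) = x$.

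Since $x \in X$ was arbitrary and $a$ is a morphism, $a(xs) = a(x)s = xs$ for every $s \in S$, so $a$ is the identity on $XS$, hence an idempotent; this is exactly the assertion that $\mathsf{R}(S)^{e}$ is $E$-unitary. I do not anticipate any genuine obstacle: the argument is a short computation, and the only points requiring a little care are the bookkeeping that an idempotent of $\mathsf{R}(S)^{e}$ automatically has essential domain (so that the essentiality hypothesis is actually available) and the identification of the abstract natural partial order on $\mathsf{R}(S)$ with ordinary restriction of partial maps.
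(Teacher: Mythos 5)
Your proof is correct and follows essentially the same route as the paper: the paper likewise takes a bijective morphism $\alpha$ that restricts to the identity on an essential finitely generated right ideal $ZS$, uses essentiality to write $xab = zc$, and cancels on the right to conclude $\alpha(x) = x$. The only cosmetic difference is that you work directly with the generators $x \in X$ while the paper takes an arbitrary element $xa \in XS$; both hinge on the same use of essentiality, the morphism property, and cancellativity of $S$.
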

\begin{proof} Let $\alpha \colon XS \rightarrow YS$ be a bijective morphism between
two finitely generated essential right ideals.
Suppose that $\alpha$ is the identity when restricted to the finitely generated essential right ideal $ZS$ where $ZS \subseteq XS$.
Let $xa \in XS$.
Then, since $ZS$ is an essential right ideal, we have that $xaS \cap ZS \ne \varnothing$.
It follows that $xab = zc$ for some $b,c \in S$ and $z \in Z$.
But $\alpha (zc) = zc$ and so $\alpha (xab) = xab$.
But $\alpha$ is a morphism and so $\alpha (xab) = \alpha (x)ab$.
By cancellation, it follows that $\alpha (x) = x$.
We have therefore proved that $\alpha$ is the identity on $X$ and so $\alpha$ is also an idempotent. 
\end{proof}

The above result makes the proof of the following much easier.

\begin{proposition}\label{prop:tresbon}
Let $S$ be an aperiodic $k$-monoid.
Let
$$\alpha = x_{1}y_{1}^{-1} \vee \ldots \vee x_{m}y_{m}^{-1}
\text{ and }
\beta = u_{1}v_{1}^{-1} \vee \ldots \vee u_{n}v_{n}^{-1}$$
be two elements of  $\mathsf{P}(S)^{e}$ such that $\chi (\alpha) = \chi (\beta)$.
Then $\alpha \, \sigma \, \beta$.
\end{proposition}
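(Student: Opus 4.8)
The plan is to exploit that $\mathsf{P}(S)^{e}$ is $E$-unitary, which turns the problem into a compatibility check, and then to settle that check using aperiodicity.

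First I would observe that $\mathsf{P}(S)^{e} \subseteq \mathsf{R}(S)^{e}$ by Lemma~\ref{lem:birget}, and that $\mathsf{R}(S)^{e}$ is $E$-unitary by Proposition~\ref{prop:tizer}. Since being $E$-unitary passes to inverse subsemigroups (the defining implication $e \leq a \Rightarrow a$ idempotent is inherited, because idempotents and the natural partial order of a subsemigroup agree with those of the ambient semigroup), $\mathsf{P}(S)^{e}$ is also $E$-unitary, and it has no zero. By Proposition~\ref{prop:E}, the minimum group congruence $\sigma$ on $\mathsf{P}(S)^{e}$ coincides with the compatibility relation $\sim$. Hence it suffices to prove that $\alpha \sim \beta$, that is, that $\alpha^{-1}\beta$ and $\alpha\beta^{-1}$ are both idempotents of $\mathsf{P}(S)^{e}$. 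Since $\chi$ is a homomorphism and $\chi(\alpha) = \chi(\beta)$, we get $\chi(\alpha^{-1}\beta) = \chi(\alpha)^{-1}\chi(\alpha)$ and $\chi(\alpha\beta^{-1}) = \chi(\alpha)\chi(\alpha)^{-1}$, both idempotents of $\mathcal{I}^{cl}(S^{\infty})$; so both statements follow from the single claim: \emph{if $\gamma \in \mathsf{P}(S)$ and $\chi(\gamma)$ is an idempotent, then $\gamma$ is an idempotent.}

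To prove the claim I would use the structure of $\mathsf{P}(S)$. Write $\gamma$ as the bijective morphism $ZS \to XS$ between projective right ideals; by Lemma~\ref{lem:comet} it is induced by a bijection between the generalized prefix codes $Z = \{z_{1}, \ldots, z_{p}\}$ and $X = \{x_{1}, \ldots, x_{p}\}$ with $\gamma(z_{i}) = x_{i}$, and by Lemma~\ref{lem:trump}, together with orthogonality of the pieces (Lemma~\ref{lem:tom}(2), since the $z_{i}$ and the $x_{i}$ are pairwise incomparable), we have $\gamma = \bigoplus_{i=1}^{p} x_{i}z_{i}^{-1}$. Applying Proposition~\ref{prop:cheese}, $\chi(\gamma)$ is the partial homeomorphism of $S^{\infty}$ with domain $\bigcup_{i} z_{i}S^{\infty}$ sending $z_{i}w \mapsto x_{i}w$ for all $w \in S^{\infty}$. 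An idempotent of $\mathcal{I}^{cl}(S^{\infty})$ is an identity map on a clopen set, so $\chi(\gamma)$ being idempotent forces $x_{i}w = z_{i}w$ for every $w \in S^{\infty}$ and every $i$. Since $S$ is aperiodic, Corollary~\ref{cor:good} (equivalently Proposition~\ref{prop:bojo}(3)) yields $x_{i} = z_{i}$ for each $i$, whence $\gamma = \bigoplus_{i} z_{i}z_{i}^{-1}$ is an idempotent, proving the claim and hence the proposition.

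The only real content is this last claim, and the hypothesis bites at exactly one spot: the passage ``$x_{i}w = z_{i}w$ for all $w$, therefore $x_{i} = z_{i}$'' is precisely effectiveness of the action of $S$ on $S^{\infty}$, which is why $S$ is assumed aperiodic. Everything else --- inheritance of $E$-unitarity, the decomposition of elements of $\mathsf{P}(S)$ into orthogonal basic morphisms, and the description of $\chi$ on such morphisms --- is routine bookkeeping from results already established. One minor point to check in passing is that $\alpha^{-1}\beta$ and $\alpha\beta^{-1}$ genuinely lie in $\mathsf{P}(S)^{e}$, so that the $E$-unitarity of $\mathsf{P}(S)^{e}$ applies; this holds because $\mathsf{P}(S)^{e}$ is an inverse subsemigroup and hence closed under products and inverses.
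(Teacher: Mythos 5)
Your proof is correct and follows essentially the same route as the paper's: reduce $\sigma$ to $\sim$ via $E$-unitarity (Proposition~\ref{prop:tizer} together with Proposition~\ref{prop:E}), then settle the compatibility check by invoking the effectiveness of the action on $S^{\infty}$ (Corollary~\ref{cor:good}). The only difference is cosmetic: you package the key step as the single claim that $\chi$ reflects idempotents on $\mathsf{P}(S)$ and apply it to $\alpha^{-1}\beta$ and $\alpha\beta^{-1}$ wholesale, whereas the paper verifies directly that each cross term $y_{i}x_{i}^{-1}u_{j}v_{j}^{-1}$ is an idempotent; both arguments turn on exactly the same application of Corollary~\ref{cor:good}.
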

\begin{proof} By Proposition~\ref{prop:E} and Proposition~\ref{prop:tizer},
we need only prove that $\alpha \, \sim \, \beta$.
By symmetry, it is enough to prove that elements of the form $y_{i}x_{i}^{-1}u_{j}v_{j}^{-1}$ are idempotents.
Let $s \in S$ be such that $(y_{i}x_{i}^{-1}u_{j}v_{j}^{-1})(s)$ is defined.
Then $s = v_{j}s_{1}$ for some $j$ and $u_{j}s_{1} = x_{i}s_{2}$ for some $i$.
In particular, 
$(y_{i}x_{i}^{-1}u_{j}v_{j}^{-1})(s) = y_{i}s_{2}$.
By assumption, $\chi (\alpha) = \chi (\beta)$ and so
$\chi (\alpha)^{-1} = \chi (\beta)^{-1}$. 
Thus for all $w \in S^{\infty}$, and using the fact that $u_{j}s_{1} = x_{i}s_{2}$, we have that
$$\chi (\beta)^{-1}(u_{j}s_{1}w) = v_{j}s_{1}w = \chi (\alpha)^{-1}(x_{i}s_{2}w) = y_{i}s_{2}w.$$
By Corollary~\ref{cor:good}, we deduce that $v_{j}s_{1} = y_{i}s_{2}$.
Thus $(y_{i}x_{i}^{-1}u_{j}v_{j}^{-1})(s) = s$ when defined.
It follows that $y_{i}x_{i}^{-1}u_{j}v_{j}^{-1}$ is an idempotent.
\end{proof}

We have therefore proved the following.
Recall that we usually make assumptions so that $S^{\infty}$ is the Cantor space.

\begin{theorem}\label{them:rice} Let $S$ be an aperiodic $k$-monoid.
The the group $\mathscr{G}(S)$ is isomorphic to a subgroup of the group of all self-homeomorphisms $S^{\infty}$.
\end{theorem}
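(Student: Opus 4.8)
The plan is to realise $\mathscr{G}(S)$ concretely inside the group of self-homeomorphisms of $S^{\infty}$ (which I will denote $\mathrm{Homeo}(S^{\infty})$) by pushing the representation $\chi$ of $\mathsf{R}(S)$ down through the minimum group congruence $\sigma$, and — crucially — to do the bookkeeping on $\mathsf{P}(S)^{e}$ rather than $\mathsf{R}(S)^{e}$, using the identification $\mathscr{G}(S) \cong \mathsf{P}(S)^{e}/\sigma$ of Theorem~\ref{them:tea}, so that injectivity reduces verbatim to Proposition~\ref{prop:tresbon}.

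First I would observe that, $S$ being aperiodic, the pair $(S,S^{\infty})$ is a system: axioms (A1), (A2) and (A3) are exactly the three assertions of Proposition~\ref{prop:bojo}. Proposition~\ref{prop:cheese} then gives a monoid homomorphism $\chi \colon \mathsf{R}(S) \rightarrow \mathcal{I}^{cl}(S^{\infty})$ with $\chi(ab^{-1}) = \lambda_{a}\lambda_{b}^{-1}$. By Lemma~\ref{lem:birget} we have $\mathsf{P}(S)^{e} \subseteq \mathsf{R}(S)^{e}$, so $\chi$ restricts to $\mathsf{P}(S)^{e}$. If $\alpha \in \mathsf{P}(S)^{e}$, then $\mathbf{d}(\alpha)$ and $\mathbf{r}(\alpha)$ are essential idempotents, that is, identity maps on essential finitely generated right ideals; by Lemma~\ref{lem:boris} their images under $\chi$ are the identity of $S^{\infty}$, so $\chi(\alpha)$ has full domain and full range, i.e. is a self-homeomorphism of $S^{\infty}$. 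Thus $\chi$ maps $\mathsf{P}(S)^{e}$ into $\mathrm{Homeo}(S^{\infty})$; and since a monoid homomorphism from an inverse monoid into a group automatically satisfies $\chi(a^{-1}) = \chi(a)^{-1}$ (cancel in the group relation $\chi(a)\chi(a^{-1})\chi(a) = \chi(a)$), the image is a subgroup $H \leq \mathrm{Homeo}(S^{\infty})$.

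The kernel congruence of $\chi|_{\mathsf{P}(S)^{e}}$ has group quotient $H$, hence contains the minimum group congruence $\sigma$; therefore $\chi$ induces a surjective group homomorphism $\overline{\chi} \colon \mathsf{P}(S)^{e}/\sigma \twoheadrightarrow H$, and $\mathsf{P}(S)^{e}/\sigma \cong \mathscr{G}(S)$ by Theorem~\ref{them:tea}. Injectivity of $\overline{\chi}$ amounts to showing that its kernel congruence is exactly $\sigma$, i.e. that $\chi(\alpha) = \chi(\beta)$ with $\alpha,\beta \in \mathsf{P}(S)^{e}$ forces $\alpha \, \sigma \, \beta$; writing $\alpha$ and $\beta$ as joins of basic morphisms via Lemma~\ref{lem:trump}, this is precisely Proposition~\ref{prop:tresbon} (which itself rests on $\mathsf{R}(S)^{e}$ being $E$-unitary, Proposition~\ref{prop:tizer}, together with Proposition~\ref{prop:E}). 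Hence $\mathscr{G}(S) \cong H \leq \mathrm{Homeo}(S^{\infty})$, and under our standing hypotheses $S^{\infty}$ is the Cantor space by Proposition~\ref{prop:top}. The only genuinely substantive input is Proposition~\ref{prop:tresbon} and, behind it, Corollary~\ref{cor:good}: distinct bijective morphisms can induce the same homeomorphism of $S^{\infty}$ only through coincidences $uw = u'w$ valid for \emph{all} $w \in S^{\infty}$, and aperiodicity is exactly what excludes these. Everything else is bookkeeping already assembled in the preceding lemmas; the one point of care is to stay inside $\mathsf{P}(S)^{e}$ so that Proposition~\ref{prop:tresbon} applies directly.
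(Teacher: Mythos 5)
Your proposal is correct and follows essentially the same route as the paper: the paper's own (largely implicit) proof assembles exactly the ingredients you use, namely the representation $\chi$ from Proposition~\ref{prop:cheese} and Proposition~\ref{prop:bojo}, Lemma~\ref{lem:boris} to identify which elements become global homeomorphisms, Proposition~\ref{prop:tresbon} for injectivity on $\mathsf{P}(S)^{e}$, and Theorem~\ref{them:tea} to transfer back to $\mathscr{G}(S)$. Your explicit bookkeeping of why the kernel congruence of $\chi|_{\mathsf{P}(S)^{e}}$ equals $\sigma$ is a welcome elaboration of what the paper leaves unstated.
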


\begin{remark}
{\em We can think of the elements of the group $\mathscr{G}(S)$ in the following concrete terms.
We use the results of Section 7.
Let $A = (a_{1}, \ldots, a_{m})$ and $B = (b_{1}, \ldots, b_{m})$ be two ordered maximal generalized prefix codes.
Then the sets $\{a_{i}S^{\infty} \colon 1 \leq i \leq m\}$ and $\{b_{i}S^{\infty} \colon 1 \leq i \leq m\}$
are both partitions of $S^{\infty}$ by Corollary~\ref{cor:insect} and Lemma~\ref{lem:boris}.
We define a function $f_{(B,A)} \colon S^{\infty} \rightarrow S^{\infty}$ by $a_{i}w \mapsto b_{i}w$ where $w \in S^{\infty}$.
Then $f_{(B,A)}$ is a self-homeomorphism of $S^{\infty}$.
The group $\mathscr{G}(S)$ is isomorphic to the totality of all such maps.}
\end{remark}

\section{The group $\mathscr{G}(S)$ as a group of units}

In this section, we shall define a congruence $\equiv$ on the inverse monoid $\mathsf{R}(S)$
and show that the quotient is a Boolean inverse monoid.
We then show that our group $\mathscr{G}(S)$ is exactly the group of units of $\mathsf{R}(S)/\equiv$.

Let $T$ be an inverse $\wedge$-semigroup with zero.
Define the relation $\equiv$ on $T$ as follows:
$s \equiv t$ if and only if for each $0 < x \leq s$ we have that $x \wedge t \neq 0$
and for each $0 < y \leq t$ we have that $y \wedge s \neq 0$.
By \cite{Lenz, JL}, we have the following.

\begin{lemma}\label{lem:toy} Let $T$ be an arbitrary inverse $\wedge$-monoid with zero.
Then $\equiv$ is a $0$-restricted congruence on $T$.
\end{lemma}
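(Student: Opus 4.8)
The plan is to verify directly that $\equiv$ is (i) an equivalence relation, (ii) compatible with multiplication, and (iii) $0$-restricted, treating each in turn and leaning on the existence of the meet operation $\wedge$ and the fixed point operator $\phi$ described in Section~2. Reflexivity and symmetry of $\equiv$ are immediate from the symmetric form of the definition (note $x \wedge s = x$ whenever $0 < x \leq s$, so the defining condition is automatically satisfied on the diagonal). For transitivity, suppose $s \equiv t$ and $t \equiv r$, and take $0 < x \leq s$. Then $x \wedge t \neq 0$; call it $y$, so $0 < y \leq t$, and hence $y \wedge r \neq 0$. Since $y \wedge r \leq x$ (as $y \leq x$) and $y \wedge r \leq r$, we get $x \wedge r \neq 0$; the symmetric argument handles the other half, so $s \equiv r$.

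For $0$-restrictedness: if $s \equiv 0$ but $s \neq 0$, then taking $x = s$ in the definition forces $s \wedge 0 \neq 0$, which is absurd since $s \wedge 0 = 0$. Hence $s \equiv 0$ implies $s = 0$.

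The substantive step is showing $\equiv$ is a congruence, i.e. that $s \equiv t$ implies $as \equiv at$ and $sa \equiv ta$ for every $a \in T$; by symmetry and the fact that $\equiv$ is clearly closed under the involution $s \mapsto s^{-1}$ (the definition is phrased symmetrically and the natural partial order and meets are preserved by inversion), it suffices to handle left multiplication. So suppose $s \equiv t$ and take $0 < x \leq as$. The key observation is that $x \leq as$ means $x = a s \mathbf{d}(x)$, and one can pull $x$ back along $a$: set $x' = a^{-1} x$. Then $x' \leq a^{-1} a s \leq s$ after a short computation using $\mathbf{d}(x) \leq \mathbf{d}(s)$-type manipulations with idempotents, and crucially $x' \neq 0$ because $\mathbf{r}(x) \leq \mathbf{r}(a) = a a^{-1}$ forces $a a^{-1} x = x$, whence $a x' = x \neq 0$ so $x' \neq 0$. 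Now $s \equiv t$ gives $x' \wedge t \neq 0$, and I would then show $a(x' \wedge t) \leq x$ and $a(x' \wedge t) \leq at$, using that $\lambda_a$-style left translation by $a$ is order-preserving and that $a(x' \wedge t) \neq 0$ (again because $a^{-1}a (x' \wedge t) = x' \wedge t \neq 0$, as $\mathbf{d}(x' \wedge t) \leq \mathbf{d}(x') \leq \mathbf{d}(s)$ and $\mathbf{d}(s) \leq a^{-1}a$ is not automatic — here one must instead observe $x' \wedge t \leq x' \leq s$, hence $\mathbf{r}(x' \wedge t) \leq \mathbf{r}(x') \leq a^{-1}a$, so left-multiplying by $a$ is injective on it). Therefore $x \wedge at \neq 0$; the symmetric condition follows by interchanging $s$ and $t$, so $as \equiv at$.

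The main obstacle I expect is precisely the bookkeeping in that last paragraph: correctly verifying that multiplication by $a$ does not annihilate the witness $x' \wedge t$ and that the images land below the right elements. This is where one genuinely uses that we are in an inverse semigroup with meets — the identity $a(x \wedge y) = ax \wedge ay$ holds once $x,y$ lie below a common element on which $\lambda_a$ is injective, and that is the case here because $x', t$ need not be comparable but $x' \wedge t$ lies below $s$ and we can arrange the relevant domain idempotent to sit below $a^{-1}a$ by passing to $\mathbf{r}(a) x$ at the outset. Everything else is formal, and since the paper cites \cite{Lenz, JL} for exactly this fact, I would in practice simply invoke those references for the congruence property and present only the short verification of $0$-restrictedness in detail.
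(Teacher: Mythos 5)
Your proposal is correct. Note that the paper itself offers no argument for this lemma at all --- it simply cites \cite{Lenz, JL} --- so your direct verification is supplying what the paper outsources. The substance checks out: the equivalence-relation part and $0$-restrictedness are exactly as routine as you say, and the left-compatibility argument is sound. The key chain is: from $0 < x \leq as$ you get $x' = a^{-1}x \leq a^{-1}as \leq s$ with $ax' = x$ (since $\mathbf{r}(x) \leq aa^{-1}$), hence $x' \neq 0$; then $x' \wedge t \neq 0$, and since $\mathbf{r}(x' \wedge t) \leq \mathbf{r}(x') = a^{-1}xx^{-1}a \leq a^{-1}a$, left multiplication by $a$ does not kill it, giving $0 \neq a(x' \wedge t) \leq x \wedge at$. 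Your self-correction in the parenthetical (using $\mathbf{r}(x') \leq a^{-1}a$ rather than anything involving $\mathbf{d}(s)$) is the right fix. Right-compatibility via closure of $\equiv$ under inversion is also fine, since inversion is an order isomorphism of $(T,\leq)$ and therefore preserves meets, so $(x^{-1} \wedge t)^{-1} = x \wedge t^{-1}$. Combined with transitivity this yields the two-sided congruence property in the paper's sense. One small remark: you do not actually need the identity $a(x \wedge y) = ax \wedge ay$ that you worry about at the end; the weaker observation that $a(x' \wedge t)$ is a nonzero common lower bound of $x$ and $at$ already suffices, which is what your argument in fact uses.
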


This congruence is defined out of the blue, but
we now describe some of its properties
which will make it much more natural.
Let $a \leq b$.
We write $a \leq_{e} b$ and say that {\em $a$ is essential in $b$} if
$0 < x \leq b$ implies that $a \wedge x \neq 0$.

\begin{lemma}\label{lem:phone} Let $T$ be an inverse $\wedge$-semigroup with zero.
Then $a \leq_{e} b$ if and only if $\mathbf{d}(a) \leq_{e} \mathbf{d}(b)$.
\end{lemma}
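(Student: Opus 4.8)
The plan is to prove the equivalence $a \leq_e b \iff \mathbf{d}(a) \leq_e \mathbf{d}(b)$ by exploiting the fact that in an inverse $\wedge$-semigroup the natural partial order and the meet operation interact well with the domain map $\mathbf{d}$, which is an order-preserving map onto the semilattice of idempotents. Throughout I would use two standard facts from Section~2: first, $a \leq b$ implies $\mathbf{d}(a) \leq \mathbf{d}(b)$ (and $\mathbf{r}(a) \leq \mathbf{r}(b)$), and second, for $x \leq b$ we have $x = b\,\mathbf{d}(x)$, so that $x$ is completely determined by $b$ together with the idempotent $\mathbf{d}(x) \leq \mathbf{d}(b)$. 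This gives an order-isomorphism between $\{x : x \leq b\}$ and $\{e \in \mathsf{E}(T) : e \leq \mathbf{d}(b)\}$, via $x \mapsto \mathbf{d}(x)$ with inverse $e \mapsto be$. The key computation I would isolate first is that this bijection respects meets and non-triviality: for $0 < x, y \leq b$ we have $x \wedge y \neq 0$ if and only if $\mathbf{d}(x) \wedge \mathbf{d}(y) \neq 0$ (equivalently $\mathbf{d}(x)\mathbf{d}(y) \neq 0$, since meets of idempotents are products). Indeed $\mathbf{d}(x \wedge y) = \mathbf{d}(x) \wedge \mathbf{d}(y)$ when both sides live below the same $b$, because $x \wedge y = b(\mathbf{d}(x) \wedge \mathbf{d}(y))$ is the meet; and $z = 0 \iff \mathbf{d}(z) = 0$.

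Granting that, the forward direction $a \leq_e b \Rightarrow \mathbf{d}(a) \leq_e \mathbf{d}(b)$ goes as follows. Suppose $0 < f \leq \mathbf{d}(b)$ is an idempotent. Set $x = bf$, so $0 < x \leq b$ and $\mathbf{d}(x) = \mathbf{d}(b)f\,$-- actually $\mathbf{d}(x) = \mathbf{d}(bf) = fb^{-1}bf = f\mathbf{d}(b)f = f$ since $f \leq \mathbf{d}(b)$. By hypothesis $a \wedge x \neq 0$, and then by the key computation $\mathbf{d}(a) \wedge \mathbf{d}(x) = \mathbf{d}(a) \wedge f \neq 0$. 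Since $f \leq \mathbf{d}(b)$ was an arbitrary non-zero idempotent below $\mathbf{d}(b)$, this is exactly $\mathbf{d}(a) \leq_e \mathbf{d}(b)$. For the converse, suppose $\mathbf{d}(a) \leq_e \mathbf{d}(b)$ and let $0 < x \leq b$. Then $0 < \mathbf{d}(x) \leq \mathbf{d}(b)$, so by hypothesis $\mathbf{d}(a) \wedge \mathbf{d}(x) \neq 0$; again by the key computation (applied with the common upper bound $b$, noting $a \leq b$ since $a \leq_e b$) this forces $a \wedge x \neq 0$. Hence $a \leq_e b$.

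The main obstacle, and the step I would write out most carefully, is the "key computation" that $\mathbf{d}$ transports the predicate "$\cdot \wedge \cdot \neq 0$" faithfully across the order-isomorphism $x \leftrightarrow \mathbf{d}(x)$ on $\{x : x \leq b\}$. The subtlety is that in a general inverse semigroup $\mathbf{d}(x \wedge y)$ need not equal $\mathbf{d}(x) \wedge \mathbf{d}(y)$; one really needs $x$ and $y$ to share the common upper bound $b$. Given $x = b e$ and $y = b f$ with $e, f \leq \mathbf{d}(b)$ idempotents, one checks directly that $b(ef)$ is below both $x$ and $y$ and is the largest such element, so $x \wedge y = b(ef)$ and $\mathbf{d}(x \wedge y) = ef = e \wedge f$; then $x \wedge y = 0 \iff b(ef) = 0 \iff ef = 0$ (using that $b \neq 0$ forces $bg \neq 0$ precisely when $g \neq 0$ for $g \leq \mathbf{d}(b)$, since $\mathbf{d}(bg) = g$). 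Once this lemma-within-the-proof is in hand, both implications are immediate as above. It is also worth remarking (and I would note it) that the statement and proof are self-dual under $a \mapsto a^{-1}$, giving simultaneously the companion fact $a \leq_e b \iff \mathbf{r}(a) \leq_e \mathbf{r}(b)$, which will presumably be used alongside it.
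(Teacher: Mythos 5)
Your proof is correct and follows essentially the same route as the paper's: in each direction you pass between a non-zero element $x\leq b$ and the non-zero idempotent $\mathbf{d}(x)\leq\mathbf{d}(b)$ via $x=b\,\mathbf{d}(x)$, and conclude using the identity $\mathbf{d}(x\wedge y)=\mathbf{d}(x)\wedge\mathbf{d}(y)$ for elements with a common upper bound. The only cosmetic difference is that you verify this identity by direct computation with $b(ef)$, whereas the paper cites it as a known fact about compatible elements (\cite[Lemma~1.4.11]{Lawson1998}).
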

\begin{proof} Assume first that  $a \leq_{e} b$.
Then $\mathbf{d}(a) \leq \mathbf{d}(b)$.
Let $0 < e \leq \mathbf{d}(b)$.
Then $0 < eb \leq b$.
Thus $a \wedge eb \neq 0$.
But $a$ and $eb$ are compatible and so by \cite[Lemma~1.4.11]{Lawson1998}.
we have that $\mathbf{d} (a \wedge eb) = \mathbf{d}(a) \wedge e$, which is therefore non-zero.
Conversely, suppose that  $\mathbf{d}(a) \leq_{e} \mathbf{d}(b)$.
Let  $0 < x \leq b$.
Then $0 < \mathbf{d}(x) \leq \mathbf{d}(b)$,
By assumption, $\mathbf{d}(x) \wedge \mathbf{d}(a) \neq 0$.
But $a$ and $x$ are compatible and so $\mathbf{d}(x \wedge a) = \mathbf{d}(x) \wedge \mathbf{d}(a) \neq 0$
again by \cite[Lemma~1.4.11]{Lawson1998}.
Thus $x \wedge a \neq 0$.\end{proof}

\begin{lemma}\label{lem:story} Let $T$ be an arbitrary inverse $\wedge$-monoid with zero.
Then $a \equiv b$ if and only if $a \wedge b \leq_{e} a,b$.
\end{lemma}
\begin{proof} Suppose first that $a \equiv b$ where $a,b \neq 0$.
Then since $a \leq a$ we have that $a \wedge b \neq 0$.
Clearly, $a \wedge b \leq a$.
Let $0 < x \leq a$.
Then, by definition, $b \wedge x \neq 0$.
But $x = x \wedge a$.
It follows that $x \wedge (a \wedge b) \neq 0$.
We have therefore proved that $a \wedge b \leq_{e} a$.
By symmetry, we also have that $a \wedge b \leq_{e} b$, as required.
To prove the converse, suppose that $a \wedge b \leq_{e} a,b$.
Let $0 < x \leq a$.
Then $x \wedge (a \wedge b) \neq 0$.
It follows that $x \wedge b \neq 0$.
By symmetry, we deduce that $a \equiv b$.
\end{proof}

We can now say something specific about the congruence $\equiv$.

\begin{lemma}\label{lem:four} Let $T$ be an arbitrary inverse $\wedge$-monoid with zero.
Let $\rho$ be any congruence such that $a \leq_{e} b$ implies that $a \, \rho \, b$.
Then $\equiv \, \subseteq \, \rho$.
\end{lemma}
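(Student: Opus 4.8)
The plan is to show directly that $a \equiv b$ implies $a \, \rho \, b$, using the characterization of $\equiv$ furnished by Lemma~\ref{lem:story}. So suppose $a \equiv b$ in $T$. If $a = b = 0$ there is nothing to prove, so assume both are non-zero. By Lemma~\ref{lem:story}, we have $a \wedge b \leq_{e} a$ and $a \wedge b \leq_{e} b$. The hypothesis on $\rho$ then gives $(a \wedge b) \, \rho \, a$ and $(a \wedge b) \, \rho \, b$. Since $\rho$ is an equivalence relation (being a congruence), transitivity yields $a \, \rho \, b$, as required. Hence $\equiv \, \subseteq \, \rho$.

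The only point that needs a little care is the degenerate boundary case: one should check that $\equiv$ relates $0$ only to $0$ — but this is exactly the content of Lemma~\ref{lem:toy}, which says $\equiv$ is $0$-restricted — and that $\leq_{e}$ behaves correctly there, namely $0 \leq_{e} 0$ trivially, so $0 \, \rho \, 0$ is automatic. For $a, b$ non-zero, the essentiality $a \wedge b \leq_{e} a, b$ produced by Lemma~\ref{lem:story} is genuine (in particular $a \wedge b \neq 0$), so the two instances of the hypothesis $a \leq_{e} b \Rightarrow a \, \rho \, b$ apply verbatim.

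I do not expect any real obstacle here: the work has already been done in setting up Lemma~\ref{lem:story}, which repackages $\equiv$ in terms of the single relation $\leq_{e}$, precisely the relation the hypothesis on $\rho$ controls. The proof is then just two applications of the hypothesis plus transitivity. The substance of the lemma is conceptual rather than technical — it identifies $\equiv$ as the smallest congruence collapsing each essential extension $a \leq_{e} b$ — and the formal verification is a two-line argument.
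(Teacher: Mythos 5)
Your proof is correct and is essentially identical to the paper's: both apply Lemma~\ref{lem:story} to get $a \wedge b \leq_{e} a, b$, invoke the hypothesis on $\rho$ twice, and conclude by transitivity. The extra remarks on the zero case are harmless and consistent with the paper's (implicit) treatment.
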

\begin{proof} Suppose that $a \equiv b$.
Then by Lemma~\ref{lem:story}, we have that $a \wedge b \leq_{e} a,b$.
By assumption, $(a \wedge b) \, \rho \, a$ and $(a \wedge b) \, \rho \, b$.
It follows that $a \, \rho \, b$.
\end{proof}

We define a congruence $\rho$ to be {\em essential} if  $a \leq_{e} b$ implies that $a \, \rho \, b$.

\begin{lemma}\label{lem:waller} Let $T$ be an arbitrary inverse $\wedge$-monoid with zero.
Let $\rho$ be a $0$-restricted idempotent-pure essential congruence on $T$.
Then $\rho \,=\, \equiv$.
\end{lemma}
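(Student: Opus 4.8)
The plan is to establish the two inclusions $\equiv\,\subseteq\,\rho$ and $\rho\,\subseteq\,\equiv$ separately. The first is immediate from what we already have: by hypothesis $\rho$ is essential, i.e.\ $a\leq_{e}b$ implies $a\,\rho\,b$, so Lemma~\ref{lem:four} applies and gives $\equiv\,\subseteq\,\rho$. All the work therefore goes into the reverse inclusion, and it is here that the hypotheses ``$0$-restricted'' and ``idempotent-pure'' are used.

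For $\rho\,\subseteq\,\equiv$, I would take $a\,\rho\,b$ with $a,b\neq 0$ (if $a=0$ then $b=0$ since $\rho$ is $0$-restricted, and $0\equiv 0$ trivially) and first extract the fact that $a\sim b$. Multiplying the relation $a\,\rho\,b$ on the right by $b^{-1}$ gives $ab^{-1}\,\rho\,bb^{-1}$, and multiplying on the left by $b^{-1}$ gives $b^{-1}a\,\rho\,b^{-1}b$; since $bb^{-1}$ and $b^{-1}b$ are idempotents and $\rho$ is idempotent-pure, both $ab^{-1}$ and $a^{-1}b=(b^{-1}a)^{-1}$ are idempotents, i.e.\ $a\sim b$. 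Next, given $0<x\leq a$, put $e=\mathbf{d}(x)$, so that $x=ae$ and $e\leq\mathbf{d}(a)$, and consider $be\leq b$. Applying $\rho$ to $x=ae$ yields $x\,\rho\,be$; were $be=0$, $0$-restrictedness of $\rho$ would force $x=0$, a contradiction, so $be\neq 0$ and hence $\mathbf{d}(be)=eb^{-1}be=e\,\mathbf{d}(b)\neq 0$. Because $x\leq a$, $be\leq b$ and $a\sim b$, the pair $x,be$ is compatible: $x^{-1}(be)\leq a^{-1}b$ and $x(be)^{-1}\leq ab^{-1}$ both lie below idempotents, and $\mathsf{E}(T)$ is an order ideal. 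Then \cite[Lemma~1.4.11]{Lawson1998} gives $\mathbf{d}(x\wedge be)=\mathbf{d}(x)\wedge\mathbf{d}(be)=e\wedge e\,\mathbf{d}(b)=e\,\mathbf{d}(b)\neq 0$, so $x\wedge be\neq 0$; since meets are monotone and $be\leq b$, this forces $x\wedge b\neq 0$. By the symmetry of $\rho$ the same argument with $a$ and $b$ interchanged shows that $0<y\leq b$ implies $y\wedge a\neq 0$, and therefore $a\equiv b$. Combining the two inclusions yields $\rho\,=\,\equiv$.

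The one step that needs care, rather than routine bookkeeping, is the passage to compatibility: idempotent-purity of $\rho$ must be used to conclude $a\sim b$, and then one must notice that compatibility descends to the sub-elements $x$ and $be$ --- without it, the meet $x\wedge be$ could collapse to $0$ even when $\mathbf{d}(x)$ and $\mathbf{d}(b)$ are non-orthogonal, and the argument would break. Everything else is standard manipulation with the natural partial order and the domain map, using that $\mathsf{E}(T)$ is an order ideal closed under meets (with $e\wedge f=ef$ there).
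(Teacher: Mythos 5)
Your proof is correct and takes essentially the same route as the paper's: idempotent-purity gives $a\sim b$, $0$-restrictedness (after multiplying the relation $a\,\rho\,b$ by a suitable idempotent) shows the relevant product is non-zero, and compatibility identifies that product with a meet via \cite[Lemma~1.4.11]{Lawson1998}, while the inclusion $\equiv\,\subseteq\,\rho$ is Lemma~\ref{lem:four} exactly as you say. The only cosmetic difference is that you verify the definition of $\equiv$ directly on an arbitrary $0<x\leq a$ (using $b\,\mathbf{d}(x)$), whereas the paper forms $a\wedge b=ab^{-1}b$, shows $a\wedge b\leq_{e}a,b$, and invokes Lemma~\ref{lem:story}.
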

\begin{proof} Let $a \rho b$.
Then $a \sim b$ because $\rho$ is idempotent-pure.
It follows that $a \wedge b = ab^{-1}b$;
this can be proved directly or see \cite[Lemma~1.4.12]{Lawson1998}.
It follows that $ab^{-1}b \, \rho \, b$ and so $(a \wedge b)\, \rho \, b$.
Let $0 < x \leq b$.
Then  $(a \wedge b)x^{-1}x\, \rho \, bx^{-1}x = x$.
Since $x$ is non-zero and $\rho$ is $0$-restricted, it follows that $(a \wedge b)x^{-1}x \neq 0$.
Now $x, a \wedge b \leq b$ and so $x \sim (a \wedge b)$.
It follows that $(a \wedge b) \wedge x = (a \wedge b)x^{-1}x$.
We have therefore shown that if $0 < x \leq b$ then $(a \wedge b) \wedge x \neq 0$.
We have therefore proved that $(a \wedge b) \leq_{e} b$.
By symmetry, $(a \wedge b) \leq_{e} a$.
Thus by Lemma~\ref{lem:story} we have that $a \equiv b$.
\end{proof}

We have therefore proved the following theorem which demonstrates the significance of the congruence $\equiv$.

\begin{theorem}\label{them:phoebe} Let $T$ be an arbitrary inverse $\wedge$-monoid with zero
in which $\equiv$ is idempotent-pure.
Then $\equiv$ is the unique congruence on $T$ which is $0$-restricted, idempotent-pure and essential.
\end{theorem}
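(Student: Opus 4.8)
The plan is to show that $\equiv$ itself satisfies all three properties and then obtain uniqueness directly from Lemma~\ref{lem:waller}, which is already tailored to exactly this situation. First I would dispose of the easy half: $\equiv$ is a $0$-restricted congruence by Lemma~\ref{lem:toy}, and idempotent-purity of $\equiv$ is part of the hypothesis. So the only thing left to verify for existence is that $\equiv$ is essential, i.e.\ that $a \leq_{e} b$ implies $a \equiv b$.

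This follows from Lemma~\ref{lem:story}. Suppose $a \leq_{e} b$; then in particular $a \leq b$, so $a \wedge b = a$. Trivially $a \leq_{e} a$, and $a \leq_{e} b$ by hypothesis, so $a \wedge b \leq_{e} a,b$, and Lemma~\ref{lem:story} gives $a \equiv b$. (The degenerate case $a = 0$ causes no trouble: $0 \leq_{e} b$ forces $b = 0$, since otherwise one could take $0 < x \leq b$ and get $0 \wedge x = 0$, a contradiction.) Hence $\equiv$ is essential, and we have produced a congruence that is $0$-restricted, idempotent-pure, and essential.

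For uniqueness, let $\rho$ be \emph{any} congruence on $T$ that is $0$-restricted, idempotent-pure, and essential. Then $\rho$ satisfies precisely the hypotheses of Lemma~\ref{lem:waller}, which asserts that $\rho \,=\, \equiv$. This gives uniqueness and completes the proof. (If one prefers to make the argument self-contained from first principles, note that Lemma~\ref{lem:four} already gives $\equiv \,\subseteq\, \rho$ from essentiality of $\rho$, and the reverse inclusion is exactly the content of Lemma~\ref{lem:waller}'s computation with $ab^{-1}b = a \wedge b$; but invoking Lemma~\ref{lem:waller} directly is cleaner.)

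There is no real obstacle here: all the substantive work has been carried out in Lemmas~\ref{lem:toy}, \ref{lem:story}, \ref{lem:four}, and \ref{lem:waller}, and this theorem is essentially a packaging statement. The only point requiring any care is the verification that $\equiv$ is essential, and even that is a one-line consequence of $a \wedge b = a$ when $a \leq b$ together with Lemma~\ref{lem:story}.
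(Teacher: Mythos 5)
Your proposal is correct and is essentially the argument the paper intends: the theorem is stated there as a direct consequence of Lemma~\ref{lem:toy} ($0$-restrictedness), the hypothesis (idempotent-purity), Lemma~\ref{lem:story} (which, as you note, gives essentiality since $a \leq b$ forces $a \wedge b = a$), and Lemma~\ref{lem:waller} (uniqueness). Your explicit verification that $\equiv$ is essential, including the degenerate case $a=0$, is a welcome bit of care that the paper leaves implicit.
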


The following now connects the congruence $\equiv$ with Proposition~\ref{prop:tizer}.

\begin{lemma}\label{lem:villanelle} Let $T$ be an arbitrary inverse $\wedge$-monoid with zero
in which $\equiv$ is idempotent-pure.
Suppose that $T^{e}$ is $E$-unitary.
Then the restriction of the congruence $\equiv$ to $T^{e}$ is the minimum group congruence on $S^{e}$.
\end{lemma}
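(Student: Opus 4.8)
The plan is to show two things: first, that the restriction of $\equiv$ to $T^{e}$ is a group congruence, i.e. $T^{e}/{\equiv}$ is a group; and second, that it is the smallest such, i.e. it is contained in every congruence on $T^{e}$ whose quotient is a group. Throughout I would use freely that $\sigma$ on $T^{e}$ is given by $a \, \sigma \, b$ iff $a \sim b$ (Proposition~\ref{prop:E}, using the hypothesis that $T^{e}$ is $E$-unitary), and that $T^{e}$ has no zero, so the minimum group congruence $\sigma$ genuinely produces a group.

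For the first part, I would argue that if $a,b \in T^{e}$ and $a \equiv b$, then $a \sim b$. Since $\equiv$ is assumed idempotent-pure on $T$, from $a \equiv b$ we get (as in the proof of Lemma~\ref{lem:waller}) that $a \sim b$ in $T$, hence $a^{-1}b$ and $ab^{-1}$ are idempotents; the compatibility relation is inherited by the inverse subsemigroup $T^{e}$, so $a \sim b$ in $T^{e}$. Thus the restriction of $\equiv$ to $T^{e}$ is contained in $\sim \, = \, \sigma_{T^{e}}$. Conversely, I would show $\sigma_{T^{e}} \subseteq \equiv|_{T^{e}}$: if $a \sim b$ in $T^{e}$, then $a \wedge b = ab^{-1}b$ (by \cite[Lemma~1.4.12]{Lawson1998}), and this is non-zero — here is where being in $T^{e}$ matters, since $\mathbf{d}(a), \mathbf{d}(b)$, and hence $a^{-1}a \cdot b^{-1}b$, are essential idempotents and in particular $a^{-1}a\, b^{-1}b \neq 0$, forcing $ab^{-1}b \neq 0$. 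Then for $0 < x \leq b$ we have $(a \wedge b) \wedge x = (a \wedge b)x^{-1}x = ab^{-1}b x^{-1}x = a x^{-1}x \wedge b$, and one checks this is non-zero again because $\mathbf{d}(a)$ is essential and meets $\mathbf{d}(x)$, so by Lemma~\ref{lem:phone}-style reasoning $a \wedge x \neq 0$ and hence $(a\wedge b) \wedge x \neq 0$; symmetrically for $0 < y \leq a$. Thus $a \wedge b \leq_{e} a,b$ and by Lemma~\ref{lem:story}, $a \equiv b$. Combining the two inclusions, $\equiv|_{T^{e}} \, = \, \sigma_{T^{e}}$, which is in particular a group congruence.

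The equality $\equiv|_{T^{e}} = \sigma_{T^{e}}$ already delivers the conclusion, since $\sigma_{T^{e}}$ is by definition the minimum group congruence on $T^{e}$. So strictly the second part comes for free, but I would record it explicitly: any congruence $\rho$ on $T^{e}$ with $T^{e}/\rho$ a group satisfies $\sigma_{T^{e}} \subseteq \rho$ by minimality of $\sigma$, hence $\equiv|_{T^{e}} \subseteq \rho$.

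The main obstacle I anticipate is the non-vanishing of the relevant meets, i.e. verifying that the essentiality of the idempotents $\mathbf{d}(a), \mathbf{r}(a)$ (which is exactly what membership in $T^{e}$ encodes) is enough to push through the argument of Lemma~\ref{lem:waller} internally to $T^{e}$ — in particular that $a \wedge b$ computed in $T$ still lies in $T^{e}$, and that $0 < x \le b$ inside $T^{e}$ has enough idempotents below $\mathbf{d}(x)$ meeting $\mathbf{d}(a)$. This should follow from the product of two essential idempotents being essential, but it is the step that requires care and the one place where the hypotheses "$\equiv$ idempotent-pure" and "$T^{e}$ is $E$-unitary" are genuinely used rather than decorative.
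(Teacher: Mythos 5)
Your proposal is correct and follows essentially the same route as the paper: the inclusion $\equiv|_{T^{e}} \subseteq \sigma$ is obtained exactly as in the paper from idempotent-purity plus $E$-unitarity, and the reverse inclusion again comes down to showing that a common lower bound of $a$ and $b$ is essential in both. The only difference is bookkeeping in that second step — the paper takes an arbitrary $c \in T^{e}$ below $a$ and $b$, uses $\mathbf{d}(c) \equiv 1 \equiv \mathbf{d}(a)$ and Lemma~\ref{lem:phone}, whereas you take $a \wedge b = ab^{-1}b$ and verify $a \wedge b \leq_{e} a,b$ by a direct meet computation with essential idempotents; both are valid and your worry about $a \wedge b$ lying in $T^{e}$ is moot since you only ever need the essentiality of $\mathbf{d}(a)$ and $\mathbf{d}(b)$.
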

\begin{proof} Observe that $a \in T^{e}$ if and only if $a^{-1}a \equiv 1$ and $aa^{-1} \equiv 1$.
Suppose that $a, b \in T^{e}$ and $a \equiv b$.
Then, since $\equiv$ is idempotent-pure, we have that $a \, \sim \, b$ and so $a \, \sigma \, b$,
since in an $E$-unitary inverse semigroup $\sigma \, = \, \sim$ \cite[Theorem 2.4.6]{Lawson1998}.
Suppose that $a \, \sigma \, b$ in $T^{e}$.
Then there exists $c \in T^{e}$ such that $c \leq a, b$.
Now $\mathbf{d}(c) \equiv 1$ and $\mathbf{d}(a) \equiv 1$
and so $\mathbf{d}(c) \equiv \mathbf{d}(a)$.
It follows that $\mathbf{d}(c)\mathbf{d}(a) \leq_{e} \mathbf{d}(a)$.
That is, $\mathbf{d}(c) \leq_{e} \mathbf{d}(a)$.
Thus by Lemma~\ref{lem:phone}, we have that $c \leq_{e} a$ and, by symmetry, $c \leq_{e} b$.
It follows by Lemma~\ref{lem:story}, that $a \, \equiv \, b$, as required.
\end{proof}

We have therefore proved the following.

\begin{proposition}\label{prop:delay} Let $T$ be an inverse $\wedge$-monoid with zero
in which $\equiv$ is idempotent-pure and $T^{e}$ is $E$-unitary.
Then the group of units of $T/\equiv$ is isomorphic to $T^{e}/\sigma$.
\end{proposition}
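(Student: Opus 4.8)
The plan is to exhibit the group of units of $T/\equiv$ as the homomorphic image of $T^{e}$ under the natural quotient map, and then to identify the kernel of that map with $\sigma$ by invoking Lemma~\ref{lem:villanelle}.

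First I would use the observation recorded at the start of the proof of Lemma~\ref{lem:villanelle}: for $a \in T$ we have $a \in T^{e}$ if and only if $a^{-1}a \equiv 1$ and $aa^{-1} \equiv 1$. Write $\nu \colon T \to T/\equiv$ for the natural map; since $\equiv$ is $0$-restricted (Lemma~\ref{lem:toy}), $T/\equiv$ is again an inverse monoid with zero, so its group of units $\mathrm{U}(T/\equiv)$ is defined. Because $\nu$ is a homomorphism of inverse monoids it preserves inverses, so $\nu(a)^{-1}\nu(a) = \nu(a^{-1}a)$ and $\nu(a)\nu(a)^{-1} = \nu(aa^{-1})$; combining this with the characterization of $T^{e}$ just quoted shows that $\nu(a)$ is a unit of $T/\equiv$ precisely when $a \in T^{e}$. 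Hence $\nu$ restricts to a monoid homomorphism $T^{e} \to \mathrm{U}(T/\equiv)$, and the same argument applied to an arbitrary unit $[b]$ of $T/\equiv$ (so that $[b^{-1}b] = [1] = [bb^{-1}]$, forcing $b \in T^{e}$) shows this restricted homomorphism is onto.

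Next I would compute its kernel. For $a, b \in T^{e}$ we have $\nu(a) = \nu(b)$ if and only if $a \equiv b$, so the kernel of $\nu|_{T^{e}}$ is exactly the restriction of $\equiv$ to $T^{e}$. The hypotheses of the present proposition are precisely those of Lemma~\ref{lem:villanelle}, which tells us that this restriction is the minimum group congruence $\sigma$ on $T^{e}$. Applying the homomorphism theorem to the surjection $\nu|_{T^{e}} \colon T^{e} \to \mathrm{U}(T/\equiv)$ then yields $T^{e}/\sigma \cong \mathrm{U}(T/\equiv)$, which is the claim.

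The only step needing any care is the first one: making sure that the units of $T/\equiv$ are \emph{exactly} the $\equiv$-classes that meet $T^{e}$, so that $\nu|_{T^{e}}$ really does surject onto the whole group of units and not merely into it. This is where the hypothesis that $\equiv$ is $0$-restricted is used (to know $T/\equiv$ has a well-behaved zero and hence a group of units), together with the preservation of inverses under $\nu$; everything after that is bookkeeping with Lemma~\ref{lem:villanelle}.
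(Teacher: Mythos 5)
Your proposal is correct and follows exactly the route the paper intends: the paper offers no separate proof of this proposition, presenting it as an immediate consequence of Lemma~\ref{lem:villanelle} together with the observation (made at the start of that lemma's proof) that $a \in T^{e}$ if and only if $a^{-1}a \equiv 1$ and $aa^{-1} \equiv 1$. Your write-up simply makes explicit the surjectivity of the restricted quotient map onto the group of units and the identification of its kernel with $\sigma$, which is precisely the bookkeeping the paper leaves to the reader.
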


\begin{proposition}\label{prop:salami} Let $S$ be a $k$-monoid.
Then  $\mathsf{R}(S)$ is a distributive inverse monoid.
\end{proposition}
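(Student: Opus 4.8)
The plan is to view $\mathsf{R}(S)$ as an inverse submonoid of the symmetric inverse monoid $\mathcal{I}(S)$ on the underlying set of $S$: multiplication in $\mathsf{R}(S)$ is composition of partial functions, and by the discussion of partial bijections in Section~2 the natural partial order on $\mathsf{R}(S)$ is the restriction of the restriction order on $\mathcal{I}(S)$. In $\mathcal{I}(S)$ every compatible pair of partial bijections $\alpha,\beta$ has a join, namely their union $\alpha\cup\beta$ as sets of ordered pairs, which is again a partial bijection. So it is enough to establish three things: (i) $\mathsf{R}(S)$ is closed under unions of compatible pairs; (ii) such a union, being already the join in $\mathcal{I}(S)$, remains the join when computed inside $\mathsf{R}(S)$; and (iii) multiplication distributes over these joins on both the left and the right.

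For (i), let $\alpha\colon AS\rightarrow BS$ and $\beta\colon CS\rightarrow DS$ be compatible elements of $\mathsf{R}(S)$, with $A,B,C,D$ finite. The first thing to check is that $\alpha\cup\beta$ really is a partial bijection: compatibility is precisely the condition that $\alpha$ and $\beta$ agree on $AS\cap CS$ and that $\alpha^{-1}$ and $\beta^{-1}$ agree on $BS\cap DS$, which is exactly what makes $\alpha\cup\beta$ single-valued and injective. Its domain is $AS\cup CS=(A\cup C)S$ and its image is $BS\cup DS=(B\cup D)S$, both finitely generated right ideals, and it is a morphism because each of its restrictions $\alpha$ and $\beta$ is; hence $\alpha\cup\beta\in\mathsf{R}(S)$. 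Statement (ii) is then immediate, since any upper bound of $\alpha$ and $\beta$ in $\mathsf{R}(S)$ is in particular an upper bound in $\mathcal{I}(S)$ and so lies above $\alpha\cup\beta$; thus $\alpha\vee\beta=\alpha\cup\beta$ in $\mathsf{R}(S)$.

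For (iii), take $\gamma\in\mathsf{R}(S)$ and compatible $\alpha,\beta\in\mathsf{R}(S)$. In any inverse semigroup $\alpha\sim\beta$ forces $\gamma\alpha\sim\gamma\beta$ and $\alpha\gamma\sim\beta\gamma$, so by (i) all the relevant joins exist; and since composition of partial functions distributes over unions, $\gamma(\alpha\vee\beta)=\gamma\alpha\vee\gamma\beta$ and $(\alpha\vee\beta)\gamma=\alpha\gamma\vee\beta\gamma$, which is the required distributivity. The only genuinely delicate point I anticipate is the verification within (i) that compatibility turns the set-theoretic union $\alpha\cup\beta$ into a well-defined bijective morphism; everything after that is bookkeeping with finitely generated right ideals of the same flavour as in the proof of Proposition~\ref{prop:truss}.
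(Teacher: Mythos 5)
Your proof is correct and takes essentially the same route as the paper: the paper simply observes that $(X\cup Y)S = XS\cup YS$ is again a finitely generated right ideal and declares distributivity ``immediate,'' and your argument is the careful expansion of that remark (compatible unions of bijective morphisms are bijective morphisms between finitely generated right ideals, unions are the joins, and composition distributes over unions). No gaps.
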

\begin{proof} We proved in Proposition~\ref{prop:truss} that $\mathsf{R}(S)$ is an inverse monoid.
If $XS$ and $YS$ are two finitely generated right ideals then their union is $(X \cup Y)S$
which is also a finitely generated right ideal.
It is now immediate that  $\mathsf{R}(S)$ is a distributive inverse monoid.
\end{proof}

In fact, the inverse monoid $\mathsf{R}(S)$ has all binary meets, a consequence of the following result.

\begin{lemma}\label{lem:gove} Let $S$ be a $k$-monoid.
Then the fixed point set of a bijective morphism $\theta \colon XS \rightarrow YS$ is a finitely generated right ideal. 
\end{lemma}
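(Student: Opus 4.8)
The plan is to show that the fixed point set $F = \{z \in XS \colon \theta(z) = z\}$ is a right ideal and then produce a finite generating set for it, which will suffice since $S$ is a $k$-monoid and hence finitely aligned. First I would check that $F$ is a right ideal: if $z \in F$ and $s \in S$, then $zs \in XS$ (as $XS$ is a right ideal) and $\theta(zs) = \theta(z)s = zs$ since $\theta$ is a morphism; hence $zs \in F$. So $F$ is a right ideal of $S$ contained in $XS$.

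Next I would exhibit a finite generating set. By Lemma~\ref{lem:trump} we may write $\theta = \bigvee_{i=1}^{n} y_{i}x_{i}^{-1}$ where, by Lemma~\ref{lem:nato}, $X = \{x_{1}, \ldots, x_{n}\}$ and $Y = \{y_{1}, \ldots, y_{n}\}$ with $\theta(x_i) = y_i$. A point $z \in XS$ lies in the domain piece $x_i S$ for exactly one $i$ (the $x_i$ being incomparable is not assumed here, but every $z \in XS$ satisfies $z = x_i m$ for some $i$ and $m$), and there $\theta(z) = y_i m$. Thus $z \in F$ forces $x_i m = y_i m$ for the relevant factorization. The key observation is that by cancellativity (Lemma~\ref{lem:coffee}(3)), $x_i m = y_i m$ already forces $x_i = y_i$ when $d(x_i) = d(y_i)$; more generally one uses Lemma~\ref{lem:levi}: from $x_i m = y_i m$ with, say, $d(x_i) \geq d(y_i)$, we get $x_i = y_i t$ and $m = tm$, but then $d(m) = d(t) + d(m)$ forces $d(t) = \mathbf{0}$, so $t = 1$ and $x_i = y_i$. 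Hence the only indices $i$ that can contribute to $F$ are those with $x_i = y_i$, i.e. where $y_ix_i^{-1}$ is already an idempotent; and for each such $i$, \emph{all} of $x_i S$ lies in $F$. Therefore $F = \bigcup_{i \,:\, x_i = y_i} x_i S$, a finite union of principal right ideals, hence finitely generated.

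The main obstacle, and the only subtle point, is handling the case where $z$ lies in $x_iS \cap x_jS$ for $i \neq j$, so that $\theta(z)$ could a priori be computed via either branch; but since $\theta$ is a well-defined function this ambiguity is illusory, and in any case the displayed description $F = \bigcup_{x_i = y_i} x_i S$ is obtained by a direct two-way inclusion: the inclusion $\supseteq$ is clear, and for $\subseteq$ one takes $z \in F$, writes $z = x_i m$, deduces $x_i = y_i$ as above, so $z \in x_iS$ with $x_i = y_i$. I would also remark that this simultaneously reproves (as was asserted in the text) that $\mathsf{R}(S)$ is a $\wedge$-semigroup, since $\theta \wedge \iota_{XS}$ is precisely the identity on this fixed-point ideal, giving the fixed-point operator $\phi$ required by \cite[Definition 1.7]{Leech}.
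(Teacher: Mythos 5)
Your proposal is correct and is essentially the paper's own argument: identify the generators $x$ of $X$ with $\theta(x)=x$ and show the fixed point set is exactly the union of the corresponding principal right ideals $xS$. The only difference is that your detour through Lemma~\ref{lem:levi} is unnecessary --- from $x_i m = y_i m$ one gets $x_i = y_i$ immediately by right cancellation (Lemma~\ref{lem:coffee}(3)), which is exactly what the paper does.
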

\begin{proof} Let $a \in XS$ be such that $\theta (a) = a$.
By assumption, $a = xs$ for some $x \in X$ and $s \in S$.
It follows that $\theta (x)s = xs$ and so, by right cancellation, we have that $\theta (x) = x$.
Define $X' = \{x \in X \colon \theta (x) = x\}$.
Then the fixed point set of $\theta$ is precisely the set $X'S$.
\end{proof}

By Lemma~\ref{lem:gove} and \cite[Theorem 1.9]{Leech}, we have the following.

\begin{corollary}\label{lem:borisy} Let $S$ be a $k$-monoid.
Then  $\mathsf{R}(S)$ is a $\wedge$-monoid.
\end{corollary}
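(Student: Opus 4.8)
The plan is to exhibit a \emph{fixed point operator} on $\mathsf{R}(S)$ in the sense of Leech and then invoke the criterion recalled in Section~2 (see \cite[Theorem~1.9]{Leech}) to conclude that $\mathsf{R}(S)$ has all binary meets. Recall that such an operator is a map $\phi \colon \mathsf{R}(S) \rightarrow \mathsf{E}(\mathsf{R}(S))$ sending each element $s$ to the largest idempotent below $s$ in the natural partial order; once $\phi$ exists, the meet of $a$ and $b$ is computed as $a \wedge b = \phi(ab^{-1})b = a\phi(a^{-1}b)$.

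First I would take an arbitrary element $\theta \colon XS \rightarrow YS$ of $\mathsf{R}(S)$ and apply Lemma~\ref{lem:gove}: the fixed point set of $\theta$ equals $X'S$, where $X' = \{x \in X \colon \theta(x) = x\}$, and this is a finitely generated right ideal. I would then set $\phi(\theta)$ to be the identity function on $X'S$, which is an idempotent of $\mathsf{R}(S)$; since $\theta$ restricts to the identity on $X'S$, we have $\phi(\theta) \leq \theta$.

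Second I would check that $\phi(\theta)$ is the \emph{largest} idempotent below $\theta$. An idempotent of $\mathsf{R}(S)$ is the identity map $\iota_{ZS}$ on some finitely generated right ideal $ZS$, and by the description of the order on $\mathsf{R}(S)$ (Lemma~\ref{lem:tom}(1), applied to basic morphisms, together with Lemma~\ref{lem:trump}) the relation $\iota_{ZS} \leq \theta$ holds exactly when $ZS \subseteq XS$ and $\theta$ fixes every element of $ZS$, i.e. when $ZS$ is contained in the fixed point set $X'S$; hence $\iota_{ZS} \leq \phi(\theta)$. Thus $\phi$ is a genuine fixed point operator, and \cite[Theorem~1.9]{Leech} then gives that $\mathsf{R}(S)$ is a $\wedge$-monoid, with $a \wedge b = \phi(ab^{-1})b$.

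I do not anticipate a serious obstacle: the whole content of the argument is already packaged in Lemma~\ref{lem:gove}, and the only point needing a moment's care is confirming that "fixed point set" really coincides with "largest idempotent below", which follows once the order on $\mathsf{R}(S)$ is spelled out explicitly. If one wished to avoid quoting Leech, one could instead verify the meet formula $a \wedge b = \phi(ab^{-1})b$ directly, but that is strictly more work than necessary here.
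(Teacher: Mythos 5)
Your argument is correct and is essentially the paper's: the paper derives this corollary directly from Lemma~\ref{lem:gove} together with \cite[Theorem 1.9]{Leech}, exactly as you do. The extra verification you include — that the identity on the fixed-point ideal $X'S$ really is the largest idempotent below $\theta$ in the restriction order — is the detail the paper leaves implicit, and it is correct.
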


\begin{lemma}\label{lem:marina} Let $S$ be an inverse semigroup and let $\rho$ be an idempotent-pure congruence on $S$.
\begin{enumerate}

\item If $S$ is distributive then $S/\rho$ is distributive and the natural map $S \rightarrow S/\rho$ is a morphism.

\item If $S$ is a $\wedge$-semigroup then $S/\rho$ is a $\wedge$-semigroup and the natural map $S \rightarrow S/\rho$ preserves meets.

\end{enumerate}
\end{lemma}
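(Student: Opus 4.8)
The plan is to reduce everything to a handful of general facts about an idempotent-pure congruence $\rho$ on an inverse semigroup $S$, and then to transport structure along the natural map $\pi\colon S\to S/\rho$. First I would record that $\pi$ reflects idempotents: if $\pi(s)$ is idempotent then, since $\pi(s)=\pi(s)^{-1}$ and $\pi(s)=\pi(s)^{2}$, one gets $s\,\rho\,s^{-1}s$, so idempotent-purity forces $s\in\mathsf E(S)$; hence the idempotents of $S/\rho$ are exactly the $\pi$-images of those of $S$. Next, $\pi$ reflects compatibility on the nose: if $\pi(a)\sim\pi(b)$ then $\pi(a^{-1}b)$ and $\pi(ab^{-1})$ are idempotent, so $a^{-1}b$ and $ab^{-1}$ are idempotent, i.e.\ $a\sim b$ already in $S$. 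And $\pi$ reflects the natural partial order: if $\pi(a)\le\pi(b)$, write $\pi(a)=\pi(b)\pi(e)$ with $e\in\mathsf E(S)$; then $a_{0}:=be\le b$ and $\pi(a_{0})=\pi(a)$. Finally, combining these with the congruence property yields the description $a\,\rho\,b\iff a\sim b\text{ and }\mathbf d(a)\,\rho\,\mathbf d(b)$ — for the nontrivial direction use that $a\sim b$ gives $a\,\mathbf d(b)=b\,\mathbf d(a)$ (\cite[Lemma~1.4.12]{Lawson1998}) — which lets me reduce assertions about $\rho$ to assertions about its restriction to $\mathsf E(S)$.

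For part (1): $S/\rho$ is an inverse semigroup as a quotient of one. Given $\pi(a)\sim\pi(b)$, compatibility lifts, so $a\vee b$ exists in $S$, and $\pi(a\vee b)$ is visibly an upper bound of $\pi(a),\pi(b)$. For leastness, let $\pi(c)$ dominate both; lifting the order gives $a_{0},b_{0}\le c$ with $\pi(a_{0})=\pi(a)$ and $\pi(b_{0})=\pi(b)$, whence $a_{0}\sim b_{0}$ (both lie below $c$), $a_{0}\vee b_{0}\le c$, and so $\pi(a_{0}\vee b_{0})\le\pi(c)$. What is left is the identity $\pi(a_{0}\vee b_{0})=\pi(a\vee b)$; by the description of $\rho$ above together with $\mathbf d(x\vee y)=\mathbf d(x)\vee\mathbf d(y)$, this is exactly the statement that $\rho$ respects joins of idempotents. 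Granting it, $\pi(a)\vee\pi(b)=\pi(a\vee b)$, so $S/\rho$ has all compatible joins and $\pi$ preserves them; distributivity of multiplication over these joins in $S/\rho$ then follows by applying the monoid homomorphism $\pi$ to the corresponding identities in $S$ (and invoking join-preservation once more on the right-hand side).

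For part (2): transport the fixed-point operator. Let $\phi\colon S\to\mathsf E(S)$ be the operator of \cite[Definition~1.7]{Leech} and set $\bar\phi(\pi(s))=\pi(\phi(s))$. Well-definedness is the assertion $s\,\rho\,t\Rightarrow\phi(s)\,\rho\,\phi(t)$, which by the description of $\rho$ again reduces to the behaviour of $\rho$ on $\mathsf E(S)$. One then checks, using reflection of idempotents and of the order, that $\bar\phi(\pi(s))$ is the largest idempotent below $\pi(s)$, so $S/\rho$ is a $\wedge$-semigroup; and $\pi(a)\wedge\pi(b)=\bar\phi\bigl(\pi(a)\pi(b)^{-1}\bigr)\pi(b)=\pi\bigl(\phi(ab^{-1})b\bigr)=\pi(a\wedge b)$ by \cite[Theorem~1.7]{Leech}, so $\pi$ preserves meets.

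I expect the main obstacle, in both parts, to be exactly the one non-formal point flagged above: that the restriction of $\rho$ to $\mathsf E(S)$ is compatible with joins (respectively, with $\phi$); everything else is bookkeeping with the natural partial order and the compatibility relation. I would attack this by using that $\rho$ is not merely a semigroup congruence on the semilattice $\mathsf E(S)$ but the trace of an idempotent-pure congruence on the ambient distributive (respectively $\wedge$-) inverse semigroup $S$ — and, in the applications that matter to us, that $\rho$ is the congruence $\equiv$, which is essential (Lemma~\ref{lem:story}, Lemma~\ref{lem:phone}): essentialness is precisely what prevents two $\rho$-related idempotents from being separated by a join, and so is what delivers $\pi(a_{0}\vee b_{0})=\pi(a\vee b)$.
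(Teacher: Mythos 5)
Your reduction apparatus is sound and closely parallels the paper's: idempotent-purity gives that $\pi$ reflects idempotents, compatibility and the natural partial order, and your characterisation $a\,\rho\,b\iff a\sim b$ and $\mathbf{d}(a)\,\rho\,\mathbf{d}(b)$ is correct. For part (2), however, your worry is unnecessary: you do not need to know in advance that $\rho$ is compatible with $\phi$. Since every idempotent $[e]\le[a]$ of $S/\rho$ satisfies $[e]=[ea]$ with $ea$ an idempotent of $S$ lying below $a$, hence $ea\le\phi(a)$, the class $[\phi(a)]$ is \emph{the largest idempotent below} $[a]$; this description depends only on the class $[a]$, so $\bar\phi$ is automatically well defined, and Leech's formula then gives preservation of meets. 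This is exactly how the paper argues part (2), and it closes that part completely without the extra input you flagged.

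Part (1) is where the genuine gap lies, and you have correctly located it but not closed it. The equality $\pi(a_{0}\vee b_{0})=\pi(a\vee b)$ reduces, as you say, to the assertion that the trace of $\rho$ on $\mathsf{E}(S)$ respects joins, and your proposed remedy --- invoking the essentialness of the particular congruence $\equiv$ --- would prove only the instance needed later in the paper, not the lemma as stated for an arbitrary idempotent-pure congruence; so as a proof of the statement the proposal is incomplete. You should also be aware that the paper's own proof of (1) passes over this same point: it writes $[e\vee f]=[ei\vee fi]$ on the sole grounds that $[e]=[ei]$ and $[f]=[fi]$, which is precisely the join-compatibility in question. That step can in fact fail for a general idempotent-pure congruence: on the four-element Boolean algebra $\{0,e,f,1\}$, viewed as a commutative distributive inverse monoid in which every element is idempotent (so that every congruence is idempotent-pure), the congruence with classes $\{0,e\}$, $\{f\}$, $\{1\}$ gives $\pi(e)\vee\pi(f)=\pi(f)$ while $\pi(e\vee f)=\pi(1)\neq\pi(f)$. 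So your instinct that this is the one non-formal point is right; it is not mere bookkeeping, and some additional hypothesis on $\rho$ (for instance that it is essential, or additive in the sense of Section~14, as holds for the congruence $\equiv$ actually used in the paper) is genuinely needed to complete part (1).
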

\begin{proof} (1) Denote the $\rho$-class containing $a$ by $[a]$.
We prove first that for idempotents $e$ and $f$ we have that $[e \vee f] = [e] \vee [f]$.
Clearly, $[e] \vee [f] \leq [e \vee f]$.
Suppose that $[a]$ is any idempotent such that $[e], [f] \leq [a]$.Because $\rho$ is idempotent-pure, we know that $[a] = [i]$, where $i$ is an idempotent.
Thus $[e], [f] \leq [i]$.
It follows that $[e] = [ei]$ and $[f] = [fi]$.
Thus $[e \vee f] = [ei \vee fi] = [(e \vee f)i] \leq [i]$.
It follows that $[e \vee f] = [e] \vee [f]$. 
Suppose, now, that $[a] \sim [b]$.
Then, since $\rho$ is idempotent-pure we have that $a \sim b$.
Thus $a \vee b$ is defined.
We claim that $[a \vee b] = [a] \vee [b]$.
Since $a,b \leq a \vee b$ we have that $[a], [b] \leq [a \vee b]$.
By our result on idempotents above, we have that $[\mathbf{d}(a \vee b)] = [\mathbf{d}(a)] \vee [\mathbf{d}(b)]$.
Suppose that $[a], [b] \leq [c]$.
It is easy to check that $[c][\mathbf{d}(a) \vee \mathbf{d}(b)] = [c\mathbf{d}(a) \vee c \mathbf{d}(b)] = [a \vee b]$.
Thus $[a] \vee [b] = [a \vee b]$.

(2) Let $[e] \leq [a]$.
Then $[e] = [ea]$.
Since $\rho$ is idempotent-pure, we have that $ea$ is an idempotent and clearly $ea \leq a$.
It follows that $ea \leq \phi (a)$, where $\phi$ is the operator introduced in Section~2, and so $[e] = [ea] \leq [\phi (a)]$.
We have therefore proved that $\phi ([a]) = [\phi (a)]$.
That is, $S/\rho$ is a $\wedge$-semigroup.

It remains to prove that the natural map preserves meets.
Let $a,b \in S$.
Then $a \wedge b = \phi (ab^{-1})b$.
It follows that 
$$[a \wedge b] = [\phi (ab^{-1})b] = [\phi (ab^{-1})][b] = \phi ([ab^{-1}][b]) = [a] \wedge [b].$$
\end{proof}

In the following result, we shall use the description of elements of $\mathsf{R}(S)$
as joins of basic morphisms Lemma~\ref{lem:trump}.

\begin{lemma}\label{lem:hunt} Let $S$ be a $k$-monoid.
The congruence $\equiv$ is idempotent-pure on the inverse monoid $\mathsf{R}(S)$.
\end{lemma}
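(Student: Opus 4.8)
The plan is to show that if $\alpha \equiv e$ in $\mathsf{R}(S)$ for some idempotent $e$, then $\alpha$ is itself an idempotent. First I would reduce to the case of basic morphisms. By Lemma~\ref{lem:trump}, write $\alpha = \bigvee_{i=1}^{n} x_{i}y_{i}^{-1}$ as a join of basic bijective morphisms. Since $\equiv$ is known to be a $0$-restricted congruence (Lemma~\ref{lem:toy}) and $\mathsf{R}(S)$ is a $\wedge$-monoid (Corollary~\ref{lem:borisy}), Lemma~\ref{lem:story} tells us that $\alpha \equiv e$ means $\alpha \wedge e \leq_{e} \alpha$ and $\alpha \wedge e \leq_{e} e$; in particular $e$ is a non-zero idempotent (if $\alpha \neq 0$) defined on some finitely generated right ideal $ES$, and $\alpha$ restricted to $ES$ agrees with the identity on an essential subideal.

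The key step is a ``local'' argument showing each basic summand $x_{i}y_{i}^{-1}$ must satisfy $x_{i} = y_{i}$. Fix $i$. The morphism $x_{i}y_{i}^{-1}$ has domain $y_{i}S$. Because $\alpha \wedge e$ is essential in $\alpha$, its domain $\mathbf{d}(\alpha \wedge e)$ is an essential subideal of $\mathbf{d}(\alpha) \supseteq y_{i}S$; hence there is $m \in S$ with $y_{i}m \in \mathbf{d}(\alpha \wedge e)$, so $\alpha(y_{i}m) = (\alpha \wedge e)(y_{i}m) = y_{i}m$ since $\alpha \wedge e$ is an idempotent (a restriction of $e$) and agrees there with $\alpha$. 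But $\alpha(y_{i}m) = (x_{i}y_{i}^{-1})(y_{i}m) = x_{i}m$ (the summand $x_{i}y_{i}^{-1}$ is the one that acts on $y_{i}S$, using Lemma~\ref{lem:line} to see it is the relevant piece). Therefore $x_{i}m = y_{i}m$, and by right cancellation (Lemma~\ref{lem:coffee}(3)) we get $x_{i} = y_{i}$. Thus $x_{i}y_{i}^{-1} = x_{i}x_{i}^{-1}$ is an idempotent, and since this holds for every $i$, $\alpha = \bigvee_{i} x_{i}x_{i}^{-1}$ is an idempotent.

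The main obstacle I anticipate is the bookkeeping needed to justify that the summand of $\alpha$ acting on $y_{i}S$ is exactly $x_{i}y_{i}^{-1}$ and not some overlapping contribution, and that $\alpha \wedge e$ really does restrict to an idempotent agreeing with $\alpha$ on a cofinal-enough set of points of $y_{i}S$. This is handled by Lemma~\ref{lem:line} (an element below a join of basic morphisms lies below one of them) together with Lemma~\ref{lem:story} and Lemma~\ref{lem:phone} (translating $\alpha \wedge e \leq_{e} \alpha$ into essentiality of the domain); once one observes that essentiality of a right ideal in $S$ is exactly the condition that it meets every principal right ideal (as in the proof of Lemma~\ref{lem:flower}), the required element $y_{i}m$ exists. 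Everything else is a routine cancellation argument. Note that $\equiv$ being $0$-restricted ensures $\alpha \neq 0$ whenever $e \neq 0$, and the case $\alpha = 0 = e$ is trivial, so the congruence is idempotent-pure on $\mathsf{R}(S)$.
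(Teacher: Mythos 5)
Your proof is correct and follows essentially the same route as the paper's: both decompose $\alpha$ into basic summands $x_iy_i^{-1}$ via Lemma~\ref{lem:trump}, locate a nonzero idempotent piece below each summand (the paper applies the definition of $\equiv$ directly to $x_iy_i^{-1}\leq\alpha$ and invokes Lemma~\ref{lem:tom}, while you route through Lemma~\ref{lem:story} and Lemma~\ref{lem:phone} to find a point $y_im$ fixed by $\alpha$), and conclude $x_im=y_im$, whence $x_i=y_i$ by cancellation. The detour through $\leq_e$ is slightly longer than necessary but entirely sound.
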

\begin{proof} Suppose that 
$$\left( \bigvee_{i=1}^{m} x_{i}y_{i}^{-1} \right) \equiv \left( \bigvee_{j=1}^{n} u_{j}u_{j}^{-1} \right).$$
Let $1 \leq i\leq m$.
Then $x_{i}y_{i}^{-1}$ is less than or equal to the left-hand side and non-zero.
By definition of the relation $\equiv$ it follows that $(x_{i}y_{i}^{-1})^{\downarrow} \cap (u_{j}u_{j}^{-1})^{\downarrow} \neq \varnothing$.
Now, the set of idempotents in an inverse semigroup is an order ideal.
Thus any element less than or equal to $u_{j}u_{j}^{-1}$ is an idempotent.
It follows that there is an idempotent $zz^{-1} \leq x_{i}y_{i}^{-1}$.
Thus by Lemma~\ref{lem:tom}, we have that $z = x_{i}p = y_{i}p$ for some $p \in S$.
By right cancellation, we have that $x_{i} = y_{i}$.
It follows that $x_{i}y_{i}^{-1}$ is an idempotent.
Thus 
$\bigvee_{i=1}^{m} x_{i}y_{i}^{-1}$
is an idempotent, as required.
\end{proof}

We have therefore proved the following.

\begin{proposition} Let $S$ be a $k$-monoid.
Then $\mathsf{R}(S)/\equiv$ is a distributive $\wedge$-monoid.
\end{proposition}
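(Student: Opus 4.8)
The plan is to assemble the statement from results already proved in the excerpt. By Proposition~\ref{prop:salami}, $\mathsf{R}(S)$ is a distributive inverse monoid; by Corollary~\ref{lem:borisy} it is a $\wedge$-monoid. It clearly has a zero (the empty function), so $\equiv$ is defined and, by Lemma~\ref{lem:toy}, is a $0$-restricted congruence on $\mathsf{R}(S)$. By Lemma~\ref{lem:hunt}, $\equiv$ is idempotent-pure. The whole argument therefore reduces to invoking Lemma~\ref{lem:marina}, which says precisely that an idempotent-pure congruence on a distributive $\wedge$-semigroup produces a distributive $\wedge$-semigroup quotient, with the natural map a morphism preserving meets.

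So the proof is essentially three lines: first, cite Proposition~\ref{prop:salami} and Corollary~\ref{lem:borisy} to say $\mathsf{R}(S)$ is a distributive $\wedge$-monoid with zero; second, cite Lemma~\ref{lem:hunt} to say $\equiv$ is idempotent-pure (and Lemma~\ref{lem:toy} to confirm it is a congruence); third, apply both parts of Lemma~\ref{lem:marina} to conclude that $\mathsf{R}(S)/\equiv$ is a distributive $\wedge$-monoid. One should also remark that the quotient is a monoid because the image of the identity of $\mathsf{R}(S)$ is an identity for $\mathsf{R}(S)/\equiv$, and that the $\wedge$-structure restricts correctly because $\equiv$ is a congruence (so products, and hence meets computed via the fixed-point operator $\phi$, descend to the quotient).

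There is no real obstacle here: every ingredient has already been established, and the only thing to watch is bookkeeping — making sure that "distributive" in Lemma~\ref{lem:marina}(1) is matched by "distributive inverse monoid" in Proposition~\ref{prop:salami}, and that "$\wedge$-semigroup" in Lemma~\ref{lem:marina}(2) is matched by "$\wedge$-monoid" in Corollary~\ref{lem:borisy}. The mild point worth spelling out is that Lemma~\ref{lem:marina} as stated is about inverse semigroups, so one should note that the monoid identity survives the quotient, making $\mathsf{R}(S)/\equiv$ a $\wedge$-monoid rather than merely a $\wedge$-semigroup.

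\begin{proof}
By Proposition~\ref{prop:salami}, $\mathsf{R}(S)$ is a distributive inverse monoid, and by Corollary~\ref{lem:borisy} it is a $\wedge$-monoid; it has a zero, namely the empty function. By Lemma~\ref{lem:toy}, the relation $\equiv$ is a $0$-restricted congruence on $\mathsf{R}(S)$, and by Lemma~\ref{lem:hunt} it is idempotent-pure. Applying Lemma~\ref{lem:marina}(1), the quotient $\mathsf{R}(S)/\equiv$ is a distributive inverse semigroup and the natural map is a morphism; applying Lemma~\ref{lem:marina}(2), it is a $\wedge$-semigroup and the natural map preserves meets. Finally, the image of the identity of $\mathsf{R}(S)$ under the natural map is an identity for $\mathsf{R}(S)/\equiv$, so $\mathsf{R}(S)/\equiv$ is in fact a distributive $\wedge$-monoid.
\end{proof}
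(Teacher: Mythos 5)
Your proof is correct and is essentially the paper's own argument: the paper states this proposition with the words ``We have therefore proved the following,'' meaning its proof is exactly the assembly you give of Proposition~\ref{prop:salami}, Corollary~\ref{lem:borisy}, Lemma~\ref{lem:toy}, Lemma~\ref{lem:hunt} and Lemma~\ref{lem:marina}. Your extra remarks about the zero and the survival of the monoid identity are harmless bookkeeping that the paper leaves implicit.
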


\noindent
{\bf Definition.} Let $S$ be a $k$-monoid.
Define
$$\mathsf{C}(S) = \mathsf{R}(S)/\equiv.$$

The rationale for defining this monoid now follows.
It is a consequence of Proposition~\ref{prop:delay} and what we proved above.

\begin{proposition}\label{prop:rory} Let $S$ be a $k$-monoid.
Then the group of units of $\mathsf{C}(S)$ is isomorphic to the group $\mathscr{G}(S)$.
\end{proposition}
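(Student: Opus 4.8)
The plan is to assemble Proposition~\ref{prop:rory} from the machinery already in place, recognizing that it is essentially an instance of Proposition~\ref{prop:delay} applied with $T = \mathsf{R}(S)$. First I would check that $\mathsf{R}(S)$ satisfies the hypotheses of Proposition~\ref{prop:delay}: it must be an inverse $\wedge$-monoid with zero in which $\equiv$ is idempotent-pure and such that $\mathsf{R}(S)^{e}$ is $E$-unitary. That $\mathsf{R}(S)$ is an inverse monoid is Proposition~\ref{prop:truss}, and it has a zero (the empty function). That it is a $\wedge$-monoid is Corollary~\ref{lem:borisy}. That $\equiv$ is idempotent-pure on $\mathsf{R}(S)$ is exactly Lemma~\ref{lem:hunt}. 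And that $\mathsf{R}(S)^{e}$ is $E$-unitary is Proposition~\ref{prop:tizer}.

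Once these four ingredients are verified, Proposition~\ref{prop:delay} gives directly that the group of units of $\mathsf{R}(S)/\equiv$ is isomorphic to $\mathsf{R}(S)^{e}/\sigma$. The final step is to unwind the definitions: $\mathsf{C}(S) = \mathsf{R}(S)/\equiv$ by definition, and $\mathscr{G}(S) = \mathsf{R}(S)^{e}/\sigma$ by the boxed definition in Section~6. Hence the group of units of $\mathsf{C}(S)$ is isomorphic to $\mathscr{G}(S)$, which is the claim.

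I do not expect any serious obstacle here, since all the hard work has been done in the preceding lemmas; the proof is a matter of correctly matching each hypothesis of Proposition~\ref{prop:delay} to the result in the excerpt that establishes it for $\mathsf{R}(S)$. The one point that requires a moment's care is making sure that the ``idempotent-pure'' hypothesis on $\equiv$ is genuinely available for $\mathsf{R}(S)$ itself (as opposed to only for $\mathsf{P}(S)$ or some quotient) --- but Lemma~\ref{lem:hunt} is stated precisely for $\mathsf{R}(S)$, so there is nothing missing.

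\begin{proof}
By Proposition~\ref{prop:truss}, $\mathsf{R}(S)$ is an inverse monoid, and it has a zero, namely the empty function. By Corollary~\ref{lem:borisy}, $\mathsf{R}(S)$ is a $\wedge$-monoid. By Lemma~\ref{lem:hunt}, the congruence $\equiv$ is idempotent-pure on $\mathsf{R}(S)$. By Proposition~\ref{prop:tizer}, $\mathsf{R}(S)^{e}$ is $E$-unitary. Thus $\mathsf{R}(S)$ satisfies all the hypotheses of Proposition~\ref{prop:delay}, and so the group of units of $\mathsf{R}(S)/\equiv$ is isomorphic to $\mathsf{R}(S)^{e}/\sigma$. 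But $\mathsf{C}(S) = \mathsf{R}(S)/\equiv$ by definition, and $\mathscr{G}(S) = \mathsf{R}(S)^{e}/\sigma$ by the definition in Section~6. Hence the group of units of $\mathsf{C}(S)$ is isomorphic to $\mathscr{G}(S)$.
\end{proof}
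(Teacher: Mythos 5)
Your proof is correct and is exactly the paper's argument: the paper derives Proposition~\ref{prop:rory} as "a consequence of Proposition~\ref{prop:delay} and what we proved above," which is precisely the assembly you carry out (Proposition~\ref{prop:truss}, Corollary~\ref{lem:borisy}, Lemma~\ref{lem:hunt}, Proposition~\ref{prop:tizer}, then Proposition~\ref{prop:delay}). You have simply made explicit the hypothesis-checking that the paper leaves implicit.
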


We shall now prove that $\mathsf{C}(S)$ is actually a Boolean inverse monoid.
By Proposition~\ref{prop:cheese} and Proposition~\ref{prop:bojo}, there is a monoid homomorphism
$\chi \colon \mathsf{R}(S) \rightarrow \mathcal{I}^{\scriptsize cl}(S^{\infty})$
to the inverse monoid of partial homeomorphisms between the clopen subsets of $S^{\infty}$.
By \cite[Proposition~4.4.8]{Wehrung}, this is a fundamental inverse semigroup --- the significance of this property will be explained later.
Our goal now is to calculate the kernel of the homomorphism $\chi$.
To do this, we need some preparation.
We use the fact that $k$-tilings have corners of every possible size (Remark~\ref{rem:ribena}).
The first result is preparatory.

\begin{lemma}\label{lem:glue} Let $S$ be a $k$-monoid.
Let $X = \{x_{1}, \ldots, x_{m}\}$ and $Y = \{y_{1}, \ldots, y_{n}\}$
Then in $\mathsf{R}(S)$ we have that
$\left( \bigvee_{i=1}^{m} x_{i}x_{i}^{-1}   \right)
\equiv
\left( \bigvee_{j=1}^{n} y_{j}y_{j}^{-1}   \right)$
if and only if
$XS^{\infty} = YS^{\infty}$.
\end{lemma}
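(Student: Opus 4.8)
The statement says that for generalized right ideals $XS$ and $YS$ (with $X,Y$ finite), the join of the corresponding idempotents in $\mathsf{R}(S)$ are $\equiv$-related if and only if $XS^\infty = YS^\infty$. I would prove this by translating both sides into concrete statements about the action on $S^\infty$, using the homomorphism $\chi$ and the results already established. First I would record the two descriptions of the sets involved: by Lemma~\ref{lem:tom}(3) and distributivity, $e := \bigvee_{i=1}^m x_ix_i^{-1}$ is exactly the identity function on $XS = \bigcup_i x_iS$, and $\chi(e)$ is the identity on $\chi$'s domain, which by Proposition~\ref{prop:cheese} and Corollary~\ref{cor:insect} is $\bigcup_i x_iS^\infty = XS^\infty$; similarly $f := \bigvee_j y_jy_j^{-1}$ corresponds to $YS$ and $\chi(f)$ to $YS^\infty$.

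The forward direction is the heart of the argument. Suppose $e \equiv f$. Both are idempotents, so by Lemma~\ref{lem:hunt} and the general theory (or directly: $\equiv$ restricted to idempotents compares their "essential cores"), I want to deduce $XS^\infty = YS^\infty$. The clean route is: $e \equiv f$ means $e \wedge f \leq_e e$ and $e \wedge f \leq_e f$ by Lemma~\ref{lem:story}; now $e \wedge f$ is the identity on $XS \cap YS$, which by Lemma~\ref{lem:Jones}-style reasoning is $ZS$ for a generalized prefix code $Z$, and $\chi(e \wedge f)$ is the identity on $XS^\infty \cap YS^\infty = ZS^\infty$ (using Lemma~\ref{lem:pepper}). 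The key point I need is that an essential extension of idempotents $g \leq_e h$ in $\mathsf{R}(S)$ forces $\chi(g) = \chi(h)$, i.e. $\mathbf{d}$-essential right ideals give the same clopen set; this is precisely the content of Lemma~\ref{lem:boris} applied inside the relevant restricted ideal. Hence $ZS^\infty = XS^\infty$ and $ZS^\infty = YS^\infty$, giving $XS^\infty = YS^\infty$.

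For the converse, suppose $XS^\infty = YS^\infty$. I would show $e \equiv f$ by verifying the defining condition of $\equiv$ directly: take $0 < p \leq e$ in $\mathsf{R}(S)$; since the idempotents below $e$ form the lattice of finitely generated sub-right-ideals of $XS$, $p$ is the identity on some nonempty finitely generated $WS \subseteq XS$. I must produce a nonzero idempotent below both $p$ and $f$, i.e. a nonempty finitely generated right ideal inside $WS \cap YS$. Pick any $w \in W$; then $wS^\infty \subseteq XS^\infty = YS^\infty$, so $w$ lies in $yS^\infty$ for some $y \in Y$, which by Corollary~\ref{cor:insect} means $wS \cap yS \neq \varnothing$, hence $WS \cap YS \neq \varnothing$, and by finite alignment it is a nonzero finitely generated right ideal; its identity is the required common lower bound. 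The symmetric argument gives the other half of $\equiv$.

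**Main obstacle.** The delicate step is the forward direction — converting "$\chi(e) = \chi(f)$ at the level of sets" (which is what $XS^\infty = YS^\infty$ amounts to) into the purely algebraic relation $e \equiv f$, and in particular pinning down that $g \leq_e h$ among idempotents of $\mathsf{R}(S)$ exactly corresponds to $g$ and $h$ determining the same subset of $S^\infty$. This requires carefully matching the meet operation in $\mathsf{R}(S)$ (Corollary~\ref{lem:borisy}) with intersections of finitely generated right ideals, and correctly invoking Lemma~\ref{lem:boris} in the localized setting where the ambient monoid is replaced by a principal right ideal. Everything else — the identifications of $\chi(e)$, the converse direction, the bookkeeping with generalized prefix codes — is routine given the lemmas already proved.
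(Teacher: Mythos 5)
Your proposal is correct in outline but reaches the forward direction by a genuinely different route from the paper. The paper works directly from the definition of $\equiv$: given $x_iw\in XS^{\infty}$, it picks a corner $x$ of $w$ with $x_ix$ bigger than every element of $Y$, uses the hypothesis to find a common lower bound $zz^{-1}\leq x_ix(x_ix)^{-1},\,y_jy_j^{-1}$, and then applies Lemma~\ref{lem:levi} to conclude $x_ix=y_jt$, hence $x_iw\in YS^{\infty}$; the converse is the mirror image, using Lemma~\ref{lem:size-matters} to choose a suitable corner and again Lemma~\ref{lem:levi} to produce an explicit $z=ux=y_jt$ below both idempotents. You instead factor the forward direction through Lemma~\ref{lem:story} ($e\equiv f$ iff $e\wedge f\leq_e e,f$) and a ``relative'' form of Lemma~\ref{lem:boris}, which buys a cleaner conceptual statement (essential extensions of idempotents do not change the associated clopen set) at the cost of an extra reduction; and your converse is actually lighter than the paper's, since producing a nonzero finitely generated right ideal inside $WS\cap YS$ via Corollary~\ref{cor:insect} and finite alignment suffices for the definition of $\equiv$ without exhibiting the explicit common minorant $z$ that the paper constructs.

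The one soft spot is your claim that the key step is ``precisely the content of Lemma~\ref{lem:boris} applied inside the relevant restricted ideal.'' As stated, that lemma only concerns ideals essential in $S$ itself, and $XS$ is neither a monoid nor in general principal, so the relative statement ($ZS\leq_e XS$ in $\mathsf{E}(\mathsf{R}(S))$ implies $ZS^{\infty}=XS^{\infty}$) does not follow formally from it. It is true, and you can prove it either by repeating the paper's corner-picking/Levi argument verbatim in the relative setting, or by reducing to Lemma~\ref{lem:boris} via translation: for each generator $x_i$, the ideal $ZS\cap x_iS$ is essential in $x_iS$, is of the form $x_iWS$ with $WS$ essential in $S$ by left cancellation, whence $x_iS^{\infty}=x_iWS^{\infty}\subseteq ZS^{\infty}$. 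Either way some version of the corner-and-Levi computation is unavoidable; it is merely relocated, not eliminated, by your reformulation.
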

\begin{proof}
Suppose first that 
$\left( \bigvee_{i=1}^{m} x_{i}x_{i}^{-1}   \right)
\equiv
\left( \bigvee_{j=1}^{n} y_{j}y_{j}^{-1}   \right)$.
We prove that $XS^{\infty} = YS^{\infty}$.
Let $x_{i}w \in XS^{\infty}$ where $x_{i} \in X$ and $w \in S^{\infty}$.
Choose a corner $x$ of $w$ (so that $w = xw'$) such that $x_{i}x$ is bigger than any element of $Y$.
Now, $0 < x_{i}x(x_{i}x)^{-1} \leq \left( \bigvee_{i=1}^{m} x_{i}x_{i}^{-1}   \right)$.
Thus for some $j$, the set
$(x_{i}x(x_{i}x)^{-1})^{\downarrow} \cap (y_{j}y_{j}^{-1})^{\downarrow}$
is non-empty.
Let $zz^{-1} \leq x_{i}x(x_{i}x)^{-1}, y_{j}y_{j}^{-1}$.
Then $z = x_{i}wp = y_{j}q$ for some $p,q \in S$.
But by assumption, $d(x_{i}x) > d (y_{j})$.
Thus by Lemma~\ref{lem:levi}, there is $t \in S$ such that $x_{i}x = y_{j}t$.
It follows that $x_{i}w = y_{i}tw'$ and so $XS^{\infty} \subseteq YS^{\infty}$.
The reverse inclusion follows by symmetry.

Suppose that $XS^{\infty} = YS^{\infty}$.
We prove that
$\left( \bigvee_{i=1}^{m} x_{i}x_{i}^{-1}   \right)
\equiv
\left( \bigvee_{j=1}^{n} y_{j}y_{j}^{-1}   \right)$.
Let $uu^{-1} \leq x_{i}x_{i}^{-1}$.
Then $u = x_{i}p$ for some $p \in S$.
By assumption, $x_{i}pS^{\omega} \subseteq YS^{\infty}$.
Let $w \in S^{\infty}$.
Then $x_{i}pw = y_{j}w_{1}$ for some $w_{1} \in S^{\infty}$ and some $y_{j} \in Y$.
By Lemma~\ref{lem:size-matters}, we can choose a corner $x$ of $w$, so that $w = xw'$, such that $x_{i}px$ is bigger than any element of $Y$.
Now recall that $x_{i}p = u$.
Then $uxw' = y_{j}w_{1}$.
By Lemma~\ref{lem:levi}, there is $t \in S$ such that $z = ux  = y_{j}t$.
It follows that $zz^{-1} \leq uu^{-1}, y_{j}y_{j}^{-1}$, as required.
The result now follows by symmetry.
\end{proof}

Our key result is the following.

\begin{proposition}\label{prop:pepsi} Let $S$ be a $k$-monoid with an aperiodic $k$-tiling.
Let $X = \{x_{1}, \ldots, x_{m}\}$, $Y = \{y_{1}, \ldots, y_{m}\}$,
$U = \{u_{1}, \ldots, u_{n}\}$ and $V = \{v_{1}, \ldots, v_{n}\}$.
Put $\alpha = \bigvee_{i=1}^{m} x_{i}y_{i}^{-1}$ and $\beta = \bigvee_{j=1}^{n} u_{j}v_{j}^{-1}$.
Then in $\mathsf{R}(S)$ we have that
$\alpha \equiv \beta$ 
if and only if
$\chi (\alpha) = \chi (\beta)$.
\end{proposition}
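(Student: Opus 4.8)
The plan is to show that the congruence $\equiv$ on $\mathsf{R}(S)$ coincides with the kernel of the homomorphism $\chi \colon \mathsf{R}(S) \rightarrow \mathcal{I}^{\scriptsize cl}(S^{\infty})$; the asserted equivalence $\alpha \equiv \beta \iff \chi (\alpha) = \chi (\beta)$ is then just the statement $\ker \chi \, = \, \equiv$ unwound. Since $\mathsf{R}(S)$ is an inverse $\wedge$-monoid with zero (Corollary~\ref{lem:borisy}), Lemma~\ref{lem:waller} tells us it suffices to verify that $\rho := \ker \chi$ is $0$-restricted, idempotent-pure, and essential.

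For $0$-restrictedness I would observe that $S^{\infty} \neq \varnothing$ (it contains an aperiodic $k$-tiling), hence $yS^{\infty} \neq \varnothing$ for every $y \in S$ by Lemma~\ref{lem:pepsi}; so a nonzero element $\bigvee_{i} x_{i}y_{i}^{-1}$ is sent by $\chi$ to a partial homeomorphism with nonempty domain $\bigcup_{i} y_{i}S^{\infty}$, which is therefore nonzero. For idempotent-purity, suppose $\chi (\alpha)$ is a partial identity, where $\alpha = \bigvee_{i} x_{i}y_{i}^{-1}$; since $\chi (\alpha)$ restricts on $y_{i}S^{\infty}$ to the map $y_{i}w \mapsto x_{i}w$, being a partial identity forces $x_{i}w = y_{i}w$ for all $w \in S^{\infty}$, and aperiodicity (Corollary~\ref{cor:good}) then gives $x_{i} = y_{i}$ for each $i$, so $\alpha$ is idempotent.

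The essentiality of $\rho$ is the substantive point and I expect it to be the main obstacle. Given $a \leq_{e} b$ in $\mathsf{R}(S)$, Lemma~\ref{lem:phone} reduces matters to the domains: writing $\mathbf{d}(b) = \mathrm{id}_{XS}$ and $\mathbf{d}(a) = \mathrm{id}_{ZS}$ with $ZS \subseteq XS$ and $X, Z$ finite, the hypothesis says precisely that $ZS$ meets every nonempty right ideal contained in $XS$, in particular every principal one. I would then prove $ZS^{\infty} = XS^{\infty}$ by the same device used in Lemma~\ref{lem:boris}, relativized: given $x\eta \in XS^{\infty}$ with $x \in X$, use that $\eta$ has corners of every size (Remark~\ref{rem:ribena}) to pick a corner $c$ of $\eta$ with $d(xc) \geq \bigvee_{z \in Z} d(z)$; since $xcS \subseteq XS$ is principal it meets $zS$ for some $z \in Z$, and Lemma~\ref{lem:levi} upgrades this to $xc = zt$ for some $t$, whence $x\eta \in zS^{\infty} \subseteq ZS^{\infty}$. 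Consequently $\chi (\mathbf{d}(a)) = \mathrm{id}_{ZS^{\infty}} = \mathrm{id}_{XS^{\infty}} = \chi (\mathbf{d}(b))$, and since $a = b\,\mathbf{d}(a)$ and $\chi$ is a homomorphism, $\chi (a) = \chi (b)\chi (\mathbf{d}(a)) = \chi (b)$, i.e.\ $a \, \rho \, b$.

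With the three properties in hand, Lemma~\ref{lem:waller} yields $\ker \chi \, = \, \equiv$, which is exactly the claim. (As a consistency check, the forward implication $\alpha \equiv \beta \Rightarrow \chi (\alpha) = \chi (\beta)$ also follows directly from the essentiality of $\ker \chi$ via Lemma~\ref{lem:four}, and matches Proposition~\ref{prop:tizer} in spirit; the content of the proposition is really the converse inclusion $\ker \chi \subseteq \, \equiv$, supplied by the $0$-restricted and idempotent-pure parts above.)
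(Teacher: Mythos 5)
Your proof is correct, but it takes a genuinely different route from the paper's. The paper proves the proposition by a direct two\-/way computation on elements written as joins of basic morphisms: for the forward direction it uses Lemma~\ref{lem:glue} together with a corner\-/picking argument (Lemma~\ref{lem:size-matters}) and Lemma~\ref{lem:levi} to locate common lower bounds $cd^{-1} \leq ab^{-1}, u_jv_j^{-1}$, and for the converse it runs essentially the same computation in reverse, invoking Corollary~\ref{cor:good} at the end. You instead verify that $\ker\chi$ is a $0$-restricted, idempotent-pure, essential congruence on the inverse $\wedge$-monoid $\mathsf{R}(S)$ and then invoke Lemma~\ref{lem:waller} (with Lemma~\ref{lem:four} supplying the inclusion $\equiv\,\subseteq\ker\chi$) to conclude $\ker\chi\,=\,\equiv$. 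All three verifications check out: $0$-restrictedness is immediate since $yS^{\infty}\neq\varnothing$; idempotent-purity is exactly the effectiveness of the action (Corollary~\ref{cor:good}, which is where aperiodicity enters); and your essentiality argument correctly relativizes the proof of Lemma~\ref{lem:boris}, using Lemma~\ref{lem:phone} to pass to domains and then Remark~\ref{rem:ribena} plus Lemma~\ref{lem:levi} to show $ZS^{\infty}=XS^{\infty}$. What your approach buys is conceptual economy: it actually uses the abstract characterization of $\equiv$ that the paper develops in Section~9 but then bypasses, and it yields Lemma~\ref{lem:hunt} and Lemma~\ref{lem:glue} as immediate corollaries of the identification $\ker\chi\,=\,\equiv$ rather than as separate computations. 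What the paper's proof buys is independence from that machinery and an explicit exhibition of the common lower bounds witnessing $\equiv$. Note, though, that the technical core is the same in both: the device of choosing a corner of a tiling large enough to dominate a finite set of sizes and then applying Levi's lemma appears in your essentiality step just as it does in the paper's direct argument.
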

\begin{proof} Suppose that $\alpha \equiv \beta$.
By Lemma~\ref{lem:glue}, we have that $YS^{\infty} = VS^{\infty}$ and $XS^{\infty} = US^{\infty}$.
Let $y_{i}w \in YS^{\infty}$.
By Lemma~\ref{lem:size-matters},
we can choose a corner $x$ of $w$ such that $x_{i}x$ is bigger than every element in $U$
and $y_{i}x$ is bigger than every element in $V$.
Let $w = x w'$.
Now $x_{i}x(y_{i}x)^{-1} \leq x_{i}y_{i}^{-1}$.
Thus we can find $ab^{-1} \leq x_{i}x(y_{i}x)^{-1}, u_{j}v_{j}^{-1}$ for some $j$.
It follows that $a = x_{i}xp = u_{j}q$ and $b = y_{i}xp = v_{j}q$ for some $p,q \in S$.
Thus by Lemma~\ref{lem:levi}, we can write $x_{i}x = u_{j}t$ and $y_{i}x = v_{j}s$.
Observe that $tp = q$ and that $sp = q$ and so $s = t$.
It is now easy to show that $\chi (\alpha)(y_{i}w) = \chi(\beta)(y_{i}w)$, as required. 

Now suppose that $\chi (\alpha) = \chi (\beta)$.
We prove that $\alpha \equiv \beta$.
Let $ab^{-1} \leq x_{i}y_{i}^{-1}$.
Then $(a,b) = (x_{i},y_{i})p$ for some $p \in S$.
Let $w \in S^{\infty}$ be arbitrary.
By Lemma~\ref{lem:size-matters}, we can choose a corner $x$ of $w$ such that $y_{i}px$ is bigger than every element of $V$
and $x_{i}px$ is bigger than every element of $U$.
Let $w = x w'$.
Then $\chi(\alpha)(y_{i}pxw') = x_{i}pxw'$.
By assumption, $\chi (\beta)(y_{i}pxw')$ is also defined and equals $x_{i}pxw'$.
We must have that $y_{i}pxw' = v_{j}w_{1}$ for some $j$.
It follows that $\chi(\beta)(y_{i}pxw') = u_{j}w_{1}$.
Thus $x_{i}pxw' = u_{j}w_{1}$.
By Lemma~\ref{lem:levi}, there are elements $s,t \in S$ such that
$y_{i}px = v_{j}s$ and $x_{i}px = u_{j}t$.
Now, we have that $y_{i}px\hat{w} = v_{j}s\hat{w}$ for all $\hat{w} \in S^{\infty}$.
But $\chi (\alpha) = \chi (\beta)$ and so
$x_{i}px\hat{w} = u_{j}s\hat{w}$ for all $\hat{w} \in S^{\infty}$.
By Corollary~\ref{cor:good}, it follows that $x_{i}px = u_{j}s$.
Put $c = ax = u_{j}s$ and $d = bx = v_{j}s$ 
where $a = x_{i}p$ and $b = y_{i}p$.
Then $cd^{-1} \leq ab^{-1}, u_{j}v_{j}^{-1}$.
The result now follows by symmetry.
\end{proof}

The idempotents of $\mathcal{I}^{\scriptsize cl}(S^{\infty})$ are the identity functions on the clopen subsets of $S^{\infty}$.
Each clopen subset is compact and therefore a finite union of clopen subsets of the form $xS^{\infty}$ where $x \in S$.
Thus each clopen subset is of the form $\{x_{1}, \ldots, x_{n}\}S^{\infty}$.
It follows that each idempotent in $\mathcal{I}^{\scriptsize cl}(S^{\infty})$ is the image of an idempotent in $\mathsf{R}(S)$.
It is well-known, and easy to prove, that wide inverse subsemigroups of fundamental inverse semigroups are themselves fundamental.
Let $\{x_{1}, \ldots, x_{m}\}S^{\infty}$ and $\{y_{1}, \ldots, y_{n}\}S^{\infty}$ be any
two non-empty clopen subsets of $S^{\infty}$.
There is a bijective morphism between  $\{x_{1}, \ldots, x_{m}\}S^{\infty}$ and $\{y_{1}x_{1}, \ldots, y_{1}x_{m}\}S^{\infty}$ 
where $x_{i} \omega \mapsto y_{1}x_{i}\omega$.
Clearly, $\{y_{1}x_{1}, \ldots, y_{1}x_{m}\}S^{\infty} \subseteq \{y_{1}, \ldots, y_{n}\}S^{\infty}$.
It follows that $\mathsf{C}(S)$ is $0$-simple.
By Lemma~\ref{lem:borisy}, $\mathsf{C}(S)$ is a $\wedge$-monoid. 
We refer to the unique countable atomless Boolean algebra as the {\em Tarski algebra};
it is the dual of the Cantor space under classical Stone duality.
We have therefore proved the following.

\begin{theorem}\label{them:munn} Let $S$ be a $k$-monoid with an aperiodic $k$-tiling.
Then the inverse monoid $\mathsf{C}(S)$ is isomorphic to
a wide inverse submonoid of $\mathcal{I}^{\scriptsize cl}(S^{\infty})$.
In particular, $\mathsf{C}(S)$ is a countably infinite, $0$-simple fundamental Boolean inverse $\wedge$-monoid
whose semilattice of idempotents is the Tarski algebra.
\end{theorem}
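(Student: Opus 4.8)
The plan is to read everything off the homomorphism $\chi \colon \mathsf{R}(S) \rightarrow \mathcal{I}^{\scriptstyle cl}(S^{\infty})$ of Proposition~\ref{prop:cheese} together with the computation of its kernel. Since, by Lemma~\ref{lem:trump}, every element of $\mathsf{R}(S)$ is a join of basic morphisms, Proposition~\ref{prop:pepsi} says precisely that $\alpha \equiv \beta$ if and only if $\chi(\alpha) = \chi(\beta)$; that is, the kernel of $\chi$ is exactly the congruence $\equiv$. Hence $\chi$ factors as $\chi = \bar{\chi}\nu$, where $\nu \colon \mathsf{R}(S) \rightarrow \mathsf{C}(S) = \mathsf{R}(S)/\equiv$ is the natural map and $\bar{\chi} \colon \mathsf{C}(S) \rightarrow \mathcal{I}^{\scriptstyle cl}(S^{\infty})$ is injective. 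Since $\chi$ preserves compatible joins and $\nu$ does too (being the quotient by an idempotent-pure congruence, Lemma~\ref{lem:marina}(1)), $\bar{\chi}$ is a morphism of distributive inverse monoids. So $\mathsf{C}(S)$ is isomorphic, via $\bar{\chi}$, to an inverse submonoid of $\mathcal{I}^{\scriptstyle cl}(S^{\infty})$.

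To see this submonoid is \emph{wide}, note that an idempotent of $\mathcal{I}^{\scriptstyle cl}(S^{\infty})$ is the identity on a clopen subset of $S^{\infty}$; by compactness of $S^{\infty}$ such a subset is a finite union $\{x_{1}, \ldots, x_{n}\}S^{\infty}$ of basic clopen sets, and this is $\chi\!\left(\bigvee_{i=1}^{n} x_{i}x_{i}^{-1}\right)$. Thus the image of $\bar{\chi}$ contains every idempotent of $\mathcal{I}^{\scriptstyle cl}(S^{\infty})$, which is exactly the statement that it is a wide inverse submonoid. This establishes the first assertion of the theorem.

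For the ``in particular'' clause I would assemble four points. First, $\mathsf{C}(S)$ is countable: $S$ is countable, there are only countably many finitely generated right ideals, and by Lemma~\ref{lem:nato} a bijective morphism $XS \rightarrow YS$ is determined by one of the finitely many bijections $X \rightarrow Y$, so $\mathsf{R}(S)$, and hence its quotient $\mathsf{C}(S)$, is countable. Second, $\mathsf{C}(S)$ is $0$-simple: given non-zero idempotents of $\mathsf{C}(S)$, lift them to idempotents of $\mathsf{R}(S)$ corresponding to finitely generated right ideals $\{x_{1}, \ldots, x_{m}\}S$ and $\{y_{1}, \ldots, y_{n}\}S$; the map $x_{i}s \mapsto y_{1}x_{i}s$ is a bijective morphism onto the right ideal $\{y_{1}x_{1}, \ldots, y_{1}x_{m}\}S \subseteq \{y_{1}, \ldots, y_{n}\}S$, and its image under $\nu$ witnesses $0$-simplicity. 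Third, $\mathsf{C}(S)$ is fundamental: $\mathcal{I}^{\scriptstyle cl}(S^{\infty})$ is fundamental by \cite[Proposition~4.4.8]{Wehrung}, and a wide inverse submonoid of a fundamental inverse semigroup is fundamental. Fourth, $\mathsf{C}(S)$ is a Boolean inverse $\wedge$-monoid: we already know it is a distributive $\wedge$-monoid (Corollary~\ref{lem:borisy}), so it only remains to see that its semilattice of idempotents is a Boolean algebra; but $\bar{\chi}$ restricts to an isomorphism of $\mathsf{E}(\mathsf{C}(S))$ with $\mathsf{E}(\mathcal{I}^{\scriptstyle cl}(S^{\infty}))$, which is the Boolean algebra of clopen subsets of $S^{\infty}$. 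Under the standing hypotheses (alphabets finite, at least one of cardinality at least $2$), Proposition~\ref{prop:top} identifies $S^{\infty}$ with the Cantor space, whose clopen algebra is the unique countable atomless Boolean algebra, i.e. the Tarski algebra; in particular it is infinite, so $\mathsf{C}(S)$ is infinite.

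The only substantive input is Proposition~\ref{prop:pepsi} (the kernel computation, already established using aperiodicity via Corollary~\ref{cor:good}); the rest is bookkeeping. The step I expect to require the most care is verifying that $\bar{\chi}$ is genuinely a morphism in the category of distributive inverse monoids and that its image is wide rather than merely an inverse submonoid: both hinge on the normal form of elements of $\mathsf{R}(S)$ (Lemma~\ref{lem:trump}) and on the compactness of $S^{\infty}$, which is what reduces an arbitrary clopen set to a finite union of basic clopen sets.
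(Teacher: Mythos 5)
Your proposal is correct and follows essentially the same route as the paper: the kernel computation of Proposition~\ref{prop:pepsi} gives the embedding, compactness of $S^{\infty}$ gives wideness, \cite[Proposition~4.4.8]{Wehrung} together with wideness gives fundamentality, the explicit morphism $x_{i}s \mapsto y_{1}x_{i}s$ gives $0$-simplicity, and Corollary~\ref{lem:borisy} with Lemma~\ref{lem:marina} gives the $\wedge$-monoid and distributivity properties. The only differences are cosmetic (you run the $0$-simplicity argument in $\mathsf{R}(S)$ rather than directly on clopen subsets of $S^{\infty}$, and you spell out countability, which the paper leaves implicit).
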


It follows that the Boolean inverse monoid $\mathsf{C}(S)$ is of the type discussed in the paper \cite{Lawson2016}.

\section{The group $\mathscr{G}(S)$ as a topological full group}

In this section, we pull the different strands of the paper together using groupoids.
If $G$ is a groupoid (a small category in which every arrow is invertible) then its set of identities is denoted by $G_{o}$.
The groupoid in question is the one described in \cite[Section 2]{KP} but we shall describe it from a new perspective.
We shall begin with some general results before specializing to the case that interests us.
It is well-known that a group acting on a set gives rise to a groupoid.
There is an obvious generalization of this construction which yields a category when the group
is replaced by an arbitrary monoid.

Let $M$ be  an abelian monoid acting on the left on the set $X$.
We denote the action by $(a,x) \mapsto a \cdot x$.
Define $M \ltimes X$ to be the set of all triples in $X \times M \times X$ of the form $(x, a, a \cdot x)$.
Now define $\mathbf{d}(x_{2}, a, x_{1}) = (x_{1}, 1, x_{1})$ and $\mathbf{r}(x_{2}, a, x_{1}) = (x_{2}, 1, x_{2})$.
If $\mathbf{d}(x_{2},a,x_{1}) = \mathbf{r}(x_{3}, b, x_{4})$ then define the partial product $(x_{2},a,x_{1})(x_{3},b,x_{4}) = (x_{2}, ab,x_{4})$.
It is easy to check that $M \ltimes X$ is a category whose identity space consists of all elements of the form $(x,1,x)$
and so can be identified with $X$.
If $M$ is a cancellative monoid then $M \ltimes X$ is a cancellative category.
Put $\mathcal{C} = M \ltimes X$.

Suppose now that $M \subseteq G$ where $G$ is an abelian group.
We shall construct a groupoid $\mathcal{G}$ from $G$ and $X$ but without assuming that the action of $M$ on $X$ can be lifted to an action of $G$ on $X$.
The elements of $\mathcal{G}$ are those triples from $X \times G \times X$ of the form $(x,c,y)$ where $a \cdot x = b \cdot y$ and $c = ab^{-1}$
for some $a,b \in M$ and $x,y \in X$.
We prove that this is a groupoid we shall call $\mathcal{G}$.
Define $\mathbf{d}(x,c,y) = (y,1,y)$ and $\mathbf{r}(x,c,y) = (x,1,x)$, both well-defined elements.
Suppose that $\mathbf{d}(x,c,y) = \mathbf{r}(u,d,v)$.
Then $y = u$, 
$a_{1} \cdot x = a_{2} \cdot y$  where $c = a_{1}a_{2}^{-1}$,
$b_{1} \cdot u = b_{2} \cdot v$ where $d = b_{1}b_{2}^{-1}$.
We prove that $(x,cd,v)$ has the correct form.
We calculate
$$(a_{1}b_{1}) \cdot x = b_{1} \cdot (a_{1} \cdot x) = b_{1} \cdot (a_{2} \cdot y) = a_{2} \cdot (b_{1} \cdot y) = a_{2} \cdot (b_{1} \cdot u) = (a_{2}b_{2}) \cdot v,$$
where we have used commutativity throughout and, 
in addition, $a_{1}b_{1}(a_{2}b_{2})^{-1} = cd$.
Thus $\mathcal{G}$ is certainly a category.
Finally, if we define $(x,c,y) = (y,c^{-1},x)$ then we get a groupoid.

The space of identitities of $\mathcal{G}$ again consists of all triples of the form $(x,1,x)$.
Thus $\mathcal{C} \subseteq \mathcal{G}$ is a wide subcategory\footnote{This simply means that the two categories have the same set of identities.}
and we have therefore embedded a cancellative category into a groupoid.
In fact, we can say more.
Let $(x,c,y)$ be an arbitrary element of $\mathcal{G}$
where $a \cdot x = b \cdot y$ and $c = ab^{-1}$.
Then $(x, a, a \cdot x), (y, b,b \cdot y) \in \mathcal{C}$.
We have that 
$$(x, a, a \cdot x)(y, b, b \cdot y)^{-1} = (x a, a \cdot x)(b \cdot y, b^{-1}, y) = (x, ab^{-1}, y) = (x,c,y).$$
Thus in fact we have that $\mathcal{G} = \mathcal{C}\mathcal{C}^{-1}$.

There is an action of the monoid $\mathbb{N}^{k}$ on the set $S^{\infty}$ given by
$(\mathbf{m},x) \mapsto \sigma^{\mathbf{m}}(x)$.
That this really is an action is apparent from \cite[Definitions~2.1]{KP}.
The monoid $\mathbb{N}^{k}$ is the positive cone of the lattice-ordered abelian group $\mathbb{Z}^{k}$.
Restricting the construction of our groupoid above to this special case yields a groupoid we shall denote by $\mathcal{G}(S)$ which is exactly the one defined in \cite[Definition~2.7]{KP}.
We describe it explicitly.
The elements of $\mathcal{G}(S)$ are those triples $(w_{2}, \mathbf{n}, w_{1}) \in S^{\infty} \times \mathbb{Z}^{k} \times S^{\infty}$ where 
$\sigma^{\mathbf{l}}(w_{2}) = \sigma^{\mathbf{m}}(w_{1})$ and $\mathbf{n} = \mathbf{l} - \mathbf{m}$.
In addition, 
$\mathbf{d}(w_{2}, \mathbf{n}, w_{1}) = (w_{1}, \mathbf{0}, w_{1})$ 
and 
$\mathbf{r}(w_{2}, \mathbf{n}, w_{1}) = (w_{2}, \mathbf{0}, w_{2})$
and 
$$(w_{2}, \mathbf{n}, w_{1})^{-1} = (w_{1}, -\mathbf{n}, w_{2}).$$

We now endow $M$ with the discrete topology, assume that $X$ has a topology (ultimately Boolean), and that the action of $M$ on $X$ is by local homeomorphisms.
If $U \subseteq X$ is an open set and $a \in M$ then $aU$ is an open set since local homeomorphisms are open maps.
Define $\mathcal{O}(a,U) = \{ (x, a, a \cdot x) \colon x \in U\}$.
Clearly, $\mathcal{O}(a,U) \subseteq \mathcal{C}$.
Using these sets as basis elements, we can endow $\mathcal{C}$ with a topology.

We now return to our action $\mathbb{N}^{k} \times S^{\infty} \rightarrow S^{\infty}$ which is by local homeomorphisms 
(from the top of page 8 of \cite{KP}).
Clearly, $\mathcal{O}(d(u),uS^{\infty})$ is well-defined for any $u \in S$.
Calculating $\mathcal{O}(d(x),xS^{\infty})\mathcal{O}(d(y), yS^{\infty})^{-1} \subseteq \mathcal{G}(S)$,
we find it consists precisely of elements of the form $(xw, d(x) - d(y), yw)$ where $w \in S^{\infty}$.
Define
$$Z(x,y) = \{ (xw, d(x) - d(y), yw) \colon w \in S^{\infty}\}$$
where $x,y \in S$ and 
endow the groupoid $\mathcal{G}(S)$ with the topology having as basis elements precisely subsets of this form. 
We refer the reader to \cite{Resende2006} for the theory of \'etale topological groupoids.
An \'etale groupoid $G$ is said to be {\em Boolean} if its space of identities, $G_{o}$, is a Boolean space.
The following is simply a restatement of \cite[Proposition~2.8]{KP}.

\begin{proposition}\label{prop:groupoid} Let $S$ be a $k$-monoid.
Then  $\mathcal{G}(S)$ is a second-countable, Hausdorff Boolean groupoid.
\end{proposition}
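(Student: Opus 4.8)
The plan is to follow the argument of \cite[Proposition~2.8]{KP}, reorganised around the groupoid-of-fractions picture $\mathcal{G}(S) = \mathcal{C}\mathcal{C}^{-1}$ set up above. There are really three things to establish: that the sets $Z(x,y)$ form a basis for a topology on $\mathcal{G}(S)$; that, with this topology, $\mathcal{G}(S)$ is an \'etale groupoid in which each $Z(x,y)$ is an open bisection; and that the identity space is a Boolean space, which together with a counting argument and a separation argument yields the two remaining adjectives.

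First I would verify the basis property, which is the one genuinely substantive point. Each element of $Z(x,y)$ has middle coordinate exactly $d(x)-d(y)$, so if $d(x_{1})-d(y_{1}) \neq d(x_{2})-d(y_{2})$ then $Z(x_{1},y_{1}) \cap Z(x_{2},y_{2}) = \varnothing$. When the two degree-labels agree, a point $(z\gamma, \mathbf{n}, \cdot)$ lies in both exactly when $z \in x_{1}S^{\infty} \cap x_{2}S^{\infty}$ and the two source-tails it induces coincide; since corners of a $k$-tiling occur in every size and are pairwise comparable (Remark~\ref{rem:ribena}) and $S$ is finitely aligned, $x_{1}S^{\infty} \cap x_{2}S^{\infty} = (x_{1} \vee x_{2})S^{\infty}$ is a \emph{finite} union (Lemma~\ref{lem:nelson}). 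Writing each $z \in x_{1} \vee x_{2}$ as $z = x_{1}p = x_{2}q$, the compatibility of source-tails reduces to the equation $y_{1}p = y_{2}q$ (checked via the uniqueness clause in Lemma~\ref{lem:pepsi}), and $Z(x_{1},y_{1}) \cap Z(x_{2},y_{2})$ is then the finite union of the sets $Z(z, y_{1}p)$ over those $z$ for which it holds. Hence the $Z(x,y)$ are closed under finite intersection up to unions and form a basis. This computation is exactly the place where finite alignment of $S$ is used; everything afterwards is formal.

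For the \'etale structure, $\mathbf{d}$ maps $Z(x,y)$ bijectively onto $yS^{\infty}$ and $\mathbf{r}$ maps it bijectively onto $xS^{\infty}$, continuously and with continuous inverses on these basic sets, so each $Z(x,y)$ is an open bisection and $\mathbf{d}, \mathbf{r}$ are local homeomorphisms; continuity of the partial product and of inversion is verified on basic sets, inversion interchanging $Z(x,y)$ and $Z(y,x)$ and a product $Z(x,y)Z(y',z)$ decomposing into a finite union of basic sets by the same computation as above. The identity space is $\{(w,\mathbf{0},w) \colon w \in S^{\infty}\}$, homeomorphic to $S^{\infty}$, which is a Boolean space by Proposition~\ref{prop:top}; hence $\mathcal{G}(S)$ is a Boolean groupoid. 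Since $S$ is countable there are only countably many pairs $(x,y)$, so the basis is countable and $\mathcal{G}(S)$ is second-countable. For the Hausdorff property, take distinct $\gamma_{1} = (w_{2}, \mathbf{n}_{1}, w_{1})$ and $\gamma_{2} = (w_{2}', \mathbf{n}_{2}, w_{1}')$: if $w_{1} \neq w_{1}'$ (resp.\ $w_{2} \neq w_{2}'$) separate these points by disjoint clopen subsets of $S^{\infty}$ and pull them back along the continuous map $\mathbf{d}$ (resp.\ $\mathbf{r}$); and if $w_{1} = w_{1}'$, $w_{2} = w_{2}'$ but $\mathbf{n}_{1} \neq \mathbf{n}_{2}$, then any basic neighbourhoods $Z(x_{i},y_{i}) \ni \gamma_{i}$ are automatically disjoint, since their elements carry the distinct middle coordinates $\mathbf{n}_{1}$ and $\mathbf{n}_{2}$. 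This settles all cases. The main obstacle, such as it is, is the bookkeeping in the intersection computation rather than any conceptual difficulty, the statement being a restatement of \cite[Proposition~2.8]{KP}.
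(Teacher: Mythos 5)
Your proof is correct and is essentially the argument the paper relies on: the paper gives no proof of its own here but simply declares the proposition to be a restatement of \cite[Proposition~2.8]{KP}, and your writeup is a faithful (and accurately detailed) expansion of that argument, with the key intersection computation $Z(x_{1},y_{1}) \cap Z(x_{2},y_{2}) = \bigcup Z(z, y_{1}p)$ over $z = x_{1}p = x_{2}q \in x_{1} \vee x_{2}$ carried out correctly using finite alignment. The only quibble is a citation: the identity $x_{1}S^{\infty} \cap x_{2}S^{\infty} = (x_{1} \vee x_{2})S^{\infty}$ is Lemma~\ref{lem:pepper}(2) rather than Lemma~\ref{lem:nelson}, which concerns right ideals of $S$ itself.
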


\begin{remark}{\em Our calculations above suggest studying those \'etale groupoids that are obtained as categories of fractions
(in the sense of \cite{GZ}) from topological cancellative categories}.
\end{remark}

At this point, we shall make use of the non-commutative Stone duality developed in \cite{Lawson2010, Lawson2012, LL}.
A subset $A \subseteq G$ of a groupoid is said to be a {\em local bisection} if $g,h \in A$ and $g^{-1}g = h^{-1}h$ (respectively $gg^{-1} = hh^{-1}$) implies that $g = h$.
A local  bisection $A$ is said to be a {\em bisection} if $A^{-1}A = G_{o} = AA^{-1}$. 
Let $G$ be a Boolean groupoid.
Denote by $\mathsf{KB}(G)$ the set of all compact-open local bisections.
This is a Boolean inverse monoid.
Let $A$ be a compact-open local bisection.
Then we may define a partial homeomorphism $\alpha_{A} \colon A^{-1}A \rightarrow AA^{-1}$ by $e \mapsto f$
where $e = a^{-1}a$ and $f = aa^{-1}$ where $a \in A$.
This defines a homomorphism $\alpha \colon \mathsf{KB}(G) \rightarrow \mathcal{I}^{\scriptsize cl}(G_{o})$ to the Boolean inverse monoid
of all partial homeomorphisms between clopen subsets.

Let $G$ be a groupoid.
The {\em isotropy groupoid} of $G$ is the set of all elements $g \in G$ such that $g^{-1}g = gg^{-1}$.
Clearly, the space of identities is contained in the isotropy groupoid.
An \'etale topological groupoid is said to be {\em effective} if the interior of the isotropy groupoid is exactly the space of identities of the groupoid. 
An \'etale topological groupoid is said to be {\em topologically principal} if the set of identities with trivial isotropy is dense.\footnote{The term {\em essentially free} is used in \cite{KP}}
The following is just a version of \cite[Corollary 3.3]{Renault2008}.

\begin{lemma}\label{lem:moth} Let $G$ be a Boolean groupoid.
Then 
$\alpha \colon \mathsf{KB}(G) \rightarrow \mathcal{I}^{\scriptsize cl}(G_{o})$
is injective if and only if $G$ is effective
\end{lemma}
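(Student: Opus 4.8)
The plan is to reduce the biconditional to a concrete description of the kernel of $\alpha$, and then match that description with the definition of effectiveness, using the fact (from the non-commutative Stone duality of \cite{Lawson2010, Lawson2012, LL}) that a Boolean groupoid has a basis of compact-open local bisections. Write $\mathrm{Iso}(G)$ for the isotropy groupoid of $G$. The starting point is that $\alpha$, being already known to be a homomorphism of inverse monoids, is automatically idempotent-separating: the idempotents of $\mathsf{KB}(G)$ are exactly the compact-open subsets $U \subseteq G_{o}$, and $\alpha$ sends such a $U$ to the identity partial homeomorphism of $U$, so distinct idempotents have distinct images. By the standard fact that an idempotent-separating homomorphism of inverse semigroups is injective precisely when it is idempotent-pure, it therefore suffices to prove: $G$ is effective if and only if, for every compact-open local bisection $C$ of $G$, $\alpha(C) \in \mathsf{E}(\mathcal{I}^{\scriptstyle cl}(G_{o}))$ implies $C \in \mathsf{E}(\mathsf{KB}(G))$.

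The next step is to make the two conditions in this implication topological. On one hand, $C \in \mathsf{E}(\mathsf{KB}(G))$ means exactly $C \subseteq G_{o}$. On the other hand, by the definition of $\alpha_{C}$, the element $\alpha(C)$ is an idempotent (an identity partial homeomorphism) if and only if $c^{-1}c = cc^{-1}$ for every $c \in C$, i.e. if and only if $C \subseteq \mathrm{Iso}(G)$. Hence the implication to be established is precisely: every compact-open local bisection contained in $\mathrm{Iso}(G)$ is contained in $G_{o}$. Now I would prove this is equivalent to effectiveness. Since $G$ is \'etale, $G_{o}$ is open and obviously sits inside $\mathrm{Iso}(G)$, so $G_{o} \subseteq \mathrm{int}(\mathrm{Iso}(G))$ always. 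If $G$ is effective then $\mathrm{int}(\mathrm{Iso}(G)) = G_{o}$, so any open — in particular any compact-open — local bisection inside $\mathrm{Iso}(G)$ is inside $G_{o}$. Conversely, assume every compact-open local bisection inside $\mathrm{Iso}(G)$ lies in $G_{o}$, and let $g \in \mathrm{int}(\mathrm{Iso}(G))$. Because the compact-open local bisections of a Boolean groupoid form a basis for its topology, there is a compact-open local bisection $C$ with $g \in C \subseteq \mathrm{int}(\mathrm{Iso}(G)) \subseteq \mathrm{Iso}(G)$; by hypothesis $C \subseteq G_{o}$, so $g \in G_{o}$. Thus $\mathrm{int}(\mathrm{Iso}(G)) = G_{o}$, i.e. $G$ is effective.

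I expect the only genuine subtlety to be the identification of $\ker \alpha$ in the first step, that is, the passage from ``$\alpha$ injective'' to ``$\alpha$ idempotent-pure'': the key observation is that, $\alpha$ being idempotent-separating, if $\alpha(A) = \alpha(B)$ then $A$ and $B$ share the same domain and range idempotents and $\alpha(A^{-1}B)$ is the domain idempotent of $\alpha(A)$, so idempotent-purity forces $A^{-1}B$ to be the idempotent $\mathbf{d}(A)$ and hence $B = A\,\mathbf{d}(A) = A$. Everything after that — rewriting the two idempotency conditions in terms of $\mathrm{Iso}(G)$ and invoking the basis of compact-open local bisections guaranteed by the Boolean hypothesis on $G$ — is routine point-set manipulation, and this is where (and the only place where) Booleanness of $G$ is used.
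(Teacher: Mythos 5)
Your proof is correct and follows essentially the same route as the paper: both directions hinge on the observations that $\alpha(C)$ is an idempotent precisely when $C \subseteq \mathrm{Iso}(G)$, that compact-open local bisections form a basis, and that for $\alpha(A)=\alpha(B)$ the set $A^{-1}B$ is an open local bisection inside the isotropy groupoid. The only presentational difference is that you factor the injectivity argument through the general fact that an idempotent-separating, idempotent-pure homomorphism of inverse semigroups is injective, whereas the paper runs the $A^{-1}B$ computation inline (and rather more tersely than you do).
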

\begin{proof} Suppose that $\alpha$ is injective.
We prove that $G$ is effective.
Let $A$ be an open subset of isotropy groupoid.
Without loss of generality, we may assume that $A$ is a compact-open local bisection.
By assumption, $\alpha (A)$ fixes every identity in $A^{-1}A$ and so $A$ and $A^{-1}A$ have the same image under $\alpha$.
It follows that $A = A^{-1}A$ and so every element of $A$ is an identity.
We have therefore proved that the interior of the isotropy groupoid is the space of identities.
Now suppose that $G$ is effective.
Let $A$ and $B$ be compact-open local bisections such that $\alpha (A) = \beta (B)$.
Then $\alpha (A^{-1}B)$ is the identity where it is defined.
Thus $A^{-1}B$ is a subset of the isotropy groupoid.
But $A^{-1}B$ is open and so must consist entirely of identities.
It now follows that $A = b$, as required.  
\end{proof}

The following is \cite[Proposition~4.5]{KP}.

\begin{lemma}\label{lem:tp} Let $S$ be a $k$-monoid.
Then the groupoid  $\mathcal{G}(S)$ is topologically principal if and only if there is an aperiodic $k$-tiling.
\end{lemma}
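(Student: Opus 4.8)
The plan is to translate the groupoid condition into a purely combinatorial statement about $k$-tilings and then prove the latter by a ``coning off'' argument.

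First I would unravel topological principality for $\mathcal{G}(S)$. Identifying the unit space with $S^{\infty}$, the unit $(w,\mathbf{0},w)$ has trivial isotropy precisely when the only element of $\mathcal{G}(S)$ of the form $(w,\mathbf{n},w)$ is the identity; by the explicit description of $\mathcal{G}(S)$ this says that whenever $\sigma^{\mathbf{l}}(w) = \sigma^{\mathbf{m}}(w)$ with $\mathbf{l},\mathbf{m} \in \mathbb{N}^{k}$ we must have $\mathbf{l} = \mathbf{m}$. The next step is to show this is equivalent to $w$ being aperiodic. If $\sigma^{\mathbf{l}}(w) = \sigma^{\mathbf{m}}(w)$ with $\mathbf{l} \neq \mathbf{m}$, put $\mathbf{q} = \mathbf{l} \wedge \mathbf{m}$ and $w' = \sigma^{\mathbf{q}}(w)$; writing $\mathbf{a} = \mathbf{l} - \mathbf{q}$ and $\mathbf{b} = \mathbf{m} - \mathbf{q}$ (which satisfy $\mathbf{a} \wedge \mathbf{b} = \mathbf{0}$) one checks directly from the definitions that $\sigma^{\mathbf{a}}(w') = \sigma^{\mathbf{b}}(w')$ forces $w'$ to have the nonzero period $\mathbf{a} - \mathbf{b} = \mathbf{l} - \mathbf{m}$, so $w$ is eventually periodic. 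Conversely, if $\sigma^{\mathbf{q}}(w)$ has a nonzero period $\mathbf{p}$, then writing $\mathbf{p} = \mathbf{p}^{+} - \mathbf{p}^{-}$ with $\mathbf{p}^{+} \wedge \mathbf{p}^{-} = \mathbf{0}$ yields $\sigma^{\mathbf{q}+\mathbf{p}^{+}}(w) = \sigma^{\mathbf{q}+\mathbf{p}^{-}}(w)$ with $\mathbf{q}+\mathbf{p}^{+} \neq \mathbf{q}+\mathbf{p}^{-}$, so $w$ has nontrivial isotropy. Hence $\mathcal{G}(S)$ is topologically principal if and only if the set of aperiodic $k$-tilings is dense in $S^{\infty}$.

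Given this reformulation, the ``only if'' direction is immediate: a dense subset of the nonempty space $S^{\infty}$ (which, under our standing hypotheses, is the Cantor space by Proposition~\ref{prop:top}) is nonempty, so there is at least one aperiodic $k$-tiling. For the ``if'' direction I would fix an aperiodic $k$-tiling $w_{0}$ and show that every nonempty basic open set $aS^{\infty}$, $a \in S$, contains an aperiodic point, which suffices since the sets $aS^{\infty}$ form a basis. The candidate is $aw_{0}$, which lies in $aS^{\infty}$ by Lemma~\ref{lem:pepsi}. If it were eventually periodic, then $\sigma^{\mathbf{q}}(aw_{0})$ would be periodic for some $\mathbf{q}$; since a shift of a periodic $k$-tiling is again periodic (a one-line verification from the definition of period), we may replace $\mathbf{q}$ by $\mathbf{q} \vee d(a)$ and assume $\mathbf{q} \geq d(a)$, whence $\sigma^{\mathbf{q}}(aw_{0}) = \sigma^{\mathbf{q}-d(a)}\bigl(\sigma^{d(a)}(aw_{0})\bigr) = \sigma^{\mathbf{q}-d(a)}(w_{0})$ would be periodic, making $w_{0}$ eventually periodic --- a contradiction. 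So $aw_{0}$ is aperiodic, and density follows.

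The main obstacle is purely bookkeeping: getting the equivalence between ``$\sigma^{\mathbf{l}}(w) = \sigma^{\mathbf{m}}(w)$ for some $\mathbf{l} \neq \mathbf{m}$'' and ``$w$ eventually periodic'' exactly right, in particular tracking the support condition $\mathbf{m}+\mathbf{p} \geq \mathbf{0}$ in the definition of a period when passing between a period $\mathbf{p} \in \mathbb{Z}^{k}$ and its positive and negative parts. Once that reformulation is in place, everything else is soft.
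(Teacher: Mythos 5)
Your proof is correct. Note first that the paper itself gives no argument for this lemma: it is stated as a direct citation of \cite[Proposition~4.5]{KP}, so you are supplying a proof where the paper outsources one. What you have written is essentially a self-contained reconstruction of the Kumjian--Pask argument: (i) a unit $w$ has trivial isotropy in $\mathcal{G}(S)$ if and only if $w$ is aperiodic, and (ii) if one aperiodic tiling $w_{0}$ exists then $aw_{0}$ is aperiodic for every $a\in S$, so the aperiodic tilings meet every basic open set $aS^{\infty}$ and are therefore dense. The one step that genuinely requires care is the passage from ``$\sigma^{\mathbf{l}}(w)=\sigma^{\mathbf{m}}(w)$ with $\mathbf{l}\neq\mathbf{m}$'' to a bona fide period: your reduction to $\mathbf{a}=\mathbf{l}-\mathbf{q}$, $\mathbf{b}=\mathbf{m}-\mathbf{q}$ with $\mathbf{a}\wedge\mathbf{b}=\mathbf{0}$ is exactly what makes the support condition $\mathbf{m}'+\mathbf{p}\geq\mathbf{0}$ in the paper's definition of period equivalent to $\mathbf{m}'\geq\mathbf{b}$, and with that in hand the verification that $\mathbf{a}-\mathbf{b}$ is a period of $\sigma^{\mathbf{q}}(w)$ goes through; the converse via $\mathbf{p}=\mathbf{p}^{+}-\mathbf{p}^{-}$ is equally fine. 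Two small remarks: the forward implication of your key equivalence is closely related to Lemma~\ref{lem:hard} (which the paper quotes from \cite{RSY2003}), so part of your work is already implicit elsewhere in the paper; and for the ``only if'' direction you only need $S^{\infty}\neq\varnothing$, which holds for any $k$-monoid with $d$ surjective, not just under the Cantor-space hypotheses.
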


\begin{lemma}\label{lem:tpy} Let $S$ be a $k$-monoid.
If $S$ is effective --- 
that is, if for each pair of distinct elements $x$ and $y$ there is an
element $c$ such that $xc$ and $yc$ are incomparable 
---
then $\mathcal{G}(S)$ is effective. 
\end{lemma}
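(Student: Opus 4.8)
The plan is to unwind the definition of an \emph{effective} \'etale groupoid and then reduce everything to the combinatorial effectiveness of $S$ via Proposition~\ref{prop:need}. Recall that $\mathcal{G}(S)$ is effective precisely when the interior of its isotropy groupoid coincides with the unit space $\mathcal{G}(S)_{o}$. One inclusion is free: the unit space equals $Z(1,1) = \{(w,\mathbf{0},w) \colon w \in S^{\infty}\}$, which is a basic open set contained in the isotropy groupoid, hence contained in its interior. So the content of the lemma is the reverse inclusion, which I would prove as follows.

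Let $g$ be an arbitrary element of the interior of the isotropy groupoid. Since the sets $Z(x,y)$ form a basis for the topology on $\mathcal{G}(S)$, there is a basic open set $Z(x,y)$ with $g \in Z(x,y)$ and $Z(x,y)$ contained entirely in the isotropy groupoid. The next step is to translate this containment into a statement about $S$. An element of $Z(x,y)$ has the form $(xw, d(x)-d(y), yw)$ for $w \in S^{\infty}$, and such an element lies in the isotropy groupoid if and only if $\mathbf{d}$ and $\mathbf{r}$ agree, that is, if and only if $(yw,\mathbf{0},yw) = (xw,\mathbf{0},xw)$, that is, if and only if $xw = yw$. Hence $Z(x,y) \subseteq$ (isotropy groupoid) says exactly that $xw = yw$ for every $w \in S^{\infty}$. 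At this point I would invoke the hypothesis: since $S$ is effective, the action of $S$ on $S^{\infty}$ is effective by Proposition~\ref{prop:need}, so $xw = yw$ for all $w$ forces $x = y$. Therefore $Z(x,y) = Z(x,x) \subseteq \mathcal{G}(S)_{o}$, and in particular $g$ is a unit. This shows the interior of the isotropy groupoid is contained in $\mathcal{G}(S)_{o}$, and combined with the trivial inclusion gives equality, i.e.\ $\mathcal{G}(S)$ is effective.

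There is no genuinely hard step here; the one place to be careful is the bookkeeping that converts ``the basic open set $Z(x,y)$ consists of isotropy elements'' into the monoid-level assertion ``$xw = yw$ for all $w \in S^{\infty}$'', since after that Proposition~\ref{prop:need} does all the work. I would also take care to record explicitly that $\mathcal{G}(S)_{o}$ is itself open (being $Z(1,1)$), so that the easy inclusion really is justified, and to note that it suffices to test membership in the interior of the isotropy groupoid against basic open sets $Z(x,y)$ because these form a basis.
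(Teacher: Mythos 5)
Your proof is correct and takes essentially the same route as the paper's: both reduce the containment of a basic open set $Z(x,y)$ in the isotropy groupoid to the statement that $xw = yw$ for all $w \in S^{\infty}$, and then apply effectiveness of $S$ to force $x = y$. The only cosmetic difference is that you invoke Proposition~\ref{prop:need} directly, whereas the paper argues by contradiction, producing $c$ with $xc$ and $yc$ incomparable and observing that this makes $x(cw) \neq y(cw)$ for every $w$ --- the same mathematical content, packaged inline rather than via the earlier proposition.
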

\begin{proof} Suppose that $Z(x,y)$ is a subset of the isotropy groupoid
and that $Z(x,y)$ contains a non-identity element $(xw', d(x) - d(y), yw')$ for some $w' \in S^{\infty}$.
In partiuclar, $d(x) - d(y) \neq \mathbf{0}$ and so $x \neq y$.
But, by assumption, there then exists $c \in S$ such that $xc$ and $yc$ are incomparable.
It follows that for all $w \in S^{\infty}$, we must have that $x(cw) \neq y(cw)$
(because if $xcw = ycw$ for some $w \in S^{\infty}$ then $xc$ and $yc$ would be comparable).
This contradicts the assumption that $Z(x,y)$ is a subset of the isotropy groupoid.
It follows that $Z(x,y)$ can only contain identities and so we have proved that $S$ is effective.
\end{proof}

Recall that a {\em Baire space} is a topological space in which the intersection of every countable set of dense open subsets is dense.
Every locally compact Hausdorff space is a Baire space.
Thus Boolean spaces are Baire spaces.
The \'etale groupoid of a $k$-monoid is a Hausdorff, second countable Boolean groupoid by Proposition~\ref{prop:groupoid}.
The following is therefore a consequence of a result proved by Renault \cite[Proposition~3.6]{Renault2008}.

\begin{proposition}\label{prop:nice-result} Let $S$ be a $k$-monoid.
Then the following are equivalent:
\begin{enumerate}

\item The groupoid $\mathcal{G}(S)$ is effective. 

\item The groupoid $\mathcal{G}(S)$ is topologically principle.

\end{enumerate}
\end{proposition}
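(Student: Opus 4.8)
The plan is to deduce the equivalence from Renault's Proposition~3.6 \cite{Renault2008}, which states that for a Hausdorff, second countable, \'etale groupoid whose unit space is a Baire space, being topologically principal is equivalent to being effective. By Proposition~\ref{prop:groupoid}, $\mathcal{G}(S)$ is second countable, Hausdorff and Boolean --- in particular \'etale, with the sets $Z(x,y)$ furnishing a basis of compact-open bisections. Its unit space is the Boolean space $S^{\infty}$ (identifying $(w,\mathbf{0},w)$ with $w$), and a Boolean space is compact Hausdorff, hence locally compact Hausdorff, hence Baire, as noted just above. So Renault's hypotheses are met and his result gives the equivalence of (1) and (2) at once.

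For a self-contained argument in the language of the preceding sections, proceed as follows. The implication $(2) \Rightarrow (1)$ is a chain of already-established facts: if $\mathcal{G}(S)$ is topologically principal then by Lemma~\ref{lem:tp} there is an aperiodic $k$-tiling, hence $S$ is effective by Lemma~\ref{lem:as}, hence $\mathcal{G}(S)$ is effective by Lemma~\ref{lem:tpy}. For $(1) \Rightarrow (2)$ it suffices to prove the converse of Lemma~\ref{lem:tpy}, namely that $\mathcal{G}(S)$ effective implies $S$ effective; combined with Lemma~\ref{lem:as} and Lemma~\ref{lem:tp} this then yields topological principality. Arguing contrapositively, suppose $S$ is not effective, so there are distinct $x,y \in S$ with $xc$ and $yc$ comparable for every $c \in S$. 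By Lemma~\ref{lem:feeble}, $xw = yw$ for all $w \in S^{\infty}$; taking the corner of size $d(x)$ on each side shows that if $d(x) = d(y)$ then $x = y$, a contradiction, so $d(x) \neq d(y)$. Hence $Z(x,y)$ is a nonempty open subset of $\mathcal{G}(S)$ all of whose elements $(xw, d(x)-d(y), yw)$ have source equal to range (because $xw = yw$) but are not identities (because $d(x)-d(y) \neq \mathbf{0}$); thus the interior of the isotropy groupoid strictly contains the unit space and $\mathcal{G}(S)$ is not effective.

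The conceptual content is minimal once Lemmas~\ref{lem:feeble}, \ref{lem:as}, \ref{lem:tp} and \ref{lem:tpy} are in hand; the one step that genuinely uses the structure of $k$-monoids is the observation that the bad pair must satisfy $d(x) \neq d(y)$, which rests on the unique factorization property (via the fact that the corner of size $d(x)$ of $xw$ is $x$). The remaining points --- openness of $Z(x,y)$, the source/range computation, and the Baire property of $S^{\infty}$ --- are routine. I do not expect any real obstacle here; the cleanest write-up is simply to cite Renault's Proposition~3.6 and check its hypotheses.
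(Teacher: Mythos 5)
Your proposal is correct and its main route is exactly the paper's: the paper proves this proposition simply by noting that $\mathcal{G}(S)$ is a Hausdorff, second-countable Boolean groupoid (Proposition~\ref{prop:groupoid}), that Boolean spaces are Baire, and then citing Renault's Proposition~3.6. The additional self-contained chain you sketch (via Lemmas~\ref{lem:feeble}, \ref{lem:as}, \ref{lem:tp} and \ref{lem:tpy}, with the correct observation that a bad pair must have $d(x) \neq d(y)$) is sound but goes beyond what the paper records.
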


\begin{remark}
{\em We shall assume that the groupoid is effective in what follows}.
\end{remark}

We now describe the compact-open local bisections.

\begin{lemma}\label{lem:zooo} Each compact-open local bisection of $\mathscr{G}(S)$
is determined by an element of $\mathsf{R}(S)$.
\end{lemma}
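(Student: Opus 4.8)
The plan is to realize an arbitrary compact-open local bisection $A$ of $\mathcal{G}(S)$ as the union $\bigcup_{i=1}^{n}Z(x_i,y_i)$ of finitely many basic sets, and then to show that the formal join $\bigvee_{i=1}^{n}x_iy_i^{-1}$ of the corresponding basic morphisms is a genuine element of $\mathsf{R}(S)$ which \emph{determines} $A$. First I would note that, by Proposition~\ref{prop:groupoid}, $\mathcal{G}(S)$ is a Hausdorff Boolean groupoid whose topology has the sets $Z(x,y)$ ($x,y\in S$) as a basis of compact-open sets; each $Z(x,y)$ is itself a compact-open local bisection, since $w\mapsto (xw,\,d(x)-d(y),\,yw)$ is a bijection from $S^{\infty}$ onto $Z(x,y)$ carried homeomorphically onto $yS^{\infty}$ by $\mathbf{d}$ (injectivity coming from (A1)/Proposition~\ref{prop:bojo}(1)). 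Consequently any compact-open subset of $\mathcal{G}(S)$, in particular our local bisection $A$, is covered by finitely many basic sets, so $A=\bigcup_{i=1}^{n}Z(x_i,y_i)$.

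The crucial step is to show that the basic morphisms $x_iy_i^{-1}\in\mathsf{R}(S)$ are pairwise compatible. Fix $i,j$. To see that $x_iy_i^{-1}\,y_jx_j^{-1}$ is idempotent, I would use Lemma~\ref{lem:tom}(4): if $y_iS\cap y_jS=\varnothing$ the product is $0$; otherwise write $y_iS\cap y_jS=\{e_1,\dots,e_r\}S$ with $e_\ell=y_ip_\ell=y_jq_\ell$, so that $x_iy_i^{-1}\,y_jx_j^{-1}=\bigvee_\ell x_ip_\ell(x_jq_\ell)^{-1}$. For each $\ell$ and each $w\in S^{\infty}$ the elements $(x_ip_\ell w,\,d(x_i)-d(y_i),\,y_ip_\ell w)\in Z(x_i,y_i)\subseteq A$ and $(x_jq_\ell w,\,d(x_j)-d(y_j),\,y_jq_\ell w)\in Z(x_j,y_j)\subseteq A$ have the same source $e_\ell w$; since $A$ is a local bisection they must coincide, so $x_ip_\ell w=x_jq_\ell w$ for all $w\in S^{\infty}$, whence $x_ip_\ell=x_jq_\ell$ by Corollary~\ref{cor:good} (effectiveness of the action). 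Thus every $x_ip_\ell(x_jq_\ell)^{-1}$ is idempotent, so $x_iy_i^{-1}\,y_jx_j^{-1}$ is idempotent. The symmetric argument, intersecting $x_iS$ with $x_jS$ and using that $A$ is a local bisection with respect to $\mathbf{r}$, shows that $y_ix_i^{-1}\,x_jy_j^{-1}$ is idempotent. Hence $x_iy_i^{-1}\sim x_jy_j^{-1}$.

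It then remains to assemble the conclusion. Being pairwise compatible, the $x_iy_i^{-1}$, viewed as partial bijections between the finitely generated right ideals $y_iS$ and $x_iS$, agree on overlapping domains and on overlapping ranges, so their union $\theta:=\bigcup_{i=1}^{n}x_iy_i^{-1}$ is again a bijective morphism, now between the finitely generated right ideals $(\{y_1,\dots,y_n\})S$ and $(\{x_1,\dots,x_n\})S$; that is, $\theta\in\mathsf{R}(S)$ and $\theta=\bigvee_{i=1}^{n}x_iy_i^{-1}$ in $\mathsf{R}(S)$ (which is distributive by Proposition~\ref{prop:salami}). By construction $A=\bigcup_{i=1}^{n}Z(x_i,y_i)$ is exactly the compact-open local bisection attached to $\theta$, so $A$ is determined by the element $\theta$ of $\mathsf{R}(S)$.

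I expect the main obstacle to be the bookkeeping inside the compatibility step: correctly exhibiting, for each generator of $y_iS\cap y_jS$ (resp.\ $x_iS\cap x_jS$), a pair of groupoid elements that genuinely lie in $A$ and genuinely share a source (resp.\ range), so that the local-bisection hypothesis applies, and then invoking effectiveness (Corollary~\ref{cor:good}) to descend from equality of the resulting tilings for all $w\in S^{\infty}$ to equality in $S$. A subsidiary point to make carefully is the routine topological one that a compact-open subset of a Hausdorff groupoid with a basis of compact-open sets is a \emph{finite} union of basic sets (extract a finite subcover of the compact set $A$).
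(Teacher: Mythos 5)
Your proof is correct and follows essentially the same route as the paper's: cover the compact-open local bisection $A$ by finitely many basic sets $Z(x_i,y_i)$ contained in it and pass to the corresponding basic morphisms $x_iy_i^{-1}$. The paper's own argument stops at the finite-union decomposition $A=\bigcup_{i=1}^{n}Z(x_i,y_i)$ and leaves implicit that the $x_iy_i^{-1}$ are pairwise compatible so that their join is a genuine element of $\mathsf{R}(S)$; your explicit verification of this via the local-bisection property and Corollary~\ref{cor:good} supplies that detail without changing the approach.
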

\begin{proof} Observe that the sets $Z(x,y)$ are local bisections 
because if $yw = yw'$ then $w = w'$ (and dually).
We calculate the partial bijection of $\mathcal{G}(S)_{o}$ determined by $Z(x,y)$ under the map $\alpha$.
Its domain is $yS^{\infty}$ and its range is $xS^{\infty}$ and the effect of the partial bijection is $yw \mapsto xw$.
It follows that the partial bijection is $xy^{-1}$.
The fact that the groupoid is effective means if two such sets induce the same partial bijection then they are equal.
It follows that there is a bijection $Z(x,y) \longleftrightarrow xy^{-1}$.
Let $A$ be a compact-open local bisection of $\mathcal{G}(S)$.
Then $A$ is a union of elements of the form $Z(x,y)$ and a finite union since it is compact.
We therefore have that $A = \bigcup_{i=1}^{n} Z(x_{i},y_{i})$.
\end{proof}

Because the groupoid $\mathcal{G}(S)$ is effective, we may identify compact-open local bisections
with certain partial homeomorphisms between clopen subsets of $S^{\infty}$.
The set of these homeomorphisms is precisely the image under the monoid homomorphism 
$\chi \colon \mathsf{R}(S) \rightarrow \mathcal{I}^{\scriptsize cl}(S^{\infty})$.   
It follows by Lemma~\ref{lem:zooo} that $\chi$ maps $\mathsf{R}(S)$ onto  $\mathsf{KB}(\mathcal{G}(S))$. 
Thus $\mathsf{KB}(\mathcal{G}(S))$ is isomorphic to $\mathsf{R}(S)/\equiv$.

Given an \'etale topological groupoid $G$, its {\em topological full group} is the groupoid of
all compact-open bisections.
We have therefore proved the following.

\begin{theorem}\label{them:main} Let $S$ be a $k$-monoid with an aperiodic $k$-tiling such that $S^{\infty}$ is the Cantor space.
\begin{enumerate}

\item The Boolean inverse monoid $\mathsf{C}(S)$ is isomorphic to the Boolean inverse monoid $\mathsf{KB}(\mathsf{G}(S))$.

\item The group $\mathscr{G}(S)$ is therefore the topological full group of the Boolean groupoid $\mathcal{G}(S)$.

\end{enumerate}
\end{theorem}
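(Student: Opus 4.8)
The plan is to assemble pieces that are already in place: both assertions are bookkeeping on top of Proposition~\ref{prop:pepsi} and Lemma~\ref{lem:zooo}. For part (1) I would start from the monoid homomorphism $\chi \colon \mathsf{R}(S) \rightarrow \mathcal{I}^{\scriptsize cl}(S^{\infty})$ furnished by Proposition~\ref{prop:cheese} applied to the system $(S,S^{\infty})$ of Proposition~\ref{prop:bojo}, and record three facts about it. First, $\chi$ is a morphism of distributive inverse monoids: in the proof of Proposition~\ref{prop:cheese} it was defined on basic morphisms and extended over finite joins, so it preserves the joins that exist in $\mathsf{R}(S)$. Second, by Lemma~\ref{lem:zooo} and the discussion following it, the image of $\chi$ is exactly $\mathsf{KB}(\mathcal{G}(S))$: each $Z(x,y)$ corresponds to the basic morphism $xy^{-1}$, each compact-open local bisection is a finite union $\bigcup_i Z(x_i,y_i)$, and effectiveness of $\mathcal{G}(S)$ (which follows from the aperiodicity hypothesis by Lemma~\ref{lem:tp} and Proposition~\ref{prop:nice-result}) makes $Z(x,y) \mapsto xy^{-1}$ a bijection. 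Third, by Proposition~\ref{prop:pepsi} the congruence kernel of $\chi$ is precisely $\equiv$; this is the one step where aperiodicity is essential, via Corollary~\ref{cor:good}. Combining these, $\chi$ induces an isomorphism $\mathsf{C}(S) = \mathsf{R}(S)/\equiv \cong \mathsf{KB}(\mathcal{G}(S))$ of inverse monoids, and since both sides are Boolean ($\mathsf{C}(S)$ by Theorem~\ref{them:munn}; $\mathsf{KB}$ of a Boolean groupoid by the non-commutative Stone duality cited above), it is an isomorphism of Boolean inverse monoids. That is part (1).

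For part (2), I would first note that an isomorphism of inverse monoids restricts to an isomorphism of their groups of units, since a monoid isomorphism preserves $1$ and inverses. By Proposition~\ref{prop:rory}, the group of units of $\mathsf{C}(S)$ is $\mathscr{G}(S)$, so part (1) gives that $\mathscr{G}(S)$ is isomorphic to the group of units of $\mathsf{KB}(\mathcal{G}(S))$. It then remains to identify the latter with the topological full group of $\mathcal{G}(S)$, i.e.\ with the set of all compact-open bisections. This is general and essentially immediate: a compact-open local bisection $A$ is a unit of $\mathsf{KB}(\mathcal{G}(S))$ precisely when $A^{-1}A = \mathcal{G}(S)_{o} = AA^{-1}$, which is exactly the definition of a bisection; and the product of two compact-open bisections is again one, with inverses taken setwise, so the compact-open bisections form a group under the multiplication of $\mathsf{KB}(\mathcal{G}(S))$. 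Hence $\mathscr{G}(S)$ is the topological full group of $\mathcal{G}(S)$.

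The genuine obstacle has already been surmounted elsewhere: it is the computation of $\ker\chi$ in Proposition~\ref{prop:pepsi}, whose nontrivial direction passes to corners bigger than all the relevant elements, rewrites via Levi's lemma (Lemma~\ref{lem:levi}), and then invokes effectiveness of the $S$-action on $S^{\infty}$ (Corollary~\ref{cor:good}) to force equality on the nose. What is left for this theorem is routine: checking that $\chi$ preserves joins so that the induced bijection respects the Boolean structure, checking surjectivity onto $\mathsf{KB}(\mathcal{G}(S))$ through Lemma~\ref{lem:zooo}, and matching units with bisections. The only point meriting a line of care is the last one, namely that ``topological full group'' as defined immediately before the theorem --- the set of compact-open bisections --- really does coincide with the group of units of $\mathsf{KB}(\mathcal{G}(S))$; this follows at once from the definitions of bisection, of the idempotents of $\mathsf{KB}(\mathcal{G}(S))$ (the compact-open subsets of $\mathcal{G}(S)_{o}$), and of its identity element $\mathcal{G}(S)_{o}$.
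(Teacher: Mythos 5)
Your proposal is correct and follows essentially the same route as the paper: the isomorphism is obtained by showing $\chi$ maps $\mathsf{R}(S)$ onto $\mathsf{KB}(\mathcal{G}(S))$ via Lemma~\ref{lem:zooo} together with effectiveness of the groupoid (coming from aperiodicity through Lemma~\ref{lem:tp} and Proposition~\ref{prop:nice-result}), identifying the kernel with $\equiv$ via Proposition~\ref{prop:pepsi}, and then passing to groups of units using Proposition~\ref{prop:rory}. No gaps; the extra care you take in matching units of $\mathsf{KB}(\mathcal{G}(S))$ with compact-open bisections is exactly the point the paper leaves implicit.
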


By Theorem~\ref{them:munn}, the Boolean inverse monoid $\mathsf{C}(S)$ is $0$-simple and fundamental.
It follows by \cite[Theorem~4.16]{Lawson2016}, that the groupoid $\mathcal{G}(S)$ is purely infinite and minimal.
Thus by Theorem~\ref{them:main} and \cite[Theorem~4.16]{Matui2015}, we have proved the following.

\begin{theorem}\label{them:commutator} Let $S$ be a $k$-monoid with an aperiodic $k$-tiling such that $S^{\infty}$ is the Cantor space.
Then the commutator subgroup of $\mathscr{G}(S)$ is simple.
\end{theorem}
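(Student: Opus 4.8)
The plan is to assemble the statement from the structural results already proved, translating the algebraic properties of the Boolean inverse monoid $\mathsf{C}(S)$ into groupoid-theoretic properties of $\mathcal{G}(S)$ and then invoking a known simplicity criterion for commutator subgroups of topological full groups. So the proof is essentially a bookkeeping exercise in which the real content has been front-loaded into earlier sections.

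First I would collect the facts about $\mathcal{G}(S)$. By Theorem~\ref{them:main}, $\mathsf{C}(S) \cong \mathsf{KB}(\mathcal{G}(S))$ and $\mathscr{G}(S)$ is the topological full group of $\mathcal{G}(S)$; by Proposition~\ref{prop:groupoid}, $\mathcal{G}(S)$ is a Hausdorff, second-countable Boolean groupoid; by Lemma~\ref{lem:tp} together with Proposition~\ref{prop:nice-result}, the existence of an aperiodic $k$-tiling makes $\mathcal{G}(S)$ topologically principal, hence effective; and since $S^{\infty}$ is assumed to be the Cantor space, $\mathcal{G}(S)_{o}$ is a Cantor space. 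Thus $\mathcal{G}(S)$ is an effective, Hausdorff, second-countable ample groupoid with Cantor unit space.

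Next I would supply the two properties that are not directly visible, namely minimality and pure infiniteness. By Theorem~\ref{them:munn}, $\mathsf{C}(S)$ is a countably infinite, $0$-simple, fundamental Boolean inverse $\wedge$-monoid whose semilattice of idempotents is the Tarski algebra, and in particular is atomless. Feeding this through the non-commutative Stone duality, in the form of \cite[Theorem~4.16]{Lawson2016}, converts $0$-simplicity of $\mathsf{C}(S)$ into minimality of $\mathcal{G}(S)$ and converts $0$-simplicity together with atomlessness (fundamentality ensuring there is no collapse) into pure infiniteness of $\mathcal{G}(S)$. This is the step that does the genuine translation; everything else is already in hand. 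Finally, with $\mathcal{G}(S)$ a minimal, purely infinite, effective, Hausdorff, second-countable ample groupoid on a Cantor unit space, I would apply \cite[Theorem~4.16]{Matui2015}, which states that the commutator (derived) subgroup of the topological full group of such a groupoid is simple; since $\mathscr{G}(S)$ is that topological full group by Theorem~\ref{them:main}, this yields the claim.

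The main obstacle is not any one calculation but making sure that \emph{all} the hypotheses of \cite[Theorem~4.16]{Matui2015} are simultaneously verified: in particular, that $\mathcal{G}(S)$ is both minimal and purely infinite, which is not apparent at the groupoid level and must be extracted via \cite[Theorem~4.16]{Lawson2016} from the $0$-simple, fundamental structure of $\mathsf{C}(S)$, and that $\mathcal{G}(S)$ is effective, which is precisely where the aperiodicity hypothesis is consumed (through Lemma~\ref{lem:tp}). Once these checks are in place, the conclusion is immediate.
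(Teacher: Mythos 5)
Your proposal is correct and follows essentially the same route as the paper: Theorem~\ref{them:munn} gives that $\mathsf{C}(S)$ is $0$-simple and fundamental, \cite[Theorem~4.16]{Lawson2016} converts this into minimality and pure infiniteness of $\mathcal{G}(S)$, and \cite[Theorem~4.16]{Matui2015} together with Theorem~\ref{them:main} yields simplicity of the commutator subgroup. Your extra bookkeeping on effectiveness and the Cantor unit space just makes explicit hypotheses the paper leaves implicit.
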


\section{Rigid $k$-monoids}

Let $S$ be a $k$-monoid with alphabets $X_{1}, \ldots, X_{k}$ and let $s \in S$.
Then $s = x_{1} \ldots x_{k}$, where $x_{i} \in X_{i}^{\ast}$, uniquely.
We call $x_{i}$ the {\em $i$-component} of $s$.
However, there is another way of obtaining elements of $X_{i}^{\ast}$ from $s$.
If $s = s_{i}s_{i}'$, where $s_{i} \in X_{i}^{\ast}$ and $s_{i}' \in X_{1}^{\ast} \ldots \hat{X_{i}^{\ast}} \ldots X_{k}^{\ast}$
(thus, the $i$th co-ordinate of $d(s_{i}')$ is zero),
then we call $s_{i}$ the {\em $i$-projection} of $s$.
Thus we have a $k$-tuple $(s_{1}, \ldots, s_{k})$ which we call a {\em projection}.
If $s \in X_{i}^{\ast}$ we say that $s$ is {\em homogeneous (of type $i$)}.

Simple examples of $k$-monoids can be constructed from direct products of $k$ free monoids by Example~\ref{ex:bbc},
but direct products of free monoids are rather restricted.
It is useful to have a broader class of $k$-monoids to work with.
We define this class now.
Let $S$ be a $k$-monoid with alphabets $X_{1}, \ldots, X_{k}$.
We say that $S$ is {\em right rigid} if the following condition holds where $i \neq j$:
given $x \in X_{i}$ and $y \in X_{j}$ there are unique elements $x' \in X_{i}$ and $y' \in X_{j}$ such that $xy' = yx'$.
This condition implies, in particular, that letters belonging to different alphabets are always comparable.
This condition was called the {\em unique pullback property} in \cite{DY2016},
whereas in \cite{DY2017} it was called the {\em little pullback property} which is the term used in \cite[Section 20]{Exel}
where it seems to have been first introduced.
In terms of `squares', it says that the bottom lefthand-side uniquely determines the top righthand-side.
We say that $S$ is {\em left rigid} if the following condition holds: 
given $x' \in X_{i}$ and $y' \in X_{j}$ there are unique elements $x \in X_{i}$ and $y \in X_{j}$
such that $xy' = yx'$.
This condition was first stated in \cite{DY2016} where it was called the {\em unique pushout property}.
In terms of `squares', it says that the top righthand-side uniquely determines the bottom lefthand-side.
We shall construct examples below to show that these two conditions are independent.
A {\em rigid} monoid is one that is both left rigid and right rigid.
Free monoids are vacuously rigid.
The direct product of rigid monoids is rigid.
Thus finite direct products of free monoids are rigid.

\begin{example}{\em We now describe all the $2$-monoids
in which there are two types of alphabets: the {\em $e$-alphabet} $\{e_{1},e_{2}\}$ and the {\em $f$-alphabet} $\{f_{1},f_{2}\}$.
According to \cite{Power2007}, we can describe all such $2$-monoids by means of a permutation $\theta$ of the set
$\{1,2\} \times \{1,2\}$. Given such a permutation, we write down all the relations of the form $e_{i}f_{j} = f_{j'}e_{i'}$
where $\theta (i,j) = (i',j')$. 
Observe that whenever $\theta (i,j) = (i,j)$, we get the relation $e_{i}f_{j} = f_{j}e_{i}$.
The monoid that results is denoted by $A_{2}^{\ast} \times_{\theta} A_{2}^{\ast}$.
By \cite[Proposition 3.1]{Power2007}, there are 9 non-isomorphic 2-monoids of this type.
Here is a list of all 9 monoids of this type.
\begin{enumerate}

\item $\theta$ is the identity permutation. The relations are $e_{1}f_{1} = f_{1}e_{1}$, $e_{1}f_{2} = f_{2}e_{1}$, $e_{2}f_{1} = f_{1}e_{2}$, $e_{2}f_{2} = f_{2}e_{2}$.
This is just the presentation of the monoid $A_{2}^{\ast} \times A_{2}^{\ast}$.
Each of the four relations can be written as a square;
observe that we orientate the $e$-alphabet from left-to-right and the $f$-alphabet from bottom-to-top:
$$
A: 
\xymatrix{
 \ar@{-}[r]_{e_{1}} & \\
\ar@{-}[u]_{f_{1}} \ar@{-}[r]^{e_{1}} &   \ar@{-}[u]^{f_{1}} }
\quad
B: 
\xymatrix{
 \ar@{-}[d]^{f_{2}} \ar@{-}[r]_{e_{1}} &  \ar@{-}[d]_{f_{2}} \\
 \ar@{-}[r]^{e_{1}} & }
\quad
C: 
\xymatrix{
 \ar@{-}[d]^{f_{1}} \ar@{-}[r]_{e_{2}} &  \ar@{-}[d]_{f_{1}} \\
 \ar@{-}[r]^{e_{2}} &  }
\quad
D: 
\xymatrix{
 \ar@{-}[d]^{f_{2}} \ar@{-}[r]_{e_{2}} &  \ar@{-}[d]_{f_{2}} \\
 \ar@{-}[r]^{e_{2}} &  }
$$

\item $\theta$ is the transposition $(1,1) \leftrightarrow (1,2)$.
The relations are $e_{1}f_{1} = f_{2}e_{1}$ and $e_{1}f_{2} = f_{1}e_{1}$
and then $e_{2}f_{1} = f_{1}e_{2}$, $e_{2}f_{2} = f_{2}e_{2}$.
We write each of the four relations as a square:
$$
A: 
\xymatrix{
 \ar@{-}[d]^{f_{2}} \ar@{-}[r]_{e_{1}} & \ar@{-}[d]_{f_{1}} \\
 \ar@{-}[r]^{e_{1}} &  }
\quad
B: 
\xymatrix{
 \ar@{-}[d]^{f_{1}} \ar@{-}[r]_{e_{1}} &  \ar@{-}[d]_{f_{2}} \\
 \ar@{-}[r]^{e_{1}} & }
\quad
C: 
\xymatrix{
 \ar@{-}[d]^{f_{1}} \ar@{-}[r]_{e_{2}} &  \ar@{-}[d]_{f_{1}} \\
 \ar@{-}[r]^{e_{2}} &  }
\quad
D: 
\xymatrix{
 \ar@{-}[d]^{f_{2}} \ar@{-}[r]_{e_{2}} &  \ar@{-}[d]_{f_{2}} \\
 \ar@{-}[r]^{e_{2}} & }
$$

\item $\theta$ is the transposition $(1,1) \leftrightarrow (2,2)$.
The relations are $e_{1}f_{1} = f_{2}e_{2}$ and $e_{2}f_{2} = f_{1}e_{1}$ and $e_{1}f_{2} = f_{2}e_{1}$, $e_{2}f_{1} = f_{1}e_{2}$.
We write each of the four relations as a square.
$$
A: 
\xymatrix{
 \ar@{-}[d]^{f_{2}} \ar@{-}[r]_{e_{2}} &  \ar@{-}[d]_{f_{1}} \\
 \ar@{-}[r]^{e_{1}} &  }
\quad
B: 
\xymatrix{
 \ar@{-}[d]^{f_{1}} \ar@{-}[r]_{e_{1}} &  \ar@{-}[d]_{f_{2}} \\
 \ar@{-}[r]^{e_{2}} &  }
\quad
C: 
\xymatrix{
 \ar@{-}[d]^{f_{2}} \ar@{-}[r]_{e_{1}} &  \ar@{-}[d]_{f_{2}} \\
 \ar@{-}[r]^{e_{1}} &  }
\quad
D: 
\xymatrix{
 \ar@{-}[d]^{f_{1}} \ar@{-}[r]_{e_{2}} &  \ar@{-}[d]_{f_{1}} \\
 \ar@{-}[r]^{e_{2}} &  }
$$

\item $\theta$ is the $3$-cycle $(1,1) \rightarrow (1,2) \rightarrow (2,2) \rightarrow (1,1)$.
The relations are  $e_{1}f_{1} = f_{2}e_{1}$, $e_{1}f_{2} = f_{2}e_{2}$, $e_{2}f_{2} = f_{1}e_{1}$ and $e_{2}f_{1} = f_{1}e_{2}$.
We write each of the four relations as a square:
$$
A: 
\xymatrix{
 \ar@{-}[d]^{f_{2}} \ar@{-}[r]_{e_{1}} &  \ar@{-}[d]_{f_{1}} \\
 \ar@{-}[r]^{e_{1}} &  }
\quad
B: 
\xymatrix{
 \ar@{-}[d]^{f_{2}} \ar@{-}[r]_{e_{2}} &  \ar@{-}[d]_{f_{2}} \\
 \ar@{-}[r]^{e_{1}} &  }
\quad
C: 
\xymatrix{
 \ar@{-}[d]^{f_{1}} \ar@{-}[r]_{e_{1}} &  \ar@{-}[d]_{f_{2}} \\
 \ar@{-}[r]^{e_{2}} &  }
\quad
D: 
\xymatrix{
 \ar@{-}[d]^{f_{1}} \ar@{-}[r]_{e_{2}} &  \ar@{-}[d]_{f_{1}} \\
 \ar@{-}[r]^{e_{2}} &  }
$$

\item $\theta$ is the $3$-cycle $(1,1) \rightarrow (2,2) \rightarrow (1,2) \rightarrow (1,1)$.
The relations are $e_{1}f_{1} = f_{2}e_{2}$, $e_{2}f_{2} = f_{2}e_{1}$, $e_{1}f_{2} = f_{1}e_{1}$ and  $e_{2}f_{1} = f_{1}e_{2}$.
$$
A: 
\xymatrix{
 \ar@{-}[d]^{f_{2}} \ar@{-}[r]_{e_{2}} &  \ar@{-}[d]_{f_{1}} \\
 \ar@{-}[r]^{e_{1}} &  }
\quad
B: 
\xymatrix{
 \ar@{-}[d]^{f_{2}} \ar@{-}[r]_{e_{1}} &  \ar@{-}[d]_{f_{2}} \\
 \ar@{-}[r]^{e_{2}} &  }
\quad
C: 
\xymatrix{
 \ar@{-}[d]^{f_{1}} \ar@{-}[r]_{e_{1}} &  \ar@{-}[d]_{f_{2}} \\
 \ar@{-}[r]^{e_{1}} &  }
\quad
D: 
\xymatrix{
 \ar@{-}[d]^{f_{1}} \ar@{-}[r]_{e_{2}} &  \ar@{-}[d]_{f_{1}} \\
 \ar@{-}[r]^{e_{2}} &  }
$$

\item $\theta$ is the following pair of transpositions $(1,1) \leftrightarrow (1,2)$ and $(2,1) \leftrightarrow (2,2)$.
The relations are $e_{1}f_{1} = f_{2}e_{1}$, $e_{1}f_{2} = f_{1}e_{1}$, $e_{2}f_{1} = f_{2}e_{2}$ and $e_{2}f_{2} = f_{1}e_{2}$.
We write each of the four relations as a square:
$$
A: 
\xymatrix{
 \ar@{-}[d]^{f_{2}} \ar@{-}[r]_{e_{1}} &  \ar@{-}[d]_{f_{1}} \\
 \ar@{-}[r]^{e_{1}} &  }
\quad
B: 
\xymatrix{
 \ar@{-}[d]^{f_{1}} \ar@{-}[r]_{e_{1}} &  \ar@{-}[d]_{f_{2}} \\
 \ar@{-}[r]^{e_{1}} &  }
\quad
C: 
\xymatrix{
 \ar@{-}[d]^{f_{2}} \ar@{-}[r]_{e_{2}} & \ar@{-}[d]_{f_{1}} \\
 \ar@{-}[r]^{e_{2}} &  }
\quad
D: 
\xymatrix{
 \ar@{-}[d]^{f_{1}} \ar@{-}[r]_{e_{2}} &  \ar@{-}[d]_{f_{2}} \\
 \ar@{-}[r]^{e_{2}} &  }
$$

\item $\theta$ is the following pair of transpositions $(1,1) \leftrightarrow (2,2)$ and $(1,2) \leftrightarrow (2,1)$.
The relations are $e_{1}f_{1} = f_{2}e_{2}$ and $e_{2}f_{2} = f_{1}e_{1}$, $e_{1}f_{2} = f_{1}e_{2}$ and $e_{2}f_{1} = f_{2}e_{1}$.
We write each of the four relations as a square:
$$
A: 
\xymatrix{
 \ar@{-}[d]^{f_{2}} \ar@{-}[r]_{e_{2}} &  \ar@{-}[d]_{f_{1}} \\
 \ar@{-}[r]^{e_{1}} &  }
\quad
B: 
\xymatrix{
 \ar@{-}[d]^{f_{1}} \ar@{-}[r]_{e_{1}} &  \ar@{-}[d]_{f_{2}} \\
 \ar@{-}[r]^{e_{2}} &  }
\quad
C: 
\xymatrix{
 \ar@{-}[d]^{f_{1}} \ar@{-}[r]_{e_{2}} &  \ar@{-}[d]_{f_{2}} \\
 \ar@{-}[r]^{e_{1}} &  }
\quad
D: 
\xymatrix{
 \ar@{-}[d]^{f_{2}} \ar@{-}[r]_{e_{1}} &  \ar@{-}[d]_{f_{1}} \\
 \ar@{-}[r]^{e_{2}} &  }
$$

\item $\theta$ is the following $4$-cycle $(1,1) \rightarrow (1,2) \rightarrow (2,2) \rightarrow (2,1) \rightarrow (1,1)$.
The relations are $e_{1}f_{1} = f_{2}e_{1}$, $e_{1}f_{2} = f_{2}e_{2}$, $e_{2}f_{2} = f_{1}e_{2}$, $e_{2}f_{1} = f_{1}e_{1}$.
We write each of the four relations as a square:
$$
A: 
\xymatrix{
 \ar@{-}[d]^{f_{2}} \ar@{-}[r]_{e_{1}} &  \ar@{-}[d]_{f_{1}} \\
 \ar@{-}[r]^{e_{1}} &  }
\quad
B: 
\xymatrix{
 \ar@{-}[d]^{f_{2}} \ar@{-}[r]_{e_{2}} &  \ar@{-}[d]_{f_{2}} \\
 \ar@{-}[r]^{e_{1}} &  }
\quad
C: 
\xymatrix{
 \ar@{-}[d]^{f_{1}} \ar@{-}[r]_{e_{2}} &  \ar@{-}[d]_{f_{2}} \\
 \ar@{-}[r]^{e_{2}} &  }
\quad
D: 
\xymatrix{
 \ar@{-}[d]^{f_{1}} \ar@{-}[r]_{e_{1}} &  \ar@{-}[d]_{f_{1}} \\
 \ar@{-}[r]^{e_{2}} &  }
$$

\item $\theta$ is the following $4$-cycle $(1,1) \rightarrow (1,2) \rightarrow (2,1) \rightarrow (2,2) \rightarrow (1,1)$.
The relations are $e_{1}f_{1} = f_{2}e_{1}$, $e_{1}f_{2} = f_{1}e_{2}$, $e_{2}f_{1} = f_{2}e_{2}$, $e_{2}f_{2} = f_{1}e_{1}$.
We write each of the four relations as a square:
$$
A: 
\xymatrix{
 \ar@{-}[d]^{f_{2}} \ar@{-}[r]_{e_{1}} &  \ar@{-}[d]_{f_{1}} \\
 \ar@{-}[r]^{e_{1}} &  }
\quad
B: 
\xymatrix{
 \ar@{-}[d]^{f_{1}} \ar@{-}[r]_{e_{2}} &  \ar@{-}[d]_{f_{2}} \\
 \ar@{-}[r]^{e_{1}} &  }
\quad
C: 
\xymatrix{
 \ar@{-}[d]^{f_{2}} \ar@{-}[r]_{e_{2}} &  \ar@{-}[d]_{f_{1}} \\
 \ar@{-}[r]^{e_{2}} &  }
\quad
D: 
\xymatrix{
 \ar@{-}[d]^{f_{1}} \ar@{-}[r]_{e_{1}} &  \ar@{-}[d]_{f_{2}} \\
 \ar@{-}[r]^{e_{2}} &  }
$$

\end{enumerate}
In the above nine examples, 
(1), (2), (6), (7) and (9) are rigid,
(3) and (8) are neither left nor right rigid,
(4) is left rigid but not right rigid
and 
(5) is right rigid but not left rigid.}
\end{example}

\begin{lemma}\label{lem:bell} Let $S$ be a rigid $k$-monoid with alphabets $X_{1}, \ldots, X_{k}$.
Let $s_{1} \in X_{1}^{\ast}, \ldots, s_{k} \in X_{k}^{\ast}$.
Then there is a unique element $s \in S$ which has $(s_{1},\ldots,s_{k})$ as its projection.
\end{lemma}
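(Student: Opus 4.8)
The plan is to show that the element we want is the generator of the right ideal $\bigcap_{i=1}^{k} s_{i}S$. Concretely, I would prove that for any homogeneous $s_{1}\in X_{1}^{\ast},\dots,s_{k}\in X_{k}^{\ast}$ the right ideal $\bigcap_{i=1}^{k}s_{i}S$ is principal, say $\bigcap_{i=1}^{k}s_{i}S=cS$, with $d(c)=\sum_{i=1}^{k}|s_{i}|\,\mathbf{e}_{i}$; then $c$ turns out to have $(s_{1},\dots,s_{k})$ as its projection, and any $s$ with that projection must lie in $\bigcap_{i}s_{i}S=cS$ and have $d(s)=d(c)$, forcing $s=c$. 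So the whole statement reduces to the one structural fact that this intersection of principal right ideals is principal of the expected size, and that is where rigidity (in fact only the right rigidity half) enters.

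The first step would be a \emph{base lemma}: in a right rigid $k$-monoid, if $a\in X_{i}$ is a single letter and $b\in X_{j}^{\ast}$ with $j\neq i$, then $aS\cap bS=eS$ for a (necessarily unique) $e$ with $d(e)=\mathbf{e}_{i}+|b|\,\mathbf{e}_{j}$. This I would prove by induction on $|b|$. For $|b|=1$ one writes, by the single-cell right rigidity axiom, $ab'=ba'$ with $a'\in X_{i}$, $b'\in X_{j}$ uniquely; then $ab'S\subseteq aS\cap bS$, and conversely any $z\in aS\cap bS$ lies (by Lemma~\ref{lem:nelson}) in some $eS$ with $d(e)=\mathbf{e}_{i}\vee\mathbf{e}_{j}=\mathbf{e}_{i}+\mathbf{e}_{j}$, whence $e=a\alpha=b\beta$ and the uniqueness clause of right rigidity gives $\alpha=b'$, so $z\in ab'S$. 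For the inductive step, split $b=b_{0}b_{1}$ with $b_{0}\in X_{j}$; the length-one case commutes $a$ past $b_{0}$ (giving $ab_{0}'=b_{0}a_{1}$ and $aS\cap b_{0}S=b_{0}a_{1}S$), left-cancellation of $b_{0}$ reduces $aS\cap bS$ to $b_{0}(a_{1}S\cap b_{1}S)$, and the induction hypothesis applies to $a_{1}S\cap b_{1}S$.

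Next I would bootstrap to the general case by induction on $k$ (with a nested induction on $\sum_{i<k}|s_{i}|$ inside the inductive step). For $k=1$ there is nothing to do. For the step, write $\bigcap_{i=1}^{k}s_{i}S=dS\cap s_{k}S$, where by the outer hypothesis $dS=\bigcap_{i<k}s_{i}S$ and $d(d)=\sum_{i<k}|s_{i}|\,\mathbf{e}_{i}$, so $d$ lies in the submonoid generated by $X_{1},\dots,X_{k-1}$. If $d=\mathbf{1}$ the claim is immediate; otherwise peel a leading atom $a\in X_{m}$ ($m<k$) off $d$, write $d=ad'$ with $d'$ again in that submonoid, use the base lemma to get $aS\cap s_{k}S=a\beta S$ with $\beta\in X_{k}^{\ast}$, $|\beta|=|s_{k}|$, left-cancel $a$ to reduce $dS\cap s_{k}S$ to $a(d'S\cap\beta S)$, and apply the nested hypothesis to $d'S\cap\beta S$. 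Tracking $d$-values throughout gives $d(c)=\sum_{i\le k}|s_{i}|\,\mathbf{e}_{i}$.

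Finally the lemma follows. With $c$ the generator of $\bigcap_{i}s_{i}S$: from $c\in s_{i}S$ write $c=s_{i}c_{i}$; comparing $d$-values forces the $i$th coordinate of $d(c_{i})$ to vanish, i.e.\ $c_{i}\in X_{1}^{\ast}\cdots\hat{X_{i}^{\ast}}\cdots X_{k}^{\ast}$, so $p_{i}(c)=s_{i}$ for every $i$. Conversely, if $p_{i}(s)=s_{i}$ for all $i$ then $s\in p_{i}(s)S=s_{i}S$ for each $i$, hence $s\in\bigcap_{i}s_{i}S=cS$, and $d(s)=\sum_{i}|p_{i}(s)|\,\mathbf{e}_{i}=\sum_{i}|s_{i}|\,\mathbf{e}_{i}=d(c)$, so $s=c$ by cancellativity and conicity (Lemma~\ref{lem:coffee}). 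I expect the main obstacle to be precisely the base lemma and the bootstrapping to intersections of arbitrary principal right ideals: one has to be disciplined about using Lemma~\ref{lem:nelson} to manufacture a candidate generator of the correct size and then pin it down with the \emph{uniqueness} half of right rigidity, and to arrange the nested inductions so that left-cancellation legitimately factors a leading letter out of an intersection of right ideals.
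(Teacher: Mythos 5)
Your proof is correct, but it takes a genuinely different route from the paper's. The paper disposes of this lemma in a single sentence: one ``fills in the region bounded by $s_{1},\ldots,s_{k}$'', that is, one builds the element cell by cell inside the $k$-dimensional box whose edges are labelled $s_{1},\ldots,s_{k}$, each cell being supplied (and forced) by one application of the single-square rigidity condition. You instead characterize the desired element algebraically as the generator of $\bigcap_{i=1}^{k}s_{i}S$ and prove, by a double induction, that this intersection is principal and generated by an element of size $\sum_{i}|s_{i}|\mathbf{e}_{i}$; the existence and uniqueness of the element with projection $(s_{1},\ldots,s_{k})$ then fall out by comparing sizes. Both arguments ultimately iterate the same single-cell axiom, but yours buys two things: it isolates a structural fact (principal right ideals generated by homogeneous elements of distinct types intersect in a principal right ideal of the expected size) which refines the single-alignment statement of Lemma~\ref{lem:sa} and is essentially the content of Lemmas~\ref{lem:tsar} and~\ref{lem:tsartwo}, which the paper only establishes \emph{after} this lemma; and it makes explicit that only the right-rigidity half of the hypothesis is needed. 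The delicate steps --- using Lemma~\ref{lem:nelson} to manufacture a candidate generator of size $\mathbf{e}_{i}\vee\mathbf{e}_{j}=\mathbf{e}_{i}+\mathbf{e}_{j}$, pinning it down with the uniqueness clause of right rigidity, and the left-cancellation identity $dS\cap s_{k}S=a\bigl(d'S\cap\beta S\bigr)$ --- all check out.
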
 
\begin{proof} This proof is best viewed graphically by filling in the region bounded
by the elements $s_{1},\ldots, s_{k}$ which the rigidity condition guarantees can be done uniquely.
\end{proof}

The following is \cite[Lemma~6.3]{DY2016}.

\begin{lemma}\label{lem:tsar} Let $S$ be a $k$-monoid
and let $x,y \in S$ such that $d(x) \wedge d(y) = 0$.
\begin{enumerate}

\item If $S$ is right rigid then there are unique elements $u$ and $v$ such that $xu = yv$ where $d(u) = d(y)$ and $d(v) = d(x)$.

\item If $S$ is left rigid then there are unique elements $u$ and $v$ such that $ux = vy$ where $d(u) = d(y)$ and $d(v) = d(x)$.

\end{enumerate}
\end{lemma}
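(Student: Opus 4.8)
The plan is to prove part (1) in full and then obtain part (2) for free by passing to the opposite monoid. Indeed, $S^{\mathrm{op}}$ with product $a \ast b := ba$ is again a $k$-monoid: the map $d$ is still a homomorphism (as $\mathbb{N}^{k}$ is abelian) and the UFP for $S^{\mathrm{op}}$ is exactly the UFP for $S$ with the two factors interchanged; moreover $S$ is left rigid precisely when $S^{\mathrm{op}}$ is right rigid, since the relation $xy' = yx'$ in $S$ reads $y' \ast x = x' \ast y$ in $S^{\mathrm{op}}$. Applying part (1) inside $S^{\mathrm{op}}$ to $x,y$ and unwinding $\ast$ yields precisely the statement of part (2). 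So everything reduces to part (1).

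For part (1), note first that $d(x) \wedge d(y) = 0$ forces the supports $\mathrm{supp}(d(x))$ and $\mathrm{supp}(d(y))$ in $\{1,\dots,k\}$ to be disjoint, so $d(x) + d(y) = d(x) \vee d(y)$. Hence a pair $(u,v)$ as in the statement is the same datum as a common extension $z = xu = yv$ with $d(z) = d(x) \vee d(y)$ --- that is, an element of $x \vee y$ in the sense of Section~3 --- since $u$ and $v$ are recovered from such a $z$ by right cancellation (Lemma~\ref{lem:coffee}). So the content of part (1) is that $x \vee y$ has exactly one element; its non-emptiness also re-proves that $x$ and $y$ are comparable. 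I stress that Lemma~\ref{lem:indep} on its own does not suffice here --- it only says distinct members of $x \vee y$ are incomparable --- so right rigidity is genuinely needed.

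I would establish the existence and uniqueness of $z$ simultaneously, by induction on $N := \sum_{i=1}^{k}\bigl(d(x)_{i} + d(y)_{i}\bigr)$, splitting on the size of $x$. If $x = 1$ then $z = y$ (so $u = y$, $v = 1$) and we are done. If $\sum_i d(x)_i \geq 2$, write $x = ax''$ with $a$ an atom of some type $i$; since $i \in \mathrm{supp}(d(x))$ we have $i \notin \mathrm{supp}(d(y))$, and $\mathrm{supp}(d(x'')) \subseteq \mathrm{supp}(d(x))$. Apply the inductive hypothesis to the pair $(a,y)$ (strictly smaller $N$) to get $a\beta = y\gamma$ with $d(\beta) = d(y)$ and $d(\gamma) = \mathbf{e}_{i}$, then to $(x'',\beta)$ --- whose degrees still have disjoint support, as $d(\beta) = d(y)$ --- to get $x''u = \beta v''$; then $xu = a\beta v'' = y(\gamma v'')$ provides $z$. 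The remaining case is $x = a$ a single atom of type $i$: if $y = 1$ take $u = 1$, $v = a$; otherwise $y = by'$ with $b$ an atom of type $j \neq i$, and right rigidity produces the unique $a' \in X_{i}$, $b' \in X_{j}$ with $ab' = ba'$; applying the inductive hypothesis to $(a',y')$ yields $a'u'' = y'v$, and setting $u = b'u''$ gives $xu = ab'u'' = b(a'u'') = b(y'v) = yv$. Throughout, the degree bookkeeping (disjointness of supports is preserved because sub-factors have sub-degrees, and the asserted sizes of $u,v$ drop out of the construction) and the identification of prefixes of prescribed size via the UFP and Lemma~\ref{lem:levi} are routine.

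The real work is in uniqueness, extracted from the very same decompositions. In the single-atom case one writes a putative $u$ as $u = u_{0}u_{1}$ with $d(u_{0}) = \mathbf{e}_{j}$ and $d(u_{1}) = d(y')$ by the UFP; comparing the size-$(\mathbf{e}_{i}+\mathbf{e}_{j})$ prefix of $z = au$ computed on both sides of $au = by'v$ --- on the left it is $au_{0}$, on the right it is $b$ times the size-$\mathbf{e}_{i}$ prefix of $y'v$ --- and invoking the \emph{uniqueness} clause of right rigidity for the pair $(a,b)$, one is forced to have $u_{0} = b'$; cancelling $b$ then reduces to the pair $(a',y')$, where the inductive hypothesis applies. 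The general ($\sum_i d(x)_i \geq 2$) case is handled the same way, pinning down the size-$(\mathbf{e}_{i}+d(y))$ prefix of $z$ via uniqueness in the already-settled atom case and then cancelling $a$ to invoke the inductive hypothesis on $(x'',\beta)$. The one place where the hypothesis $d(x) \wedge d(y) = 0$ is used essentially is precisely here: it guarantees $i \neq j$ at every atomic step, so the small square $ab' = ba'$ is controlled by the right rigidity axiom rather than living inside a single free alphabet $X_{i}^{\ast}$, where a nontrivial square is impossible. I expect this uniqueness bookkeeping to be the only delicate part; existence is just the geometric picture of filling in a rectangle one unit square at a time.
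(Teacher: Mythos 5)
Your proof is correct. Note, though, that the paper itself gives no argument for this lemma: it is stated as a citation of \cite[Lemma~6.3]{DY2016}, so there is nothing internal to compare against line by line. What you have supplied is a self-contained inductive proof, and it is essentially the ``fill in the rectangle one unit square at a time'' argument that the paper only gestures at later (in the proof of Lemma~\ref{lem:bell}). Your reduction of the pair $(u,v)$ to a single element of $x\vee y$ of size $d(x)+d(y)=d(x)\vee d(y)$, the observation that Lemma~\ref{lem:indep} alone cannot give uniqueness, and the careful use of the \emph{uniqueness} clause of right rigidity at each atomic square (where $d(x)\wedge d(y)=0$ guarantees the two atoms lie in different alphabets, so the rigidity axiom rather than freeness of a single $X_i^\ast$ governs the square) are exactly the points that need to be made; I checked the degree bookkeeping in both the existence and uniqueness branches and it goes through. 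The deduction of part~(2) from part~(1) via the opposite monoid is clean and correct: $S^{\mathrm{op}}$ is again a $k$-monoid with the same degree map, and left rigidity of $S$ is precisely right rigidity of $S^{\mathrm{op}}$. The only blemish is terminological: recovering $u$ from $z=xu$ uses cancellation of the left factor $x$, which one would normally call left cancellation; since Lemma~\ref{lem:coffee} gives two-sided cancellativity this is harmless.
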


We can refine the above lemma a little to obtain a `categorical-looking result'.
We prove the refinement of part (1) above since the proof of the refinement of part (2) follows by symmetry.
Recall that $d(x)_{i}$ is the $i$th co-ordinate of $d(x)$.

\begin{lemma}\label{lem:tsartwo}
Let $S$ be a right rigid $k$-monoid
and let $x,y \in S$ be such that $d(x) \wedge d(y) = \mathbf{0}$.
Let $u$ and $v$ be the unique elements such that $xu = yv$ where $d(u) = d(y)$ and $d(v) = d(x)$.
Let $p$ and $q$ be any elements such that $xp = yq$.
Then there is a unique element $t$ such that $p = ut$ and $q = vt$.
\end{lemma}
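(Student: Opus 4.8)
The plan is to produce a single element $w_{1} \in S$ that is simultaneously a ``least common right multiple'' of $x$ and $y$ and a left factor (prefix) of the common value $xp = yq$; rigidity will then force $w_{1} = xu = yv$, and the leftover tail will be the required $t$.

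First I would set $w = xp = yq$ and record that, since $w = xp$ lies in $xS$ and $w = yq$ lies in $yS$, we have $wS \subseteq xS$ and $wS \subseteq yS$. By Lemma~\ref{lem:nelson} there is then an element $w_{1} \in S$ with $wS \subseteq w_{1}S \subseteq xS \cap yS$ and $d(w_{1}) = d(x) \vee d(y)$. Because $d(x) \wedge d(y) = \mathbf{0}$, for each coordinate at least one of $d(x)_{i}, d(y)_{i}$ vanishes, so $d(x) \vee d(y) = d(x) + d(y)$; hence $d(w_{1}) = d(x) + d(y)$.

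Next I would exploit $w_{1} \in xS \cap yS$ to write $w_{1} = xu' = yv'$ for suitable $u', v' \in S$. Comparing sizes gives $d(u') = d(w_{1}) - d(x) = d(y)$ and $d(v') = d(w_{1}) - d(y) = d(x)$, so $u'$ and $v'$ satisfy exactly the conditions that characterise $u$ and $v$ in Lemma~\ref{lem:tsar}(1). By the uniqueness asserted there --- and this is the only place right rigidity enters --- we conclude $u' = u$ and $v' = v$, that is, $w_{1} = xu = yv$. Finally, from $wS \subseteq w_{1}S$ we get $w = w_{1}t$ for some $t \in S$; then $xp = w = w_{1}t = xut$ and $yq = w = w_{1}t = yvt$, so left cancellativity (Lemma~\ref{lem:coffee}(3)) yields $p = ut$ and $q = vt$. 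Uniqueness of $t$ is immediate from cancellativity, since $ut = ut'$ forces $t = t'$.

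I do not expect a genuine obstacle here: the whole argument is bookkeeping with the UFP, Lemma~\ref{lem:nelson}, and cancellativity, and the one point that needs a moment's care is checking that $d(w_{1})$ is exactly $d(x) + d(y)$ so that Lemma~\ref{lem:tsar}(1) applies verbatim --- but that is a one-line componentwise computation from $d(x) \wedge d(y) = \mathbf{0}$.
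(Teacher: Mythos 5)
Your proof is correct. It takes a somewhat different route from the paper's: you produce the ``corner'' element of size $d(x)\vee d(y)$ sitting between $wS$ and $xS\cap yS$ by invoking Lemma~\ref{lem:nelson}, identify it with $xu=yv$ via the uniqueness clause of Lemma~\ref{lem:tsar}(1), and then peel off $t$ from the inclusion $wS\subseteq w_{1}S$. The paper instead builds that same element by hand: it first observes (exactly as you do) that $d(x)\wedge d(y)=\mathbf{0}$ forces $d(y)\leq d(p)$ and $d(x)\leq d(q)$, then uses the UFP to factor $p=y's$ and $q=x't$ with $d(y')=d(y)$ and $d(x')=d(x)$, applies the dual of Lemma~\ref{lem:levi} to conclude $s=t$, and finally invokes the same uniqueness from right rigidity to get $y'=u$ and $x'=v$. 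The two arguments are really the same idea in different clothing; yours is shorter because the combinatorial content is delegated to Lemma~\ref{lem:nelson}, which the paper cites from the literature without proof, while the paper's version is more self-contained, resting only on the UFP and Levi's lemma as established in the text. Your observation that $d(x)\vee d(y)=d(x)+d(y)$ under the hypothesis $d(x)\wedge d(y)=\mathbf{0}$ is the key bookkeeping point in both versions, and you handle it correctly.
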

\begin{proof} Suppose that $xp = yq$.
Then $d(x) + d(p) = d(y) + d(q)$.
Clearly, $d(x), d(y) \leq d(x) + d(p)$.
Now $d(x) \wedge d(y) = \mathbf{0}$ implies that for each $1 \leq i \leq k$
at least one of $d(x)_{i}$ or $d(y)_{i}$ is zero.
It follows that $d(x) + d(y) \leq d(x) + d(p)$.
Similarly $d(x) + d(y) \leq d(y) + d(q)$.
It follows that $xp = xy's$ where $d(y') = d(y)$ and $d(s) = d(p) - d(y)$
and $yq = yx't$ where $d(x') = d(x)$ and $d(t) = d(q) - d(x)$.
It follows that $s$ and $t$ have the same length.
By the dual of Lemma~\ref{lem:levi}, we have that $s = t$.
It follows that $q = x't$ and $p = y't$.
Thus $xy' = yx'$ where $d(y') = d(y) = d(u)$ and $d(x') = d(x) = d(v)$.
By uniqueness, we have that $y' = u$ and $x' = v$.
It follows that $q = vt$ and $p = ut$.
\end{proof}

The terminology `singly aligned' seems to have been first used in \cite[Proposition~20.3]{Exel} where the following result was proved.

\begin{lemma}\label{lem:sa} Let $S$ be a right rigid $k$-monoid.
Then $S$ is singly aligned; that is,
if $x,y \in S$, where $x \neq y$, are such that 
$xS \cap yS \neq \varnothing$
then there is a unique element $z$ such that $xS \cap yS = zS$.
\end{lemma}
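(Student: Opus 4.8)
The plan is to peel off the part of the degrees that $x$ and $y$ have in common, reducing to the situation where the two remaining factors have degrees of disjoint support --- which is precisely the hypothesis under which the rigidity lemmas Lemma~\ref{lem:tsar} and Lemma~\ref{lem:tsartwo} operate. (The hypothesis $x \neq y$ plays no role and can be ignored.)

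First I would fix $p,q \in S$ with $xp = yq$, which exist since $xS \cap yS \neq \varnothing$, and set $\mathbf{c} = d(x) \wedge d(y)$. Using the UFP, factor $x = x_{1}x_{2}$ and $y = y_{1}y_{2}$ with $d(x_{1}) = d(y_{1}) = \mathbf{c}$. Then $x_{1}(x_{2}p) = y_{1}(y_{2}q)$ with $d(x_{1}) = d(y_{1})$, so Lemma~\ref{lem:levi} forces $x_{1} = y_{1} =: w$, and cancellativity (Lemma~\ref{lem:coffee}) gives $x_{2}p = y_{2}q$. A coordinatewise check shows $d(x_{2}) \wedge d(y_{2}) = \mathbf{0}$: for each $i$, $d(x_{2})_{i} = \max(0, d(x)_{i} - d(y)_{i})$ and $d(y_{2})_{i} = \max(0, d(y)_{i} - d(x)_{i})$, and one of these is always zero.

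Now I would apply Lemma~\ref{lem:tsar}(1) to $x_{2}, y_{2}$ to obtain the unique $u,v$ with $x_{2}u = y_{2}v$, $d(u) = d(y_{2})$, $d(v) = d(x_{2})$, and set $z = w x_{2} u$. Since $x = w x_{2}$ and $y = w y_{2}$ we get $z = xu = yv \in xS \cap yS$, so $zS \subseteq xS \cap yS$. For the reverse inclusion, take any $r \in xS \cap yS$, say $r = xp' = yq'$; cancelling $w$ gives $x_{2}p' = y_{2}q'$, and Lemma~\ref{lem:tsartwo} supplies $t'$ with $p' = ut'$, whence $r = w x_{2} u t' = z t' \in zS$. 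Thus $xS \cap yS = zS$. Finally, uniqueness of $z$ is routine: $zS = z'S$ gives $z = z'a$ and $z' = zb$, so $ba = 1$ by cancellativity and hence $a = b = 1$ since $S$ is conical (Lemma~\ref{lem:coffee}), so $z = z'$.

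There is no genuinely hard step; the only point requiring care is the bookkeeping at the start --- recognising that $\mathbf{c} = d(x) \wedge d(y)$ is exactly the degree that can be cancelled from both sides via Lemma~\ref{lem:levi}, after which the leftover factors $x_{2}, y_{2}$ have complementary support and the rigidity lemmas close the argument.
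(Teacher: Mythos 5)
Your proof is correct. Note that the paper does not actually prove this lemma --- it is quoted from Exel's work (his Proposition~20.3) --- so there is no in-paper argument to compare against; what you have supplied is a self-contained derivation from the paper's own rigidity machinery, and it holds up. The reduction is exactly right: splitting off the common initial factor $w$ of size $d(x)\wedge d(y)$ via the UFP and Lemma~\ref{lem:levi}, observing that the residual factors $x_{2},y_{2}$ have degrees of disjoint support, and then letting Lemma~\ref{lem:tsar}(1) produce the generator $z=xu=yv$ and Lemma~\ref{lem:tsartwo} supply the factorization $p'=ut'$ needed for the inclusion $xS\cap yS\subseteq zS$. Since both of those lemmas precede this one in the paper and neither depends on it, there is no circularity. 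Two very minor points: in the uniqueness step, the cleanest justification that $ba=1$ forces $a=b=1$ is that $d(a)+d(b)=\mathbf{0}$ and the identity is the only element of size $\mathbf{0}$ (Lemma~\ref{lem:coffee}(1)); appealing to conicality alone requires knowing $a$ and $b$ are units, which in a general monoid does not follow from a one-sided inverse --- though the paper itself uses the same shorthand in Lemma~\ref{lem:beer}, so this is consistent with its conventions. And your parenthetical that $x\neq y$ is not needed is correct; the degenerate cases (e.g.\ $y_{2}=1$) pass through Lemma~\ref{lem:tsar} harmlessly.
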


\begin{remark}{\em The property of being `singly aligned' has played an important role in semigroup theory;
see \cite{Lawson1999} and \cite{FK2008}.
The latter paper would refer to left cancellative monoids in which the intersection of any two principal rights ideals
is either empty or a principal right ideal as {\em right LCM} monoid.
In particular, left cancellative monoids which are singly aligned can be used to construct all $0$-bisimple inverse monoids. 
Given such a monoid $S$, we construct the inverse monoid $\mathsf{B}(S)$ that consists of all the isomorphisms $ab^{-1} \colon bS \rightarrow aS$.
That this is even a semigroup follows from the singly aligned property.
It is $0$-bisimple because any two non-zero principal right ideals are connected by an element of $\mathsf{B}(S)$. 
If $S$ is cancellative then the inverse monoid $\mathsf{B}(S)$ is $E^{\ast}$-unitary.
$C^{\ast}$-algebras of right LCM monoids in general are studied in \cite{BLS2017}.}
\end{remark}

The following example motivates the idea behind Proposition~\ref{prop:chocolate}.

\begin{example}{\em Let $A = \{a_{1},a_{2}\}$, $B = \{b_{1},b_{2}\}$, $C = \{c_{1},c_{2}\}$ and $D = \{d_{1},d_{2}\}$.
We work in the rigid $4$-monoid $A^{\ast} \times B^{\ast} \times C^{\ast} \times D^{\ast}$.
Let $x = ac$ and $y = bd$ where $a \in A^{\ast}$, $b \in B^{\ast}$, $c \in C^{\ast}$ and $d \in D^{\ast}$.
Then $xu = yv$ where $u = y$ and $v = x$ since, in fact, $x$ and $y$ commute.
We choose the $1$-projection of $v$ which is the element $a$.
Let $\bar{a}$ be any element of $A^{\ast}$ which is prefix incomparable with $a$ (which makes sense since we are working in a free monoid).
Observe that $x\bar{a} = a\bar{a}c$ and $y\bar{a} = \bar{a}bd$.
The only way for $x\bar{a}$ and $y\bar{a}$ to be comparable is for $a\bar{a}$ and $\bar{a}$ to be prefix comparable
but this is impossible.}
\end{example}

The following was motivated by \cite{KR}.

\begin{proposition}\label{prop:chocolate} 
Let $S$ be a right rigid $k$-monoid in which every alphabet has cardinality at least 2.
Then $S$ is effective;
that is, if $x,y \in S$ are distinct elements, there exists $c \in S$ such that $xc$ and $yc$ are incomparable.
\end{proposition}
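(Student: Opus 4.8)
The plan is to dispatch the trivial case, reduce to a normal form, and then run a contradiction argument built on the unique factorization property and Lemma~\ref{lem:tsar}. If $xS \cap yS = \varnothing$ then $x$ and $y$ are already incomparable and $c = 1$ works, so I would assume $x$ and $y$ are comparable. First I would strip off a common left factor: put $\mathbf{p} = d(x) \wedge d(y)$ and use the UFP to write $x = x_{0}x_{1}$ and $y = y_{0}y_{1}$ with $d(x_{0}) = d(y_{0}) = \mathbf{p}$; from any identity $xa = yb$ (one exists since $x,y$ are comparable) and the UFP one gets $x_{0} = y_{0}$, so, writing $w$ for this common prefix, $x = wx_{1}$ and $y = wy_{1}$ with $d(x_{1}) \wedge d(y_{1}) = \mathbf{0}$, $x_{1} \neq y_{1}$, and $x_{1},y_{1}$ still comparable. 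Since $xc = wx_{1}c$ and $yc = wy_{1}c$, left cancellation shows it suffices to make $x_{1}c$ and $y_{1}c$ incomparable, so I may assume from the start that $d(x) \wedge d(y) = \mathbf{0}$. Now $x \neq y$ together with comparability forces $d(x) \neq d(y)$ by Lemma~\ref{lem:levi}, and since the two degrees meet in $\mathbf{0}$ there is a coordinate $j$ on which one of them is positive and the other is zero; swapping $x$ and $y$ if necessary (legitimate, since incomparability of $xc$ and $yc$ is symmetric in $x,y$), I may assume $d(x)_{j} \geq 1$ and $d(y)_{j} = 0$.

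Next I would apply Lemma~\ref{lem:tsar}(1), available precisely because $d(x) \wedge d(y) = \mathbf{0}$: there are unique $u,v \in S$ with $xu = yv$, $d(u) = d(y)$ and $d(v) = d(x)$. As $d(v)_{j} = d(x)_{j} \geq 1$, the element $v$ has a well-defined leading type-$j$ letter $a := v(\mathbf{0},\mathbf{e}_{j}) \in X_{j}$. Since $|X_{j}| \geq 2$ I may choose $b \in X_{j}$ with $b \neq a$, and the claim is that $c := b$ works.

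The verification is by contradiction and is where the work lies. Suppose $xc$ and $yc$ were comparable, say $xcs_{1} = ycs_{2} =: w'$. A degree count, using $d(x) \wedge d(y) = \mathbf{0}$ so that $d(x) \vee d(y) = d(x) + d(y)$, gives $d(w') \geq d(x) + d(y) + d(c)$; hence by the UFP I can factor $cs_{1} = \tilde{u}\rho$ and $cs_{2} = \tilde{v}\tau$ with $d(\tilde{u}) = d(y)$ and $d(\tilde{v}) = d(x)$, so $w' = (x\tilde{u})\rho = (y\tilde{v})\tau$. These are two factorizations of $w'$ whose first factor has size $d(x) + d(y)$, so the UFP gives $x\tilde{u} = y\tilde{v}$ and $\rho = \tau$, and then the uniqueness clause of Lemma~\ref{lem:tsar}(1) forces $\tilde{u} = u$ and $\tilde{v} = v$. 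Cancelling $y$ from $w' = ycs_{2} = yv\rho$ yields $cs_{2} = v\rho$. But $v = av'$ with $d(a) = \mathbf{e}_{j}$, so $(v\rho)(\mathbf{0},\mathbf{e}_{j}) = a$, whereas $cs_{2} = bs_{2}$ gives $(cs_{2})(\mathbf{0},\mathbf{e}_{j}) = b$; hence $a = b$, contradicting the choice of $b$. Therefore $xc$ and $yc$ are incomparable.

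The main obstacle is the combinatorics of this last step: extracting, from the mere comparability of $xc$ and $yc$, the single equation $cs_{2} = v\rho$ that forces $c$ to agree with the leading letter of $v$, and checking that each appeal to the UFP and to the uniqueness in Lemma~\ref{lem:tsar}(1) is legitimate. The reduction to $d(x) \wedge d(y) = \mathbf{0}$ is not cosmetic — it is what makes Lemma~\ref{lem:tsar} applicable and simultaneously supplies a coordinate on which exactly one of the two degrees vanishes, which is the coordinate along which comparability is broken.
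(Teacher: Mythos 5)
Your proof is correct and follows essentially the same route as the paper's: reduce to $d(x)\wedge d(y)=\mathbf{0}$ by stripping the common prefix of size $d(x)\wedge d(y)$, invoke right rigidity via Lemma~\ref{lem:tsar} to get the canonical pair $xu=yv$, and break comparability by perturbing with a letter of $X_{j}$ (where $d(x)_{j}\geq 1$, $d(y)_{j}=0$) chosen to disagree with $v$. The only differences are cosmetic: you use a single letter $b\neq v(\mathbf{0},\mathbf{e}_{j})$ where the paper uses a word in $X_{j}^{\ast}$ prefix-incomparable with the full $j$-projection of $v$, and you inline the uniqueness argument that the paper packages as Lemma~\ref{lem:tsartwo}.
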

\begin{proof} We begin by proving a special case that does almost all the work.
Let $x,y \in S$, $x \neq y$ such that $d(x) \wedge d(y) = \mathbf{0}$.
We shall prove that there is an element $c \in S$ such that $xc$ and $yc$ are not comparable.
By right rigidity and Lemma~\ref{lem:tsar}, there are unique elements $u,v \in S$ such that
$xu = yv$ where $d(u) = d(y)$ and $d(v) = d(x)$.
Observe that $u \neq v$ since otherwise we could cancel them and get $x = y$.
Also, we cannot have both $x$ and $y$ equal to the identity since then $x = y$.
Now suppose that exactly one of them, say $y$, was equal to the identity;  accordingly $x \neq 1$.
In fact, then, for any $x$ we have $d(x) \wedge d(1) = \mathbf{0}$.
We have to find an element $a$ such that $xa \neq a$ but this will be true for any element $a \in S$
since $x \neq 1$.
In what follows, therefore, we can assume that $x \neq 1$ and $y \neq 1$.
Observe that since $d(x) \wedge d(y) = \mathbf{0}$, if the $i$-projection of $x$ is not the identity
then the $i$-projection of $y$ is the identity, and vice versa. 
Let $i$ be any value $1 \leq i \leq k$ such that 
the $i$-projection of $x$ is not trivial.
Now $d(v) = d(x)$.
It follows that $v$ has a non-trivial $i$-projection.
Write $v = bv'$ where $b \in X_{i}^{\ast}$ is the $i$-projection and $v'$ has only a trivial $i$-projection.
Let $c$ be any element of $X_{i}^{\ast}$ which is prefix incomparable with $b$
which we can do this since the cardinality of $X_{i}$ is at least 2 and we are working in a free monoid.
In fact, $b$ and $c$ are therefore incomparable by Lemma~\ref{lem:levi}.
We now consider the elements $xc$ and $yc$.
If they are incomparable we are done.
So, assume that they are comparable and we shall derive a contradiction.
Then $xcp = ycq$, for some $p,q \in S$,
which we regard as $x(cp) = y(cq)$.
By Lemma~\ref{lem:tsartwo}, there is a unique element $t$ such that
$cp = ut$ and $cq = vt$.
We focus on the second equation $cq = vt$.
We can write this $cq = bv't$ which says that $b$ and $c$ are comparable which is a contradiction.
It follows that $xc$ and $yc$ are not comparable.

The general case, where we do not assume that $d(a) \wedge d(b) = \mathbf{0}$, is proved using the same argument as \cite[Remark~3.2]{LS2010}.
Now let $x,y \in S$, where $x \neq y$, be arbitrary elements, where $d(x) \wedge d(y) = \mathbf{m} \neq \mathbf{0}$.
Since $\mathbf{m} \leq d(x), d(y)$ we can write $x = u_{1}x'$ and $y = u_{2}y'$ where $d(u_{1}) = \mathbf{m} = d(u_{2})$.
Suppose first that $u_{1} \neq u_{2}$.
Suppose that $x$ and $y$ were comparable.
Then $xp = yq$ for some $p,q \in S$.
But then $u_{1}x'p = u_{2}y'q$ where $d(u_{1}) = d(u_{2})$.
By Lemma~\ref{lem:levi}, we would then have $u_{1} = u_{2}$ which is a contradiction.
It follows that $x$ and $y$ are already not comparable.
Assume, therefore, that $u_{1} = u_{2}$.
If $x' = y'$ then we would have $x = y$.
It follows that $x' \neq y'$.
Observe that $d(x') \wedge d(y') = \mathbf{0}$.
By the first part of the proof there is therefore an element $c$ such that $x'c$ and $y'c$ are incomparable.
It follows that $xc$ and $yc$ are incomparable.
\end{proof}

We can rephrase the above result in terms of the $0$-bisimple inverse monoid $\mathsf{B}(S)$.
The following is immediate by Lemma~\ref{lem:tom}.

\begin{lemma}\label{lem:nosix} Let $S$ be a right rigid $k$-monoid.
Then the following are equivalent:
\begin{enumerate}
\item For each non-idempotent element $xy^{-1}$ there is an element $x_{1}y_{1}^{-1} \leq xy^{-1}$ such that
$x_{1}y_{1}^{-1}$ has disjoint domain and range.

\item If $x,y \in S$ are distinct elements, then there exists $c \in S$ such that $xc$ and $yc$ are incomparable;
thus, $S$ is effective.
\end{enumerate}
\end{lemma}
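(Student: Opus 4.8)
The plan is to unwind both conditions using Lemma~\ref{lem:tom}; once this is done, they become verbatim the same statement about $S$, so the equivalence is purely formal and the right rigidity hypothesis plays no role beyond guaranteeing that the basic morphisms $xy^{-1}$ are genuine elements of the inverse monoid $\mathsf{B}(S)$.

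First I would record three translations. Recalling the proof of Lemma~\ref{lem:tom}(1), a basic morphism $xy^{-1}$ is idempotent if and only if $x = y$ (it must be a partial identity, and evaluating at the generator forces $x = y$); hence the non-idempotent elements are exactly those $xy^{-1}$ with $x \neq y$. By Lemma~\ref{lem:tom}(1) again, $x_{1}y_{1}^{-1} \leq xy^{-1}$ holds if and only if $(x_{1},y_{1}) = (x,y)c$ for some $c \in S$, that is, $x_{1} = xc$ and $y_{1} = yc$. Finally, the partial bijection $x_{1}y_{1}^{-1}$ has domain $y_{1}S$ and range $x_{1}S$, and these are disjoint exactly when the idempotents $x_{1}x_{1}^{-1}$ and $y_{1}y_{1}^{-1}$ are orthogonal, which by Lemma~\ref{lem:tom}(2) is equivalent to $x_{1}$ and $y_{1}$ being incomparable in $S$.

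Now the two implications are immediate. For $(2) \Rightarrow (1)$: given a non-idempotent $xy^{-1}$ we have $x \neq y$, so (2) provides $c \in S$ with $xc$ and $yc$ incomparable; setting $x_{1} = xc$ and $y_{1} = yc$, the translations above show that $x_{1}y_{1}^{-1} \leq xy^{-1}$ has disjoint domain and range. For $(1) \Rightarrow (2)$: given distinct $x,y$, the element $xy^{-1}$ is non-idempotent, so (1) yields $x_{1}y_{1}^{-1} \leq xy^{-1}$ with disjoint domain and range; writing $x_{1} = xc$ and $y_{1} = yc$ as above, the disjointness forces $xc$ and $yc$ to be incomparable, which is exactly (2). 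There is no genuine obstacle here: the only points needing a moment's attention are the bookkeeping facts just listed, the harmless edge cases where one of $x,y$ is the identity (covered automatically by incomparability), and the observation that the elements produced in each direction are nonzero, which holds because a basic morphism $ab^{-1}$ always contains the pair $b \mapsto a$.
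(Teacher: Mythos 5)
Your proof is correct and follows exactly the route the paper intends: the paper dismisses this lemma as ``immediate by Lemma~\ref{lem:tom}'', and your three translations (non-idempotency of $xy^{-1}$ means $x\neq y$, the order characterization $(x_1,y_1)=(x,y)c$, and orthogonality of domain and range being incomparability of $x_1$ and $y_1$) are precisely the relevant parts of that lemma. Nothing further is needed.
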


The following is now immediate by Proposition~\ref{prop:chocolate} and Lemma~\ref{lem:as}.

\begin{corollary}\label{cor:doggy} 
The \'etale groupoid of a right rigid $k$-monoid in which every alphabet has cardinality at least 2
is effective and so aperiodic.
\end{corollary}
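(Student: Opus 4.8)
The plan is simply to chain together the results that immediately precede the statement; no new work is required beyond bookkeeping. First I would invoke Proposition~\ref{prop:chocolate}: since $S$ is right rigid and every alphabet $X_{i}$ has cardinality at least $2$, it gives directly that $S$ is effective as a $k$-monoid, that is, for each pair of distinct elements $x,y \in S$ there exists $c \in S$ with $xc$ and $yc$ incomparable. This proposition is the only substantive input; everything that follows is assembly.

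For the aperiodicity half I would then apply Lemma~\ref{lem:as}, which asserts that effectiveness of a $k$-monoid forces the existence of an aperiodic $k$-tiling; by the definition given just after Corollary~\ref{cor:good}, this is precisely the statement that $S$ is aperiodic.

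For the groupoid half I would pass through Lemma~\ref{lem:tpy}, whose hypothesis is exactly that $S$ be effective in the monoid sense; it yields at once that the \'etale groupoid $\mathcal{G}(S)$ is effective. Alternatively, one can route through the groupoid side: knowing from the previous paragraph that $S$ has an aperiodic $k$-tiling, Lemma~\ref{lem:tp} gives that $\mathcal{G}(S)$ is topologically principal, and Proposition~\ref{prop:nice-result} then upgrades this to effectiveness. Either path closes the argument.

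I do not anticipate any real obstacle, since the corollary is a pure assembly of Proposition~\ref{prop:chocolate}, Lemma~\ref{lem:as}, and Lemma~\ref{lem:tpy} (or, equivalently, Lemma~\ref{lem:tp} together with Proposition~\ref{prop:nice-result}). The only point requiring a little care is the overloaded word \emph{effective}, which the paper uses both for the $k$-monoid $S$ and for the \'etale groupoid $\mathcal{G}(S)$; but Lemma~\ref{lem:tpy} is exactly the bridge between the two uses, so the transition is unambiguous.
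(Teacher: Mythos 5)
Your proposal is correct and follows essentially the same route as the paper, which derives the corollary immediately from Proposition~\ref{prop:chocolate} and Lemma~\ref{lem:as}. Your explicit invocation of Lemma~\ref{lem:tpy} (or the alternative via Lemma~\ref{lem:tp} and Proposition~\ref{prop:nice-result}) merely spells out the bridge from effectiveness of the monoid to effectiveness of the groupoid that the paper leaves implicit.
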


\begin{lemma}\label{lem:cf} Let $S$ be a right rigid $k$-monoid in which each of the associated alphabets has cardinality at least two.
Then the $0$-bisimple inverse monoid $\mathsf{B}(S)$ is congruence free.
\end{lemma}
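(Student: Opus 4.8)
The strategy is to establish three properties of the inverse monoid $T:=\mathsf{B}(S)$ and then combine them in the standard way: (a) $T$ is $0$-bisimple; (b) $T$ is fundamental; (c) the semilattice of idempotents $\mathsf{E}(T)$ is $0$-disjunctive. Property (a) was already recorded in the Remark preceding the statement (it uses only that $S$ is singly aligned, which holds by Lemma~\ref{lem:sa}, together with Lemma~\ref{lem:coffee}): for non-zero idempotents $aa^{-1}$ and $bb^{-1}$ the element $ba^{-1}$ has domain idempotent $aa^{-1}$ and range idempotent $bb^{-1}$. Throughout I will use that the non-zero idempotents of $T$ are exactly the identity maps $aa^{-1}$ on the principal right ideals $aS$, that $aa^{-1}\le bb^{-1}$ iff $a\in bS$, and, by Lemma~\ref{lem:tom}(2), that $aa^{-1}bb^{-1}=0$ iff $a$ and $b$ are incomparable.

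For (b), I would compute when a non-zero element $ab^{-1}$ commutes with every idempotent of $T$. Viewing $ab^{-1}$ as the partial bijection $bm\mapsto am$ of the set $S$, it commutes with the identity map on $cS$ precisely when, for every $m\in S$, one has $bm\in cS$ if and only if $am\in cS$. Taking $c=b$ and $m=1$ forces $a\in bS$; taking $c=a$ and $m=1$ forces $b\in aS$; since $S$ is conical (Lemma~\ref{lem:coffee}) these two conditions give $a=b$, so the element is an idempotent. Hence the centraliser of $\mathsf{E}(T)$ in $T$ is $\mathsf{E}(T)$ itself, i.e.\ $T$ is fundamental, and therefore every idempotent-separating congruence on $T$ is the equality relation \cite{Lawson1998}.

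For (c) --- the one place the hypothesis $|X_i|\ge 2$ is used --- suppose $aa^{-1}<bb^{-1}$ with both idempotents non-zero. Then $a=bt$ with $t\ne 1$, so $d(t)\ne\mathbf 0$ and, by Lemma~\ref{lem:coffee}, $t=t_1t_2$ with $t_1$ an atom, say $t_1\in X_i$. Pick $t_1'\in X_i$ with $t_1'\ne t_1$; the atoms $t_1,t_1'$ have the same size $\mathbf e_i$ and, being distinct, are incomparable by Lemma~\ref{lem:levi}. Cancelling $b$ then shows $bt_1'S\cap aS=\varnothing$, so $(bt_1')(bt_1')^{-1}$ is a non-zero idempotent below $bb^{-1}$ with $(bt_1')(bt_1')^{-1}\cdot aa^{-1}=0$. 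Thus $\mathsf{E}(T)$ is $0$-disjunctive.

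Finally I would assemble these. Let $\rho$ be a congruence on $T$ with $\rho\ne\Delta$; I claim $\rho=\nabla$. If $\rho$ separated idempotents it would be idempotent-separating, hence equal to $\Delta$ by (b), a contradiction; so $\rho$ identifies two distinct idempotents $e\ne f$. Multiplying $e\mathrel\rho f$ by $e$ and by $f$ gives $e\mathrel\rho ef\mathrel\rho f$, and a short case check on whether $ef$ equals $e$, equals $f$, or is strictly below both yields a $\rho$-related pair $g<h$ of distinct idempotents with $h\ne 0$. If $g\ne 0$, property (c) supplies a non-zero idempotent $g'\le h$ with $gg'=0$, whence $g'=hg'\mathrel\rho gg'=0$; so in every case $0$ is $\rho$-related to some non-zero idempotent $k$. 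Now, given any non-zero idempotent $\ell$, property (a) provides $x\in T$ with $x^{-1}x=\ell$ and $xx^{-1}=k$, and then $\ell=x^{-1}kx\mathrel\rho x^{-1}0x=0$. Hence every non-zero idempotent is $\rho$-related to $0$, and for every $s\in T$ we obtain $s=s(s^{-1}s)\mathrel\rho s\cdot 0=0$, i.e.\ $\rho=\nabla$. The only genuinely fiddly point is the case analysis producing the pair $g<h$ from an arbitrary identified pair of idempotents; everything else follows directly from the lemmas already available.
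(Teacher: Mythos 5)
Your proof is correct, and it follows the same overall decomposition as the paper ($0$-bisimple $+$ fundamental $+$ $0$-disjunctive $\Rightarrow$ congruence-free, with the $0$-disjunctivity argument essentially identical), but it differs in two genuine ways. First, your fundamentality argument is more elementary than the paper's: you compute the centraliser of the idempotents directly from the concrete realisation of $ab^{-1}$ as the partial bijection $bm \mapsto am$, and the two test idempotents $bb^{-1}$ and $aa^{-1}$ already force $a \in bS$ and $b \in aS$, whence $a=b$ by cancellativity and conicality. The paper instead deduces fundamentality from effectiveness of $S$ (Proposition~\ref{prop:chocolate}), producing below any non-idempotent $xy^{-1}$ an element $(xc)(yc)^{-1}$ with orthogonal domain and range; that route uses right rigidity and the alphabet-size hypothesis a second time, whereas your argument shows fundamentality holds for any singly aligned $k$-monoid, so in your proof right rigidity enters only to guarantee that $\mathsf{B}(S)$ is an inverse monoid and is $0$-bisimple, and the cardinality hypothesis enters only in the $0$-disjunctivity step. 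Second, where the paper simply cites Munn's theorem that a fundamental, $0$-disjunctive, $0$-bisimple inverse semigroup with zero is congruence-free, you prove that implication inline; your assembly (extracting a comparable $\rho$-related pair of idempotents, collapsing a non-zero idempotent to $0$, and propagating via $0$-bisimplicity) is the standard argument and is carried out correctly. The trade-off is a slightly longer but self-contained and marginally more general proof.
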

\begin{proof} We use \cite[Theorem~1.11, Lemma~1.10]{Munn1974} 
and prove that $\mathsf{B}(S)$ is $0$-disjunctive and fundamental;
this is enough because it is $0$-bisimple.

We first prove the following simple result;
we are grateful to the referee for their nice proof.
Let $p \in S$ be a non-trivial element.
Then there is an element $q \in S$ such that $p$ and $q$ are not comparable.
Choose $1 \leq i \leq k$ such that $p = ap'$ where $a \in X_{i}$.
Since $X_{i}$ contains at least 2 elements, there is $b \in X_{i}$ such that $b \neq a$.
Clearly, $a$ and $b$ are incomparable and we may put $q = b$.

We apply this result to the structure of $\mathsf{B}(S)$.
Let $bb^{-1} < aa^{-1}$.
Then $b = ap$ for some $p \in S$.
By the result above, there is a $q \in S$ such that $p$ and $q$ are incomparable.
We claim that the idempotents $bb^{-1}$ and $aq(aq)^{-1}$ are orthogonal.
Suppose not.
Then $b$ and $aq$ are comparable.
Thus $ap$ and  $aq$ are comparable.
Thus $apu = aqv$ for some $u,v \in S$.
Thus $pu = qv$ which implies that $p$ and $q$ are comparable, which is a contradiction.
We have therefore proved that $\mathsf{B}(S)$ is $0$-disjunctive.

We shall prove that $\mathsf{B}(S)$ is fundamental but we prove a more general result first.
Let $a$ be an element in an inverse semigroup that there exists $b \leq a$ such that $bb^{-1} \perp b^{-1}b$.
We prove that it cannot commute with all idempotents.
Suppose it did.
Since $b \leq a$ we have that $b = ab^{-1}b = bb^{-1}a$.
Thus $a^{-1}ab^{-1}b = a^{-1}bb^{-1}a$.
But $b^{-1}b \leq a^{-1}a$ and so
we get $b^{-1}b = a^{-1}bb^{-1}a$.
We now use the fact that $a$ commutes with all idempotents
to deduce $b^{-1}b = a^{-1}abb^{-1}$.
It follows that $b^{-1}b \leq bb^{-1}$ which is impossible because these idempotents are supposed to be orthogonal.
Now let $xy^{-1}$ be a non-idempotent element of  $\mathsf{B}(S)$.
Then $x \neq y$.
By Proposition~\ref{prop:chocolate},
$S$ is effective.
Thus there is a $c \in S$ such that $xc$ and $yc$ are incomparable.
Thus $(xc)(yc)^{-1} \leq xy^{-1}$ wherethe domain and range of $(xc)(yc)^{-1}$ are orthogonal.
It follows by our result above that $xy^{-1}$ cannot commute with all idempotents.
We have therefore proved that $\mathsf{B}(S)$ is fundamental.
Thus  $\mathsf{B}(S)$ is congruence-free.
\end{proof}

\begin{remark}\label{rem:periodicty}
{\em Determining whether a $k$-monoid is aperiodic in general is complex,
see \cite{RS2007, DY2009a, LS2010},
but the right rigidity condition is an easy one to check.}
\end{remark}

The question of whether $k$-monoids can be embedded in groups is discussed in \cite{PQR}.
They construct a counterexample \cite[Example~7.1]{PQR} of a $2$-monoid that cannot be embedded into a group.
Their example is evidently not rigid.

\begin{example}{\em This example was constructed by Benjamin Steinberg and communicated to the second author.
Consider the $2$-monoid $S$ with alphabets $\{e_{1}, e_{2}, e_{3}, e_{4}\}$ and $\{f_{1}, f_{2}, f_{3}, f_{4}\}$
defined by means of the following relations:
$$ 
\xymatrix{
 \ar@{->}[r]^{e_{1}} &  \\
   \ar@{->}[u]^{f_{4}}  \ar@{->}[r]_{e_{1}} & \ar@{->}[u]_{f_{4}}  }
\quad 
\xymatrix{
 \ar@{->}[r]^{e_{4}} &  \\
   \ar@{->}[u]^{f_{1}}  \ar@{->}[r]_{e_{4}} & \ar@{->}[u]_{f_{1}}  }
\quad 
\xymatrix{
 \ar@{->}[r]^{e_{j}} &  \\
   \ar@{->}[u]^{f_{i}}  \ar@{->}[r]_{e_{i}} & \ar@{->}[u]_{f_{j}}  }
\text{ where } (i,j) \neq (1,4), (4,1).
$$
This is not a rigid monoid.
Suppose that $S$ could be embedded in a group.
Then for $(i,j) \neq (1,4), (4,1)$, we would have $e_{i}^{-1}f_{i} = f_{j}e_{j}^{-1}$.
Thus $e_{1}^{-1}f_{1} = f_{2}e_{2}^{-1} = e_{3}^{-1}f_{3} = f_{4}e_{4}^{-1}$.
It follows that $e_{1}f_{4} = f_{1}e_{4}$ but we also have $e_{1}f_{4} = f_{4}e_{1}$.
Thus $f_{1}e_{4} = f_{4}e_{1}$ which is an equation in $S$ that contradicts the UFP.}
\end{example}

The following theorem is not used in this paper but is of independent interest.

\begin{theorem}\label{them:embedding} 
Every rigid $k$-monoid can be embedded in a group.
\end{theorem}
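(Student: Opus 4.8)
The plan is to realize a rigid $k$-monoid $S$ as a submonoid of a group that is built out of the free monoids $X_i^*$ and the rigidity data. By Lemma~\ref{lem:relations}, each element $s\in S$ has a unique factorization $s = x_1\cdots x_k$ with $x_i\in X_i^*$, and by Lemma~\ref{lem:bell} the projection map $s\mapsto (s_1,\dots,s_k)$ (with $s_i\in X_i^*$ the $i$-projection) is a bijection between $S$ and $X_1^*\times\cdots\times X_k^*$. The rigidity conditions say precisely that for each pair $i\neq j$ the ``square-completion'' maps $X_i\times X_j\to X_i\times X_j$ coming from the relations $xy'=yx'$ are bijections in both the left and right variables, i.e. the associated bijection $\theta_{ij}$ restricts well. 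So the multiplication table of $S$ is completely governed by the maps $\theta_{ij}$, which by the associativity (Yang--Baxter type) constraint discussed after Lemma~\ref{lem:relations} form a compatible system.

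The first step I would take is to write down the group presentation
$$G = \langle\, X_1\cup\cdots\cup X_k \mid xy' = yx' \text{ whenever } xy'=yx' \text{ holds in } S,\ x\in X_i,\ y\in X_j,\ i\neq j\,\rangle,$$
that is, $G$ is the group with the same generators and defining relations as the monoid $S$ (a one-vertex higher-rank graph presentation). There is an obvious monoid homomorphism $\iota\colon S\to G$ induced by the identity on generators, and the whole task is to show $\iota$ is injective. The second step is to exhibit a normal form for elements of $G$: I would show that every element of $G$ can be written in a canonical form $u_1^{-1}\cdots u_k^{-1}\,v_1\cdots v_k$, or better, adapting the ``block'' argument, as a product $p^{-1}q$ with $p,q\in S$ in the image of $\iota$, and that this form is unique up to the obvious moves. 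The rigidity conditions are exactly what is needed to push all inverse generators to the left and all positive generators to the right (right rigidity lets us rewrite $x^{-1}y$ with $x\in X_i$, $y\in X_j$ as $y''{}x''{}^{-1}$, and left rigidity handles the dual rewrite), so that $G$ is a group of (left or right) fractions of $S$.

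The cleanest route, and the one I expect to use, is to show $S$ satisfies the Ore-type condition making it embeddable: since $S$ is cancellative (Lemma~\ref{lem:coffee}) and, by Lemma~\ref{lem:tsar} and Lemma~\ref{lem:tsartwo} together with single-alignedness (Lemma~\ref{lem:sa}), any two principal right ideals intersect in a principal right ideal, $S$ is a right LCM cancellative monoid; one then checks it is also a \emph{left} Ore monoid (any two elements have a common left multiple — this uses \emph{left} rigidity, the unique pushout property, to fill squares on the other side), and a cancellative left-reversible (left Ore) monoid embeds in its group of left fractions by Ore's theorem. So the key steps in order are: (i) record that $S$ is cancellative and conical; (ii) use right rigidity to get right LCMs and, via \ref{lem:tsartwo}, the fraction-rewriting; (iii) use left rigidity to establish the left Ore condition (common left multiples); (iv) invoke Ore's embedding theorem to conclude $S\hookrightarrow \mathrm{Frac}(S)$. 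The main obstacle is step (iii): verifying that arbitrary pairs of elements of $S$ — not just atoms in different alphabets — admit common left multiples, which requires an induction on $d(x)+d(y)$ reducing to the atomic case handled by left rigidity, and carefully checking the induction goes through when the sizes $d(x),d(y)$ are not coordinatewise comparable. Once the atomic square-filling is in hand, the induction should be routine, and the Ore machinery does the rest.
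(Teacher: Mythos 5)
Your ``cleanest route'' via Ore's theorem does not work, and the failure is already visible in the case $k=1$. A free monoid on an alphabet with at least two letters is vacuously rigid, yet it is neither left nor right reversible: for distinct letters $a,b \in X_1$ we have $aS \cap bS = \varnothing$ (they are incomparable) and also $Sa \cap Sb = \varnothing$ (a relation $ua = vb$ forces $a=b$ by the UFP, or by Lemma~\ref{lem:levi} applied on the right). The rigidity hypotheses only produce square-fillings for letters drawn from \emph{different} alphabets $X_i$ and $X_j$ with $i \neq j$; they say nothing about two letters from the same alphabet, and it is precisely there that common multiples fail to exist. So step (iii) of your plan --- establishing the left Ore condition --- is not a routine induction with a delicate case analysis; it is false, and Ore's theorem cannot be invoked. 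For the same reason the normal form $p^{-1}q$ (or $u_1^{-1}\cdots u_k^{-1}v_1\cdots v_k$) is not available: in the free group on $X_1$ the element $a^{-1}b$ is already reduced and is not a fraction of monoid elements in either order. The free group on two generators is not a group of fractions of the free monoid it contains.

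Your first idea --- present the group on the same generators and relations and prove injectivity via a normal form --- is essentially what the paper does, but the correct normal form sorts letters by \emph{alphabet}, not by \emph{sign}. The paper adjoins formal inverses $X_i'$ of each alphabet, observes that the rigidity relations induce compatible commutation-type relations among all of $X_i \cup X_i'$ and $X_j \cup X_j'$ for $i \neq j$ (its relations (2),(3),(4), whose existence and uniqueness is exactly where left and right rigidity are both used), so that the enlarged generating set again presents a $k$-monoid $S'$ on the doubled alphabets; quotienting by $xx'=1=x'x$ then yields a group $T$ in which every element is uniquely a product $g_1\cdots g_k$ with $g_i$ in the free group on $X_i$. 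Since $X_1^*\cdots X_k^* \subseteq G_1\cdots G_k$ and the relations of $T$ restrict on $S$ to the defining relations of $S$, the monoid embeds. If you want to salvage your argument, replace the Ore machinery by this alphabet-sorting normal form; the rewriting you describe (using right rigidity for $x^{-1}y$ and left rigidity for the dual) is the right engine, but it moves letters between alphabets rather than between the positive and negative parts of a fraction.
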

\begin{proof} Let $S$ be a rigid $k$-monoid with alphabets $(X_{1}, \ldots, X_{k})$.
The monoid $S$ is defined by relations of the form $xy = y_{1}x_{1}$
where $x, x_{1} \in X_{i}$ and $y, y_{1} \in X_{j}$ and $i \neq j$. 
This relation can be represented by means of the following square
$$ 
\xymatrix{
 \ar@{->}[r]^{x_{1}} &  \\
   \ar@{->}[u]^{y_{1}}  \ar@{->}[r]_{x} & \ar@{->}[u]_{y}  }
$$
The fact that the monoid is rigid means that each corner of this square uniquely determines the opposite corner uniquely.
For each set $X_{i}$, construct a new set $X_{i}'$ which is in bijective correspondence with it;
the map is $x \mapsto x'$ where $x \in X_{i}$ and $x' \in X_{i}'$.
The free group $G_{i}$ on the set $X_{i}$ is the free monoid on the set $X_{i} \cup X_{i}'$
factored out by the congruence generated by $x'x = 1 = xx'$ where $x \in X_{i}$.
Observe that $X_{i}^{\ast} \subseteq G_{i}$.
We regard $G_{i}$ as the set of reduced words.
We now define another monoid $S'$ as follows.
Its generators will be the set $X_{1} \cup X_{1}' \cup \ldots \cup X_{k} \cup X_{k}'$
subject to the following relations:
\begin{itemize}
\item[{\rm (1)}] $xy = y_{1}x_{1}$, the monoid relations of $S$.
\item[{\rm (2)}] $y'x' = x_{1}'y_{1}'$.
\item[{\rm (3)}] $x'y_{1} = yx_{1}'$.
\item[{\rm (4)}] $y_{1}'x = x_{1}y'$.
\end{itemize}
It follows that $S'$ is a $k$-monoid;
observe that the compatibility conditions hold because they hold in $S$.
Thus each element of $S'$ can be written uniquely as a product of elements in the free monoids 
$(X_{1} \cup X_{1}')^{\ast} \ldots (X_{k} \cup X_{k}')^{\ast}$.
Define $T$ to be the quotient of $S'$ factored out by the congruence generated by all relations of the form:
\begin{itemize}
\item[{\rm (5)}] $xx' = 1 = x'x$ where $x \in X_{1} \cup \ldots \cup X_{k}$.
\end{itemize} 
Then $T$ is a group in which 
each element can be written uniquely as a product of elements in the free groups $G_{1} \ldots G_{k}$.
The monoid $S$ injects as a set into $G_{1} \ldots G_{k}$, since $X_{1}^{\ast} \ldots X_{k}^{\ast} \subseteq G_{1} \ldots G_{k}$,    
and embeds as a monoid, since the relations in $T$ restricted to $S$ are exactly the defining relations of $S$.
\end{proof}

\begin{remark}{\em In the case where $S$ is a rigid $k$-monoid, it follows by Theorem~\ref{them:embedding}
that the inverse monoid $\mathsf{B}(S)$ is not merely $E^{\ast}$-unitary but actually {\em strongly} $E^{\ast}$-unitary;
an inverse semigroup with zero $T$ is {\em strongly $E^{\ast}$-unitary} if there is a function $\theta \colon T \setminus \{0\} \rightarrow G$
to a group $G$ such that $\theta (st) = \theta (s)\theta (t)$ whenever $st \neq 0$ and $\theta^{-1}(1)$ consists of only idempotents in $T$ \cite{Lawson2002}.}
\end{remark}

\section{Maximal generalized prefix codes}

At the conclusion to Section~7, we remarked that the structure of the group $\mathscr{G}(S)$ is bound up with the structure of
the maximal generalized prefix codes in $S$.
The key problem is how to construct such codes in order to construct the groups.
The following lemma shows that the properties of (maximal) generalized prefix codes really do generalize well-known properties of (maximal) prefix codes;
see \cite[Section II.4]{BP}.

\begin{lemma}\label{lem:star} Let $M$ be a cancellative monoid.
\begin{enumerate}

\item Let $u \in M$.
Then $$\{x_{1}, \ldots, x_{m}\}$$ is a generalized prefix code if and only if  $$\{ux_{1}, \ldots, ux_{m}\}$$ is a generalized prefix code.

\item Let $X = \{x_{1}, \ldots, x_{m}\}$ be a (resp. maximal) generalized prefix code and let $Y = \{y_{1}, \ldots, y_{n}\}$ be a (resp. maximal) generalized prefix code.
Then $$\{x_{1}, \ldots, x_{m-1}, x_{m}y_{1}, \ldots, x_{m}y_{n}\}$$ is a (resp. maximal) generalized prefix code.
We call this process of enlarging maximal generalized prefix codes in this way an {\em elementary expansion}.

\item Let $C$ be a maximal generalized prefix code.
Let $xD \subseteq C$ where $D$ is also a maximal generalized prefix code.
Then $C' = C \setminus xD \cup \{x\}$ is a maximal generalized prefix code.
We call this process of reducing maximal generalized prefix codes in this way  an {\em elementary reduction}.

\item If $X$ and $Y$ are maximal generalized prefix codes so too is $XY$.

\item Let $X$ and $Y$ be maximal generalized prefix codes.
Then $XM \subseteq YM$ if and only if $X$ is obtained from $Y$ by a finite sequence of elementary expansions.

\end{enumerate}
\end{lemma}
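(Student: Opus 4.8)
The plan is to establish the five parts in sequence: the first four are short computations with the relation of comparability (recall that $a,b$ are comparable exactly when $aM\cap bM\neq\varnothing$), and the real work lies in the forward direction of part~(5). For (1), I would note that $x_ia=x_jb$ gives $ux_ia=ux_jb$, while conversely $ux_ia=ux_jb$ gives $x_ia=x_jb$ by cancellation on the left, and $ux_i=ux_j$ forces $x_i=x_j$; hence the comparable pairs of $\{x_1,\ldots,x_m\}$ and of $\{ux_1,\ldots,ux_m\}$ correspond exactly, so one set is a generalized prefix code if and only if the other is. For (2), with $Z=\{x_1,\ldots,x_{m-1},x_my_1,\ldots,x_my_n\}$, I would check incomparability of the three kinds of distinct pairs: two of the $x_i$ with $i<m$ are incomparable because $X$ is a code; $x_i$ (for $i<m$) and $x_my_k$ are incomparable since $x_ia=x_my_kb$ would give $x_iM\cap x_mM\neq\varnothing$; and $x_my_k,x_my_l$ with $k\neq l$ are incomparable because cancelling $x_m$ on the left reduces any overlap to one between $y_k$ and $y_l$. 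For maximality of $Z$, given $s$ the maximality of $X$ yields $x_i$ with $sa=x_ib$; if $i<m$ we are done, and if $i=m$ the maximality of $Y$ applied to $b$ yields $y_k$ with $bc=y_kd$, whence $s(ac)=x_my_kd$.

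For (3), the one nonobvious point is that the replacement element $x$ is incomparable with every $c\in C\setminus xD$: if $xa=cb$ then, applying maximality of $D$ to $a$, we get $aa'=d_ia''$ for some $d_i\in D$, so that $c(ba')=xaa'=(xd_i)a''$ shows $cM\cap(xd_i)M\neq\varnothing$ with $c$ and $xd_i$ distinct members of $C$, contradicting that $C$ is a code; incomparability among the other elements is inherited from $C$, and maximality of $C'$ is immediate because any $s$ comparable with some $xd_i\in C$ is in particular comparable with $x$. Part~(4) then follows by induction on $|X|$: a code is just a set, so the elementary expansion of (2) may be carried out at any chosen element, and replacing the elements of $X$ one at a time by $x_iY$ yields $XY$ after $|X|$ steps, each step preserving maximality by (2).

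For (5), the implication from ``$X$ is obtained from $Y$ by elementary expansions'' to $XM\subseteq YM$ is immediate: one expansion replaces a code $W$ by $(W\setminus\{w\})\cup wD$, and $(wD)M\subseteq wM$, so the generated right ideal never increases. The converse is the heart of the matter. Assume $XM\subseteq YM$. Each $x_i\in X$ then lies in $y_jM$ for a unique $j=j(i)$ (a second choice would make two distinct elements of the code $Y$ comparable), so writing $x_i=y_{j(i)}z_i$ partitions $X=\bigsqcup_j X_j$ with $X_j=y_jZ_j$ and $Z_j=\{z_i:j(i)=j\}$, the latter well defined by cancellation on the left.

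I would then verify three facts: each $X_j$ is nonempty, since by maximality of $X$ some element of $X$ is comparable with $y_j$ and, $Y$ being a code, its prefix index must be $j$; each $Z_j$ is a generalized prefix code, since an overlap inside $Z_j$ lifts through $y_j$ to an overlap inside $X$; and each $Z_j$ is maximal, since given $s$, the maximality of $X$ applied to $y_js$ produces an element $x_i=y_jz_i\in X_j$ comparable with $y_js$, and cancelling $y_j$ on the left makes $s$ comparable with $z_i\in Z_j$. Granting these, one starts from $Y$ and successively expands $y_1$ by $Z_1$, then $y_2$ by $Z_2$, and so on; each step is a genuine elementary expansion (both the current code and the relevant $Z_j$ being maximal generalized prefix codes), and after $|Y|$ steps one reaches $\bigcup_j y_jZ_j=\bigcup_j X_j=X$, the degenerate case $Z_j=\{1\}$ (that is, $y_j\in X$) corresponding to an expansion that changes nothing. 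The step I expect to demand the most care is precisely this one: checking that the residual sets $Z_j$ are again maximal generalized prefix codes, and assembling the sequence of expansions correctly, including the minor bookkeeping needed to apply (2) at an arbitrary element of an intermediate code.
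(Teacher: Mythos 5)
Your proposal is correct, and for the crucial converse of part (5) it follows the same strategy as the paper: decompose $X=\bigcup_j y_jZ_j$ using $XM\subseteq YM$ and the code property of $Y$, check that every $y_j$ actually occurs and that each residual set $Z_j$ is a maximal generalized prefix code, then realise $X$ as a sequence of expansions of $Y$. There are two points where you diverge from the paper's route. First, you obtain part (4) by induction from part (2), replacing the elements of $X$ one at a time by $x_iY$; the paper instead verifies incomparability and maximality of $XY$ directly. Your version is shorter given (2), at the small cost of the bookkeeping remark that an elementary expansion may be performed at any element of the code. Second, for the maximality of $Z_j$ in part (5) you argue directly: apply maximality of $X$ to the element $y_js$, observe that the witness must lie in $X_j$ because $Y$ is a code, and cancel $y_j$. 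The paper instead argues by contradiction, adjoining a hypothetical element $w$ incomparable with $Z_j$ and deriving a comparability between distinct elements of $Y$. Your direct argument is arguably cleaner and avoids the detour through $Z_j\cup\{w\}$. Everything else --- the cancellation argument in (1), the case analysis in (2), the use of maximality of $D$ to show the replacement element $x$ in (3) is incomparable with the rest of $C$, and the handling of the degenerate expansions $Z_j=\{1\}$ --- matches the paper's proof.
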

\begin{proof} (1) Suppose that $\{x_{1}, \ldots, x_{m}\}$ is a generalized prefix code.
If $\{ux_{1}, \ldots, ux_{m}\}$ is not a generalized prefix code then there exist $a,b \in M$ such that $ux_{i}a = ux_{j}b$ for some $i$ and $j$.
But then by cancellation, $x_{i}a = x_{j}b$ which contradicts our assumption.
It follows that $\{ux_{1}, \ldots, ux_{m}\}$ is a generalized prefix code.
Conversely, suppose that  $\{ux_{1}, \ldots, ux_{m}\}$ is a generalized prefix code.
If $\{x_{1}, \ldots, x_{m}\}$ is not a generalized prefix code then there exists $a,b \in M$ such that $x_{i}a = x_{j}b$ for some $i$ and $j$.
But then $ux_{i}a = ux_{j}b$ which contradicts our assumption.
It follows that $\{x_{1}, \ldots, x_{m}\}$ is a generalized prefix code.

(2) By Part (1), we know that $\{x_{m}y_{1}, \ldots, x_{m}y_{n}\}$ is a generalized prefix code.
Let  $1 \leq i \leq m-1$ and $1 \leq j \leq n$
and
suppose that $x_{i}M \cap x_{m}y_{j}M \neq \varnothing$.
But then $x_{i}$ and $x_{m}$ are comparable and so this cannot happen.
It follows that $\{x_{1}, \ldots, x_{m-1}, x_{m}y_{1}, \ldots, x_{m}y_{n}\}$ is a generalized prefix code.

Now suppose that $X$ and $Y$ are maximal.
We show that our code is a maximal generalized prefix code.
Suppose not.
Then there is an element $x \in M$ which is incomparable with all of the elements in $\{x_{1}, \ldots, x_{m-1}, x_{m}y_{1}, \ldots, x_{m}y_{n}\}$.
Now, $X$ is a maximal generalized prefix code.
So, either $x_{i}M \cap xM \neq \varnothing$ for $1 \leq i \leq m-1$ or $x_{m}M \cap xM \neq \varnothing$.
It follows that $x_{i}M \cap xM = \varnothing$ and $x_{m}M \cap xM \neq \varnothing$.
Thus $x_{m}a = xb$ for some $a,b \in M$.
Now $Y$ is a maximal generalized prefix code.
By relabelling if necessary, we can assume that $y_{1}M \cap aM \neq \varnothing$.
Thus $y_{1}c = ad$ for some $c,d \in M$.
It follows that $x_{m}y_{1}c = xbd$ and so $x_{m}y_{1}M \cap xM \neq \varnothing$.
This is a contradiction.

(3) We prove first that $C' = C \setminus xD \cup \{x\}$ is a generalized prefix code.
Suppose that there is $c \in C \setminus xD$ such that $cu = xv$ for some $u,v \in S$ (we shall get a contradiction).
Now $D$ is a maximal generalized prefix code and so $da = vb$ for some $d \in D$ and $a,b \in S$.
It follows that $cub = xvb$ and so $cub = (xd)a$ but $xd \in C$ and $c \in C$ are not comparable.
Thus $C'$ is a generalized prefix code.
It only remains to prove that it is maximal.

Let $s \in S$.
Then $su = cv$ for some $c \in C$ and $u,v \in S$.
Suppose that $c \in xD$.
Then $c = xd$ for some $d \in D$.
Thus $su = xdv$ and so $s$ is comparable with $x$.
Suppose that $c \notin xD$.
Then $c \in C \setminus xD$ and we are done.

(4) We prove first that no two elements of $XY$ are comparable.
Suppose that $xy$ and $x'y'$ are comparable where $x,x' \in X$ and $y,y' \in Y$.
Then $xyu = x'y'v$ for some $u,v \in S$.
This implies that $x$ and $x'$ are comparable and so, by assumption, $x = x'$.
It follows that $yu = y'v$.
This implies that $y$ and $y'$ are comparable and so, by assumption, $y = y'$.
It follows that $xy = x'y'$.
We now prove that $XY$ is maximal.
Let $s \in S$.
Then since $X$ is maximal there are $u,v \in S$ and $x \in X$ such that $su = xv$.
Since $Y$ is maximal there are $a,b \in S$ and $y \in Y$ such that $va = yb$.
Thus $s(ua) = xva = xy(b)$.
It follows that $s$ is comparable to an element of $XY$. 

(5) We look at the easy direction first.
Suppose that $X$ is obtained from $Y$ by one application of expansion.
Then $X = Y \setminus \{y\} \cup yZ$ where $y \in Y$ and both $Y$ and $Z$ are maximal generalized prefix codes.
By part (2), we have that $X$ is a maximal generalized prefix code.
We have that 
$$XM =  (Y \setminus \{y\} \cup yZ)M = (Y \setminus \{y\})M \cup (yZ)M \subseteq YM \cup yM \subseteq YM.$$
The general case follows by induction.
We now prove the converse.
Let $XM \subseteq YM$.
For each $x \in X$ we can find a $y \in Y$ and an $m \in M$ such that $x = ym$.
Let $Y = \{y_{1}, \ldots, y_{n}\}$.
We may therefore write $X  = \bigcup_{i=1}^{m} y_{i}Z_{i}$ 
where, by suitable relabelling if necessary, $m \leq n$.
We show that in fact $m = n$.
Suppose not.
Then $y_{m+1}$ is not included.
However, $X$ is a maximal generalized prefix code.
It follows that for some $i \leq m$ we have that $y_{i}za = y_{m+1}b$ where $z \in Z_{i}$.
But this cannot happen since $Y$ is a maximal generalized prefix code.
It follows that $m = n$.
Next, we prove that each $Z_{i}$ is a maximal generalized prefix code.
First, we prove that each $Z_{i}$ is a generalized prefix code.
Suppose not.
Then there are $z,z' \in Z_{i}$ such that $za = z'b$.
It follows that $y_{i}za = y_{i}z'b$.
But $y_{i}z  = x$ and $y_{i}z' = x'$ where $x,x' \in X$.
By assumption, $x$ and $x'$ are incomparable.
It follows that $Z_{i}$ is an generalized prefix code.
We now prove that it is a maximal generalized prefix code.
Suppose not.
Then there exists $w \in M$ such that $Z_{i}M \cap wM = \varnothing$.
It follows that $Z_{i} \cup \{w\}$ is a generalized prefix code.
Thus $y_{i}Z_{i} \cup \{y_{i}w\}$ is a generalized prefix code.
But $X$ is maximal generalized prefix code.
Thus there is a $y_{j}z \in y_{j}Z_{j}$, where $j \neq i$, such that
$y_{j}za = y_{i}wb$.
But this cannot happen since $i \neq j$ and the set $Y$ is a generalized prefix code.
\end{proof}

We highlight two operations in the above lemma that generate new maximal generalized prefix codes from old:
\begin{enumerate}

\item {\em Elementary expansion.} Let $X$ be a maximal generalized prefix code, let $x \in X$ and let $Y$ be a maximal generalized prefix code.
Then $X\setminus \{x \} \cup xY$ is a maximal generalized prefix code. 

\item {\em Elementary reduction.} Let $C$ be maximal generalized prefix code and let $xD \subseteq C$ where $D$ is a maximal generalized prefix code.
Then $C \setminus xD \cup \{x\}$ is a maximal generalized prefix code.
\end{enumerate}
Observe that these two operations are the inverse of each other.

We need a source of examples of maximal generalized prefix codes.
Let $S$ be a $k$-monoid with $k$ alphabets $(X_{1}, \ldots, X_{k})$.
Each $X_{i}^{\ast}$ is a free monoid by part (2) of Lemma~\ref{lem:relations}.
The following tells us that maximal prefix codes in each of the free monoids $X_{i}^{\ast}$
are automatically maximal generalized prefix codes in the ambient $k$-monoid.
Let $S$ be a $k$-monoid.
It is useful to define $d_{i}(s)$ to be the $i$th component of $d(s)$.
It therefore tells us the number of elements of $X_{i}$ that occur in $s$ counting multiplicities.

\begin{lemma}\label{lem:pinko} Let $S$ be a $k$-monoid and let $C \subseteq X_{i}^{\ast}$ be a maximal prefix code in $X_{i}^{\ast}$.
Then $C$ is a maximal generalized prefix code in $S$.
\end{lemma}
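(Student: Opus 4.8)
The plan is to check directly the two conditions in the definition of a maximal generalized prefix code: first that any two distinct members of $C$ are incomparable in the ambient monoid $S$, and then that every element of $S$ is comparable in $S$ with some member of $C$. The fact to keep in mind throughout is that the elements of $X_{i}^{\ast}$ are precisely the elements $s \in S$ with $d(s) \in \mathbb{N}\mathbf{e}_{i}$, and that for such an $s$ the word length of $s$ over the alphabet $X_{i}$ equals $d_{i}(s)$.

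For incomparability, suppose $c, c' \in C$ are distinct but $cu = c'v$ for some $u, v \in S$. Since $c, c' \in X_{i}^{\ast}$, the sizes $d(c) = d_{i}(c)\mathbf{e}_{i}$ and $d(c') = d_{i}(c')\mathbf{e}_{i}$ are comparable in $\mathbb{N}^{k}$, and we may assume $d(c) \le d(c')$. Rewriting $cu = c'v$ as $c'v = cu$ and applying Lemma~\ref{lem:levi}, we obtain $t \in S$ with $c' = ct$. Then $d(t) = d(c') - d(c) \in \mathbb{N}\mathbf{e}_{i}$, so $t \in X_{i}^{\ast}$; thus $c$ is a prefix of $c'$ in $X_{i}^{\ast}$. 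Since $C$ is a prefix code in $X_{i}^{\ast}$, this forces $c = c'$, a contradiction. Hence $C$ is a generalized prefix code in $S$.

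For maximality, put $N = \max_{c \in C} |c|$, which is finite as $C$ is finite. Let $y \in S$ be arbitrary and choose $z \in X_{i}^{\ast}$ with $|z| = N$. Then $d(yz) = d(y) + N\mathbf{e}_{i} \ge N\mathbf{e}_{i}$, so by the unique factorization property we may write $yz = w_{1} w_{2}$ with $d(w_{1}) = N\mathbf{e}_{i}$; peeling off $N$ pieces of size $\mathbf{e}_{i}$ by repeated use of the UFP shows that $w_{1}$ is a product of $N$ atoms of type $i$, i.e. $w_{1} \in X_{i}^{\ast}$ with $|w_{1}| = N$. Since $|w_{1}| = N \ge \max_{c \in C} |c|$ and $C$ is a maximal prefix code in $X_{i}^{\ast}$, the word $w_{1}$ has a prefix in $C$, say $w_{1} = c u'$ with $c \in C$ and $u' \in X_{i}^{\ast}$. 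Then $y z = c u' w_{2}$, so $yz \in yS \cap cS$ and hence $yS \cap cS \ne \varnothing$. This proves maximality.

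I expect the only genuinely non-routine step to be the incomparability argument: one must rule out that the auxiliary factors $u, v \in S$ (which need not lie in $X_{i}^{\ast}$) allow $c$ and $c'$ to become comparable inside $S$ without being prefix-comparable inside the free factor $X_{i}^{\ast}$. This is resolved by Lemma~\ref{lem:levi} together with the observation that the overlap $t$ is forced to be homogeneous of type $i$ by its size. The remaining ingredients — the UFP and the standard fact that in a finite maximal prefix code every word of length at least $\max_{c \in C} |c|$ has a prefix in the code — are routine.
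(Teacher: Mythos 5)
Your proposal is correct and follows essentially the same route as the paper's proof: for incomparability, both arguments use Lemma~\ref{lem:levi} together with the observation that the relevant overlap is forced by its size to be homogeneous of type $i$, reducing to prefix-comparability in the free monoid $X_{i}^{\ast}$; for maximality, both pad an arbitrary $y$ so that its $i$-component is at least $\max_{c\in C}|c|$, extract the homogeneous type-$i$ prefix via the UFP, and invoke maximality of $C$ in $X_{i}^{\ast}$. The only cosmetic difference is that the paper handles incomparability by showing both $x$ and $y$ are prefixes of the full $i$-homogeneous prefix of the common element $z=xu=yv$, whereas you compare $d(c)$ and $d(c')$ directly and apply Levi once; both are fine.
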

\begin{proof} Let $x,y \in C \subseteq X_{i}^{\ast}$.
Suppose that $z = xu = yv$ for some $u,v \in S$.
Define $\mathbf{m} \in \mathbb{N}^{k}$ by
$\mathbf{m}_{i} = d_{i}(z)$ and all other components are $0$.
Then $d(z) = \mathbf{m} + (d(z) - \mathbf{m})$.
By the UFP, there is a unique element $x' \in X_{i}^{\ast}$, such that $d(x') = \mathbf{m}$,
and a unique element $w \in S$, such that $d(w) = d(z) - \mathbf{m}$, where $z = x'w$.
Clearly, $d(x') \geq d(x), d(y)$ and $x'w = xu = yv$.
It follows that $x' = xx_{1} = yy_{1}$, by Lemma~\ref{lem:levi}, for some $x_{1},y_{1} \in X_{i}^{\ast}$.
But this implies that $x$ and $y$ are comparable in $X_{i}^{\ast}$ which means that $x = y$.
Thus $C$ is a generalized prefix code in $S$.

We now prove that $C$ is maximal.
Let $y \in S$.
Choose $u \in S$ such that $d_{i}(yu)$ is greater than or equal to
$d_{i}(x)$ for any $x \in C$.
We can  write $yu = x_{1}y_{1}$ where $x_{1} \in X_{i}^{\ast}$ and $y_{1}$ has no $i$-component.
Now $x_{1} = ct$ for some $c \in C$ and $t \in S$, since $C$ is a maximal prefix code in $X_{i}^{\ast}$.
Thus $yu = cty_{1}$.
It follows that $y$ is comparable with some element of $C$.
\end{proof}

\noindent
{\bf Definition.} Let $S$ be a $k$-monoid with alphabets $X_{1}, \ldots, X_{k}$.
We define a {\em concrete maximal generalized prefix code} to be one constructed as follows:
\begin{enumerate}

\item $\{1\}$ and $X_{i}$, where $1 \leq i \leq k$, are all concrete maximal generalized prefix codes.

\item If $X$ is a concrete maximal generalized prefix code, $x \in X$, and $Y$ is a concrete maximal generalized prefix code
then $X\setminus \{x \} \cup xY$ is a concrete maximal generalized prefix code. 

\item All concrete maximal generalized prefix codes are obtained in a finite number of steps by repeated applications of (1) and (2).

\end{enumerate}

The following is immediate by part (2) of Lemma~\ref{lem:star} and Lemma~\ref{lem:pinko}.

\begin{corollary}\label{cor:dash} Let $S$ be a $k$-monoid.
Then every concrete maximal generalized prefix code is a maximal generalized prefix code.
\end{corollary}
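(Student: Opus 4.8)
The plan is to argue by induction on the number of steps used to build a concrete maximal generalized prefix code according to clauses (1)--(3) of the definition. Since clause (2) produces a new code from \emph{two} previously constructed codes $X$ and $Y$, the induction should be set up on the total construction length (or, equivalently, a strong induction on the number of applications of (1) and (2)) rather than on a single code.

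For the base case, I would check the codes produced by clause (1). The singleton $\{1\}$ is trivially a maximal generalized prefix code: every $y \in S$ satisfies $yS \cap 1S = yS \neq \varnothing$. For $X_{i}$, I would invoke Lemma~\ref{lem:pinko}: by part (2) of Lemma~\ref{lem:relations} the monoid $X_{i}^{\ast}$ is free with basis $X_{i}$, so $X_{i}$ is a maximal prefix code in $X_{i}^{\ast}$, and Lemma~\ref{lem:pinko} then promotes it to a maximal generalized prefix code in the ambient $k$-monoid $S$. (Alternatively one could observe $X_{i} = C_{\mathbf{e}_{i}}$ and appeal to Lemma~\ref{lem:homogeneous}.)

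For the inductive step, suppose $X$ and $Y$ are concrete maximal generalized prefix codes each obtained in strictly fewer steps, so by the induction hypothesis both are maximal generalized prefix codes in the sense of Section~3. Given $x \in X$, the set $X \setminus \{x\} \cup xY$ is precisely an elementary expansion, and part (2) of Lemma~\ref{lem:star} asserts that an elementary expansion of maximal generalized prefix codes is again a maximal generalized prefix code. Since clause (3) guarantees that every concrete maximal generalized prefix code is reached after finitely many applications of (1) and (2), the induction closes and the corollary follows.

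I do not expect any genuine obstacle: the entire content is carried by Lemma~\ref{lem:pinko} (or Lemma~\ref{lem:homogeneous}) in the base case and by part (2) of Lemma~\ref{lem:star} in the inductive step. The only point requiring a little care is the bookkeeping noted above, namely that the recursion branches over two earlier codes, so one must induct on construction length rather than na\"ively on a single code.
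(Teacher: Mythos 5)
Your proof is correct and follows exactly the route the paper intends: the paper's own justification is simply that the corollary is immediate from Lemma~\ref{lem:pinko} (giving the base cases $\{1\}$ and $X_{i}$) together with part~(2) of Lemma~\ref{lem:star} (showing elementary expansions preserve maximality), which is precisely your induction. Your explicit remark about inducting on total construction length, since clause~(2) branches over two previously built codes, is a reasonable piece of bookkeeping that the paper leaves implicit.
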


We shall now begin an analysis of the maximal generalized prefix codes $C_{\mathbf{m}}$.

\begin{lemma}\label{lem:winter} Let $\mathbf{m} \in \mathbb{N}^{k}$ be any non-zero element.
Let $i$ be the smallest suffix such that $m_{i} \neq 0$.
Put $X_{i} = \{a_{1}, \ldots, a_{n}\}$
and $\mathbf{m}' = \mathbf{m} - \mathbf{e}_{i}$.  
Then $C_{\mathbf{m}} = \bigcup_{j=1}^{n} a_{j} C_{\mathbf{m}'} = X_{i}C_{\mathbf{m}'}$.
\end{lemma}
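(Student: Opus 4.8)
The plan is to unwind the unique factorization property (UFP) at the additive decomposition $\mathbf{m} = \mathbf{e}_{i} + \mathbf{m}'$. First I would record that, since $m_{i} \neq 0$, indeed $\mathbf{m}' = \mathbf{m} - \mathbf{e}_{i} \in \mathbb{N}^{k}$, so the statement makes sense; note that the choice of $i$ as the \emph{smallest} index with $m_{i} \neq 0$ plays no role in the argument — any index with $m_{i} \neq 0$ would do — and minimality is retained only because it gives a canonical recursion for later use. Recall also that, by definition, $X_{i} = d^{-1}(\mathbf{e}_{i})$.

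For the inclusion $C_{\mathbf{m}} \subseteq X_{i}C_{\mathbf{m}'}$, I would take any $x \in C_{\mathbf{m}}$, so $d(x) = \mathbf{m} = \mathbf{e}_{i} + \mathbf{m}'$. By the UFP there are (unique) elements $x_{1}, x_{2} \in S$ with $x = x_{1}x_{2}$, $d(x_{1}) = \mathbf{e}_{i}$ and $d(x_{2}) = \mathbf{m}'$. Then $x_{1} \in d^{-1}(\mathbf{e}_{i}) = X_{i}$, say $x_{1} = a_{j}$ for some $1 \leq j \leq n$, while $d(x_{2}) = \mathbf{m}'$ gives $x_{2} \in C_{\mathbf{m}'}$. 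Hence $x = a_{j}x_{2} \in a_{j}C_{\mathbf{m}'} \subseteq \bigcup_{j=1}^{n} a_{j}C_{\mathbf{m}'} = X_{i}C_{\mathbf{m}'}$.

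For the reverse inclusion $X_{i}C_{\mathbf{m}'} \subseteq C_{\mathbf{m}}$, I would take $x = a_{j}y$ with $a_{j} \in X_{i}$ and $y \in C_{\mathbf{m}'}$; since $d$ is a homomorphism, $d(x) = d(a_{j}) + d(y) = \mathbf{e}_{i} + \mathbf{m}' = \mathbf{m}$, so $x \in C_{\mathbf{m}}$. Combining the two inclusions yields $C_{\mathbf{m}} = \bigcup_{j=1}^{n} a_{j}C_{\mathbf{m}'} = X_{i}C_{\mathbf{m}'}$. There is no genuine obstacle here: the only points requiring care are that the UFP decomposition lands in elements of $S$ of \emph{exactly} the prescribed sizes (which is precisely what the UFP asserts) and that $X_{i}$ is exactly the set of elements of size $\mathbf{e}_{i}$ (which is the definition of the $k$ alphabets).
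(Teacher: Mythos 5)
Your proof is correct and follows essentially the same route as the paper's: both directions are obtained by applying the UFP to the decomposition $\mathbf{m} = \mathbf{e}_{i} + \mathbf{m}'$ for one inclusion and by additivity of $d$ for the other. Your side remark that minimality of $i$ is irrelevant to the argument is accurate — the paper's proof does not use it either.
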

\begin{proof} Let $x \in C_{\mathbf{m}}$.
Then $d(x) = \mathbf{m}$.
By assumption, $m_{i} \neq 0$.
Thus $x = a_{j}y$ for a unique element $a_{j} \in X_{i}$ and 
where $d(y) = \mathbf{m} - \mathbf{e}_{i}$.  
It follows that $C_{\mathbf{m}} \subseteq \bigcup_{j=1}^{n} a_{j} C_{\mathbf{m}'}$.
Now let $a_{j}y \in a_{j} C_{\mathbf{m}'}$.
Then $d(a_{j}y) = \mathbf{m}$ and so $a_{j}y \in C_{\mathbf{m}}$.
It follows that $\bigcup_{j=1}^{n} a_{j} C_{\mathbf{m}'} \subseteq C_{\mathbf{m}}$,
and we have therefore proved that the two sets are equal.
\end{proof}

By Lemma~\ref{lem:pinko}, the set $X_{i}$ is a maximal generalized prefix code.
Thus $C_{\mathbf{m}}$ is obtained from $X_{i}$ by a sequence of elementary expansions using the 
maximal generalized prefix codes $C_{\mathbf{m}'}$ where $\mathbf{m}' < \mathbf{m}$.
This process can be repeated and so we have proved the following.

\begin{proposition}\label{prop:spring} Let $S$ be a $k$-monoid.
The maximal generalized prefix code $C_{\mathbf{m}}$ is either $\{1\}$
or is obtained from the maximal generalized prefix codes $X_{1}, \ldots, X_{k}$ by a sequence of elementary expansions.
It follows that such codes are always concrete.
\end{proposition}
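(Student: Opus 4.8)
The plan is to prove the slightly stronger statement that every $C_{\mathbf{m}}$ is a \emph{concrete} maximal generalized prefix code, proceeding by induction on the quantity $\mathbf{m}_{1} + \cdots + \mathbf{m}_{k}$; the dichotomy asserted in the proposition then follows at once, since any concrete code other than $\{1\}$ is, after trimming off an initial expansion of $\{1\}$, visibly obtained from one of the $X_{i}$ by a finite sequence of elementary expansions.

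First I would handle the base of the induction. If $\mathbf{m} = \mathbf{0}$, then $C_{\mathbf{0}} = \{1\}$ by part (1) of Lemma~\ref{lem:coffee}, and $\{1\}$ is concrete by clause (1) of the definition. If $\mathbf{m} = \mathbf{e}_{i}$, then $C_{\mathbf{e}_{i}} = X_{i} = d^{-1}(\mathbf{e}_{i})$ is concrete, again by clause (1). Now let $\mathbf{m}$ be an arbitrary non-zero element and assume the result for all strictly smaller values of $\mathbf{m}_{1} + \cdots + \mathbf{m}_{k}$. Choose $i$ as in Lemma~\ref{lem:winter}, write $X_{i} = \{a_{1}, \ldots, a_{n}\}$, and set $\mathbf{m}' = \mathbf{m} - \mathbf{e}_{i}$. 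Lemma~\ref{lem:winter} gives $C_{\mathbf{m}} = \bigcup_{j=1}^{n} a_{j} C_{\mathbf{m}'} = X_{i} C_{\mathbf{m}'}$, and $C_{\mathbf{m}}$ is a maximal generalized prefix code by Lemma~\ref{lem:homogeneous}. Since $\mathbf{m}'_{1} + \cdots + \mathbf{m}'_{k}$ is strictly smaller, the induction hypothesis shows that $C_{\mathbf{m}'}$ is concrete.

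The key step is then to realize $C_{\mathbf{m}}$ as the end of a chain of elementary expansions starting from $X_{i}$. Beginning with the concrete code $X_{i} = \{a_{1}, \ldots, a_{n}\}$, replace $a_{1}$ by $a_{1}C_{\mathbf{m}'}$; this is a legitimate step of clause (2) because $C_{\mathbf{m}'}$ is concrete. The elements $a_{2}, \ldots, a_{n}$ are untouched and so still present, so we next replace $a_{2}$ by $a_{2}C_{\mathbf{m}'}$, and so on; after $n$ such elementary expansions we arrive at $\bigcup_{j=1}^{n} a_{j}C_{\mathbf{m}'}$, which is exactly $C_{\mathbf{m}}$. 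Hence $C_{\mathbf{m}}$ is concrete. The only point requiring a moment's care is that each intermediate code still contains the element scheduled to be expanded next, which is immediate since an elementary expansion disturbs only the single element being replaced (and, incidentally, the pieces $a_{j}C_{\mathbf{m}'}$ are pairwise disjoint because distinct $a_{j}$ are incomparable, being distinct members of the generalized prefix code $X_{i}$). Beyond this bookkeeping I do not anticipate any genuine obstacle.
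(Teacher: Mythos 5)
Your proof is correct and follows essentially the same route as the paper: the paper's argument is precisely the decomposition $C_{\mathbf{m}} = X_{i}C_{\mathbf{m}'}$ from Lemma~\ref{lem:winter}, read as $n$ elementary expansions of $X_{i}$ by $C_{\mathbf{m}'}$, with the process then ``repeated'' on $C_{\mathbf{m}'}$. You have merely made the recursion explicit as an induction on $\mathbf{m}_{1} + \cdots + \mathbf{m}_{k}$, which is a welcome tightening rather than a different idea.
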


Let $C$ be a maximal generalized prefix code.
The {\em extent} of $C$ is defined to be the join of all the sizes of the elements of $C$. 

\begin{proposition}\label{prop:code} Let $S$ be a $k$-monoid.
Let $C$ be a maximal generalized prefix code of extent $\mathbf{m}'$
and let $\mathbf{m} \geq \mathbf{m}'$.
Then $C_{\mathbf{m}}$ can be obtained from $C$ by a sequence of elementary expansions.
\end{proposition}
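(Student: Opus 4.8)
The plan is to describe $C_{\mathbf{m}}$ explicitly in terms of $C$ and then to recognise each ``layer'' of that description as an elementary expansion. Write $C = \{c_{1}, \ldots, c_{r}\}$, so that the extent of $C$ is $\mathbf{m}' = \bigvee_{i=1}^{r} d(c_{i})$. Since $\mathbf{m} \geq \mathbf{m}'$ we have $\mathbf{m} \geq d(c_{i})$ for every $i$, so $\mathbf{m} - d(c_{i}) \in \mathbb{N}^{k}$ makes sense, and the claim I would establish is that
\[
C_{\mathbf{m}} = \bigcup_{i=1}^{r} c_{i}\, C_{\mathbf{m} - d(c_{i})},
\]
this union moreover being disjoint.

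First I would prove this identity. The inclusion $\supseteq$ is immediate: if $x = c_{i}t$ with $d(t) = \mathbf{m} - d(c_{i})$, then $d(x) = d(c_{i}) + (\mathbf{m} - d(c_{i})) = \mathbf{m}$ because $d$ is a homomorphism. For $\subseteq$, let $x \in C_{\mathbf{m}}$. Since $C$ is a maximal generalized prefix code, $x$ is comparable with some $c_{i}$, say $xu = c_{i}v$; as $d(x) = \mathbf{m} \geq d(c_{i})$, Lemma~\ref{lem:levi} yields $t \in S$ with $x = c_{i}t$, and then necessarily $d(t) = \mathbf{m} - d(c_{i})$, so $x \in c_{i}C_{\mathbf{m} - d(c_{i})}$. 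Finally, if $x = c_{i}t = c_{j}t'$ then $c_{i}$ and $c_{j}$ are comparable, hence $c_{i} = c_{j}$ and $i = j$, which gives disjointness (and in particular shows the $c_{j}$ with $j \neq i$ survive as genuine elements when we expand at $c_{i}$).

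To conclude, recall that each $C_{\mathbf{m} - d(c_{i})}$ is itself a maximal generalized prefix code by Lemma~\ref{lem:homogeneous}. Starting from $C$ and replacing $c_{1}$ by $c_{1}C_{\mathbf{m} - d(c_{1})}$, then $c_{2}$ by $c_{2}C_{\mathbf{m} - d(c_{2})}$, and so on through $c_{r}$, each individual step is by definition an elementary expansion; by part (2) of Lemma~\ref{lem:star} every intermediate set is again a maximal generalized prefix code, so the process is legitimate, and after $r$ steps the code obtained is exactly $\bigcup_{i=1}^{r} c_{i}C_{\mathbf{m} - d(c_{i})} = C_{\mathbf{m}}$. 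When $\mathbf{m} = d(c_{i})$ the corresponding step is the trivial expansion coming from $C_{\mathbf{0}} = \{1\}$ and changes nothing, so it causes no difficulty. The only point that needs genuine care is the decomposition identity displayed above --- specifically the appeal to Lemma~\ref{lem:levi} to upgrade ``$x$ comparable with $c_{i}$'' to ``$c_{i}$ is a prefix of $x$'', which is precisely where the hypothesis $\mathbf{m} \geq \mathbf{m}'$ enters --- and everything after that is straightforward bookkeeping.
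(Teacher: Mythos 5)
Your proof is correct and follows essentially the same route as the paper's: both arguments replace each $c_{i}$ by $c_{i}$ times the set of all elements of size $\mathbf{m}-d(c_{i})$ (which the paper writes as $X_{1}^{s_{1}}\ldots X_{k}^{s_{k}}$, the same set as your $C_{\mathbf{m}-d(c_{i})}$) and both hinge on Lemma~\ref{lem:levi} to identify the resulting code with $C_{\mathbf{m}}$. The only cosmetic difference is that you establish the decomposition $C_{\mathbf{m}}=\bigcup_{i}c_{i}C_{\mathbf{m}-d(c_{i})}$ up front using the maximality of $C$, whereas the paper performs the expansions first and then deduces that the expanded code equals $C_{\mathbf{m}}$ from the maximality of that expanded code.
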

\begin{proof} Let $C = \{c_{1}, \ldots, c_{n}\}$ with the extent of $C$ being $\mathbf{m}'$.
We shall show how to replace the element $c_{1}$ by a subset all of whose elements have size $\mathbf{m}$.
A similar procedure can then be applied to each of the other elements of $C$ in turn.
If $d(c_{1}) = \mathbf{m}$ then there is nothing to do, 
so that in what follows we may
assume that $d(c_{1}) < \mathbf{m}$.
Let $\mathbf{m} - d(c_{1}) = (s_{1}, \ldots, s_{k})$.
Replace $c_{1}$ by $c_{1}X_{1}^{s_{1}} \ldots X_{k}^{s_{k}}$.
It is immediate that every element of  $c_{1}X_{1}^{s_{1}} \ldots X_{k}^{s_{k}}$
has size $\mathbf{m}$ and $X_{1}^{s_{1}} \ldots X_{k}^{s_{k}}$ is a maximal generalized prefix code by part (4) of Lemma~\ref{lem:star} and Lemma~\ref{lem:pinko}.
Once this procedure has been applied to each element of $C$ call the resulting maximal generalized prefix code $D$.
By construction, every element of $D$ has size $\mathbf{m}$.
We prove that, in fact, $D = C_{\mathbf{m}}$.
Let $x$ be any element of size $\mathbf{m}$.
Then, since $D$ is a maximal generalized prefix code, there is an element $y \in D$ such that $xu = yv$ for some $u,v \in S$.
But $d(x) = d(y)$.
Thus by Lemma~\ref{lem:levi}, we must have that $x = y$ and so $x \in C_{\mathbf{m}}$.
\end{proof}

The above theorem shows that a sequence of elementary expansions can be applied to a maximal generalized prefix code 
to yield a concrete maximal generalized prefix code.

Observe that if $X$ and $Y$ are maximal generalized prefix codes then $XY = \bigcup_{x \in X} xY$.
Thus $XY$ is obtained from $X$ by a sequence of elementary expansions.
It follows that the maximal generalized prefix codes $X_{1}^{s_{1}} \ldots X_{k}^{s_{k}}$ can be obtained by a sequence of elementary expansions
using only the maximal generalized prefix codes $X_{1}, \ldots, X_{k}$.
If we combine this observation with Proposition~\ref{prop:spring} and Proposition~\ref{prop:code},
we obtain the following theorem.

\begin{theorem}\label{them:goodo} Let $S$ be a $k$-monoid with $k$ alphabets $(X_{1}, \ldots,X_{k})$.
Then every maximal generalized prefix code which is not $\{1\}$ is obtained by means of a sequence of elementary expansions
following by a sequence of elementary reductions using only the maximal generalized prefix codes $X_{1}, \ldots, X_{k}$.
\end{theorem}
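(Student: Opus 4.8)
The plan is to combine three facts already established: (i) Proposition~\ref{prop:spring} tells us that each $C_{\mathbf{m}}$ (with $\mathbf{m}\neq\mathbf{0}$) is obtained from the $X_i$ by a sequence of elementary expansions; (ii) the observation just before the theorem tells us that each $X_1^{s_1}\cdots X_k^{s_k}$ is obtained from the $X_i$ by a sequence of elementary expansions (since $XY=\bigcup_{x\in X}xY$); and (iii) Proposition~\ref{prop:code} tells us that for a maximal generalized prefix code $C$ of extent $\mathbf{m}'$ and any $\mathbf{m}\geq\mathbf{m}'$, the code $C_{\mathbf{m}}$ is obtained from $C$ by elementary expansions. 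The key structural input is part (5) of Lemma~\ref{lem:star}, which characterises the relation $XM\subseteq YM$ between maximal generalized prefix codes $X,Y$ exactly as ``$X$ is obtained from $Y$ by a finite sequence of elementary expansions'' — together with the remark that elementary expansion and elementary reduction are mutually inverse operations.

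First I would take an arbitrary maximal generalized prefix code $C\neq\{1\}$ and let $\mathbf{m}$ be its extent (the join of the sizes of its elements); note $\mathbf{m}\neq\mathbf{0}$, since otherwise every element of $C$ has size $\mathbf{0}$, forcing $C=\{1\}$ by part (1) of Lemma~\ref{lem:coffee}. By Proposition~\ref{prop:code}, $C_{\mathbf{m}}$ is obtained from $C$ by a finite sequence of elementary expansions. By part (5) of Lemma~\ref{lem:star} this is equivalent to $C_{\mathbf{m}}S\subseteq CS$; hence, running the mutually-inverse operation backwards, $C$ is obtained from $C_{\mathbf{m}}$ by a finite sequence of elementary \emph{reductions}. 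So it suffices to show that $C_{\mathbf{m}}$ itself is obtained from $X_1,\ldots,X_k$ by a sequence of elementary expansions alone; then prepending that sequence to the reduction sequence just produced gives the desired factorisation ``expansions followed by reductions''.

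The remaining step — showing $C_{\mathbf{m}}$ is reachable from $\{X_1,\ldots,X_k\}$ by expansions only — is exactly Proposition~\ref{prop:spring}, so it is already done: $C_{\mathbf{m}}$ is either $\{1\}$ (excluded here) or obtained from $X_1,\ldots,X_k$ by a sequence of elementary expansions. Chaining this with the reduction sequence from the previous paragraph completes the argument. I would also remark that the expansions used in Proposition~\ref{prop:spring} and in the $X_1^{s_1}\cdots X_k^{s_k}$ observation only ever use the $X_i$ (and intermediate codes built from them), which is what lets us assert ``using only the maximal generalized prefix codes $X_1,\ldots,X_k$'' in the conclusion.

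The only genuine subtlety — and the step I would be most careful about — is the bookkeeping at the ``turnaround'': one must be sure that reversing a sequence of elementary expansions yields a legitimate sequence of elementary \emph{reductions} and vice versa, i.e.\ that at each intermediate stage the sets involved really are maximal generalized prefix codes so that the reduction step is applicable. This is guaranteed because each intermediate set in an expansion chain is a maximal generalized prefix code by part (2) of Lemma~\ref{lem:star}, and the reduction of part (3) of Lemma~\ref{lem:star} undoes precisely such an expansion; so the chain read backwards is a valid reduction chain. No new computation is needed beyond invoking these lemmas in the right order.
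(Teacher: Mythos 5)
Your proposal is correct and follows essentially the same route as the paper: the paper obtains the theorem exactly by combining Proposition~\ref{prop:spring} and Proposition~\ref{prop:code} (together with the observation that $XY=\bigcup_{x\in X}xY$ and that elementary expansions and reductions are mutually inverse), which is precisely the chain ``$X_{1},\ldots,X_{k}\to C_{\mathbf{m}}$ by expansions, then $C_{\mathbf{m}}\to C$ by reductions'' that you describe. Your extra care about the extent being non-zero and about intermediate stages remaining maximal generalized prefix codes is sound but not a departure from the paper's argument.
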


The above theorem tells us, in particular, that every maximal generalized prefix code can be obtained from a concrete maximal generalized prefix code by 
means of a sequence of elementary reductions.
The obvious question is whether every maximal generalized prefix code is concrete.
We shall show by means of a counterexample that the answer to this question is `no' in general.

\begin{example}\label{ex:prorogue}{\em Let $S = \{a_{1},a_{2}\}^{\ast} \times \{b_{1},b_{2}\}^{\ast} \times \{c_{1},c_{2}\}^{\ast}$, a $3$-monoid
obtained by taking the direct product of three copies of the free monoid on two generators.
We shall write the elements of $S$ as triples $xyz$.
Let 
$$C = \{a_{1}b_{1}, a_{2}c_{1}, b_{2}c_{2}, a_{1}b_{2}c_{1}, a_{2}b_{1}c_{2}\}.$$ 
It is easy to check that this is a generalized prefix code;
a pair of elements $xyz$ and $uvw$ are incomparable if and only if at least one pair of corresponding components are 
prefix-incomparable --- recall that if $M$ is a free monoid, the elements $a$ and $b$ are {\em prefix incomparable} 
if neither $a = bc$ for some $c$ or $b = ac$ for some $c$ or, equivalently, $aM \cap bM = \varnothing$.
It can be proved directly that $C$ is, in fact, a maximal generalized prefix code but we show this by using our results above.
Replace the element $a_{1}b_{1}$ by the elements $a_{1}b_{1}c_{1}$ and $a_{1}b_{1}c_{2}$;
replace the element $a_{2}c_{1}$ by $a_{2}b_{1}c_{1}$ and $a_{2}b_{2}c_{1}$;
finally, replace $b_{2}c_{2}$ by $a_{1}b_{2}c_{2}$ and $a_{2}b_{2}c_{2}$.
Denote by $C'$ the resulting set of eight elements of $S$.
They all have the same size $(1,1,1)$.
It follows that $C' = C_{(1,1,1)}$ and so is a maximal generalized prefix code by Lemma~\ref{lem:homogeneous}.
Now, $C$ is obtained from $C_{(1,1,1)}$ by a sequence of elementary reductions: 
the pair $a_{1}b_{1}c_{1}$ and $a_{1}b_{1}c_{2}$ is replaced by $a_{1}b_{1}$;
the pair $a_{2}b_{1}c_{1}$ and $a_{2}b_{2}c_{1}$ is replaced by $a_{2}c_{1}$;
the pair $a_{1}b_{2}c_{2}$ and $a_{2}b_{2}c_{2}$ is replaced by $b_{2}c_{2}$.
It follows by Lemma~\ref{lem:star} that $C$ is a maximal generalized prefix code.
However, it is not concrete since it contains the elements $b_{2}c_{2}$, $a_{2}c_{1}$ and $a_{1}b_{1}$,
which omit an $a$-coordinate, a $b$-coordinate and a $c$-coordinate, respectively, which means that part (2)
of the definition of concreteness cannot be applied.}
\end{example}

Our counterexample above was constructed for a rigid $3$-monoid.
We now show that there is no counterexample in the case of right rigid $2$-monoids.

\begin{lemma}\label{lem:spa} Let $S$ be a $k$-monoid.
Let $C$ be a maximal generalized prefix code such that
$$C = \bigcup_{x \in Y} xZ_{x}$$
where $Y \subseteq X_{i}$
for some $1 \leq i \leq k$.
Then $Y = X_{i}$ and each $Z_{x}$ is a maximal generalized prefix code.
\end{lemma}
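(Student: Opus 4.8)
The plan is to establish the two assertions in turn, using only the unique factorization property (UFP), Lemma~\ref{lem:levi}, and cancellativity (Lemma~\ref{lem:coffee}). First I would record the elementary fact used repeatedly: if $x,x'\in X_i$ and $p=xq=x'q'$ in $S$ for some $q,q'$, then $x=x'$, since by the UFP the factorization of $p$ into an element of size $\mathbf{e}_i$ followed by the remainder is unique and is witnessed both by the pair $(x,q)$ and by the pair $(x',q')$. In particular, distinct atoms of $X_i$ are incomparable, so the sets $xZ_x$ for $x\in Y$ are pairwise disjoint.

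Next I would prove $Y=X_i$. Suppose not and pick $a\in X_i\setminus Y$. By maximality of $C$, the element $a$ is comparable with some $c\in C$; write $c=xz$ with $x\in Y$ and $z\in Z_x$, so that $au=xzv$ for some $u,v\in S$. Since $d(a)=\mathbf{e}_i\le \mathbf{e}_i+d(z)=d(xz)$, Lemma~\ref{lem:levi} gives $t\in S$ with $xz=at$; as $xz=x\cdot z$ as well, with $x,a\in X_i$, the elementary fact forces $x=a$, contradicting $a\notin Y$. Hence $Y=X_i$. The same reasoning shows that every $Z_x$ is non-empty: if $Z_{x_0}=\varnothing$ then no element of $C$ lies above $x_0$, so $x_0$ would be comparable with no element of $C$, again contradicting the maximality of $C$; and each $Z_x$ is finite because $C$ is.

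I would then check that each $Z_x$ is a generalized prefix code: if $z,z'\in Z_x$ satisfy $zu=z'v$, then $xzu=xz'v$, so $xz$ and $xz'$ are comparable elements of the generalized prefix code $C$, whence $xz=xz'$ and, after left cancellation, $z=z'$. Finally, for the maximality of $Z_x$, suppose some $w\in S$ is incomparable with every element of $Z_x$; I claim that $xw$ is then incomparable with every $c=x'z'\in C$ (where $x'\in X_i$ and $z'\in Z_{x'}$), which contradicts the maximality of $C$. Indeed, from $xws=x'z't$, viewed as $x\cdot(ws)=x'\cdot(z't)$, the elementary fact gives $x=x'$, and then left cancellation gives $ws=z't$, so $w$ is comparable with $z'\in Z_x$, a contradiction. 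Thus each $Z_x$ is a maximal generalized prefix code, completing the proof.

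The step I expect to demand the most care is the maximality argument for $Z_x$: the single witness $xw$ must simultaneously rule out comparability with the elements of $C$ lying above $x$ and with those lying above every other atom of $X_i$, and what makes this uniform is precisely the UFP observation that the size-$\mathbf{e}_i$ prefix of any common right multiple of $xw$ and $x'z'$ equals both $x$ and $x'$.
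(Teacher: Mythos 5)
Your proof is correct and follows essentially the same route as the paper's: both parts rest on matching the size-$\mathbf{e}_{i}$ prefixes via the UFP (Lemma~\ref{lem:levi}), together with left cancellation and the maximality of $C$. The only cosmetic differences are that you prove the maximality of $Z_{x}$ by contraposition where the paper argues directly, and that you add the harmless extra observation that each $Z_{x}$ is non-empty and finite.
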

\begin{proof} Suppose first that $Y \neq X_{i}$.
Let $x' \in X_{i} \setminus Y$.
Then, by assumption, $x'u = xz$ where $z \in Z_{x}$ for some $x \in Y$.
It follows by Lemma~\ref{lem:levi} that $x' = x$, which is a contradiction. 
We have therefore proved that $Y = X_{i}$.
We prove that $Z_{x}$ is a generalized prefix code.
Let $a,b \in Z_{x}$ and suppose that $au = bv$ for some $u,v \in S$.
Then $(xa)u = (xb)v$.
But $xa, xb \in C$ and so $xa = xb$ giving $a = b$.
We now prove that $Z_{x}$ is maximal.
Let $a \in S$ be arbitrary.
Then $xa$ must be comparable with an element $c \in C$.
Thus $xau = cv$ where $u,v \in S$.
Now $c$ must contain at least one element of $X_{i}$ by assumption.
It follows that we can write $c = yc'$ for some $c' \in S$ and $y \in X_{i}$.
Thus $xau = yc'v$.
But, by assumption, $d(x) = d(y)$ and so $x = y$
by Lemma~\ref{lem:levi}.
From $c = xc'$, we get that $c' \in Z_{x}$. 
Also $xau = xc'v$ and so by cancellation, $au = c'v$. 
It follows that $Z_{x}$ is a maximal generalized prefix code.
\end{proof}

The following is immediate by Lemma~\ref{lem:tsar} since if $x \in X_{1}^{\ast}$ and $y \in X_{2}^{\ast}$
then $d(x) \wedge d(y) = \mathbf{0}$.

\begin{lemma}\label{lem:alina1} Let $S$ be a right rigid $2$-monoid with alphabets $X_{1}$ and $X_{2}$.
Let
$x \in X_{1}^{\ast}$ and $y \in X_{2}^{\ast}$ be two non-empty strings.
Then there are elements $u,v \in S$ such that $xu = yv$.
\end{lemma}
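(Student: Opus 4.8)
The plan is to reduce the statement directly to part~(1) of Lemma~\ref{lem:tsar}, so that the only thing to check is its hypothesis. First I would observe that since $x$ is a product of atoms drawn from $X_{1}$, we have $d(x) = (d_{1}(x),0)$ with $d_{1}(x) \geq 1$, and similarly $d(y) = (0,d_{2}(y))$ with $d_{2}(y) \geq 1$. Taking the componentwise minimum then gives
$$d(x) \wedge d(y) = \bigl(\min(d_{1}(x),0),\, \min(0,d_{2}(y))\bigr) = (0,0) = \mathbf{0}.$$

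Next, since $S$ is right rigid, part~(1) of Lemma~\ref{lem:tsar} applies to the pair $x,y$: there exist (in fact unique) elements $u,v \in S$ with $xu = yv$, where moreover $d(u) = d(y)$ and $d(v) = d(x)$. This is precisely the asserted conclusion; the extra information on the sizes of $u$ and $v$, together with the refinement recorded in Lemma~\ref{lem:tsartwo}, is what makes the lemma useful in the arguments that follow.

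There is no real obstacle here: the result is genuinely immediate once the equality $d(x) \wedge d(y) = \mathbf{0}$ is noted, and Lemma~\ref{lem:tsar} (quoted from \cite{DY2016}) does all the work. I would remark that the non-emptiness hypothesis on $x$ and $y$ is not actually needed for existence — it holds verbatim when $x = 1$ or $y = 1$, taking $u = y$ and $v = x$ — but we retain it since it is the form in which the lemma will be invoked.
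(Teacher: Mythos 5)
Your proof is correct and is essentially identical to the paper's: the paper also notes that $d(x) \wedge d(y) = \mathbf{0}$ for $x \in X_{1}^{\ast}$ and $y \in X_{2}^{\ast}$ and then cites part~(1) of Lemma~\ref{lem:tsar}. Nothing further is needed.
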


We can now prove the following.

\begin{theorem}\label{them:alina2} Let $S$ be a right rigid $2$-monoid with alphabets $X_{1}$ and $X_{2}$.
Then every maximal generalized prefix code $C \neq \{1\}$ in $S$ is obtained from
the maximal prefix codes in $X_{1}^{\ast}$ or $X_{2}^{\ast}$ by a sequence of expansions.
In particular, every maximal generalized prefix code in $S$ is concrete.
\end{theorem}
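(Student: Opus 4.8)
The plan is to induct on the extent $\mathbf{m}'$ of the maximal generalized prefix code $C$, or more precisely on $|\mathbf{m}'| = m_1' + m_2'$. The base case is when $\mathbf{m}' = \mathbf{0}$, forcing $C = \{1\}$, which is excluded; and when $\mathbf{m}' \in \{\mathbf{e}_1, \mathbf{e}_2\}$, in which case Lemma~\ref{lem:redwine} (applied after noting all elements have the same size) forces $C = C_{\mathbf{e}_i} = X_i$, a maximal prefix code in $X_i^{\ast}$, which is concrete by definition. For the inductive step, the key is to show that $C$ decomposes as $C = \bigcup_{x \in X_i} x Z_x$ for some $i$, with each $Z_x$ a maximal generalized prefix code of strictly smaller extent; then by Lemma~\ref{lem:spa} (which gives exactly this decomposition once we know $C$ is covered by multiples of $X_i$) and the induction hypothesis each $Z_x$ is concrete, and $C$ is built from $X_i$ by elementary expansions with concrete codes, hence concrete by definition and by part~(2) of Lemma~\ref{lem:star}.

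So the real content is: \emph{given a maximal generalized prefix code $C \neq \{1\}$ in a right rigid $2$-monoid, every element of $C$ has a nontrivial common factor structure that lets us factor out $X_1$ or $X_2$ on the left.} First I would dispose of the trivial case where every element of $C$ lies in a single free monoid $X_1^{\ast}$ (or $X_2^{\ast}$): then $C$ is a maximal generalized prefix code consisting of homogeneous type-$1$ elements, and I claim it must then be a maximal prefix code in $X_1^{\ast}$ — indeed any $w \in X_1^{\ast}$ is comparable in $S$ to some $c \in C$, hence $wu = cv$; taking the $1$-projections and using the UFP shows $w$ and $c$ are prefix comparable in $X_1^{\ast}$ — so $C$ is concrete. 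Otherwise $C$ contains an element with a nontrivial $2$-component and an element with a nontrivial $1$-component (possibly the same element). The aim is to show that then $C \subseteq X_1 S$, i.e. every element of $C$ begins with a letter from $X_1$; symmetrically one would get $C \subseteq X_2 S$, and at least one of these must hold.

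To see $C \subseteq X_1 S$ when not all elements are homogeneous of type $2$: pick $c_0 \in C$ whose first letter is in $X_1$ (such exists since not every element is purely type-$2$, and an element with nontrivial $1$-component, after moving a type-$1$ letter to the front via the rigidity/commutation-type relations, begins with a type-$1$ letter — here I would use Lemma~\ref{lem:relations}(3) to write any $s$ as $x_1 x_2$ with $x_i \in X_i^{\ast}$, so if $d_1(s) > 0$ then $s$ begins with a letter of $X_1$). Now take an arbitrary $c \in C$; I must show $c$ begins with an $X_1$-letter. Suppose instead $c$ begins with an $X_2$-letter, so $c \in X_2^{\ast}$-initial; write $c = y c'$ with $y \in X_2$. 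Using Lemma~\ref{lem:alina1}, the type-$1$ prefix $a$ of $c_0$ (a nonempty element of $X_1^{\ast}$) and $y \in X_2$ are comparable: $a u = y v$ for suitable $u, v$. Then $c_0$ and $c$ become comparable in $S$ (trace through: $c_0 = a\,(\cdots)$ and $c = y\,(\cdots)$, and $aS \cap yS \neq \varnothing$ propagates), contradicting that $C$ is a generalized prefix code and $c_0 \neq c$. Hence $c$ begins with an $X_1$-letter, giving $C \subseteq X_1 S$. Then writing each $c = x z$ with $x \in X_1$, $z \in S$, and grouping, we get $C = \bigcup_{x \in Y} x Z_x$ with $Y \subseteq X_1$, and Lemma~\ref{lem:spa} finishes: $Y = X_1$ and each $Z_x$ is a maximal generalized prefix code, necessarily of extent $< \mathbf{m}'$ in the $\ell^1$-sense since we stripped off a genuine $X_1$-letter. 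Apply the induction hypothesis to each $Z_x$.

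The main obstacle I anticipate is the comparability argument in the previous paragraph — specifically, carefully justifying that an element of $C$ with a nontrivial $1$-component really must share a nonempty $X_1$-initial segment with $c_0$ in a way that forces $aS \cap yS \neq \varnothing$ to lift to $c_0 S \cap c S \neq \varnothing$. This is where right rigidity is essential (via Lemma~\ref{lem:tsar}/Lemma~\ref{lem:alina1}, which fails in general $k$-monoids and indeed fails for the $3$-monoid counterexample in Example~\ref{ex:prorogue}), and where the restriction to $k = 2$ matters, since with only two alphabets an element that is not purely type-$2$ is "type-$1$-initial" after rearrangement, leaving no room for a third incomparable direction. I would also need to handle the bookkeeping that stripping an $X_1$-letter strictly decreases $\sum_i (\text{extent})_i$, which is immediate since the extent of $Z_x$ is $\le \mathbf{m}' - \mathbf{e}_1$ componentwise when $x \in X_1$. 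Everything else is an assembly of Lemma~\ref{lem:spa}, Lemma~\ref{lem:star}(2), Lemma~\ref{lem:pinko}, and the definition of concreteness.
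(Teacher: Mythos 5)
Your overall strategy --- induct on the extent, factor one alphabet out on the left via Lemma~\ref{lem:spa}, and recurse on the smaller codes $Z_x$ --- is exactly the paper's, but the step that decides \emph{which} alphabet can be factored out rests on a false claim. You pick $c_0\in C$ with a nontrivial $1$-component, write its type-$1$ prefix as $a$, and assert that for any nonidentity $c=yc'$ with $y\in X_2$ and $c\in X_2^{\ast}$, the comparability $au=yv$ from Lemma~\ref{lem:alina1} ``propagates'' to give $c_0S\cap cS\neq\varnothing$. It does not: Lemma~\ref{lem:alina1} (via Lemma~\ref{lem:tsar}) requires $d(x)\wedge d(y)=\mathbf{0}$, which fails as soon as $c_0$ is mixed, since then $d(c_0)\wedge d(c)=(0,\min(d_2(c_0),d_2(c)))\neq\mathbf{0}$. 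A concrete counterexample lives in the rigid $2$-monoid of item (2) of the example in Section~11 (relations $e_1f_1=f_2e_1$, $e_1f_2=f_1e_1$, $e_2f_j=f_je_2$): take $c_0=e_1f_1=f_2e_1$ and $c=f_1$. Then $c_0S\subseteq f_2S$, and $f_2S\cap f_1S=\varnothing$ by Lemma~\ref{lem:levi}, so $c_0$ and $c$ are \emph{incomparable}. Moreover $C=\{f_1,\,f_2e_1,\,f_2e_2\}$ is a maximal generalized prefix code (one elementary expansion of $X_2$) containing an element with nontrivial $1$-component but not contained in $X_1S$; your argument, run on this $C$, would ``prove'' $C\subseteq X_1S$, which is false.

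The repair is the paper's dichotomy, which is keyed to \emph{homogeneous} elements rather than to components. If $C$ contains a nonidentity element of $X_j^{\ast}$, then Lemma~\ref{lem:alina1} applies legitimately (both elements homogeneous of different types, so their sizes meet in $\mathbf{0}$) to show $C$ contains no nonidentity element of the other free monoid; hence every element of $C$ has a nontrivial $j$-component and, by the UFP, lies in $X_jS$ --- so one factors out $X_j$, which need not be $X_1$. If $C$ contains no homogeneous element, every element is mixed and either alphabet may be factored out. With that correction the rest of your argument (the decomposition via Lemma~\ref{lem:spa}, the strict decrease of extent after stripping a letter, and reassembly via part (2) of Lemma~\ref{lem:star}) goes through as in the paper; your separate treatment of the case $C\subseteq X_i^{\ast}$ and your base cases are fine but are in fact subsumed by the corrected induction.
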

\begin{proof} In this proof, we shall use the term {\em homogeneous} to mean 
a non-identity element of $X_{1}^{\ast}$ or a non-identity element of $X_{2}^{\ast}$.
Let $D \neq \{1\}$ be any maximal generalized prefix code in $S$.
There are two possibilities:
either $D$ contains a homogeneous element or it doesn't.
Suppose that $D$ contains a homogeneous element $x$.
Then, without loss of generality, we can assume that $x \in X_{1}^{\ast}$.
By Lemma~\ref{lem:alina1}, there can be no elements of $D$ that belong to $X_{2}^{\ast}$.
It follows that every element of $D$ contains an element of $X_{1}$.
Thus by Lemma~\ref{lem:spa}, we can write 
$D = \bigcup_{x \in X_{1}} xW_{x}$
where each $W_{x}$ is a maximal generalized prefix code. 
Now suppose that $D$ contains no homogeneous element.
Thus every element of $D$ contains at least one letter from both of the alphabets $X_{1}$ and $X_{2}$.
It follows again that $D = \bigcup_{x \in X_{i}} xW_{x}$
for some $1 \leq i \leq 2$
where each $W_{x}$ is a maximal generalized prefix code.
Thus, in either case, we can write 
$D = \bigcup_{x \in X_{i}} xW_{x}$
for some $1 \leq i \leq 2$.
But each $W_{x}$ is either equal to $\{1\}$ or is a non-trivial maximal generalized prefix code
but with elements strictly shorter than in $D$.
It follows that an inductive argument can now be applied.
\end{proof}


\section{Constructing examples of our groups}

In this section, we shall prove that the groups we have defined contain as special cases the higher-dimensional
Thompson groups introduced by Matt Brin \cite{Brin} and subsequently widely studied \cite{Brin2010, BC, FMWZ, HM,BB, DM, BL}.
Brin's groups arise from the $k$-monoids which are finite direct products of free monoids.
That his groups arise in this way has also been proved by Birget \cite{Birget2019} working solely with finite direct products of free monoids.
We therefore begin by establishing a dictionary between our paper and that of Birget \cite{Birget2019};
specifically, Section~1 and Section~2 of his paper.
Observe that he is working solely with $k$-monoids of the form $S = nA^{\ast}$ --- that is the direct product of $n$ copies of the free monoid $A^{\ast}$.
Let $S$ be any $n$-monoid.
Define $u \leq_{\mbox{\scriptsize int}} v$ if and only if there exists an $x$ such that $ux = v$.
This is an order relation since $S$ is conical.
It is called the {\em initial factor order}.
in the case where $S$ is a free monoid, it is the usual prefix order
where  $u \leq_{\mbox{\scriptsize pref}} v$ if and only if there exists an $x$ such that $ux = v$.
Given $u$ and $v$ we are also interested in when there is an element $z$ such that
$u \leq_{\mbox{\scriptsize pref}} z$ and $v \leq_{\mbox{\scriptsize pref}} z$.
This is equivalent to requiring that there exist $x$ and $y$ such that $ux = vy = z$;
equivalently, $uS \cap vS \neq \varnothing$.
By \cite[Lemma~2.5]{Birget2019}, we have that $u,v \in nA^{\ast}$ have a join if and only if  $uS \cap vS \neq \varnothing$.
It follows that `joinless' used in \cite{Birget2019} is equivalent to our `incomparable'.
It follows that our definition of a 'maximal generalized prefix code' is equivalent to Birget's definition of a `maximal joinless code'.
With this understanding, Birget's inverse monoid $n\mathcal{RI}_{A}^{\scriptsize fin}$ is our inverse monoid $\mathsf{P}(nA^{\ast})$.
His definition of the congruence $\equiv_{\scriptsize end}$ \cite[Definition 2.23]{Birget2019} looks different from that of the minimum group congruence we use
but this is only apparent.
Recall that $\mathsf{P}(nA^{\ast})$ is $E$-unitary by Proposition~\ref{prop:tizer} and Lemma~\ref{lem:birget}.
By Proposition~\ref{prop:E}, in an $E$-unitary inverse semigroup $\sigma\, = \, \sim$.
We prove (the well-known) result that in an $E$-unitary inverse semigroup $a \sim b$ if and only if $ab^{-1}b = ba^{-1}a$.
Suppose that $ab^{-1}b = ba^{-1}a$.
Then $ba^{-1} = ab^{-1}ba^{-1}$ and so is an idempotent.
It follows that $ab^{-1}$ is an idempotent.
Now observe that $a^{-1}ba^{-1}a = a^{-1}ab^{-1}b$.
Thus $a^{-1}b$ is above an idempotent and so, since the inverse semigroup is $E$-unitary,
it is itself an idempotent.
We have therefore proved that $a \sim b$.
Suppose now that $a \sim b$.
Since $ab^{-1}$ is an idempotent it follows that $ab^{-1}b \leq b$.
It is immediate that $ab^{-1}b \leq a$.
Now let $z \leq a,b$.
Then $z \leq ab^{-1}b, ba^{-1}a$.
It follows that $ab^{-1}b = a \wedge b$.
A similar argument shows that $ba^{-1}a = a \wedge b$.
We have therefore proved that $ab^{-1}b = ba^{-1}a$.
Observe that his \cite[Lemma~2.11]{Birget2019} is a special case of our Lemma~\ref{lem:star}.

The papers \cite{Birget} and \cite{Lawson2007, Lawson2007b} simply prove the following.

\begin{proposition}\label{prop:water} 
$$\mathscr{G}(A_{n}^{\ast}) = G_{n,1}.$$
\end{proposition}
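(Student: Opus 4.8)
The plan is to unwind the definition of $\mathscr{G}(A_{n}^{\ast})$ with the machinery of Sections~5--8 and match it, term by term, with the classical Scott--Birget description of $G_{n,1}$; throughout, $n \geq 2$. First recall that $A_{n}^{\ast}$ is a free monoid, hence a $1$-monoid by Remark~\ref{rem:free}, and is singly aligned. It is also aperiodic: any right-infinite word over $A_{n} = \{a_{1}, \ldots, a_{n}\}$ that is not eventually periodic --- for instance the word in which the blocks of $a_{1}$'s separated by successive occurrences of $a_{2}$ have strictly increasing lengths --- is an aperiodic $1$-tiling, so by Corollary~\ref{cor:good} the action of $A_{n}^{\ast}$ on $(A_{n}^{\ast})^{\infty}$ is effective. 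By Example~\ref{ex:tomb}, $(A_{n}^{\ast})^{\infty} = A_{n}^{\omega}$, which is the Cantor space by Proposition~\ref{prop:top}. By Theorem~\ref{them:tea}, $\mathscr{G}(A_{n}^{\ast}) = \mathsf{R}(A_{n}^{\ast})^{e}/\sigma \cong \mathsf{P}(A_{n}^{\ast})^{e}/\sigma$, so it is enough to analyse the latter.

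Next I would record the concrete shape of $\mathsf{P}(A_{n}^{\ast})^{e}$. In a free monoid the generalized prefix codes are precisely the prefix codes; a finitely generated projective right ideal $XA_{n}^{\ast}$, with $X$ a prefix code, is essential exactly when every element of $A_{n}^{\ast}$ is prefix-comparable with an element of $X$, i.e. exactly when $X$ is a maximal prefix code; and by Lemma~\ref{lem:comet} a bijective morphism between two such ideals amounts to a bijection $X \to Y$ between maximal prefix codes of the same cardinality. So an element of $\mathsf{P}(A_{n}^{\ast})^{e}$ is the same datum as an ordered pair $(Y,X)$ of equinumerous ordered maximal prefix codes, acting by $x_{i}m \mapsto y_{i}m$ on $XA_{n}^{\ast}$. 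On the other side, by definition (see \cite{Scott, Birget}) $G_{n,1}$ is the group of all self-homeomorphisms of $A_{n}^{\omega}$ of the form $f_{(Y,X)} \colon x_{i}w \mapsto y_{i}w$, where $(X,Y)$ ranges over pairs of equinumerous ordered maximal prefix codes and two such data are identified precisely when they induce the same homeomorphism.

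Now I would invoke the geometric representation of Section~8. By Theorem~\ref{them:rice}, which rests on Proposition~\ref{prop:cheese} and Proposition~\ref{prop:tresbon}, the homomorphism $\chi \colon \mathsf{R}(A_{n}^{\ast}) \to \mathcal{I}^{cl}(A_{n}^{\omega})$ induces an \emph{injective} homomorphism $\mathscr{G}(A_{n}^{\ast}) = \mathsf{P}(A_{n}^{\ast})^{e}/\sigma$ into the group of self-homeomorphisms of $A_{n}^{\omega}$, so it only remains to identify its image. On a basic element, $\chi$ sends the bijection $x_{i} \mapsto y_{i}$ (between ordered maximal prefix codes $X$ and $Y$) to $\bigvee_{i} \lambda_{y_{i}}\lambda_{x_{i}}^{-1}$, which by Lemma~\ref{lem:boris} is a self-homeomorphism of $A_{n}^{\omega}$ and is precisely the prefix-replacement map $f_{(Y,X)} \colon x_{i}w \mapsto y_{i}w$. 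Moreover, by Proposition~\ref{prop:tresbon} together with Proposition~\ref{prop:E} and Proposition~\ref{prop:tizer}, two elements of $\mathsf{P}(A_{n}^{\ast})^{e}$ are $\sigma$-related if and only if they have the same image under $\chi$, so $\sigma$ is exactly the kernel congruence of the restriction of $\chi$ to $\mathsf{P}(A_{n}^{\ast})^{e}$. Hence $\mathscr{G}(A_{n}^{\ast}) = \mathsf{P}(A_{n}^{\ast})^{e}/\sigma \cong \chi(\mathsf{P}(A_{n}^{\ast})^{e}) = \{f_{(Y,X)}\} = G_{n,1}$, and since this isomorphism is realised by the literal action on $A_{n}^{\omega}$ we may write equality.

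I do not expect a genuine obstacle: all the substance is already supplied by Sections~5--8, and what remains is the bookkeeping that identifies ``finitely generated essential projective right ideal of $A_{n}^{\ast}$'', ``maximal prefix code'', and ``finite clopen partition of $A_{n}^{\omega}$ into cones'' as one and the same object, together with the trivial observation that every element of $G_{n,1}$ arises from some $\alpha \in \mathsf{P}(A_{n}^{\ast})^{e}$. The one point deserving care is to confirm that the definition of $G_{n,1}$ used in \cite{Scott, Birget} agrees, under this identification, with the one above --- in particular that ``induces the same homeomorphism of $A_{n}^{\omega}$'' on replacement data coincides with the minimum group congruence $\sigma$ --- but this is exactly the dictionary set up in the discussion preceding the statement (Birget's $\equiv_{\mathrm{end}}$ versus our $\sigma$).
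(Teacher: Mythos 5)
Your argument is correct, but it is doing substantially more work than the paper does at this point: the paper offers no internal proof of Proposition~\ref{prop:water} at all, simply asserting that the statement is what \cite{Birget} and \cite{Lawson2007, Lawson2007b} prove (those references carry out precisely the prefix-code construction of $G_{n,1}$ from the free monoid). What you have written is in effect a self-contained verification that the general machinery of Sections~5--8 specializes, in the case $k=1$, to the classical Scott--Birget picture: essential projective right ideals of $A_{n}^{\ast}$ are maximal prefix codes, elements of $\mathsf{P}(A_{n}^{\ast})^{e}$ are bijections between equinumerous maximal prefix codes, $\chi$ realizes these as the prefix-replacement homeomorphisms of $A_{n}^{\omega}$, and Proposition~\ref{prop:tresbon} (with Propositions~\ref{prop:E} and~\ref{prop:tizer}) identifies $\sigma$ with the kernel of $\chi$. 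Each of these identifications is justified by the lemmas you cite, and your exhibition of an aperiodic word settles the hypothesis needed for Corollary~\ref{cor:good}. The trade-off is clear: the paper's citation is shorter and leans on the literature, while your route confirms that the new general framework genuinely recovers the known groups rather than merely being asserted to; the only external input you still need is that the definition of $G_{n,1}$ in \cite{Scott, Birget} is the prefix-replacement description, which is exactly the dictionary point you flag at the end.
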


Before we can state our next result, we shall need  to describe maximal generalized prefix codes in more geometrical language.

\begin{remark}\label{rem:guardian}
{\em 
Let $S$ be a $k$-monoid with associated $k$-alphabets $(X_{1}, \ldots, X_{k})$.
Let $|X_{i}| = n_{i} \geq 2$ but finite.
Write $X_{i} = \{x_{i}^{1}, \ldots, x_{i}^{n_{i}}\}$ ---
observe that here we use superscripts as labels not powers.
Let $I^{k}$ be the product $[0,1)^{k}$.
We shall show how maximal generalized prefix codes in $S$ divide up the $k$-cube $I^{k}$.
Each element $s \in S$ can be written uniquely as $s = x_{1} \ldots x_{k}$ where $x_{i} \in X_{i}^{\ast}$.
An elements of $I^{k}$ is a $k$-tuple  and we associate the $i$th-coordinate with the alphabet $X_{i}$.
Let $\mathbf{m} \in \mathbb{N}^{k}$.
We show first how the elements of $C_{\mathbf{m}}$ correspond to a special partition of $I^{k}$.
We shall assume first that no component of $\mathbf{m}$ is zero.
We calculate the cardinality of $C_{\mathbf{m}}$.
This is $n_{1}^{m_{1}} \ldots n_{k}^{m_{k}}$.
Given $n_{i} \geq 2$ divide the interval $[0,1]$ into $n_{i}$ right-open, left-closed intervals each of length $\frac{1}{n_{i}}$.
The interval $[\frac{j-1}{n_{i}}, \frac{j}{n_{i}}]$ is associated with  the element $x_{i}^{j}$ where $1 \leq j \leq n_{i}$.
More generally, the element $x_{i}^{j}$ applied to a left-closed, right-open interval $J$ does the following:
divide $J$ equally into $n_{i}$  left-closed, right-open intervals and pick out the $j-1$th.
An element of $X_{i}^{\ast}$ therefore determines a left-closed right-open interval in $[0,1)$.
If that element is $1$ then this is the whole of $[0,1)$.
If it is an individual letter $x_{i}^{j}$ then it picks out the $j-1$th as above.
If it is a sequence of elements of $X_{i}^{\ast}$ then we apply then one at a time from left-to-right.
If the sequence has length $m_{i}$ then the left-closed, right-open interval will have length $\frac{1}{n_{i}^{m_{i}}}$.
It follows that each element $s = (x_{1}, \ldots, x_{k}) \in C_{\mathbf{m}}$ will correspond to a $k$-parallelepiped inside $I^{k}$
whose volume will be $\frac{1}{n_{1}^{m_{1}} \ldots n_{k}^{m_{k}}}$.
We call such a parallelepiped a {\em brick (of size $\mathbf{m}$)}.
In this way, the set $C_{\mathbf{m}}$ leads to a partition of $I^{k}$ into $n_{1}^{m_{1}} \ldots n_{k}^{m_{k}}$ bricks each with the same volume.
We call the partition of $I^{k}$ obtained in this way a {\em uniform partition}.
We can now explain the reduction process in terms of gluing suitable bricks together. 
Let $c_{1}, \ldots, c_{n_{i}}$ be elements of a generalized maximal prefix code.
We say that they are {\em $i$-contiguous} if there exists $c \in S$ such that $c_{j} = c x_{i}^{j}$ where $1 \leq j \leq n_{i}$.
This may be difficult to determine but is easy if the alphabets mutually commute.
We now remove $c_{1}, \ldots, c_{n_{i}}$ from the code but replace them with $c$.
Geometrically, we say that $c$ is obtained from  $c_{1}, \ldots, c_{n_{i}}$ by {\em gluing} them  along the $i$th-component.
A {\em shape} is obtained from bricks by applying the gluing process a finite number of times.
A {\em pattern} is any partition of $I^{k}$ into shapes obtained from a uniform partition by a finite number of gluings.}
\end{remark}

Now consider the $n$-fold direct product $A_{2}^{\ast} \times \ldots \times A_{2}^{\ast}$.
By Proposition~\ref{prop:doodah},
Remark~\ref{rem:guardian},
Section~7,
and \cite{BL}, we have the following.

\begin{proposition}\label{prop:air} 
$$\mathscr{G}((A_{2}^{\ast})^{n}) \cong nV.$$
\end{proposition}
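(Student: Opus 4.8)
The plan is to identify $\mathscr{G}((A_2^{\ast})^n)$ with Brin's group $nV$ by matching the geometric description of the group elements. Recall from Section~7 and the Remark following Theorem~\ref{them:rice} that every element of $\mathscr{G}(S)$ is represented by a pair $(B,A)$ of ordered maximal generalized prefix codes of equal size, acting on $S^\infty$ by $a_i w \mapsto b_i w$; and from Theorem~\ref{them:tea} that $\mathscr{G}(S) = \mathsf{P}(S)^e/\sigma$, so we may always choose these representatives to be generalized prefix codes rather than arbitrary finitely generated right ideals. For $S = (A_2^\ast)^n$, Proposition~\ref{prop:doodah} identifies $S^\infty$ homeomorphically with $(A_2^\omega)^n \cong \{0,1\}^\omega \times \cdots \times \{0,1\}^\omega$, which is exactly the model of the $n$-cube (mod the standard dyadic identifications) on which Brin defines $nV$ as a group of self-homeomorphisms implemented by "pattern pairs".

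First I would set up the dictionary carefully using Remark~\ref{rem:guardian}: a maximal generalized prefix code $C$ in $(A_2^\ast)^n$ corresponds, via the brick/gluing construction, to a pattern, i.e. a partition of the cube $I^n$ into dyadic rectangular shapes, and by Theorem~\ref{them:alina2}'s higher-dimensional analogue — more precisely by Theorem~\ref{them:goodo} together with Proposition~\ref{prop:code} — every such code is obtained from a uniform partition $C_{\mathbf m}$ by elementary reductions (gluings), which is precisely how Brin's admissible patterns arise. Since $(A_2^\ast)^n$ has mutually commuting alphabets (Proposition~\ref{prop:brexit}), the contiguity condition in Remark~\ref{rem:guardian} is easy to check and matches Brin's notion of a pattern exactly. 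Thus an ordered pair $(B,A)$ of maximal generalized prefix codes of the same cardinality translates to a pair of patterns with the same number of cells together with a bijection between the cells, and the induced map $a_i w \mapsto b_i w$ on $S^\infty \cong (A_2^\omega)^n$ is the canonical affine homeomorphism $nV$ assigns to such a pattern pair (a product of dyadic scalings and translations in each coordinate). This gives a well-defined map $\mathscr{G}((A_2^\ast)^n) \to nV$.

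Next I would check this map is a group homomorphism and a bijection. Injectivity is immediate from Theorem~\ref{them:rice}: distinct elements of $\mathscr{G}(S)$ act as distinct self-homeomorphisms of $S^\infty$, and the translation above is literally the identification of $S^\infty$ with the cube, so the actions correspond. That it is a homomorphism follows because composition of pattern-pair homeomorphisms on the cube corresponds to the product in $\mathsf{P}(S)^e/\sigma$ — one uses a common refinement of the relevant codes (available by Proposition~\ref{prop:code}, taking $C_{\mathbf m}$ for $\mathbf m$ large enough) exactly as in the classical Thompson-group argument for $\mathscr{G}(A_n^\ast)=G_{n,1}$ recorded in Proposition~\ref{prop:water}. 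Surjectivity is the content of Brin's own normal-form result: every element of $nV$ is given by some pair of patterns with a cell bijection, and by the remark that $(A_2^\ast)^n$'s codes are exactly patterns (via Theorem~\ref{them:goodo}), every such pattern pair comes from an element of $\mathscr{G}(S)$. Here one cites \cite{BL} for the precise comparison of conventions.

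The main obstacle I anticipate is the bookkeeping in the dictionary itself — ensuring that the dyadic subdivision scheme of Remark~\ref{rem:guardian} (which coordinate gets divided, in which order, and the half-open conventions) matches Brin's conventions for $nV$ on the nose, including the orientation of each alphabet and the fact that "gluing along the $i$th component" corresponds to merging two adjacent dyadic bricks into one in the $i$th direction. Once the spaces and the generating patterns are identified, the homomorphism and bijection properties are formal, paralleling Proposition~\ref{prop:water}; so the bulk of the work, and the only place where something could genuinely go wrong, is verifying that the combinatorial notion of maximal generalized prefix code in $(A_2^\ast)^n$ is the same as Brin's notion of pattern. That verification is essentially supplied by Theorem~\ref{them:goodo} and Proposition~\ref{prop:doodah}, with \cite{BL} bridging any remaining notational gap.
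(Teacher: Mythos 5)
Your proposal is correct and follows essentially the same route as the paper, which proves this proposition only by citing Proposition~\ref{prop:doodah} (identifying $S^{\infty}$ with the $n$-fold product of Cantor spaces), Remark~\ref{rem:guardian} (the dictionary between maximal generalized prefix codes and dyadic-brick patterns in the cube), the description of $\mathscr{G}(S)$ via pairs of maximal generalized prefix codes from Section~7, and \cite{BL} for the comparison with Brin's conventions. You have simply filled in the details the paper leaves implicit, and your substitution of Theorem~\ref{them:goodo} and Proposition~\ref{prop:code} for any appeal to concreteness (which fails for $n\geq 3$ by Example~\ref{ex:prorogue}) is the right way to do it.
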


\begin{remark}{\em Observe that our Example~\ref{ex:prorogue} contradicts the statement in the original version of  \cite[Lemma~2.10]{Birget2019}.
This has no bearing on the first two sections of Birget's paper which we refer to here.}
\end{remark}

\section{Concluding remarks}

This section contains sundry results that are interesting but do not fit with the main thrust of the paper.

Maximal generalized prefix codes occupy a special position in the theory as we now show.
Let $S$ be a distributive inverse semigroup.
A congruence $\rho$ on $S$ is said to be {\em additive} if $a \, \rho \, b$ and $c \, \rho \, d$ and $a \, \sim \, c$ and $b \, \sim \, d$ imply that
$(a \vee c) \, \rho \, (b \vee d)$.

\begin{lemma}\label{lem:cola} Let $S$ be a $k$-monoid.
If $X = \{x_{1}, \ldots, x_{m}\}$ is a maximal generalized prefix code then 
$\bigvee_{x \in X} xx^{-1} \leq_{e} 1$.
\end{lemma}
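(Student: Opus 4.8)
The statement is about the inverse $\wedge$-monoid $\mathsf{R}(S)$ (which is a distributive $\wedge$-monoid with zero by Proposition~\ref{prop:salami} and Corollary~\ref{lem:borisy}), so $\leq_{e}$ makes sense here. First I would unwind what the two sides are: by part (3) of Lemma~\ref{lem:tom} (or directly), $\bigvee_{x \in X} xx^{-1}$ is simply the identity function $\iota_{XS}$ on the finitely generated right ideal $XS$, and this is an idempotent, hence $\bigvee_{x \in X} xx^{-1} \leq 1$. So the only content is to verify the essentiality condition: for every $0 < e \leq 1$ in $\mathsf{R}(S)$ we have $\left(\bigvee_{x \in X} xx^{-1}\right) \wedge e \neq 0$.

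The key reduction is that an element $\leq 1$ is automatically an idempotent, since $1$ is an idempotent and the set of idempotents is an order ideal. Thus $e$ is the identity function $\iota_{YS}$ on some nonzero finitely generated right ideal $YS$, where $Y = \{y_{1}, \ldots, y_{n}\}$ is a finite nonempty subset of $S$. Now for two idempotents of $\mathsf{R}(S)$ the meet is just the product (the infimum in the semilattice $\mathsf{E}(\mathsf{R}(S))$), so $\left(\bigvee_{x \in X} xx^{-1}\right) \wedge e = \iota_{XS}\,\iota_{YS} = \iota_{XS \cap YS}$. Hence the meet is nonzero precisely when $XS \cap YS \neq \varnothing$.

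Finally I would invoke maximality of the generalized prefix code $X$: choose any $y_{1} \in Y$; by definition of \emph{maximal} generalized prefix code there is some $x \in X$ with $xS \cap y_{1}S \neq \varnothing$, and since $xS \cap y_{1}S \subseteq XS \cap YS$ this forces $XS \cap YS \neq \varnothing$. Therefore $\left(\bigvee_{x \in X} xx^{-1}\right) \wedge e \neq 0$ for every nonzero $e \leq 1$, which together with $\bigvee_{x \in X} xx^{-1} \leq 1$ gives $\bigvee_{x \in X} xx^{-1} \leq_{e} 1$.

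There is no real obstacle here: the argument is a short bookkeeping exercise once one observes (i) that elements below $1$ are idempotents, (ii) that the meet of two idempotents in $\mathsf{R}(S)$ is their product $\iota_{XS}\iota_{YS} = \iota_{XS\cap YS}$, and (iii) that maximality of $X$ is exactly the statement that $XS$ meets every principal right ideal, hence every finitely generated right ideal. If anything needs care, it is just spelling out (ii), i.e. that the meet of $\bigvee_{x\in X}xx^{-1}$ with an arbitrary idempotent below $1$ coincides with the product of idempotents rather than requiring the general meet formula of Leech.
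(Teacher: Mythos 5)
Your proof is correct and follows essentially the same route as the paper: both arguments reduce the essentiality check to idempotents (identity maps on finitely generated right ideals, or just principal ones in the paper's version) and then invoke maximality of $X$ to see that $XS$ meets every principal right ideal. The only cosmetic difference is that you compute the meet as the product of idempotents $\iota_{XS}\iota_{YS}=\iota_{XS\cap YS}$, whereas the paper exhibits a non-zero $zz^{-1}$ below both; these amount to the same thing.
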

\begin{proof} It is enough to work with idempotents of the form $uu^{-1}$.
By assumption, $uS \cap x_{i}S \neq \varnothing$ for some $i$.
Thus $z = up = x_{i}q$ for some $p,q \in S$.
Then $zz^{-1} \leq uu^{-1}, x_{i}x_{i}^{-1}$.
It follows that $uu^{-1} \wedge x_{i}x_{i}^{-1} \neq 0$.
\end{proof}

The above lemma provides the context for the next result. 

\begin{proposition}\label{prop:essential} Let $S$ be a $k$-monoid.
Let $\rho$ be an additive congruence on $\mathsf{R}(S)$ such that 
$\left( \bigvee_{x \in X} xx^{-1}\right) \, \rho \, 1$ whenever $X$ is a maximal generalized prefix code.
Then $\rho$ is an essential congruence.
\end{proposition}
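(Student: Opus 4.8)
The plan is to reduce to idempotents and then peel off essentiality one generator at a time. Throughout, for a finitely generated right ideal $XS$ of $S$ write $e_{XS}$ for the idempotent of $\mathsf{R}(S)$ which is the identity map on $XS$; thus $e_{XS}=\bigvee_{x\in X}xx^{-1}$, and the hypothesis on $\rho$ says precisely that $e_{ZS}\,\rho\,1$ whenever $Z$ is a maximal generalized prefix code. Recall that $\mathsf{R}(S)$ is a distributive inverse $\wedge$-monoid with zero by Proposition~\ref{prop:salami} and Corollary~\ref{lem:borisy}, so all the operations below make sense; recall also that $S$ is finitely aligned, so the intersection of two finitely generated right ideals is again finitely generated.

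First I would reduce the claim $a\leq_{e}b\Rightarrow a\,\rho\,b$ to the case of idempotents. If $a\leq_{e}b$ then $a\leq b$, hence $a=b\mathbf{d}(a)$, and $\mathbf{d}(a)\leq_{e}\mathbf{d}(b)$ by Lemma~\ref{lem:phone}. Once we know $\mathbf{d}(a)\,\rho\,\mathbf{d}(b)$, multiplying on the left by $b$ and using that $\rho$ is a congruence gives $a=b\mathbf{d}(a)\,\rho\,b\mathbf{d}(b)=b$. So it is enough to show: if $e,f$ are idempotents of $\mathsf{R}(S)$ with $e\leq_{e}f$, then $e\,\rho\,f$. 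The special case $f=1$ is immediate from the hypothesis: here $e=e_{RS}$ for some finitely generated right ideal $RS$ which, by the definition of $\leq_{e}$, meets every nonzero principal right ideal of $S$, i.e. is essential; by Lemma~\ref{lem:bliss} there is a maximal generalized prefix code $Z\subseteq RS$, so $ZS\subseteq RS$ and the hypothesis gives $e_{ZS}\,\rho\,1$; multiplying this on the right by $e_{RS}$ and using $ZS\subseteq RS$ yields $e_{ZS}=e_{ZS}e_{RS}\,\rho\,e_{RS}$, and combining with $e_{ZS}\,\rho\,1$ gives $e_{RS}\,\rho\,1$. (Additivity is not needed for this step.)

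It remains to pass from $f=1$ to arbitrary $f$. Write $e=e_{RS}\leq_{e}f=e_{TS}$ with $RS$ essential in $TS$, and fix a finite generating set $T=\{t_{1},\dots,t_{n}\}$ of $TS$. For each $t\in T$ the ideal $RS\cap tS$ is nonempty (as $tS\subseteq TS$ and $RS$ is essential in $TS$) and finitely generated (finite alignment), say $RS\cap tS=R_{t}S$, and it is essential in $tS$. The basic morphism $\lambda_{t}=t\cdot 1^{-1}\colon S\to tS$ is an isomorphism of right ideals, so there is a finitely generated right ideal $R'_{t}S$ of $S$ with $tR'_{t}S=R_{t}S$, and transporting essentiality along $\lambda_{t}$ (a routine cancellation argument) shows $R'_{t}S$ is essential in $S$; by the previous step $e_{R'_{t}S}\,\rho\,1$, whence conjugating by $t$ gives $e_{R_{t}S}=t\,e_{R'_{t}S}\,t^{-1}\,\rho\,t\cdot 1\cdot t^{-1}=tt^{-1}$. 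Since $\bigcup_{t\in T}R_{t}S=RS\cap TS=RS$ we have $e_{RS}=\bigvee_{t\in T}e_{R_{t}S}$, and also $e_{TS}=\bigvee_{t\in T}tt^{-1}$. All the joinands here are idempotents, hence pairwise compatible, so applying additivity $n-1$ times (the partial joins remaining idempotents throughout, so compatibility is preserved) yields $e_{RS}=\bigvee_{t}e_{R_{t}S}\,\rho\,\bigvee_{t}tt^{-1}=e_{TS}$, as required.

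The only point that needs genuine care — and the main obstacle — is exactly this last passage from $f=1$ to general $f$: one must slice $TS$ along its generators, verify that $RS\cap tS$ is a finitely generated right ideal essential in $tS$ (this is where finite alignment enters), transport essentiality down to $S$ along the basic isomorphism $\lambda_{t}$, and then reassemble the pieces with additivity, making sure the intermediate joins stay in the range where additivity applies. Everything else is a routine unwinding of the definitions of $\leq_{e}$ and of the natural partial order in $\mathsf{R}(S)$.
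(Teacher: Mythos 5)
Your proof is correct and follows essentially the same route as the paper's: reduce to idempotents via Lemma~\ref{lem:phone}, settle the case of an ideal essential in $S$ itself by means of a maximal generalized prefix code it contains, then slice a general essential inclusion $RS \subseteq TS$ along the generators of $TS$, conjugate each piece down to the unit by a basic morphism, and reassemble with additivity. The only differences are cosmetic: you invoke Lemma~\ref{lem:bliss} where the paper reruns the $C_{\mathbf{n}}$ argument inline, and you perform the reduction to idempotents at the start rather than at the end.
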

\begin{proof} Let $\bigvee_{i=1}^{m} x_{i}x_{i}^{-1} \leq_{e} 1$.
Put $\mathbf{n} = \bigvee_{i=1}^{m} d(x_{i})$.
Let $u \in C_{\mathbf{n}}$.
Then $uu^{-1}$ is an idempotent and so, by assumption, there is an $i$ such that
$zz^{-1} \leq uu^{-1}, x_{i}x_{i}^{-1}$.
It follows that $z = up = x_{i}q$ for some $p,q \in S$.
But $d(u) \geq d(x_{i})$.
By  Lemma~\ref{lem:levi}, we have that $u = x_{i}t$ for some $t \in S$.
Thus $uu^{-1} \leq x_{i}x_{i}^{-1}$.
We have therefore proved that
$$\bigvee_{u \in C_{\mathbf{n}}} uu^{-1} \leq \bigvee_{i=1}^{m} x_{i}x_{i}^{-1}.$$
By assumption, $\bigvee_{u \in C_{\mathbf{n}}} uu^{-1} \rho 1$ and so, using the fact that $\rho$ is an additive congruence,
we get that $\bigvee_{i=1}^{m} x_{i}x_{i}^{-1} \rho 1$.
Let $\bigvee_{i=1}^{m} x_{i}x_{i}^{-1} \leq uu^{-1}$.
Then we can write $x_{i} = up_{i}$ for each $1 \leq i \leq m$.
It is routine to check that
$$\bigvee_{i=1}^{m} x_{i}x_{i}^{-1} \leq_{e} uu^{-1} \Leftrightarrow \bigvee_{i=1}^{m} p_{i}p_{i}^{-1} \leq_{e} 1.$$
It is also routine to check that 
$$\bigvee_{i=1}^{m} e_{i} \leq_{e} 1 \Rightarrow a\left( \bigvee_{i=1}^{m} e_{i} \right)a^{-1} \leq_{e} aa^{-1}.$$
We have therefore proved that if
$\bigvee_{i=1}^{m} x_{i}x_{i}^{-1} \leq_{e} uu^{-1}$
then $\bigvee_{i=1}^{m} x_{i}x_{i}^{-1} \rho uu^{-1}$.
Next, one can easily check that
$$\bigvee_{i=1}^{m} e_{i} \leq_{e} \bigvee_{j=1}^{n} f_{j} \Leftrightarrow \bigvee_{i=1}^{m} f_{j} \wedge e_{i} \leq_{e} f_{j}$$
for $1 \leq j \leq n$.
Suppose that 
$\bigvee_{i=1}^{m} e_{i} \leq_{e} \bigvee_{j=1}^{n} f_{j}$.
Then
$\bigvee_{i=1}^{m} f_{j} \wedge e_{i} \rho f_{j}$
for each $1 \leq j \leq n$.
We now use the fact that $\rho$ is an additive congruence to deduce that
$\bigvee_{j=1}^{n} \left( \bigvee_{i=1}^{m} f_{j} \wedge e_{i} \right) \rho \bigvee_{j=1}^{n} f_{j}$.
This simplifies to
$\bigvee_{i=1}^{m} e_{i} \rho \bigvee_{j=1}^{n} f_{j}$.
Suppose that $a \leq_{e} b$ in $\mathsf{R}(S)$.
By Lemma~\ref{lem:phone}, we have that $\mathbf{d}(a) \leq_{e} \mathbf{d}(b)$.
But we have proved that $\mathbf{d}(a) \rho \mathbf{d}(b)$.
Thus $b\mathbf{d}(a) \rho b$.
But $a = b\mathbf{d}(a)$.
We have therefore proved that $a \rho b$.
\end{proof}

For the time being, let $S$ be any cancellative monoid.
For each $a \in S$, define a function $\lambda_{a} \colon S \rightarrow S$ by $\lambda_{a}(x) = ax$.
By left cancellation in $S$, the function $\lambda_{a}$ is injective.
Regarded as a function $\lambda_{a} \colon S \rightarrow aS$, it is a bijection or a partial bijection when viewed as an element of the symmetric inverse monoid $I(S)$.
We denote its inverse (as a partial bijection) by $\lambda_{a}^{-1}$.
Define $\Sigma (S)$, the {\em inverse hull} of $S$, to be the inverse submonoid of $I(S)$ generated by all the elements $\lambda_{a}$.
Thus $\Sigma (S)$ is the monoid generated by all the elements of the form $\lambda_{a}$ and $\lambda_{a}^{-1}$.
We can massage the elements of $\Sigma (S)$ into a certain shape.
Observe that $\lambda_{a}\lambda_{b} = \lambda_{ab}$ and that $\lambda_{a}^{-1}\lambda_{b}^{-1} = \lambda_{ba}^{-1}$.
In addition, $\lambda_{a}^{-1}\lambda_{a}$ is the identity map on $S$.
We can therefore write each element of $\Sigma (S)$ in the form
$$\lambda_{a_{1}}^{-1}\lambda_{a_{2}} \ldots \lambda_{a_{2s-1}}^{-1}\lambda_{a_{2s}}.$$
In general, we cannot say more about the structure of the inverse monoid $\Sigma (S)$.
However, if $S$ is strongly finitely aligned we can say more.
We analyse maps of the form $\lambda_{a}^{-1}\lambda_{b}$.
If $bS \cap aS = \varnothing$ then this map is the empty map.
We therefore assume that $bS \cap aS \neq \varnothing$.
Let $bS \cap aS = \bigcup_{i=1}^{m} c_{i}S$ and  let $c_{i} = ax_{i} = by_{i}$.
Thus  $\lambda_{a}^{-1}\lambda_{b} = \bigsqcup_{i=1}^{m} \lambda_{x_{i}}\lambda_{y_{i}}^{-1}$
which is a disjoint union.
This means that the domains and ranges of the elements of the inverse hull
are finitely generated right ideals generated by incomparable elements.
If we repeatedly apply this result, 
we deduce that every element of the inverse hull can be written in the form
$\bigoplus_{i=1}^{m} \lambda_{x_{i}}\lambda_{y_{i}}^{-1}$.

\begin{remark}
{\em Spielberg's monoid of zig-zags \cite{JS2014} is an inverse hull}.
\end{remark}
 
There is a natural embedding of $\iota \colon \Sigma (S) \rightarrow \mathsf{R}(S)$.
The proof of the following is straightforward and uses Lemma~\ref{lem:line}.

\begin{proposition}\label{prop:butter} Let $S$ be a $k$-monoid.
Let $\alpha \colon \Sigma (S) \rightarrow T$ be any homomorphism to a distributive inverse semigroup $T$.
Then there is a unique morphism of distributive inverse semigroups $\beta \colon \mathsf{R}(S) \rightarrow T$
such that $\iota \beta = \alpha$.
\end{proposition}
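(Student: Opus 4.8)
The plan is to exhibit $\mathsf{R}(S)$ as the universal distributive inverse monoid generated by $\Sigma(S)$, building $\beta$ directly from $\alpha$ via the description of elements of $\mathsf{R}(S)$ as joins of basic bijective morphisms (Lemma~\ref{lem:trump}). Recall that $\iota$ carries the generator $\lambda_{a}$ of $\Sigma(S)$ to the basic morphism $a1^{-1}\colon S \to aS$, and more generally $\lambda_{x}\lambda_{y}^{-1}$ to $xy^{-1}$; thus $\iota$ is a faithful, order-reflecting homomorphism whose image contains every basic morphism. By Lemma~\ref{lem:trump} and Proposition~\ref{prop:salami}, $\iota(\Sigma(S))$ is therefore join-dense in the distributive inverse monoid $\mathsf{R}(S)$. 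Uniqueness of $\beta$ is then immediate: a morphism of distributive inverse monoids with $\iota\beta = \alpha$ must satisfy $\beta(xy^{-1}) = \alpha(\lambda_{x}\lambda_{y}^{-1})$ on basic morphisms and, preserving compatible joins, is determined everywhere.

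For existence, I would set, for $\theta = \bigvee_{i=1}^{n} x_{i}y_{i}^{-1} \in \mathsf{R}(S)$ written as a compatible join of basic morphisms,
$$\beta(\theta) = \bigvee_{i=1}^{n} \alpha(\lambda_{x_{i}}\lambda_{y_{i}}^{-1}),$$
the right-hand join existing in $T$ because the $x_{i}y_{i}^{-1}$ are pairwise compatible (a join exists), hence so are the $\lambda_{x_{i}}\lambda_{y_{i}}^{-1}$ in $\Sigma(S)$ (as $\iota$ reflects compatibility), hence so are their $\alpha$-images, and $T$ is distributive. The key point is well-definedness: if $\bigvee_{i} x_{i}y_{i}^{-1} = \bigvee_{j} u_{j}v_{j}^{-1}$, then Lemma~\ref{lem:line} shows each $x_{i}y_{i}^{-1}$ lies below some $u_{j}v_{j}^{-1}$ and conversely; since $\iota$ is order-reflecting and $\alpha$ order-preserving, $\alpha(\lambda_{x_{i}}\lambda_{y_{i}}^{-1}) \leq \bigvee_{j}\alpha(\lambda_{u_{j}}\lambda_{v_{j}}^{-1})$ for every $i$, so $\bigvee_{i}\alpha(\lambda_{x_{i}}\lambda_{y_{i}}^{-1}) \leq \bigvee_{j}\alpha(\lambda_{u_{j}}\lambda_{v_{j}}^{-1})$, and symmetrically.

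It then remains to check that $\beta$ is a morphism of distributive inverse monoids and that $\iota\beta = \alpha$. Preservation of compatible joins is essentially built into the definition, since a compatible join of two elements, each expressed as a join of basic morphisms, is the join of all the basic morphisms involved. For multiplicativity, because multiplication distributes over joins in $\mathsf{R}(S)$ (Proposition~\ref{prop:salami}) and in $T$, it is enough to verify $\beta\big((xy^{-1})(uv^{-1})\big) = \beta(xy^{-1})\beta(uv^{-1})$ on basic morphisms: here one applies the product formula of Lemma~\ref{lem:tom}(4) on the left, and on the right uses $\alpha(\lambda_{x}\lambda_{y}^{-1})\alpha(\lambda_{u}\lambda_{v}^{-1}) = \alpha\big((\lambda_{x}\lambda_{y}^{-1})(\lambda_{u}\lambda_{v}^{-1})\big)$ together with the decomposition of this product inside the inverse hull as an orthogonal join $\bigoplus_{k} \lambda_{xp_{k}}\lambda_{vq_{k}}^{-1}$ over the generators $x_{k} = yp_{k} = uq_{k}$ of $yS \cap uS$ --- precisely the terms produced by Lemma~\ref{lem:tom}(4). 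Finally $\iota\beta = \alpha$ holds because $\iota\beta$ and $\alpha$ are semigroup homomorphisms $\Sigma(S) \to T$ agreeing on the generators: $\beta(\iota(\lambda_{a})) = \beta(a1^{-1}) = \alpha(\lambda_{a})$ and $\beta(\iota(\lambda_{a}^{-1})) = \beta(1a^{-1}) = \alpha(\lambda_{a}^{-1})$, and $\Sigma(S)$ is generated as a semigroup by the $\lambda_{a}$ and $\lambda_{a}^{-1}$.

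The main obstacle is well-definedness of $\beta$, which rests entirely on Lemma~\ref{lem:line}: it is what lets two representations of the same element of $\mathsf{R}(S)$ be compared term by term so that their $\alpha$-images have the same join. The multiplicativity check, though routine, also requires some care in matching the combinatorial product formula in $\mathsf{R}(S)$ with the orthogonal decomposition of products of the maps $\lambda_{a}$ inside $\Sigma(S)$.
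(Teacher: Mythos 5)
Your strategy is the one the paper intends: Lemma~\ref{lem:trump} makes the basic morphisms join-dense in $\mathsf{R}(S)$, so uniqueness is immediate, and Lemma~\ref{lem:line} is exactly what makes the formula $\beta\left(\bigvee_i x_iy_i^{-1}\right) = \bigvee_i \alpha(\lambda_{x_i}\lambda_{y_i}^{-1})$ well defined (the paper offers nothing beyond ``straightforward, uses Lemma~\ref{lem:line}''). Your well-definedness and uniqueness arguments, and the existence of the relevant joins in $T$, are all correct.

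There is, however, a genuine gap in the multiplicativity step, and it is not routine. You need $\alpha\big(\lambda_x\lambda_y^{-1}\lambda_u\lambda_v^{-1}\big) = \bigvee_k \alpha(\lambda_{xp_k}\lambda_{vq_k}^{-1})$. The identity $\lambda_x\lambda_y^{-1}\lambda_u\lambda_v^{-1} = \bigoplus_k \lambda_{xp_k}\lambda_{vq_k}^{-1}$ does hold inside $\Sigma(S)$, but when $yS\cap uS$ requires more than one generator this is a \emph{nontrivial} orthogonal join of strictly smaller elements of $\Sigma(S)$, and an arbitrary homomorphism of inverse semigroups is under no obligation to preserve it: a priori you only get $\bigvee_k \alpha(\lambda_{xp_k}\lambda_{vq_k}^{-1}) \leq \alpha\big(\lambda_x\lambda_y^{-1}\lambda_u\lambda_v^{-1}\big)$. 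Concretely, the canonical map of $\Sigma(S)$ into the distributive inverse semigroup of finitely generated compatible order ideals of $\Sigma(S)$ sends $w = \lambda_y^{-1}\lambda_u$ to the principal order ideal $w^{\downarrow}$, which strictly contains the union of the order ideals generated by its orthogonal pieces; since any morphism $\beta$ of distributive inverse semigroups must send $\iota(w)=\iota(w_1)\vee\iota(w_2)$ to $\alpha(w_1)\vee\alpha(w_2)$, no $\beta$ with $\iota\beta=\alpha$ exists for that $\alpha$. The gap evaporates when $S$ is singly aligned (every nonzero element of $\Sigma(S)$ is then of the form $\lambda_x\lambda_y^{-1}$, so the product of two basic morphisms is basic or zero and no join intervenes), and in the general finitely aligned case the statement has to be read with the extra hypothesis that $\alpha$ preserves the finite orthogonal joins already present in $\Sigma(S)$. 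With that hypothesis your argument closes up; without it, the step you describe as needing ``some care'' is where the proof, and indeed the statement, breaks.
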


The above results means that $\mathsf{R}(S)$ is the {\em distributive completion} of $\Sigma (S)$.
By Proposition~\ref{prop:essential} and Proposition~\ref{prop:butter}, 
it follows that the Boolean inverse monoid $\mathsf{C}(S)$ is what we termed in \cite{LV2019}  the {\em tight completion}
of $\Sigma (S)$ and the groupoid  $\mathcal{G}(S)$ is therefore the {\em tight groupoid} of $\Sigma (S)$.

\begin{remark}
{\em There are parallels and similarities between our paper and \cite{KMN, MN}.
In particular, let $S = A_{n_{1}}^{\ast} \times \ldots \times A_{n_{s}}^{\ast}$, where $n_{i} \geq 2$.
This is an $s$-monoid.
It is convenient to denote the associated alphabets by $X_{1}, \ldots, X_{s}$.
We say that we have {\em $s$ colours}.
Assume that $X_{i}$ contains $n_{i}$ elements.
We write $X_{i} = \{x_{i}^{1}, \ldots, x_{i}^{n_{i}}\}$.
We say that $X_{i}$ has {\em arity} $n_{i}$.
The {\em data} consisting of the number $s$, the list of arities $(n_{1}, \ldots, n_{s})$
and the fact that the elements of $X_{i}$ commute with the elements of $X_{j}$, when $i \neq j$, 
is precisely what is needed to define the $\Omega$-algebras described in \cite{KMN}.
There are also parallels between our maximal generalized prefix codes and the admissible sets used there.
But we do not know if the general class of groups defined in \cite{KMN} is the same as ours.
This is similar to an issue described in \cite{FL}.
It is worth noting that our groups are closely associated with \'etale groupoids, as we have shown.
This is not, a priori, the case with the groups defined in \cite{KMN, MN}.}
\end{remark}



\end{document}